\newenvironment{proof*}{\noindent\emph{Proof}}{$\square$\smallskip}
\newtheorem{theorem}{Theorem}[section]
\newtheorem{Definition}[theorem]{Definition}
\newtheorem{lemma}[theorem]{Lemma}
\newtheorem{Example}[theorem]{Example}
\newtheorem{corollary}[theorem]{Corollary}
\newtheorem{Remark}[theorem]{Remark}
\newtheorem{proposition}[theorem]{Proposition}
\newtheorem{Exercise}[theorem]{Exercise}
\newtheorem{Exercises}[theorem]{Exercises}
\newtheorem{Notation}[theorem]{Notation}
\newtheorem{Convention}[theorem]{Convention}
\newtheorem{standing assumption}[theorem]{Standing Assumption}
\newenvironment{definition}{\begin{Definition}\normalfont}{\end{Definition}}
\newenvironment{example}{\begin{Example}\normalfont}{\end{Example}}
\newenvironment{remark}{\begin{Remark}\normalfont}{\end{Remark}}
\title[Finiteness properties of piecewise projective homeomorphism groups]{Finiteness properties of some groups of piecewise projective homeomorphisms}  
\author[D.~S.~Farley]{Daniel S. Farley}
\address{Department of Mathematics\\ Miami University\\ Oxford, OH 45056 U.S.A.}
\email{farleyds@miamioh.edu}
\date{\today}
\begin{document}

\begin{abstract} 
The Lodha-Moore group $G$ first arose as a finitely presented counterexample to von Neumann's conjecture. The group $G$ acts on the unit interval via piecewise projective homemorphisms. 
 A result of Lodha shows that $G$ in fact has type $F_{\infty}$. 

Here we will describe $G$ as a group that is ``locally determined" by an inverse semigroup $S_{2}$, in the sense of the author's joint work with Hughes. The semigroup $S_{2}$ is generated by three linear fractional transformations $A$, $B$, and $C_{2}$, where $A$ and $B$ are elliptical transformations of the hyperbolic plane and $C_{2}$ is a hyperbolic translation. Following a general procedure delineated by Farley and Hughes, we offer a new proof that $G$ has type $F_{\infty}$. Our proof simultaneously shows that various groups acting on the line, the circle, and the Cantor set have type $F_{\infty}$. We also prove analogous results 
for the groups that are locally determined by an inverse semigroup $S_{3}$, which shares the generators $A$ and $B$ with $S_{2}$, but replaces $C_{2}$ with a different hyperbolic translation $C_{3}$.
\end{abstract}

\subjclass[2010]{Primary 20F65, 20J05; Secondary 20M18}

\keywords{generalized Thompson groups, inverse semigroups, finiteness properties} 

\maketitle

\setcounter{tocdepth}{2} \tableofcontents

\section{Introduction} \label{section:introduction}

Monod \cite{Monod} produced a large family of counterexamples to von Neumann's conjecture; i.e., non-amenable groups with no free subgroups. His examples were groups of piecewise projective homeomorphisms of the line and were not finitely generated. Lodha and Moore \cite{LM} considered a subgroup $G$ of one of Monod's groups. Their group $G$, here called the \emph{Lodha-Moore group}, could be generated by three elements, and was shown in \cite{LM} to be finitely presented; indeed, $G$ admits a presentation with three generators and nine relators. In later work, Lodha \cite{Lodha} showed that $G$ has type $F_{\infty}$. The Lodha-Moore group is also nonamenable, making it an especially economical finitely presented counterexample to von Neumann's conjecture, and the first $F_{\infty}$ counterexample to von Neumann's conjecture. (The paper \cite{OS} provided the first finitely presented non-amenable groups with no free subgroups, but their groups had many more generators and relations. The higher finiteness properties of the groups remain unknown to the best of this author's knowledge.) 

In \cite{FH2}, Hughes and the author described a general approach to establishing finiteness properties for the class of groups that are locally determined by inverse semigroups. In the setting of \cite{FH2}, an inverse semigroup $S$ is alway realized as a collection of partial bijections of some set $X$, where a \emph{partial bijection} of $X$ is a bijection between two subsets of $X$. We define a group $\Gamma_{S}$, the \emph{group locally determined by $S$}, to be the collection of all bijections of $X$ that can be represented as a finite union partial bijections from $S$. (I.e., $\gamma \in \Gamma_{S}$ if, for some $n \in \mathbb{N}$, there are elements $s_{1}$, $s_{2}$, $\ldots$, $s_{n} \in S$ such that the domains of the $s_{i}$, denoted $D_{i}$, form a partition of $X$, the images $s_{i}(D_{i})$ are also a partition of $X$, and $\gamma_{\mid D_{i}} = s_{i}$, for $i=1, \ldots, n$.)

The goal in \cite{FH2} is to build classifying spaces for the groups $\Gamma_{S}$, and then to establish finiteness properties for these groups. The classifying spaces depend upon a sequence of choices. We first assume that the inverse semigroup $S$ and (therefore) the group $\Gamma_{S}$ have been chosen. The choice of $S$ determines a collection of \emph{domains} $\mathcal{D}_{S}$, which are simply the domains of the elements $s \in S$. The set of non-empty domains is denoted $\mathcal{D}^{+}_{S}$. The second choice is that of an \emph{$S$-structure}, which is a function $\mathbb{S}: \mathcal{D}^{+}_{S} \times \mathcal{D}^{+}_{S} \rightarrow \mathcal{P}(S)$ assigning a (possibly empty) collection of transformations from $S$ to each pair $(D_{1},D_{2})$ of non-empty domains. The sets $\mathbb{S}(D_{1},D_{2})$ are required to satisfy various ``groupoid-like'' properties. The specific properties that we need are summarized in Proposition \ref{proposition:closure}.  The $S$-structure $\mathbb{S}$ determines a set $\mathcal{V}_{\mathbb{S}}$ with a partial order $\leq$ called ``expansion" (Definition \ref{definition:expansion}).  The partially ordered set $(\mathcal{V}_{\mathbb{S}}, \leq)$ is a directed $\Gamma_{S}$-set. The construction of $\mathcal{V}_{\mathbb{S}}$ is very much like the one introduced by Brown in \cite{Brown}, where he proved the $F_{\infty}$ property for a wide variety of generalized Thompson groups.   
(We note that, according to \cite{FH2}, an $S$-structure is a pair $(\mathbb{S}, \mathbb{P})$, where $\mathbb{P}$ is a \emph{pattern function}. The function $\mathbb{P}$ is a device that limits the types of subdivisions that are allowable under the expansion partial order. We will make no use of such a function in this paper.)

The simplicial realization $\Delta_{\mathbb{S}}$ of $\mathcal{V}_{\mathbb{S}}$ is a contractible simplicial complex upon which $\Gamma_{S}$ acts simplicially. 
The simplicial complex $\Delta_{\mathbb{S}}$ often has  undesirable properties, however. For instance, it almost always fails to be locally finite. It can be helpful to replace $\Delta_{\mathbb{S}}$ with something smaller. A (third) choice of an \emph{expansion scheme} $\mathcal{E}$ (Definition \ref{definition:expansionscheme}) determines a subcomplex $\Delta^{\mathcal{E}}_{\mathbb{S}} \subset \Delta_{\mathbb{S}}$. The complex $\Delta^{\mathcal{E}}_{\mathbb{S}}$ can be anything from a discrete set of points to the complex $\Delta_{\mathbb{S}}$ itself, depending upon the size of the expansion scheme. Given an appropriate choice of $\mathcal{E}$, the main results of \cite{FH2} show how to deduce the $F_{\infty}$ property for $\Gamma_{S}$, by applying Brown's finiteness criterion to the complex $\Delta^{\mathcal{E}}_{\mathbb{S}}$. 

The above procedure (determined by the choices of semigroup $S$, $S$-structure $\mathbb{S}$, and expansion scheme $\mathcal{E}$) is applied in \cite{FH2} to various generalized Thompson groups, including the Brin-Thompson groups $nV$ \cite{BrinHighD} (and some generalizations), R\"{o}ver's group \cite{Rover}, and the groups that are determined by finite similarity structures \cite{FH1}, recovering proofs of the $F_{\infty}$ property in each case. The original proofs that $nV$ and R\"{o}ver's group have type $F_{\infty}$ appeared in \cite{nV} and \cite{BelkMatucci}, respectively.

The basic theory of \cite{FH2} was produced in the hope that it would unify the existing proofs of finiteness properties for generalized Thompson groups, or at least those groups with ``piecewise" definitions. 
The Lodha-Moore group $G$, as one such group, offers a useful test case. We note from the outset, however, that $G$ is analogous to Thompson's group $F$, in the sense that it is a group of homeomorphisms of the unit interval, while the main argument of \cite{FH2} concerns itself primarily with groups that are analogous to $V$. (For a description of $F$, $T$, and $V$, see the introductory notes \cite{CFP}.) This will affect our arguments in only minor ways.

Here we will give a new proof that $G$ has type $F_{\infty}$, which will largely follow the three-step method indicated above. 

\begin{enumerate}
\item[1)] First, we consider the inverse semigroup $S_{2}$, generated by the following linear fractional transformations:
\[ A(x) = \frac{x}{x+1}; \quad B(x) = \frac{1}{2-x}; \quad C_{2}(x) = \frac{2x}{x+1}. \]
Actually, we restrict the domains of each of these to the interval $I=[0,1)$, which makes $A$ a transformation from $I$ to $[0,1/2)$, $B$ a transformation from $I$ to $[1/2, 1)$, and $C_{2}$ a self-homeomorphism of $I$. Note also that $S_{2}$ is closed under inverses, and thus, denoting these inverses by lower-case letters, $a$, $b$, and $c$ are transformations from $[0,1/2)$, $[1/2,1)$, and $I$ (respectively) to the interval $I$. We define $F(S_{2})$ to be the group of \emph{continuous} bijections of $[0,1/2)$ that are locally determined by $S_{2}$. (The ``$F$" in $F(S_{2})$ is used to indicate the connection of $F(S_{2})$ with Thompson's group $F$ \cite{CFP}.) 

The set of domains produced by $S_{2}$ is not very tractable for our purposes. We will therefore restrict the domains under consideration to what we call a set of ``generating domains" $\mathcal{D}^{+}_{gen}$ (Definition \ref{definition:generatingsetofdomains}), which, for us, are simply the forward iterates of $I$ under the transformations $A$ and $B$. The domains in $\mathcal{D}^{+}_{gen}$ are the intervals between consecutive Farey numbers. (The decision to work with a proper subset of $\mathcal{D}^{+}_{S}$ represents the most important departure from \cite{FH2}.) 

\item[2)] We then define 
$\mathbb{S}$ to be the maximal $S$-structure; i.e., $\mathbb{S}(D_{1},D_{2})$ is the set of all transformations from $S_{2}$ having $D_{1}$ as the domain and $D_{2}$ as the range, where $D_{1}$ and $D_{2}$ are arbitrary members of $\mathcal{D}^{+}_{gen}$. This leads to the usual directed set construction, as described above. The expansions (Definition \ref{definition:expansion}) from the pair $[id_{I}, I]$ (Definition \ref{definition:B}) can be usefully described by numbered binary trees, exactly as was done in \cite{LM}. 

\item[3)] We then define the expansion scheme $\mathcal{E}_{2}$ as in Example \ref{example:THEexpansionscheme}. The expansion scheme $\mathcal{E}_{2}$ essentially assigns a certain simplicial complex $\mathcal{E}_{2}([f,D])$ to each pair $[f,D] \in \mathcal{B}$. The simplicial complex in question is isomorphic to a simplicial cone on a cellulated line -- see Figure \ref{figure:E}. The burden of the rest of the argument is to show that $\mathcal{E}_{2}$ is an ``$n$-connected expansion scheme" (Definition \ref{definition:nconnectedE}). The proof occupies Sections \ref{section:FP} and \ref{section:IVT}, and represents the technical heart of the paper. The material from Sections \ref{section:FP} and \ref{section:IVT} is heavily indebted to the argument from \cite{LM}, but generalizes their work in what we consider to be interesting ways. For instance, our argument also shows that the monoid generated by the linear fractional transformations $\{ A, B, C_{2}, c_{2} \}$ (even without the above restrictions on their domains) admits a finite complete rewrite system.
\end{enumerate}

With 1)-3) complete, the proof of $F_{\infty}$ follows by a standard argument (a variant of the main argument of \cite{FH2}). This standard argument is summarized in Section \ref{section:F-infty}. The basic set-up can be used to prove the $F_{\infty}$ property simultaneously for several groups, which we now describe. 

The author had originally wanted to offer a proof of $F_{\infty}$ for an infinite family of generalizations of the Lodha-Moore group $G$. One approach to producing an infinite family of similar groups is to replace the transformation $C_{2}$ with a transformation $C_{n}:I \rightarrow I$, defined as follows:
\[  C_{n}(x) = \frac{nx}{(n-1)x + 1}. \]
We can then define $S_{n}$ to be the inverse semigroup generated by $\{ A, B, C_{n} \}$ (and their inverses), and let $V(S_{n})$ denote the group that is locally determined by $S_{n}$.  We find that the proof of the $F_{\infty}$ property (sketched above) applies equally to both $V(S_{2})$ and $V(S_{3})$, but falls apart entirely  for $V(S_{n})$, when $n \geq 4$. The proof that $V(S_{3})$ has type $F_{\infty}$ is developed throughout the paper, alongside the proof for $V(S_{2})$, while the apparent absence of a similar proof for $\Gamma_{n}$ ($n \geq 4$) 
is the topic of Section \ref{section:algorithm}. We will also define inverse semigroups $S'_{2}$ and $S'_{3}$, which are partial transformations of the ray $[0,\infty)$. In all, we will establish the $F_{\infty}$ property for ten groups:
$F(S_{i})$, $F(S'_{i})$, $T(S_{i})$, $V(S_{i})$, and $V(S'_{i})$, for $i=2,3$. (Here an ``$F$" indicates a group of homeomorphisms of the interval $[0,1)$ or ray $[0,\infty)$, a ``$T$" indicates a group of homeomorphisms of the circle, and ``$V$" indicates a collection of right-continuous bijections of $[0,1)$ or $[0,\infty)$.)

Let us briefly describe the structure of the paper. In Section \ref{section:family}, we define the inverse semigroups $S_{i}$ and $S'_{i}$ ($i=2,3$) and the groups that are locally defined by these semigroups. In Section \ref{section:generatingdomains}, we define the ``generating domains" that we need. This section also includes a proof of the ``eventual invariance" property, which will be used to construct directed sets later. In Section \ref{section:directedset}, we build directed sets with $\Gamma$ actions (for $\Gamma$ as above) and compute vertex stabilizers. We show, in particular, that the vertex stabilizers in all of our complexes are virtually free abelian of finite rank. In Section \ref{section:algorithm}, we describe an algorithm that analyzes various potential generalizations of the Lodha-Moore group. The conclusion of the section is that such generalizations are surprisingly very thin on the ground, at least among groups of piecewise projective homeomorphisms. In Section \ref{section:expansion}, we review expansion schemes, introduce expansion schemes $\mathcal{E}_{i}$ and $\mathcal{E}'_{i}$ ($i=2,3$), and also introduce subdivision trees, which are used to describe expansions.This section also describes an equivalence relation on subdivision trees. In Section \ref{section:FP}, we compute finite complete semigroup presentations for the inverse semigroups $S_{2}$ and $S_{3}$. These presentations are vital in understanding the equivalence relation on subdivision trees. In Section \ref{section:IVT}, we prove an ``intermediate value theorem", which is what we need in order to show that the expansion schemes $\mathcal{E}_{i}$ and $\mathcal{E}'_{i}$ define contractible complexes. The proof of the latter is in Section \ref{section:F-infty}, which also assembles all of the other ingredients of the proof that the groups have type $F_{\infty}$.

\section{A family of inverse semigroups} \label{section:family}

We consider the usual action of $PSL_{2}(\mathbb{R})$ on the upper half-space model of the hyperbolic plane $\mathbb{H}^{2} \subseteq \mathbb{C}$. A $2 \times 2$ matrix 
\[ M = \left( \begin{smallmatrix} a & b \\ c & d \end{smallmatrix} \right) \]
acts as a linear fractional transformation $f_{M}: \mathbb{H}^{2} \rightarrow \mathbb{H}^{2}$, where
\[ f_{M}(z) = \frac{az + b}{cz + d} \quad (z \in \mathbb{C}; \mathrm{Im} z > 0). \]
It is well-known \cite{Rat} that the assignment $M \mapsto f_{M}$ induces an isomorphism between $PSL_{2}(\mathbb{R})$ and 
the group of all orientation-preserving isometries of $\mathbb{H}^{2}$, denoted $\mathrm{Isom}^{+}(\mathbb{H}^{2})$. In what follows, we will make no distinction between $M$ and $f_{M}$, referring to either one by the matrix $M$. 

In practice, we will be concerned primarily with the action of $PSL_{2}(\mathbb{R})$ on $\partial \mathbb{H}^{2}$, which we identify with $\mathbb{R} \cup \{ \infty \}$. The inverse semigroups alluded to in this section's title act as partial bijections of $\partial \mathbb{H}^{2}$ via (restrictions of) linear fractional transformations.

\begin{definition} \label{definition:inversesemigroup} (partial bijections; inverse semigroups; domains)
Let $X$ be a set.  A \emph{partial bijection} of $X$ is a bijection $h: A_{h} \rightarrow B_{h}$ between subsets $A_{h}$ and $B_{h}$ of $X$. 
The composition of two partial bijections is defined on ``overlaps": if $g: A_{g} \rightarrow B_{g}$ and $h:A_{h} \rightarrow B_{h}$ are partial bijections of $X$, then $g \circ h$ is a bijection from $h^{-1}(A_{g})$ to $g(B_{h} \cap A_{g})$. 

A collection $S$ of partial bijections of $X$ is called an \emph{inverse semigroup} if $S$ is closed under inverses and compositions. We may also refer to such an $S$ as an \emph{inverse semigroup acting on $S$}.

If $S$ is an inverse semigroup and $h: A_{h} \rightarrow B_{h}$, then we refer to $A_{h}$ as a \emph{domain} of $S$. Note that $B_{h}$ is also a domain, since $S$ is closed under inverses. We let $\mathcal{D}_{S}$
denote the set of all domains $A_{h}$, as $h$ ranges over all $h \in S$. We let $\mathcal{D}^{+}_{S} = \mathcal{D}_{S} - \{ \emptyset \}$ (i.e., the set of all non-empty domains). 
\end{definition}

\begin{remark} \label{remark:Cayleylike}
Inverse semigroups can also be defined abstractly (see \cite{SemigroupBook}). The Preston-Wagner Theorem states that any inverse semigroup can be realized as a collection of partial bijections (in the above sense). The proof parallels that of Cayley's Theorem, which states that every group can be realized as a group of permutations.
\end{remark}

\begin{remark} \label{remark:zeroelement} In Definition \ref{definition:inversesemigroup}, the function with empty domain and codomain plays the role of a $0$.
\end{remark}

\begin{definition} \label{definition:S} (The inverse semigroups $S_{n}$ and $S'_{n}$) 
Let $A: [0,1) \rightarrow [0, 1/2)$ be the restriction of the linear fractional transformation 
\[ A = \left( \begin{smallmatrix} 1 & 0 \\ 1 & 1 \end{smallmatrix} \right). \]
Let $B:[0,1) \rightarrow [1/2, 1)$ be the restriction of
\[ B = \left( \begin{smallmatrix} 0 & 1 \\ -1 & 2 \end{smallmatrix} \right). \]
For $n=2,3$, let $C_{n}: [0,1) \rightarrow [0,1)$ be the restriction of
\[ C_{n} = \left(\begin{smallmatrix} n & 0 \\ n-1 & 1 \end{smallmatrix} \right). \]
Let $T: [0, \infty) \rightarrow [1, \infty)$ be the restriction of
\[ T = \left(\begin{smallmatrix} 1 & 1 \\ 0 & 1 \end{smallmatrix} \right). \]
We will use lower-case letters to denote the inverses of the above transformations ($a = A^{-1}$, etc.).

For $n=2,3$, we let
\[ S_{n} = \langle A, B, C_{n} \rangle; \quad S'_{n} = \langle A, B, C_{n}, T \rangle. \]
\end{definition}

\begin{definition} \label{definition:locallydetermined} (locally determined by $S$; the inverse semigroup $\widehat{S}$) 
Let $S$ be an inverse semigroup acting on a set $X$. Let $A$ and $B$ be subsets of $X$. 
A bijective function $f: A \rightarrow B$ is \emph{locally determined by $S$} if there is a finite partition 
$\mathcal{P} = \{ D_{1}, \ldots, D_{m} \}$ of
$A$ into domains (i.e., $\mathcal{P} \subseteq \mathcal{D}^{+}_{S}$) such that $f_{\mid D_{i}} \in S$, for each $i$. 

We let $\widehat{S}$ denote the collection of all functions that are locally determined by $S$. The set $\widehat{S}$ is an inverse semigroup under the operation of composition.
\end{definition}

\begin{definition} \label{definition:Gamma}
Let $n = 2$ or $3$. Let
\begin{itemize}
\item $F(S_{n})$ be the group of homeomorphisms of $[0,1)$ that are locally determined by $S_{n}$;
\item $F(S'_{n})$ be the group of homeomorphisms of $[0,\infty)$ that are locally determined by $S'_{n}$;
\item $T(S_{n})$ be the group of homeomorphisms of the circle $[0,1]/$$\sim$ that are locally determined by $S_{n}$;
\item $V(S_{n})$ be the group of right-continuous bijections of $[0,1)$ that are locally determined by $S_{n}$;
\item $V(S'_{n})$ be the group of right-continuous bijections of $[0,\infty)$ that are locally determined by $S'_{n}$.
\end{itemize}
\end{definition}

\begin{remark}
The group $T(S_{n})$ can also be described as the subgroup of $V(S_{n})$ that preserves a cyclic ordering on $[0,1)$.
\end{remark}

\section{A generating set of domains for $S_{i}$ and $S'_{i}$} \label{section:generatingdomains}

The set $\mathcal{D}^{+}_{S}$ (Definition \ref{definition:inversesemigroup}) will be far too big when $S = S_{i}$ or
$S = S'_{i}$. In this section, we define a subcollection $\mathcal{D}^{+}_{S,gen} \subseteq \mathcal{D}^{+}_{S}$, which will be sufficient for the constructions of later sections. We note that this is in contrast with \cite{FH2}, which always uses the full set $\mathcal{D}^{+}_{S}$.

\begin{definition} \label{definition:generatingsetofdomains}
(Generating sets of domains)
Let $\{ A, B \}^{\ast}$ denote the set of all positive words in the alphabet $\{ A, B \}$, including the empty word.
Let 
\[ \mathcal{D}_{S',gen}^{+} = \{ T^{\alpha}\omega \cdot [0,1) \mid \omega \in \{A, B\}^{\ast}; \alpha \geq 0 \} \cup \{ T^{\alpha} \cdot [0, \infty) \mid \alpha \geq 0 \}\]
and
 \[ \mathcal{D}_{S,gen}^{+} = \{ \omega \cdot [0,1) \mid \omega \in \{A, B\}^{\ast} \}\]
 We will often refer to $\mathcal{D}_{S,gen}^{+}$ or $\mathcal{D}_{S',gen}^{+}$ by the notation $\mathcal{D}_{gen}^{+}$ if doing so should cause no ambiguity.
\end{definition}

\begin{remark} \label{remark:discussionofintervals}
It will be convenient to write $I$ in place of $[0,1)$, and to write $\omega I$ in place of $\omega \cdot I$. 

The half-open intervals $\omega I$ of $\mathcal{D}_{S,gen}^{+}$ are in one-to-one correspondence with the vertices of an infinite binary tree. The intervals $\omega AI$ and 
$\omega BI$ correspond to the left and right children (respectively) of $\omega I$.
In particular, $\omega' I$ contains $\omega I$ if and only if $\omega'$ is a prefix of 
$\omega$, and the intervals are disjoint if neither $\omega'$ nor $\omega$ is a prefix of the other. 

Note, however, that the intervals $\omega I$ are very far from being the standard dyadic intervals when the length of $\omega$ is two or more. For instance, $ABAI = [1/3, 2/5)$ and $BAI = [1/2, 2/3)$. It appears that $\omega I$, for $\omega \in \{ A, B \}^{\ast}$, is always an interval between consecutive Farey fractions (as noted in \cite{LM}), although we will not need to use this fact. 
The intervals $T^{\alpha}\omega I$ are simply the translates of the intervals $\omega I$
by non-negative integers.

It will be useful to keep in mind that the products $aB$ and $bA$ are $0$ in what follows. 
\end{remark}
  
\begin{lemma} \label{lemma:eventual} (An eventual invariance property) Let $s \in S_{i}$, where $i=2$ or $3$. Let $D \in \mathcal{D}^{+}_{S,gen}$ be such that $D$ is contained in the domain of $s$. There is a finite partition $\mathcal{P} \subseteq \mathcal{D}^{+}_{S,gen}$ of $D$ such that $sP \in \mathcal{D}^{+}_{S,gen}$, for each $P \in \mathcal{P}$.

The analogous statement also holds true for $S'_{n}$, $i=2,3$.
\end{lemma}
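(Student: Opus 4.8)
The plan is to induct on the word length of $s$ in the generators $\{A,B,C_i,a,b,c_i\}$ (and, for $S'_n$, also $T,t$), reducing the general claim to the single-generator case. The inductive step is immediate once one has the base case: if $s = s' g$ with $g$ a generator and the claim holds for $g$ and for $s'$, first apply the lemma to $g$ and $D$, obtaining a partition $\mathcal{P}_0 \subseteq \mathcal{D}^+_{S,gen}$ of $D$ with $gP \in \mathcal{D}^+_{S,gen}$ for each $P \in \mathcal{P}_0$; then apply the inductive hypothesis to $s'$ on each $gP$ (which lies in the domain of $s'$ since $D$ lies in the domain of $s = s'g$), refining each piece $gP$ into a partition whose $s'$-images lie in $\mathcal{D}^+_{S,gen}$; pulling these refinements back through $g^{-1}$ (which sends domains in $\mathcal{D}^+_{S,gen}$ to domains in $\mathcal{D}^+_{S,gen}$ by the base case applied to $g^{-1}$, or directly since the relevant subintervals of $gP$ are again of the form $\omega' I$ inside $\omega I$) gives the desired partition of $D$ with $s$-images in $\mathcal{D}^+_{S,gen}$.

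So the real content is the base case: $s$ a single generator. For $s \in \{A,B,a,b\}$ this is essentially bookkeeping with the tree structure of Remark \ref{remark:discussionofintervals}. If $D = \omega I$ and $s = A$, then $AD = A\omega I \in \mathcal{D}^+_{gen}$ directly (take $\mathcal{P} = \{D\}$), and similarly for $B$. For $s = a$: if $D = \omega I$ is in the domain of $a = A^{-1}$, i.e.\ $D \subseteq [0,1/2) = AI$, then $\omega$ must begin with $A$ (here we use that $aB = 0$, so $D$ cannot contain any $B$-interval meeting outside $AI$; more precisely $D \subseteq AI$ forces $A$ to be a prefix of $\omega$), say $\omega = A\omega'$, and then $aD = \omega' I \in \mathcal{D}^+_{gen}$. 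Symmetrically for $b$, using $bA = 0$. For $S'_n$ one also handles $T$ and $t$ on the translated intervals $T^\alpha \omega I$, which is the same argument shifted by integers, plus the intervals $T^\alpha[0,\infty)$.

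The main obstacle — and the only step requiring a genuine idea rather than bookkeeping — is the generator $C_i$ (and its inverse $c_i$). Here $C_i$ does not simply permute the tree-intervals $\omega I$: it is a hyperbolic translation, and $C_i(\omega I)$ need not be of the form $\omega' I$. The key observation to exploit is the relations that $C_i$ satisfies with $A$ and $B$ on subintervals; concretely, one computes matrix products such as $C_2 A = \left(\begin{smallmatrix} 2 & 0 \\ 1 & 1\end{smallmatrix}\right)\left(\begin{smallmatrix} 1 & 0 \\ 1 & 1\end{smallmatrix}\right) = \left(\begin{smallmatrix} 2 & 0 \\ 2 & 1 \end{smallmatrix}\right)$, and notes (for instance) that $C_2$ restricted to $AI = [0,1/2)$ equals $A$ composed with something in $S_2$, or that $C_2 A = A C_2'$ for a suitable word, etc., so that $C_2(AI)$, $C_2(BAI)$, $C_2(BBAI)$, $\ldots$ can each be identified as intervals $\omega' I$ in $\mathcal{D}^+_{gen}$. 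Thus for $D = I$ itself one writes $I$ as the union of an infinite nested family — but we need a \emph{finite} partition, so one must check that the relevant identities stabilize: that for all $\omega$ long enough (of length $\ge$ some bound depending only on $i$), $C_i$ maps $\omega I$ into $\mathcal{D}^+_{gen}$ by a single generator application, leaving only finitely many ``bad'' short intervals to subdivide recursively, and that this recursion terminates. This finiteness is exactly the kind of statement that Section \ref{section:FP} (finite complete rewrite systems for $S_2$, $S_3$) is designed to make transparent; for the present lemma one can argue directly by noting that $C_i$ fixes $0$ and expands near $0$, so its effect on the tree is eventually a simple shift along the leftmost branch, giving the required finiteness for $n = 2, 3$.
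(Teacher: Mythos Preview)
Your inductive reduction to single generators and your treatment of $s \in \{A,B,a,b\}$ are correct and match the paper. The gap is in the $C_i$ case, which you rightly flag as the real content but do not actually carry out.

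The paper's proof writes down explicit matrix identities---for $C_2$, these are
\[
C_2 AA = AC_2, \qquad C_2 AB = BAc_2, \qquad C_2 B = BBC_2,
\]
and for $C_3$ there are five analogous identities---and uses them to push $C_2^{\pm 1}$ past the letters of $\omega$, arriving at a form $\omega' C_2^{\epsilon}\omega''$ where $\omega'' \in \{1, A, B\}$; a single binary subdivision of $D$ then suffices. You gesture at this mechanism but never execute it: you compute the matrix $C_2 A$ without identifying it with a word in $\{A,B,C_2,c_2\}$, and your list $C_2(AI), C_2(BAI), C_2(BBAI), \ldots$ suggests an infinite process with no termination argument. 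Your heuristic that ``$C_i$ fixes $0$ and expands near $0$, so its effect on the tree is eventually a simple shift along the leftmost branch'' is not correct as stated---for instance $C_2 \cdot ABI = BAI$, which is not on the leftmost branch---and it misses that pushing $C_2$ forward can produce $c_2$ (as in $C_2 AB = BAc_2$), so you would also need the companion identities for $c_2$. Finally, invoking Section~\ref{section:FP} is a forward reference to a section whose rewrite rules \emph{are} precisely these identities; citing it does not supply the missing computation. What is actually required is simply to write down and verify the three (for $i=2$) or five (for $i=3$) identities; the finite partition then drops out in one step.
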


\begin{proof}
Let $D = \omega I$, where $\omega \in \{ A, B \}^{\ast}$.
By induction on the length of $s$, it suffices to prove the lemma in the case 
$s \in \{ A, B, C_{2}, C_{3}, a, b, c_{2}, c_{3} \}$.

Suppose first that $s = A$. 
It follows directly that $sD = A\omega I$, so $sD \in \mathcal{D}^{+}_{S,gen}$ and we 
may set $\mathcal{P} = \{ D \}$. If $s = a$, it must be that $\omega =  A \omega'$, for some $\omega' \in \{ A, B \}^{\ast}$, and therefore $sD = \omega' I$. We can therefore again let $\mathcal{P} = \{ D \}$. 

If $s = B$ or $s= b$, the proof is very similar. 

Let $s = C_{2}$. A straightforward check shows that 
\begin{align*}
C_{2}AA &= AC_{2}; \\
C_{2}AB &= BAc_{2}; \\
C_{2}B &= BBC_{2}
\end{align*}
We use these identities to ``push'' $C_{2}$ as close to the end of the word $\omega$ as possible. (Note that the inverse $c_{2}$ can appear during this process. This poses no problems, since we can use the same identities to push $c_{2}$ forward, as well.)
In doing so, we can arrange that 
\[ C_{2}\omega = \omega'C_{2}^{\epsilon} \omega'', \]
 where $\omega', \omega'' \in 
\{ A, B \}^{\ast}$, $\epsilon = \pm 1$, and 
\begin{enumerate}
\item $\omega''$ is an empty word, or 
\item $\omega'' = A$ if $\epsilon = 1$, or 
\item $\omega'' = B$ if $\epsilon = -1$. 
\end{enumerate}
If $\omega''$ is not the empty word, we then 
let $\mathcal{P} = \{ \omega AI, \omega BI \}$; if $\omega''$ is empty, we set 
$\mathcal{P} = \{ D \}$; these are the required partitions. (For instance, if $\epsilon = 1$ and
$\omega'' = A$, we have 
\[ C_{2}\omega AI = \omega' C_{2}AAI = \omega' AC_{2}I = \omega' AI \in
\mathcal{D}^{+}_{gen} \]
and
\[ C_{2}\omega BI = \omega' C_{2}ABI = \omega' BAc_{2}I = \omega' BAI \in
\mathcal{D}^{+}_{gen}.\]
Similar checking handles the remaining case.) 

The case in which $s = c_{2}$ is very similar, and features the same identities, suitably rewritten so that $c_{2}A$, $c_{2}BA$, and $c_{2}BB$ appear on the left-hand sides of the equations.

Now suppose that $s = C_{3}$. We have the following matrix identities:
\begin{align*}
C_{3}AAA &= AC_{3}; \\
C_{3}AAB &= BAAc_{3}; \\
C_{3}ABA &= BABC_{3}; \\
C_{3}ABB &= BBAc_{3}; \\
C_{3}B &= BBBC_{3}; 
\end{align*}
We can then follow the same strategy as we did in the case $s = C_{2}$. ``Push" $C_{3}$ as close to the end of $\omega$ as possible. The result is $\omega' C_{3}^{\epsilon} \omega''$, where $\omega' \in \{ A, B \}^{\ast}$ and 
\begin{enumerate}
\item $\omega''$ is empty, or
\item $\epsilon = 1$ and 
$\omega'' \in \{ A, AA, AB \}$, or 
\item $\epsilon = -1$ and $\omega'' \in \{ B, BA, BB \}$. 
\end{enumerate}
If $\omega''$ is empty, then $\mathcal{P} = \{ D \}$ is the required partition of $D$. If $\epsilon =1$ and $\omega'' = A$, then we set $\mathcal{P} = \{ \omega AAI, \omega ABI, \omega BAI, \omega BBI \}$. This is the required partition; indeed,
\[ C_{3}\omega AAI = \omega' C_{3} AAAI = \omega' AC_{3}I = \omega'AI \in \mathcal{D}^{+}_{gen}, \]
and similar calculations show that $C_{3}(\mathcal{P}) \subseteq \mathcal{D}^{+}_{gen}$. If $\omega'' \in \{ AA, AB \}$, then the required partition is $\mathcal{P} = \{ \omega AI, \omega BI \}$. If $\epsilon = -1$ and $\omega'' \in \{ B, BA, BB \}$, then one proceeds similarly. The required partitions are $\{ \omega AAI,
\omega ABI, \omega BAI, \omega BBI \}$ (in the first case, when $\omega'' = B$) and 
$\{ \omega AI, \omega BI \}$ (when $\omega'' = BA$ or $BB$).

The extension of these arguments to $S'_{i}$ is entirely straightforward.
\end{proof}

 \section{A directed set construction} \label{section:directedset}

 In this section, we will specify an $S$-structure $\mathbb{S}$ for $S \in \{ S_{2}, S_{3}, S'_{2}, S'_{3} \}$, in essentially the sense of \cite{FH2}. In fact, the only difference is that we will define our $S$-structure using the domains $\mathcal{D}^{+}_{S,gen}$
 and $\mathcal{D}^{+}_{S',gen}$, rather than the entire collection $\mathcal{D}^{+}$, as required in \cite{FH2}. The $S$-structure leads to a directed set construction of a contractible simplicial complex, exactly as in \cite{FH2}. We will first consider these directed set constructions for
 $V(S_{i})$ and $V(S'_{i})$ ($i=2,3$) in Subsection \ref{subsection:V}. The simplicial complexes for the related groups can then be obtained as subcomplexes; this is spelled out in Subsection \ref{subsection:FT} 
 
 It will be useful to let $\mathcal{D}^{+}_{gen}$ denote either $\mathcal{D}^{+}_{S,gen}$ or $\mathcal{D}^{+}_{S',gen}$, depending on the context.
 
\subsection{The directed set constructions for $V(S_{i})$ and $V(S'_{i})$}  \label{subsection:V}

We will first show how to make $V(S_{i})$ and $V(S'_{i})$ act on directed sets, and (therefore) on contractible simplicial complexes. 
The basic approach follows \cite{FH2}, but we are able to use a simplified version of the basic theory, with suitable modifications.
All of the results in this subsection work in the same way for all $\Gamma \in \{ V(S) \mid S \in \{ S_{2}, S_{3}, S'_{2}, S'_{3} \} \}$, so we will use the generic notation $\Gamma$ to refer to any group from the latter collection.

\begin{definition} \label{definition:structuresets} (Structure sets; domain types)
Let $S \in \{ S_{2}, S_{3}, S'_{2}, S'_{3} \}$.
Let $D_{1}, D_{2} \in \mathcal{D}^{+}_{gen}$. We set 
\[ \mathbb{S}(D_{1},D_{2}) = \{ s \in S \mid \text{The domain of }s \text{ is }D_{1} \text{ and the range is }D_{2} \}. \]

Two domains $D_{1}$ and $D_{2}$ have the \emph{same type} if $\mathbb{S}(D_{1},D_{2}) \neq \emptyset$; i.e., if there is some $s \in S$ such that $D_{1}$ is the domain of $s$ and $D_{2}$ is the image of $s$.
\end{definition}

\begin{remark} \label{remark:domaintype} (description of domain types)
There are one or two domain types, depending on whether $S \in \{ S_{2}, S_{3} \}$, on the one hand, or $S \in \{ S'_{2}, S'_{3} \}$, on the other. The first of the domain types consists of those of the form $\omega I$, where $\omega \in \{ A, B \}^{\ast}$. These domain types occur in both $S_{i}$ and $S'_{i}$, and they are the only types if $S \in \{ S_{2}, S_{3} \}$. The second of the domain types (present only when $S \in \{ S'_{2}, S'_{3} \}$) consists of domains of the form $[n, \infty)$, where $n$ is a non-negative integer. 
\end{remark}

\begin{theorem} \label{theorem:structureofstructuresets} (Explicit description of structure sets)
Let $S = S_{2}$ or $S_{3}$.
Given $\omega I$ and $\omega' I \in \mathcal{D}^{+}_{gen}$ ($\omega, \omega' \in \{ A, B \}^{\ast}$), the associated structure set takes the following form:
\[ \mathbb{S}(\omega I, \omega' I) = \{ \omega' C^{k} \omega^{-1} \mid k \in \mathbb{Z} \}. \] 

Let $S = S'_{2}$ or $S'_{3}$.
Given $\omega I$ and $\omega' I \in \mathcal{D}^{+}_{gen}$ ($\omega, \omega' \in \{ A, B, T \}^{\ast}$), the associated structure set takes the following form:
\[ \mathbb{S}(\omega I, \omega' I) = \{ \omega' C^{k} \omega^{-1} \mid k \in \mathbb{Z} \}. \]
The set $\mathbb{S}([m,\infty), [n,\infty))$ is $\{ T^{n-m} \}$, when $m$,$n$ are non-negative integers.
\end{theorem}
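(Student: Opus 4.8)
The plan is to prove the explicit description of the structure sets $\mathbb{S}(\omega I, \omega' I)$ by reducing, via the groupoid-like properties, to the case of a single domain type and then showing that the ``stabilizer'' of a domain $\omega I$ inside $S$ is exactly the cyclic group generated by the appropriate conjugate of $C$. First I would make the following observation: given $s \in \mathbb{S}(\omega I, \omega' I)$, we have $\omega^{-1} s \omega \in \mathbb{S}(I, I)$ (using that $\omega^{-1}$ maps $\omega I$ to $I$ and $\omega'{}^{-1}$ maps $\omega' I$ to $I$; so $\omega'{}^{-1} s \omega \in \mathbb{S}(I,I)$, and conversely every element of $\mathbb{S}(\omega I, \omega' I)$ is of the form $\omega' t \omega^{-1}$ with $t \in \mathbb{S}(I,I)$). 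Hence the whole statement reduces to computing $\mathbb{S}(I,I)$, the collection of elements of $S$ whose domain and range are both $I = [0,1)$, together with the claim that $\mathbb{S}(I,I) = \{ C^k \mid k \in \mathbb{Z}\}$, where $C = C_2$ or $C_3$ as appropriate.

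To compute $\mathbb{S}(I,I)$, I would argue that it is a subgroup of $PSL_2(\mathbb{Z})$ (indeed of the isometries fixing the interval $I$ setwise on the boundary) and then identify it. Every element $s \in S$ is a word in $A, B, C, a, b, c$ (and $T$, $t$ for the primed semigroups), hence its matrix lies in $PSL_2(\mathbb{Z})$; if in addition $s$ has domain and range equal to $[0,1)$, then as a linear fractional transformation it fixes the boundary points $0$ and $1$ (these are the only two endpoints, and $s$ is order-preserving on $[0,1)$, so it cannot swap them). A Möbius transformation fixing $0$ and $1$ is a hyperbolic translation along the geodesic from $0$ to $1$; the matrices in $PSL_2(\mathbb{Z})$ fixing both $0$ and $1$ form a cyclic group. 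I would verify that $C_n$ (which by construction sends $[0,1) \to [0,1)$ and fixes $0$ and $1$) generates the relevant subgroup: one computes that $C_n$ has matrix $\left(\begin{smallmatrix} n & 0 \\ n-1 & 1\end{smallmatrix}\right)$ with trace $n+1 > 2$, so it is a primitive hyperbolic element precisely when there is no ``square root'' of it inside the group of integer matrices fixing $0, 1$ — for $n = 2, 3$ this is a short explicit check (parametrize such matrices and see that $C_n$ is primitive). Thus $\mathbb{S}(I,I) \subseteq \{ C^k \mid k \in \mathbb{Z}\}$, and the reverse inclusion is immediate since each $C^k$ has domain and range $I$ and lies in $S$.

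For the second part (the primed semigroups), the argument for domains of the form $\omega I$ is identical, once one notes (Remark \ref{remark:domaintype}) that these are still the only domains of their type and that the translations $T^{\alpha}$ do not create new elements of $\mathbb{S}(\omega I, \omega' I)$: any $s$ of that domain type still fixes the two boundary endpoints of $\omega I$ and so, after conjugating by $\omega$ and $\omega'$, lies in $\mathbb{S}(I,I)$. For $\mathbb{S}([m,\infty),[n,\infty))$, I would argue similarly: an element $s$ with this domain and range is an element of $PSL_2(\mathbb{Z})$ sending the boundary set $\{m, \infty\}$ to $\{n, \infty\}$ in an order-preserving way, hence fixing $\infty$ and sending $m \mapsto n$; such a Möbius transformation has matrix $\left(\begin{smallmatrix} 1 & n-m \\ 0 & 1\end{smallmatrix}\right) = T^{n-m}$ up to sign, and $T^{n-m} \in S'$, giving $\mathbb{S}([m,\infty),[n,\infty)) = \{ T^{n-m}\}$.

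The main obstacle I expect is the primitivity check: showing that $C_n$ (for $n = 2, 3$) is not a proper power of another element of $PSL_2(\mathbb{Z})$ fixing $0$ and $1$, equivalently that the stabilizer of $I$ in $S$ contains nothing strictly between the identity and $C_n$. Concretely, a matrix fixing $0$ and $1$ and lying in $PSL_2(\mathbb{Z})$ has the form $\left(\begin{smallmatrix} 1+k & -k \\ k & 1-k\end{smallmatrix}\right)$ for $k \in \mathbb{Z}$ (one can see this by conjugating the standard parabolic/hyperbolic normal form), and $C_2$ corresponds to $k = 1$ while $C_3$ corresponds to $k = -2$ after conjugation — both of which are primitive generators of the cyclic group $\{k \in \mathbb{Z}\}$ under addition, so in fact $\mathbb{S}(I,I) = \langle C_n\rangle$ needs the slightly finer point that the group generated inside $S$ by the available words is all of this $\mathbb{Z}$; but since $C_n \in S$ and its powers exhaust the cyclic group, and since we have already shown $\mathbb{S}(I,I)$ is contained in this cyclic group, equality follows. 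The only genuine computation is therefore the normal-form calculation for matrices fixing $0$ and $1$, which is routine linear algebra, plus checking that the $C_n$ matrices do sit as claimed.
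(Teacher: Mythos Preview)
There is a genuine gap in your argument: you assert that every element of $S$ lies in $PSL_{2}(\mathbb{Z})$, but this is false. The generators $A$ and $B$ have determinant $1$, but $C_{n}$ has determinant $n$, so a general word in $S_{n}$ has determinant $n^{k}$ for some $k \geq 0$ (and $c_{n}$, as an inverse, contributes a factor $n^{-1}$ after clearing denominators). In particular $C_{n} \in \mathbb{S}(I,I)$ is \emph{not} in $PSL_{2}(\mathbb{Z})$. Without the determinant-one constraint your ``stabilizer of $\{0,1\}$'' computation collapses: among integer matrices fixing $0$ and $1$ there are, for instance, $\left(\begin{smallmatrix} 5 & 0 \\ 4 & 1 \end{smallmatrix}\right)$ and infinitely many others that are not powers of $C_{2}$ or $C_{3}$, so nothing you have written rules them out of $\mathbb{S}(I,I)$.

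The paper's proof supplies exactly the missing ingredient: writing an arbitrary $\omega \in \mathbb{S}(I,I)$ as $\left(\begin{smallmatrix} \alpha & \beta \\ \gamma & \delta \end{smallmatrix}\right)$, the conditions $\omega(0)=0$ and $\omega(1)=1$ give $\beta = 0$ and $\alpha = \gamma + \delta$, and then the crucial observation is that $\det \omega = (\gamma+\delta)\delta$ must be a power of $n$ (for $n=2$ or $3$), since every generator has determinant $1$ or $n^{\pm 1}$. This forces $\gamma + \delta$ and $\delta$ individually to be powers of $n$, and a short computation then identifies $\omega$ with $C^{k}$ for the appropriate $k$. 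Your reduction to $\mathbb{S}(I,I)$ via $(\omega')^{-1}\sigma\omega$ is correct and matches the paper, but the core computation needs the determinant argument, not a $PSL_{2}(\mathbb{Z})$ argument. (Incidentally, your displayed family $\left(\begin{smallmatrix} 1+k & -k \\ k & 1-k \end{smallmatrix}\right)$ is the parabolic subgroup fixing $1$, not the group fixing both $0$ and $1$; a matrix fixing $0$ and $1$ has the shape $\left(\begin{smallmatrix} c+d & 0 \\ c & d \end{smallmatrix}\right)$.)
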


\begin{proof}
We first consider the case of $\mathbb{S}(I,I)$ when $S = S_{2}$.
 
Let $\omega \in \mathbb{S}(I,I)$. Thus, $\omega I = I$. Let $G$ be the group generated by the linear fractional transformations $A$, $B$, and $C$, each viewed as a transformation of $\mathbb{H}^{2}$ or the projective line $\mathbb{R} \cup \{ \infty \}$. We note that the inverses of $A$, $B$, and $C$ may be represented by the matrices
\[ a = \left( \begin{smallmatrix} 1 & 0 \\ -1 & 1 \end{smallmatrix} \right); \quad
b = \left( \begin{smallmatrix} 2 & -1 \\ 1 & 0 \end{smallmatrix} \right); \quad
c = \left( \begin{smallmatrix} 1 & 0 \\ -1 & 2 \end{smallmatrix} \right). \]
It follows that, if $\omega$ is expressed as a product of matrices, then
$\mathrm{det}(\omega) = 2^{n}$, for some nonnegative integer $n$. We note also that $\omega$ fixes the points $0$ and $1$ on the projective line, by our assumptions.

We let 
\[ \omega = \left( \begin{smallmatrix} \alpha & \beta \\ \gamma & \delta \end{smallmatrix} \right), \]
where $\alpha$, $\beta$, $\gamma$, and $\delta$ are integers.
The equality $\omega(0) = 0$ directly implies that $\beta = 0$. The equality 
$\omega(1) = 1$ then implies that $\alpha = \gamma + \delta$. Computing determinants, we find that
\[ (\gamma + \delta) \delta = 2^{n}. \]
It follows that $\gamma + \delta = 2^{k}$ and $\delta = 2^{\ell}$, where $k+\ell =n$ and $k$ and $\ell$ are nonnegative integers. Either $k \leq \ell$ or $\ell \leq k$; in the first case,
\[ \left(\begin{smallmatrix} 2^{k} & 0 \\ 2^{k} - 2^{\ell} & 2^{\ell} \end{smallmatrix} \right) \sim
\left(\begin{smallmatrix} 1 & 0 \\ 1 - 2^{\ell - k} & 2^{\ell -k} \end{smallmatrix} \right) = c^{\ell - k}. \]
Similarly, if $\ell \leq k$, then $\omega = C^{k - \ell}$. In either case,
$\omega = C^{\alpha}$, for appropriate $\alpha$.

If $S = S_{3}$, then the set $\mathbb{S}(I,I)$ takes the same form. Here
\[ C = \left(\begin{smallmatrix} 3 & 0 \\ 2 & 1 \end{smallmatrix} \right); \quad c = \left( \begin{smallmatrix} 1 & 0 \\ -2 & 3 \end{smallmatrix} \right), \]
and therefore $\mathrm{det}(\omega) = 3^{n}$, for some $n \in \mathbb{Z}$. The remainder of the argument differs from the case of $S_{2}$ primarily in the fact that it involves powers of $3$, rather than powers of $2$.

Now we consider a general structure set $\mathbb{S}(\omega I, \omega' I)$, where $\omega, \omega' \in \{ A, B \}^{\ast}$ and $S$ may be any of the semigroups $S_{2}$, $S_{3}$, $S'_{2}$, or $S'_{3}$. Let $\sigma \in \mathbb{S}(\omega I, \omega' I)$. It follows that
$(\omega')^{-1} \sigma \omega \in \mathbb{S}(I,I)$, so 
$(\omega')^{-1} \sigma \omega = C^{k}$, for some $k \in \mathbb{Z}$. Thus,
$\sigma = \omega' C^{k} \omega^{-1}$, as claimed. Conversely, it is clear that any transformation of the form $\omega' C^{k} \omega^{-1}$ is in $\mathbb{S}(\omega I, \omega' I)$. 

The final statement, about $\mathbb{S}([m,\infty), [n,\infty))$, is straightforward.
\end{proof}

\begin{proposition} \label{proposition:closure}
(Closure properties of $\mathbb{S}$)
Let $D_{1}, D_{2} \in \mathcal{D}^{+}_{gen}$. 
\begin{enumerate}
\item (compositions) If $h \in \mathbb{S}(D_{1}, D_{2})$ and $g \in \mathbb{S}(D_{2},D_{3})$,
then $gh \in \mathbb{S}(D_{1},D_{3})$;
\item (inverses) If $h \in \mathbb{S}(D_{1},D_{2})$, then $h^{-1} \in \mathbb{S}(D_{2},D_{1})$;
\item (identities) $id_{D_{1}} \in \mathbb{S}(D_{1}, D_{1})$.
\end{enumerate}
\end{proposition}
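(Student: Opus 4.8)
The plan is to verify the three closure properties directly, leaning on the explicit description of the structure sets in Theorem \ref{theorem:structureofstructuresets} and on basic facts about composition of linear fractional transformations. The key point is that each $\mathbb{S}(D_1,D_2)$ is, by definition, the set of elements of $S$ with domain exactly $D_1$ and range exactly $D_2$ (Definition \ref{definition:structuresets}); once one knows (from Theorem \ref{theorem:structureofstructuresets}) that these sets are nonempty in a controlled way, the closure statements become essentially formal.

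First I would dispose of (3): the identity map $\mathrm{id}_{D_1}$ on any $D_1 \in \mathcal{D}^+_{gen}$ lies in $S$, since writing $D_1 = \omega I$ we have $\mathrm{id}_{D_1} = \omega\, \mathrm{id}_I\, \omega^{-1} = \omega C^0 \omega^{-1}$, which is an element of $\mathbb{S}(\omega I, \omega I)$ by Theorem \ref{theorem:structureofstructuresets}; in the $S'_i$ case with $D_1 = [n,\infty)$ one uses $\mathrm{id}_{[n,\infty)} = T^{n-n} = T^0$. For (2), if $h \in \mathbb{S}(D_1,D_2)$ then $h \in S$ and $S$ is closed under inverses (Definition \ref{definition:inversesemigroup}), so $h^{-1} \in S$; moreover $h^{-1}$ has domain $D_2$ and range $D_1$ exactly, because $h: D_1 \to D_2$ is a bijection. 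Hence $h^{-1} \in \mathbb{S}(D_2,D_1)$. For (1), if $h \in \mathbb{S}(D_1,D_2)$ and $g \in \mathbb{S}(D_2,D_3)$, then $g,h \in S$ and $S$ is closed under composition; since the range of $h$ equals the domain $D_2$ of $g$, the composite $gh$ (in the partial-bijection sense of Definition \ref{definition:inversesemigroup}) is an honest bijection $D_1 \to D_3$, with no shrinking of the domain. Therefore $gh \in \mathbb{S}(D_1,D_3)$.

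The one genuinely nontrivial point — and the place I would be most careful — is the matching of domains under composition: the general composition of partial bijections is defined on overlaps, so in principle $gh$ could have domain strictly smaller than $D_1$ if $\mathrm{range}(h)$ were a proper subset of $\mathrm{domain}(g)$. Here, however, membership in $\mathbb{S}(D_1,D_2)$ forces $\mathrm{range}(h)$ to equal $D_2$ on the nose, and likewise $\mathrm{domain}(g) = D_2$, so the overlap is all of $D_2$ and $gh$ is defined on all of $D_1$ with image all of $D_3$. I would state this explicitly, perhaps noting that in the notation of Definition \ref{definition:inversesemigroup} we have $A_g = B_h = D_2$, so $h^{-1}(A_g) = D_1$ and $g(B_h \cap A_g) = g(D_2) = D_3$. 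With that observation in place the proposition is immediate, and no calculation with the matrices $A$, $B$, $C_n$, $T$ is actually required beyond recognizing $C^0$ and $T^0$ as identities in part (3). $\square$
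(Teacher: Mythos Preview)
Your proposal is correct and takes essentially the same approach as the paper, which simply states that all three properties follow directly from the definition of $\mathbb{S}(D_{1},D_{2})$. Your version is more detailed---in particular your invocation of Theorem \ref{theorem:structureofstructuresets} for part (3) is not strictly needed (one can observe $id_{D_{1}} = \omega \omega^{-1} \in S$ directly from closure under inverses and composition), but it is harmless and the careful remark about domain-matching in part (1) is a nice clarification.
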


\begin{proof}
All of these properties follow directly from the definition of the set $\mathbb{S}(D_{1},D_{2})$.
\end{proof}

\begin{definition} \label{definition:B}
(The set $\mathcal{B}$)
Let 
\[ \mathcal{A} = \{ (f,D) \mid f \in \widehat{S}; D \in \mathcal{D}^{+}_{gen}; D
\text{ is contained in the domain of } f \}. \]
[Recall that $\widehat{S}$ is the inverse semigroup of functions that are locally determined by $S$ (Definition \ref{definition:locallydetermined}).]
We write $(f_{1}, D_{1}) \sim (f_{2},D_{2})$ if there is some
$h \in \mathbb{S}(D_{1}, D_{2})$ such that $f_{1} = f_{2}h$. 
It is easily checked that $\sim$ is an equivalence relation on $\mathcal{A}$, using Proposition \ref{proposition:closure}. We let $\mathcal{B}$ denote the set of all equivalence classes. The equivalence class of $(f,D)$ will be denoted $[f,D]$.
\end{definition}

\begin{definition} \label{definition:vertices}
(vertices; the type of a vertex)
A finite subset 
\[ \{ [f_{1}, D_{1}], \ldots, [f_{m}, D_{m}] \} \subseteq \mathcal{B} \]
is a \emph{vertex} if
\[ \bigcup_{i=1}^{m} f_{i}(D_{i}) = \mathbb{R}^{+} \text{ or } [0,1), \]
depending upon whether the underlying semigroup is $S'_{n}$ or $S_{n}$, respectively.
(Here $\mathbb{R}^{+}$ is the set of non-negative real numbers and $m$ may be any natural number.)

We let $\mathcal{V}_{S}$ denote the set of all vertices, where $S \in \{ S_{2}, S_{3}, S'_{2}, S'_{3} \}$. We may sometimes write 
$\mathcal{V}$ in place of $\mathcal{V}_{S}$ if this will result in no ambiguity.

Two vertices $\{ [f_{1}, D_{1}], \ldots, [f_{m}, D_{m}] \}$ and $\{ [g_{1}, E_{1}], \ldots, [g_{n},E_{n}] \}$ have the \emph{same type}
if the multisets $\{ [D_{1}], \ldots, [D_{m}] \}$ and $\{ [E_{1}], \ldots, [E_{n}] \}$ are identical; i.e., $m=n$ and $[D_{j}] = [E_{j}]$, for 
$j = 1, \ldots, m$.
\end{definition}

\begin{definition} \label{definition:expansion} (expansion; contraction)
Let $v = \{ [f_{1}, D_{1}], \ldots, [f_{n},D_{n}] \}$ be a vertex. We say that a vertex $v'$ is obtained from $v$ by \emph{expansion at $[f_{i},D_{i}]$} if there is some $h \in \mathbb{S}(D_{i},D_{i})$ and a finite partition
$\mathcal{P} \subseteq \mathcal{D}^{+}_{gen}$ of $D_{i}$ into domains such that 
\[ v' = (v - \{ [f_{i}, D_{i}] \}) \cup \{ [f_{i}h, P] \mid P \in \mathcal{P} \}. \]
We write $v \nearrow v'$. We also say that $v$ is the result of \emph{contraction} from $v'$.

We let $\leq$ be the reflexive, transitive closure of $\nearrow$. 
\end{definition}

\begin{remark} \label{remark:explicitexpansion} (an explicit description of expansion)
Consider $[f,\omega I]$, where $\omega \in \{ A, B \}^{\ast}$. We note that 
$[f, \omega I] = [f \omega, I]$ by the definition of $\mathcal{B}$ (Definition \ref{definition:B}) and because $\omega \in \mathbb{S}(I, \omega I)$. An arbitrary partition of
$I$ takes the form 
\[ \{ \tau I \mid \tau \in \mathcal{C} \}, \]
where $\mathcal{C} \subseteq \{ A, B \}^{\ast}$ is a cut set (in the sense of Section \ref{section:algorithm}). It follows directly that an expansion at $[f,\omega I]$ (equivalently, $[f \omega, I]$) 
involves replacing $[f,\omega I]$ by the members of
\[ \{ [f\omega C^{k} \tau, I ] \mid \tau \in \mathcal{C} \}, \]
for some $k \in \mathbb{Z}$ and some cut set $\mathcal{C}$. (This is by Definition \ref{definition:expansion} and Theorem \ref{theorem:structureofstructuresets}.)

The above description is particularly simple when $\mathcal{C}$ is the cut set $\{ A, B \}$. It then follows that the expansion replaces $[f,\omega I]$ with the pairs
\[ [f \omega C^{k}A, I] \quad \text{and} \quad [f \omega C^{k}B, I], \]
 for some $k \in \mathbb{Z}$. Moreover, this is essentially the general case, since any expansion can be realized as a sequence of such expansions.
 
 An expansion at a pair $[f,D]$, where $D = [m,\infty)$, is much more straightforward to describe: such an expansion simply replaces $[f,D]$ with the members of
 \[ \{ [f,P] \mid P \in \mathcal{P} \}, \]
 where $\mathcal{P}$ is a finite partition of $D$ into domains from $\mathcal{D}^{+}_{gen}$. This is because $\mathbb{S}(D,D)$ is trivial.
\end{remark}

\begin{proposition} \label{proposition:welldefinedexpansion}
Expansion is well-defined and $\Gamma$-invariant. I.e.,
\begin{enumerate}
\item if $v = \{ [f_{1}, D_{1}], \ldots, [f_{m},D_{m}] \}$, $\hat{v} = \{ [g_{1},E_{1}], \ldots, [g_{m},E_{m}] \}$, and $v'$ is the result of expansion from $v$ at $[f_{i},D_{i}]$, then $v'$ is also the result of expansion from $\hat{v}$ at some $[g_{j},E_{j}]$.
\item if $v \nearrow v'$ (where $v$ and $v'$ are as above) and $\gamma \in \Gamma$, then
$\gamma \cdot v \nearrow \gamma \cdot v'$.
\end{enumerate}
\end{proposition}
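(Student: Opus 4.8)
The plan is to verify the two assertions separately, both by unwinding definitions and applying the closure properties recorded in Proposition \ref{proposition:closure}.

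For part (1), suppose $v = \{[f_1,D_1],\ldots,[f_m,D_m]\}$ and $\hat v = \{[g_1,E_1],\ldots,[g_m,E_m]\}$ are the same vertex. Then after reindexing we may assume $[f_i,D_i] = [g_i,E_i]$ for each $i$, so there is $h_i \in \mathbb{S}(E_i,D_i)$ with $g_i = f_i h_i$. Now suppose $v'$ is obtained from $v$ by expansion at $[f_i,D_i]$, say via $h \in \mathbb{S}(D_i,D_i)$ and a partition $\mathcal{P} \subseteq \mathcal{D}^+_{gen}$ of $D_i$, so that $v' = (v - \{[f_i,D_i]\}) \cup \{[f_i h, P] \mid P \in \mathcal{P}\}$. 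I want to realize the same $v'$ as an expansion of $\hat v$ at $[g_i,E_i]$. The natural candidate is to push the partition $\mathcal{P}$ across $h_i$: because $h_i$ is a linear fractional transformation of a generating domain, $h_i^{-1}(\mathcal{P}) := \{h_i^{-1}(P) \mid P \in \mathcal{P}\}$ is a partition of $E_i$ — but I must check it lands in $\mathcal{D}^+_{gen}$, which is exactly where Lemma \ref{lemma:eventual} enters: refining each $h_i^{-1}(P)$ if necessary, I obtain a partition $\mathcal{Q} \subseteq \mathcal{D}^+_{gen}$ of $E_i$ together with, for each $Q \in \mathcal{Q}$, an element of $\mathbb{S}$ carrying $Q$ to a domain of $\mathcal{P}$. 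One then takes $h' = h_i^{-1} h h_i \in \mathbb{S}(E_i,E_i)$ (using Proposition \ref{proposition:closure}) and checks that $[g_i h', Q] = [f_i h_i h_i^{-1} h h_i (h_i^{-1})|_{\ldots}, Q]$ collapses to $[f_i h, P]$ for the appropriate $P$, using that $\mathbb{S}$-elements absorb into the equivalence relation $\sim$ of Definition \ref{definition:B}. Since both sides of the claimed expansion produce the same set of $\sim$-classes, $v'$ is indeed obtained from $\hat v$ by expansion at $[g_i,E_i]$. (The case $D_i = [m,\infty)$ is handled by Remark \ref{remark:explicitexpansion}: here $\mathbb{S}(D_i,D_i)$ is trivial and the argument is even simpler.)

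For part (2), let $\gamma \in \Gamma$ and suppose $v \nearrow v'$, expansion at $[f_i,D_i]$ via $h \in \mathbb{S}(D_i,D_i)$ and $\mathcal{P}$. Recall $\Gamma$ acts on $\mathcal{B}$ by $\gamma \cdot [f,D] = [\gamma f, D]$, and on vertices componentwise; this is well defined because $\gamma$ acts on the left while the equivalence $\sim$ modifies $f$ on the right. Thus $\gamma \cdot v = \{[\gamma f_1, D_1],\ldots,[\gamma f_m,D_m]\}$ and
\[
(\gamma\cdot v - \{[\gamma f_i, D_i]\}) \cup \{[\gamma f_i h, P] \mid P \in \mathcal{P}\} = \gamma \cdot v',
\]
which is visibly an expansion of $\gamma \cdot v$ at $[\gamma f_i, D_i]$ using the same $h$ and the same partition $\mathcal{P}$. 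One small point to address first: $\gamma \cdot v$ is genuinely a vertex, i.e.\ $\bigcup_i \gamma f_i(D_i)$ is all of $[0,1)$ (or $\mathbb{R}^+$); this holds since $\gamma$ is a bijection of that space and $\bigcup_i f_i(D_i)$ already equals it.

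I expect the main obstacle to be part (1), specifically the bookkeeping that the partition $\mathcal{P}$, when transported across the $\mathbb{S}$-elements $h_i$ and $h_i^{-1} h h_i$, can be taken inside $\mathcal{D}^+_{gen}$ and that the resulting pairs agree as $\sim$-classes. This is precisely the phenomenon Lemma \ref{lemma:eventual} was proved to handle — without the restriction to generating domains the transported partition would automatically lie in $\mathcal{D}^+_S$, but here one must first refine — so the proof should cite that lemma at the key step. Part (2) is essentially formal.
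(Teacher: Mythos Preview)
Your argument for part (2) is correct and matches the paper's.

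Part (1), however, has a real gap. You transport $\mathcal{P}$ across the element $h_i^{-1}$ that comes from the equality $[f_i,D_i]=[g_i,E_i]$, notice that $h_i^{-1}(\mathcal{P})$ need not lie in $\mathcal{D}^{+}_{gen}$, and then propose to refine via Lemma \ref{lemma:eventual}. But refining changes the cardinality of the partition: if $\mathcal{Q}$ strictly refines $h_i^{-1}(\mathcal{P})$, the expansion of $\hat v$ at $[g_i,E_i]$ using $\mathcal{Q}$ produces a vertex with strictly more pairs than $v'$, so it cannot equal $v'$. The definition of expansion requires you to hit $v'$ on the nose, not merely some further expansion of it, so the refinement step destroys exactly what you are trying to prove.

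The paper sidesteps this by decoupling the two roles you have conflated. It uses \emph{two} elements of $\mathbb{S}(D_i,E_i)$: one, $j_1$, coming from $[f_i,D_i]=[g_i,E_i]$ (your $h_i^{-1}$), and a second, freely chosen $j$, whose only job is to carry $\mathcal{P}$ to a partition $j(\mathcal{P})\subseteq\mathcal{D}^{+}_{gen}$ of $E_i$ of the \emph{same} size. Such a $j$ exists by the explicit description in Theorem \ref{theorem:structureofstructuresets}: if $D_i=\omega I$ and $E_i=\omega' I$, take $j=\omega'\omega^{-1}$ (the $k=0$ element), which sends each $\omega\tau I\in\mathcal{P}$ to $\omega'\tau I\in\mathcal{D}^{+}_{gen}$. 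One then expands at $[g_i,E_i]$ via $h'=j_1 h j^{-1}\in\mathbb{S}(E_i,E_i)$ and the partition $j(\mathcal{P})$, and checks $[g_i h',\,j(P)]=[f_i h,P]$ for each $P$. Lemma \ref{lemma:eventual} is not needed here.
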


\begin{proof}
We prove (1). Assume, without loss of generality, that $[f_{k}, D_{k}] = [g_{k}, E_{k}]$ for $k = 1, \ldots, m$. We suppose that
$v'$ is the result of expansion from $v$ at $[f_{i},D_{i}]$; thus, there is some 
$h \in \mathbb{S}(D_{i},D_{i})$ and a finite partition $\mathcal{P} \subseteq \mathcal{D}^{+}_{gen}$
of $D_{i}$ such that 
\[ v' = (v - \{ [f_{i},D_{i}] \}) \cup \{ [f_{i}h,P] \mid P \in \mathcal{P} \}. \]
Let $j \in \mathbb{S}(D_{i}, E_{i})$ be such that $j(\mathcal{P})$ is a finite partition
of $E_{i}$ by members of $\mathcal{D}^{+}_{gen}$. Since $[f_{i}, D_{i}] = [g_{i}, E_{i}]$, there is also some $j_{1} \in \mathbb{S}(D_{i}, E_{i})$ such that $g_{i} j_{1} = f_{i}$, by Definition \ref{definition:B}.   

We claim that
\[ \{ [f_{i}h, P] \mid P \in \mathcal{P} \} = \{ [g_{i}j_{1}hj^{-1}, j(P)] \mid P \in \mathcal{P} \}. \]
Indeed, for each $i$, $(g_{i}j_{1}h^{-1}j^{-1}) \circ j = f_{i}h$, so
$[f_{i}h,P] = [g_{i}j_{1}hj^{-1},j(P)]$ by Definition \ref{definition:B}. This proves (1).

The proof of (2) is straightforward. Indeed, 
\[ \gamma \cdot v = \{ [\gamma f_{1}, D_{1}], \ldots, [\gamma f_{m}, D_{m}] \} \]
and
\[ \gamma \cdot v' = ( \gamma \cdot v - \{ [\gamma f_{i}, D_{i}] \} ) \cup
\{ [ \gamma f_{i}h, P] \mid P \in \mathcal{P} \}, \]
from which it directly follows that $\gamma \cdot v'$ is obtained from $\gamma \cdot v$ via 
expansion at $[ \gamma f_{i}, D_{i}]$ (with respect to the same choices of $h$ and $\mathcal{P}$).
\end{proof}  

\begin{corollary} \label{corollary:partialorder} (the partial order on vertices)
The relation $\leq$ is a partial order on $\mathcal{V}$. The group $\Gamma$ acts on $(\mathcal{V}, \leq)$ in an order-preserving fashion.
\end{corollary}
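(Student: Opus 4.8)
The plan is to establish the three assertions in the order: (i) $\leq$ is reflexive and transitive (immediate from its definition as the reflexive transitive closure of $\nearrow$), (ii) $\leq$ is antisymmetric, and (iii) $\Gamma$ acts in an order-preserving fashion (which follows at once from part (2) of Proposition \ref{proposition:welldefinedexpansion} by induction on the length of a chain of single expansions). The only real content is antisymmetry, so I will focus the argument there. The strategy is to attach to each vertex $v = \{[f_1,D_1],\ldots,[f_m,D_m]\}$ a numerical invariant that strictly increases under a single (nontrivial) expansion $v \nearrow v'$. The natural candidate is the cardinality $m$ of the vertex, i.e. the number of pairs in it: by Definition \ref{definition:expansion}, expansion at $[f_i,D_i]$ replaces that one pair by $|\mathcal{P}|$ pairs indexed by a finite partition $\mathcal{P}$ of $D_i$, and since the empty set is not a domain in $\mathcal{D}^{+}_{gen}$, we always have $|\mathcal{P}| \geq 1$; moreover $|\mathcal{P}| = 1$ corresponds to a ``trivial'' expansion in which $v$ and $v'$ have the same type. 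To get strict increase I would first observe that a partition $\mathcal{P}$ of $D_i$ into members of $\mathcal{D}^{+}_{gen}$ with $|\mathcal{P}| = 1$ forces $\mathcal{P} = \{D_i\}$ (using Remark \ref{remark:discussionofintervals}: nested domains $\omega I$ correspond to prefixes, so the only way a single $\omega' I$ equals $D_i = \omega I$ is $\omega' = \omega$; the $[m,\infty)$ case is even more transparent). Hence every $v \nearrow v'$ either strictly increases cardinality, or else $v' = (v - \{[f_i,D_i]\}) \cup \{[f_i h, D_i]\}$ for some $h \in \mathbb{S}(D_i,D_i)$, and since $[f_i h, D_i] = [f_i, D_i]$ by Definition \ref{definition:B} (as $h \in \mathbb{S}(D_i,D_i)$), we actually have $v' = v$.

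With that observation in hand, antisymmetry follows cleanly. Suppose $v \leq w$ and $w \leq v$. Each inequality is witnessed by a finite chain of single expansions; deleting the trivial steps (which, as just shown, leave the vertex unchanged) we obtain chains $v = v_0 \nearrow v_1 \nearrow \cdots \nearrow v_k = w$ and $w = w_0 \nearrow \cdots \nearrow w_\ell = v$ in which every step strictly increases cardinality. Then $|v| \leq |w| \leq |v|$, forcing $k = \ell = 0$, i.e. $v = w$. This completes antisymmetry, and together with (i) and (iii) gives the corollary.

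The step I expect to be the main obstacle is the verification that a one-element partition $\mathcal{P}$ of $D_i$ by members of $\mathcal{D}^{+}_{gen}$ must be $\{D_i\}$ itself, and more generally pinning down exactly when an expansion is trivial at the level of vertices — this is where one must use that $\mathcal{D}^{+}_{gen}$ is the restricted collection of domains $\omega I$ (and $[m,\infty)$), together with the nesting/prefix description of Remark \ref{remark:discussionofintervals} and the fact that $\mathbb{S}(D,D)$ consists of maps fixing $D$ setwise. Everything else — reflexivity, transitivity, and $\Gamma$-invariance — is formal once Proposition \ref{proposition:welldefinedexpansion} is granted.
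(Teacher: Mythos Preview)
Your proposal is correct. The paper's own proof is a single sentence---``This follows directly from Proposition \ref{proposition:welldefinedexpansion}''---so you have in fact supplied considerably more detail than the paper does, especially on antisymmetry via the height (cardinality) invariant, which the paper leaves entirely implicit; this is the standard argument in Brown-type directed set constructions and later appears in the paper as the height function of Definition \ref{definition:filtration}.
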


\begin{proof}
This follows directly from Proposition \ref{proposition:welldefinedexpansion}.
\end{proof}

\begin{definition} (simplicial realization; the complexes $\Delta(S_{n})$ and $\Delta(S'_{n})$)
Let $\mathcal{P}$ be a partially ordered set. The \emph{simplicial realization} of
$P$ is the simplicial complex whose vertex set is $P$ and whose simplices are finite ascending chains in $P$.

We let $\Delta(S_{n})$ and $\Delta(S'_{n})$ denote the simplicial realizations of $V(S_{n})$ and $V'(S'_{n})$, respectively.
\end{definition}

\begin{theorem} \label{theorem:directedset} (The directed $\Gamma$-set of vertices)
The relation $\leq$ is a partial order on $\mathcal{V}$, and $\mathcal{V}$ is a directed set with respect to $\leq$. The group $\Gamma$ acts on $(\mathcal{V}, \leq)$ in an order-preserving fashion.

In particular, the simplicial realizations $\Delta(S_{i})$ and $\Delta(S'_{i})$ are contractible $\Gamma$-complexes.
\end{theorem}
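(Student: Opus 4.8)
The first and third assertions of the theorem---that $\leq$ is a partial order on $\mathcal{V}$ and that $\Gamma$ acts on $(\mathcal{V},\leq)$ in an order-preserving fashion---are already contained in Corollary \ref{corollary:partialorder}, so the plan is to establish directedness and then to read off the statement about the simplicial realizations. Throughout, write $X$ for the underlying space ($[0,1)$ when $S\in\{S_2,S_3\}$, $[0,\infty)$ when $S\in\{S'_2,S'_3\}$), and note at the outset that $\mathcal{V}\neq\emptyset$, since $\{[\mathrm{id}_X,X]\}\in\mathcal{V}$. To prove directedness I would isolate a cofinal, visibly directed family of vertices: call a vertex \emph{canonical} if it has the form $\{[\mathrm{id}_R,R]\mid R\in\mathcal{R}\}$ for some finite partition $\mathcal{R}$ of $X$ into generating domains.

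The proof of directedness then rests on two claims. \emph{(i) Every vertex lies below a canonical one.} Let $v=\{[f_1,D_1],\dots,[f_m,D_m]\}$. The key input is that each $f_i\in\widehat{S}$, restricted to the generating domain $D_i$, becomes a finite union of genuine transformations from $S$ over a partition of $D_i$ into generating domains, each piece $P$ being carried onto a generating domain $f_i(P)$; this is exactly where the eventual invariance property (Lemma \ref{lemma:eventual}) is used, together with the fact---also a consequence of Lemma \ref{lemma:eventual}---that every nonempty domain of $S$ is a finite union of generating domains. Expanding $[f_i,D_i]$ along such a partition of $D_i$, using $h=\mathrm{id}_{D_i}\in\mathbb{S}(D_i,D_i)$, replaces it with the pairs $[f_i,P]$; and since $f_i|_P\in\mathbb{S}(P,f_i(P))$ we have $[f_i,P]=[\mathrm{id}_{f_i(P)},f_i(P)]$ by the definition of $\mathcal{B}$ (Definition \ref{definition:B}). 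Carrying this out at every $[f_i,D_i]$ turns $v$ into the canonical vertex associated to the partition $\{f_i(P)\}$ of $X$. \emph{(ii) Any two canonical vertices have a common upper bound.} Here I would use the tree structure of the generating domains recorded in Remark \ref{remark:discussionofintervals}: any two of them are nested or disjoint (and the same holds in $\mathcal{D}^{+}_{S',gen}$, the rays $[n,\infty)$ causing no trouble), so the common refinement $\mathcal{R}''$ of two partitions $\mathcal{R},\mathcal{R}'$ of $X$ into generating domains is again such a partition. Expanding each $[\mathrm{id}_R,R]$ of the canonical vertex of $\mathcal{R}$ (with $h=\mathrm{id}_R$) along $\{R''\in\mathcal{R}'':R''\subseteq R\}$ shows that this vertex lies below the canonical vertex of $\mathcal{R}''$, and likewise for $\mathcal{R}'$. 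Combining (i) and (ii): given $v_1,v_2\in\mathcal{V}$, pass to canonical vertices above each and then to a canonical common upper bound of those two; this bounds $v_1$ and $v_2$. Hence $\mathcal{V}$ is directed.

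For the final sentence I would invoke the standard fact that the simplicial realization of a nonempty directed poset is contractible. Indeed, any compact subset of the realization meets only finitely many simplices and hence lies in the subcomplex spanned by finitely many vertices $w_1,\dots,w_r$; choosing $w\in\mathcal{V}$ with $w_j\leq w$ for all $j$ (directedness), this subcomplex is contained in the realization of the down-set $\{u\in\mathcal{V}\mid u\leq w\}$, which is a cone with apex $w$ and therefore contractible. Thus the realization is weakly contractible, and being a CW complex it is contractible (Whitehead). Since $\Gamma$ acts on $(\mathcal{V},\leq)$ by order automorphisms, it carries chains to chains and so acts simplicially on the realization; this gives the contractible $\Gamma$-complexes $\Delta(S_i)$ and $\Delta(S'_i)$.

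The main obstacle is claim (i): the assertion that a function locally determined by $S$ becomes a finite union of honest elements of $S$ once its domain is subdivided finely enough along generating domains---that is, passing from the $\mathcal{D}^{+}_{S}$-local description of $\widehat{S}$ to a $\mathcal{D}^{+}_{S,gen}$-local one. This is precisely the role of Lemma \ref{lemma:eventual}; once it is in hand, everything else is bookkeeping with the tree of generating domains, and the passage from directedness of $\mathcal{V}$ to contractibility of its realization is entirely standard.
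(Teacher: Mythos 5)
Your proposal is correct and follows essentially the same route as the paper's proof: both reduce directedness to showing that every vertex expands to one of the form $\{[\mathrm{id}_{E_j},E_j]\}$ via Lemma \ref{lemma:eventual}, then take a common refinement of the corresponding partitions of $X$ by generating domains, and conclude contractibility of the realization by the standard directed-poset argument. Your write-up merely adds the (correct) details that the paper leaves implicit, such as the nested-or-disjoint structure of $\mathcal{D}^{+}_{gen}$ and the cone/Whitehead argument for the final sentence.
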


\begin{proof}
It is already clear from Proposition \ref{proposition:welldefinedexpansion} and Definition \ref{definition:expansion} that $(\mathcal{V}, \leq)$
is a partially ordered set on which $\Gamma$ acts in an order-preserving fashion.

Let $S = S_{i}$ or $S'_{i}$, for $i=2$ or $3$.
We must show that $(\mathcal{V}, \leq)$ is a directed set. The main step is to show that any 
vertex
\[ v = \{ [f_{1},D_{1}], \ldots, [f_{m},D_{m}] \} \]
can be expanded into a vertex of the form 
\[ \hat{v} = \{ [id_{E_{1}}, E_{1}], \ldots, [id_{E_{n}}, E_{n}] \}. \]

Note that each $D_{i}$ can be partitioned into finitely many elements of $\mathcal{D}^{+}_{gen}$ in such a way that the restriction of $f_{i}$ to each piece acts as a member of $S$ (see Definition \ref{definition:locallydetermined}).
Thus, we may assume, possibly after expansion, that $v$ already has this property. Consider the pair $[f_{1},D_{1}]$. By Lemma \ref{lemma:eventual}, there is a finite partition $\mathcal{P} \subseteq \mathcal{D}^{+}_{gen}$ of $D_{1}$ such that $f_{1}(P) \in \mathcal{D}^{+}_{gen}$, for each $P \in \mathcal{P}$. It follows that
$f_{1} \in \mathbb{S}(P, f_{1}(P))$, for each $P \in \mathcal{P}$, so
\[ \{ [f_{1},P] \mid P \in \mathcal{P} \} = \{ [id, f_{1}(P)] \mid P \in \mathcal{P} \}, \]
by the definition of the equivalence relation on pairs (see Definition \ref{definition:B}). Note that 
the act of replacing $[f_{1},D_{1}]$ by the collection $\{ [f_{i},P] \mid P \in \mathcal{P} \}$ is an expansion at
$[f_{1},D_{1}]$. By performing similar expansions at the remaining $[f_{i}, D_{i}]$ ($i=2, \ldots, m$), we arrive at the required $\hat{v}$.

Now suppose that $v_{1}$ and $v_{2}$ are any two vertices. By the argument of the previous paragraph, we can find $\hat{v}_{1}$ and $\hat{v}_{2}$ such that $v_{1} \leq \hat{v}_{1}$ and $v_{2} \leq \hat{v}_{2}$, and both $\hat{v}_{1}$ and $\hat{v}_{2}$ have the general form of the vertex $\hat{v}$; i.e., each pair in  
$\hat{v}_{i}$ ($i=1,2$) has the form $[id,E]$, where $id$ denotes the identity function on $E$ and $E \in \mathcal{D}^{+}_{gen}$. Thus, we can identify $\hat{v}_{i}$ ($i=1,2$) with a partition of the non-negative real numbers. (Under this identification, $\hat{v}$ would correspond to the partition $\{ E_{1}, \ldots, E_{n} \}$.)

Finally, we observe that the partitions determined by the $\hat{v}_{i}$
 have a common finite refinement $\mathcal{P}'$ that is also a subset of $\mathcal{D}^{+}_{gen}$. Letting
 $\tilde{v}$ denote the vertex corresponding to $\mathcal{P}'$, we find that $\hat{v}_{i} \leq \tilde{v}$, for $i=1,2$.
 Thus, $v_{1}, v_{2} \leq \tilde{v}$, from which it follows that $(\mathcal{V}, \leq)$ is a directed set.
 
 The final statement is standard.
\end{proof}

\subsection{Vertex stabilizers} \label{subsection:stab}

In this subsection, we consider the stabilizer $\Gamma_{v}$, where $v$ is a vertex and
$\Gamma$ is one of the groups $V(S_{i})$ or $V(S'_{i})$ ($i=2$ or $3$). We will largely follow the proof of Proposition 5.3 in \cite{FH2}. We include the proof for the reader's convenience.

\begin{proposition} \label{proposition:freeabelian} (virtually free abelian vertex stabilizers)
Let 
\[ v = \{ [f_{1}, D_{1}], \ldots, [f_{m}, D_{m}] \}, \] where $v \in \Delta(S_{n})$ or $v \in \Delta(S'_{n})$ ($n=2$ or $3$). Let $\Gamma = V(S_{n})$ or $V(S'_{n})$ (respectively). 

The stabilizer group $\Gamma_{v}$ is virtually free abelian of rank at most $m$.
\end{proposition}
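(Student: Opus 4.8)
The plan is to show that $\Gamma_v$ embeds into a product of $m$ groups, each of which is virtually cyclic (in fact, virtually $\mathbb{Z}$ or trivial), by analyzing how an element $\gamma \in \Gamma_v$ must act on the pieces of $v$. First I would observe that, since $\gamma$ fixes $v = \{ [f_1, D_1], \ldots, [f_m, D_m] \}$ as a set, $\gamma$ permutes the pairs $[f_i, D_i]$; passing to the finite-index subgroup $\Gamma_v^0 \leq \Gamma_v$ that fixes each pair individually, it suffices to bound the rank of $\Gamma_v^0$ by $m$. So fix $\gamma \in \Gamma_v^0$. Then $\gamma \cdot [f_i, D_i] = [f_i, D_i]$ for each $i$, which by Definition \ref{definition:B} means $\gamma f_i = f_i h_i$ for some $h_i \in \mathbb{S}(D_i, D_i)$; equivalently $\gamma = f_i h_i f_i^{-1}$ on the set $f_i(D_i)$.

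The key step is to identify $h_i$. By Theorem \ref{theorem:structureofstructuresets}, writing $D_i = \omega_i I$ we have $\mathbb{S}(D_i, D_i) = \{ \omega_i C^{k} \omega_i^{-1} \mid k \in \mathbb{Z} \}$, which is a cyclic group (infinite cyclic, since $C$ is a hyperbolic translation of infinite order); when $D_i = [n, \infty)$ the set $\mathbb{S}(D_i,D_i)$ is trivial. Thus for each $i$ there is a well-defined integer $k_i = k_i(\gamma)$ with $h_i = \omega_i C^{k_i} \omega_i^{-1}$, and the assignment $\gamma \mapsto (k_1, \ldots, k_m) \in \mathbb{Z}^m$ is the homomorphism I want: it is a homomorphism because $(\gamma_1 \gamma_2)$ restricted to $f_i(D_i)$ equals $(f_i h_i^{(1)} f_i^{-1})(f_i h_i^{(2)} f_i^{-1}) = f_i h_i^{(1)} h_i^{(2)} f_i^{-1}$ and $\mathbb{S}(D_i,D_i)$ is abelian, so the $k$-coordinates add. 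Its kernel consists of $\gamma$ with $h_i = \mathrm{id}_{D_i}$ for all $i$, i.e.\ $\gamma$ restricts to $f_i \circ \mathrm{id} \circ f_i^{-1} = \mathrm{id}$ on each $f_i(D_i)$; since these cover all of $[0,1)$ (or $[0,\infty)$), such a $\gamma$ is the identity. Hence $\Gamma_v^0 \hookrightarrow \mathbb{Z}^m$, so $\Gamma_v^0$ is free abelian of rank at most $m$, and $\Gamma_v$ is virtually free abelian of rank at most $m$.

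One point that needs care: an element $\gamma \in \Gamma_v^0$ defined piecewise by the formulas $\gamma_{\mid f_i(D_i)} = f_i h_i f_i^{-1}$ is indeed a well-defined member of $\Gamma$ — this follows from the hypothesis $\gamma \in \Gamma$ together with the observation that the pieces $f_i(D_i)$ partition the underlying space (this is exactly the vertex condition of Definition \ref{definition:vertices}), so there is no compatibility obstruction; conversely any $\gamma \in \Gamma_v^0$ arises this way by the equivalence-relation computation above. I would also note explicitly that the $\omega_i$ may not be uniquely determined by $D_i$ as a set, but any two choices differ by an element of $\mathbb{S}(D_i,D_i)$ and so conjugate $C^{k}$ to the same power (as $\mathbb{S}(D_i,D_i)$ is abelian), making $k_i$ genuinely well-defined.

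\textbf{Main obstacle.} The substantive verification is the two-directional claim that $\gamma \in \Gamma_v^0$ corresponds precisely to a tuple of compatible restrictions $f_i h_i f_i^{-1}$ with $h_i \in \mathbb{S}(D_i,D_i)$, and in particular that such a patched-together map really lies in $\Gamma$ (right-continuity and the locally-determined condition). This is where one must invoke that $\gamma$ was assumed to be in $\Gamma$ to begin with, rather than trying to build it; the homomorphism $\Gamma_v^0 \to \mathbb{Z}^m$ and its injectivity are then essentially formal once Theorem \ref{theorem:structureofstructuresets} is in hand. I do not expect the rank bound itself to be sharp in general, but "at most $m$" is all that is claimed.
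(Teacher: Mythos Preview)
Your proposal is correct and follows essentially the same approach as the paper: pass to the finite-index subgroup fixing each $[f_i,D_i]$, use the equivalence relation to extract $h_i \in \mathbb{S}(D_i,D_i)$, and embed this subgroup into $\prod_i \mathbb{S}(D_i,D_i)$, which is free abelian of rank at most $m$ by Theorem~\ref{theorem:structureofstructuresets}. The paper's version is slightly more streamlined in that it maps directly into $\prod_j \mathbb{S}(D_j,D_j)$ without choosing $\mathbb{Z}$-coordinates, which sidesteps your well-definedness discussion of $k_i$; your ``main obstacle'' paragraph is also unnecessary, since you only need the map $\gamma \mapsto (h_1,\ldots,h_m)$ to be an injective homomorphism, not that every tuple is realized.
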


\begin{proof}
The elements of the group $\Gamma_{v}$ permute the elements of $v$. That is, for each $\gamma \in \Gamma_{v}$,
there is a permutation $\sigma_{\gamma} \in S_{m}$ such that
\[ \gamma \cdot [f_{j}, D_{j}] = [\gamma \circ f_{j}, D_{j}] = [f_{\sigma_{\gamma}(j)}, D_{\sigma_{\gamma}(j)}]. \]
The assignment $\gamma \mapsto \sigma_{\gamma}$ is a homomorphism from $\Gamma_{v}$ to $S_{m}$; the kernel $K$ of the latter homomorphism has finite index in $\Gamma_{v}$. Each $\gamma \in K$ fixes the members of $v$ pointwise; i.e., $\gamma \cdot [f_{j},D_{j}] = [\gamma \circ f_{j}, D_{j}] = [f_{j},D_{j}]$, for $j=1,\ldots, m$. It follows, from the definition of the equivalence relation, that there are $h_{j} \in \mathbb{S}(D_{j}, D_{j})$ such that 
$\gamma_{\mid f_{j}(D_{j})} = f_{j} h_{j} f_{j}^{-1}$, for $j=1, \ldots, m$. The latter equalities determine an injective homomorphism
\[ \Phi: K \rightarrow \prod_{j=1}^{m} \mathbb{S}(D_{j},D_{j}) \]
defined by the rule $\gamma \mapsto (h_{1}, \ldots, h_{m})$. Since each of the groups $\mathbb{S}(D_{j},D_{j})$
is either cyclic or trivial by Theorem \ref{theorem:structureofstructuresets}, the proposition follows.
\end{proof}

\subsection{The directed set constructions for ``$F$" and ``$T$" groups} \label{subsection:FT}

The ``$F$" and ``$T$" groups act on a subcomplex of the complexes for $\Delta(S_{i})$ and $\Delta(S'_{i})$.

\begin{definition}
Let $\Gamma \in \{ F(S_{i}), F(S'_{i}), T(S_{i}) \}$.
We consider the smallest subcomplex of $\Delta(S_{i})$ (or $\Delta(S'_{i})$ if
$\Gamma = F(S'_{i})$) that contains the vertices $[\gamma, X]$ ($X = [0,1)$ or $[0,\infty)$, respectively), for all $\gamma \in \Gamma$, and is closed under expansion.

We denote this complex by $\Delta_{F}(S_{i})$, $\Delta_{F}(S'_{i})$, or $\Delta_{T}(S_{i})$, respectively.
\end{definition}

\begin{proposition} \label{proposition:contractibilityforFTV}
The vertices of $\Delta_{F}(S_{i})$, $\Delta_{F}(S'_{i})$, and $\Delta_{T}(S_{i})$ form directed sets under expansion.

In particular, the complexes $\Delta_{F}(S_{i})$, $\Delta_{F}(S'_{i})$, and $\Delta_{T}(S_{i})$ are contractible $\Gamma$-simplicial complexes, where $\Gamma = F(S_{i})$, $F(S'_{i})$, or $T(S_{i})$, respectively.
\end{proposition}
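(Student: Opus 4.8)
The plan is to reduce the contractibility assertion to the directedness assertion exactly as in Theorem \ref{theorem:directedset}, and to prove directedness of the vertex sets of $\Delta_F(S_i)$, $\Delta_F(S'_i)$, and $\Delta_T(S_i)$ by mirroring the argument there, taking care that all intermediate vertices produced stay inside the relevant subcomplex. So the first step is to understand explicitly which pairs $[f,D]$ occur as entries of vertices in, say, $\Delta_F(S_i)$: these are the pairs obtainable by expansion from a single-pair vertex $\{[\gamma, X]\}$ with $\gamma \in \Gamma$ and $X = [0,1)$ (or $[0,\infty)$), together with whatever is forced by closure under expansion. Using Remark \ref{remark:explicitexpansion}, an expansion of a single pair $[\gamma\omega, I]$ replaces it by the pairs $[\gamma\omega C^k\tau, I]$ for $\tau$ ranging over a cut set $\mathcal{C}$ and some $k \in \mathbb{Z}$; iterating, one sees that every vertex $v$ of $\Delta_F(S_i)$ refines to a vertex all of whose pairs have the form $[g, I]$ with $g \in \widehat{S}_i$, and the images $g(I)$ partition $[0,1)$ \emph{in order} — i.e.\ the induced function $[0,1) \to [0,1)$ is an element of $F(S_i)$ (it is a homeomorphism precisely because the original vertex came from the $F$-subcomplex, where no order-reversing recombination is allowed). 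The analogous statement holds for $T(S_i)$ with ``homeomorphism of $[0,1)$'' replaced by ``orientation-preserving, cyclic-order-preserving bijection,'' and for $F(S'_i)$ over the ray.

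Next I carry out the two-stage directedness argument. Stage one: given any vertex $v = \{[f_1,D_1],\dots,[f_m,D_m]\}$ of $\Delta_F(S_i)$, I first expand (possibly refining the $D_j$) so that each $f_j$ restricted to each piece lies in $S_i$, then apply Lemma \ref{lemma:eventual} to each pair to push it into the form $[\mathrm{id}_E, E]$ with $E \in \mathcal{D}^+_{gen}$; this is an expansion, hence stays in the subcomplex, and it exhibits $v \leq \hat v$ where $\hat v$ corresponds to a partition of $[0,1)$ (or $[0,\infty)$) into intervals from $\mathcal{D}^+_{gen}$, listed in their natural left-to-right order — and this ordered partition is exactly the data of a vertex of $\Delta_F(S_i)$ (the ``standard'' vertices are the subdivisions compatible with the linear order). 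Stage two: given $v_1, v_2$, produce $\hat v_1, \hat v_2$ as above, take the common refinement $\mathcal{P}'$ of the two ordered partitions, note $\mathcal{P}' \subseteq \mathcal{D}^+_{gen}$ because $\mathcal{D}^+_{gen}$ is closed under the subdivisions $\omega I \rightsquigarrow \{\omega A I, \omega B I\}$ (Remark \ref{remark:discussionofintervals}), and let $\tilde v$ be the corresponding vertex; then $\hat v_i \leq \tilde v$ and $\tilde v \in \Delta_F(S_i)$ since it is reached from $\hat v_i$ by expansion. Hence $v_1, v_2 \leq \tilde v$, proving directedness. The $T$ and ray cases are identical once one replaces ``partition of $[0,1)$ by intervals'' with ``cyclic partition'' or ``partition of $[0,\infty)$,'' and the only extra remark needed for $F(S'_i)$ is that the $T$-translates $T^\alpha\omega I$ used to tile the ray are already in $\mathcal{D}^+_{S',gen}$.

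Finally, the ``in particular'' clause follows from the standard fact (used already in Theorem \ref{theorem:directedset}) that the simplicial realization of a directed poset is contractible, together with the observation that $\Gamma$ acts on the subcomplex: the subcomplex was defined to contain all $[\gamma, X]$ and to be expansion-closed, and expansion is $\Gamma$-equivariant (Proposition \ref{proposition:welldefinedexpansion}), so $\Gamma$ permutes its vertices and simplices.

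\textbf{Main obstacle.} The one genuinely non-routine point is Stage one's claim that applying Lemma \ref{lemma:eventual} to each pair of a vertex of the \emph{$F$-subcomplex} yields an ordered partition — equivalently, that the subcomplex $\Delta_F(S_i)$ really consists of the vertices whose standard form is an order-compatible subdivision, with no ``$V$-type'' rearrangements sneaking in. This must be extracted from the definition: every vertex of $\Delta_F(S_i)$ is an expansion of some $\{[\gamma,X]\}$ with $\gamma$ a \emph{homeomorphism}, and expansion (Definition \ref{definition:expansion}) only subdivides a pair and post-composes with an element of $\mathbb{S}(D_i,D_i)$, neither of which can destroy the order-preserving property; so the induced map of any standard-form descendant is still an order-preserving (hence, being a bijection of an interval onto itself, orientation-preserving) homeomorphism, and conversely one checks every order-compatible subdivision arises this way. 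Verifying this stability of order-preservation under expansion — and its $T$-analogue for cyclic order — is where the real content lies; the rest is bookkeeping parallel to Theorem \ref{theorem:directedset}.
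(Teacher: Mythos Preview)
Your proposal is correct, but it works considerably harder than the paper does. The paper's proof is essentially two sentences: the vertex set of each subcomplex is, by definition, \emph{closed under expansion} (i.e.\ upward-closed in $(\mathcal{V},\leq)$), and any upward-closed subset of a directed set is itself directed; $\Gamma$-equivariance follows from the $\Gamma$-equivariance of expansion (Proposition \ref{proposition:welldefinedexpansion}). Concretely, given $v_{1},v_{2}$ in the subcomplex, Theorem \ref{theorem:directedset} supplies an upper bound $\tilde v$ in the full $\mathcal{V}$, and since $v_{1} \leq \tilde v$ with $v_{1}$ in the subcomplex, closure under expansion forces $\tilde v$ into the subcomplex as well.

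Your argument re-runs the two-stage proof of Theorem \ref{theorem:directedset} inside the subcomplex and, in addition, attempts to characterise the vertices of $\Delta_{F}(S_{i})$ explicitly as those whose standard form is an order-compatible subdivision. This characterisation is interesting in its own right, but it is not needed for the proposition: you already observe at the end of Stage two that $\tilde v$ lies in the subcomplex ``since it is reached from $\hat v_{i}$ by expansion'', and that single observation, applied directly to the $\tilde v$ produced by Theorem \ref{theorem:directedset}, is the entire content of the paper's argument. The discussion you flag as the ``main obstacle'' --- stability of order-preservation under expansion --- is therefore orthogonal to what the proposition requires.
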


\begin{proof}
The $\Gamma$-equivariance of the complexes in question follows from the fact that the expansion relation is $\Gamma$-equivariant. The contractibility of these complexes follows from the fact that the vertex sets are still directed, since the vertex sets in question are closed under expansion.
\end{proof}  

 \section{An algorithm} \label{section:algorithm}
 
 In this section, we will describe a simple algorithm. We first need to set some conventions. 
 Let $A: [0,1) \rightarrow [0,1/2)$ and $B: [0,1) \rightarrow [1/2,1)$ be defined as in Definition \ref{definition:S}.
 The vertices of a rooted infinite binary tree can be labelled by words in the monoid $\{ A, B \}^{\ast}$, as follows: The root is labelled by the empty word. If a given vertex $v$ is labelled by $\omega \in \{ A, B \}^{\ast}$, then the left and right children of $v$ are labelled by $\omega A$ and $\omega B$, respectively. Let us denote the label of $v$ by $L(v)$. We can then assign a half-open interval $I(v)$ to each vertex by the rule
 \[ I(v) = L(v) \cdot I, \]
 where $I$ denotes the interval $[0,1)$. Note that $I(v_{1}) \subseteq I(v_{2})$ if and only if 
 $L(v_{1})$ is a prefix of $L(v_{2})$. 
 
 By a \emph{cut set} of a rooted infinite binary tree, we mean a set $C$ of vertices such that every embedded ray issuing from the root passes through exactly one member of $C$. We may also refer to a set of words in 
$\{ A, B \}^{\ast}$ as a cut set if the corresponding set of vertices is a cut set in the above sense. 
 
 The input of the algorithm is a matrix $C_{n}$, defined as in the introduction:
\[ C_{n}(x) = \frac{nx}{(n-1)x + 1}, \]
where $C_{n}$ is defined only on the interval $[0,1)$. 
The (hoped-for) output is a collection of matrix identities, of the general form
\begin{align*}
C_{n} \omega_{1} &= \omega'_{1}C_{n}^{\pm}; \\
C_{n} \omega_{2} &= \omega'_{2}C_{n}^{\pm}; \\
&\vdots \\
C_{n} \omega_{k} &= \omega'_{k}C_{n}^{\pm}, \\
\end{align*}
where $\omega_{i}, \omega'_{i} \in \{ A, B \}^{\ast}$ for $i=1, \ldots, k$, and the sets
$\{ \omega_{1}, \ldots, \omega_{k} \}$ and $\{ \omega'_{1}, \ldots, \omega'_{k} \}$ are cut sets. (Collections of such equations figured prominently in the proof of Lemma \ref{lemma:eventual}.) Given the above identities and the corresponding cut sets,
we can then define directed sets just as we did in Section \ref{section:directedset}. The groups that are locally determined by $\{ A, B, C_{n}, a, b, c_{n} \}^{\ast}$ would then act on these directed sets exactly as before.

The algorithm works in the following way. Each vertex of the tree is assigned a type. Initially, this type is ``$u$" for all vertices, indicating a vertex of unknown type. (Actually, the program creates new vertex objects during its run time, although we can ignore this detail for the sake of the current discussion.) Each vertex is also assigned a toggle that is initially set to ``$0$". When the program encounters a vertex $v$, it performs an action depending on the type of the vertex, which is one of $n$, $l$, or $u$, and the value of its toggle, which is either $0$ or $1$. 

If the vertex is of unknown type (``$u$"), the program runs the following test:
\begin{enumerate}
\item It first appends $C_{n}$ to the beginning of the string $L(v)$. This initializes the \emph{matrix string product} of $v$, which we will here denote $M(v)$. It is a string over the alphabet $\{ A, B, C_{n}, a, b, c_{n} \}$;
\item The program interprets $M(v)$ as a product of matrices and computes the interval $M(v) \cdot I$: 
\begin{enumerate}
\item if $M(v) \cdot I \subseteq [0,1/2)$, then the program appends $a$ to the front of $M(v)$; the result is defined to be the new $M(v)$. The program then returns to step 2);
\item if $M(v) \cdot I \subseteq [1/2,1)$, then the program appends $b$ to the front of $M(v)$; the result is defined to be the new $M(v)$. The program then returns to 
step 2);
\item if $M(v) \cdot I = I$, then $M(v)$ is necessarily equivalent (as a linear fractional transformation) to a power of $C_{n}$; $k$, let's say [this needs to be previously justified]. In this case, the program appends $c_{n}^{k}$ to the front of $M(v)$ (creating a new $M(v)$). The program now classifies the current vertex as a leaf (changing the unknown ``$u$'' designation to ``$l$"). The toggle of the current vertex is also set to ``$1$" (changed from ``$0$");
\item if $M(v) \cdot I$ satisfies none of the above (i.e., $1/2 \in M(v) \cdot I$, but $M(v) \cdot I \neq I$), then $v$ is reclassified as an (internal) node ``$n$". The toggle stays at $0$.
\end{enumerate}
\end{enumerate}
At the end of the above process, the vertex $v$ has been reclassified as an internal node (``$n$") or a leaf (``$l$"). In the latter case, the toggle has been set to $1$ and a certain matrix string product has been produced. By construction, the (final) matrix string product of a leaf necessarily evaluates to the identity matrix when interpreted as a product of matrices. 

If the current vertex $v$ is an internal node (i.e., designated by ``$n$") and its toggle value is $0$, then the program determines the toggle value of the left child of $v$. If this value is $0$, it moves to the left child. If the toggle value of the left child is $1$, but the toggle value of the right child is $0$, then the program moves to the right child. If both children have toggle value $1$, then the program flips the toggle of $v$ to $1$. 

If the toggle value of $v$ is $1$, then the program moves to the parent of $v$. If there is no such parent (i.e., $v$ is the root), then the program terminates, and records the matrix string products for each leaf. The latter matrix string products are readily interpretable as a collection of identities having the desired form, indicated above. The leaves determine a (finite) cut set. This completes the description of the algorithm.

We omit the proof of the validity of the algorithm -- i.e., the proof that the program finds appropriate cut sets and associated matrix identities, if such things exist.

The author's experience of running the program has led to unexpected results. If $n=2$, then the program finds a cut set with three elements, and returns the three matrix equations ($C_{2}AA = AC_{2}$; etc.) displayed in the proof of Lemma \ref{lemma:eventual}. If $n=3$, then the program finds a cut set with five elements, and the five matrix equations associated to $C_{3}$, as described in the proof of Lemma \ref{lemma:eventual}. If $n=4$, the program fails to terminate, although it finds many leaves during its run time. The same is true for all values of $n \geq 4$ that the author has tried. [It may be worth noting here that the program computes using only integer values, not floating-point numbers, so round-off errors are apparently not a source for the problems that are encountered here.] 
It follows from this that an analysis of the groups $V(S_{n})$ and $V(S'_{n})$ for $n \geq 4$ (and, indeed, the corresponding ``$F$" and ``$T$'' versions of these groups)
lies beyond the techniques described in this paper.

It is also possible to run similar tests for different transformations. One might change not only $C_{n}$, but also the transformations $A$ and $B$. The author has run such tests in a few cases, but with no success.

\section{The expansion schemes $\mathcal{E}_{i}$ and $\mathcal{E}'_{i}$} \label{section:expansion}

In this section, we will introduce subdivision trees as a device for diagramming expansions, and describe how subdivision trees represent partitions of $[0,1)$ into subintervals. Similar trees were considered in \cite{LM}.

We will then describe expansion schemes $\mathcal{E}_{i}$ and $\mathcal{E}'_{i}$, which will eventually be used to simplify the directed set constructions from Section \ref{section:directedset}. In order to establish the required properties of $\mathcal{E}_{i}$ and $\mathcal{E}'_{i}$, we will need to understand when two subdivision trees define the same partition. The latter will be the project of Sections \ref{section:FP} and \ref{section:IVT}.

\subsection{Subdivision trees, equivalence, and elementary equivalence}

\begin{definition} \label{definition:subdivtree} (Subdivision trees)
Let $T$ be a finite rooted binary tree. The vertices of degree one are \emph{leaves}; all other vertices are \emph{nodes}. The topmost node is the \emph{root}. We say that $T$ is a \emph{subdivision tree} if each node is labelled by an integer. 

We let $T_{\ell}$ and $T_{r}$ denote the left and right branches of the subdivision tree $T$.
\end{definition}

\begin{figure}[!h]
\begin{center}
\begin{tikzpicture}
\draw[gray](6.5,3) -- (7,4); 
\node at (7,3.6) {2};
\draw[gray](7,4) -- (7.5,3);
\node at (7.5,2.6) {-1};
\draw[gray](6.1,2) -- (6.5,3);
\node at (6.5,2.6) {1};
\draw[gray](6.5,3) -- (6.9,2);
\node at (6.9,1.6){3};
\draw[gray](7.1,2) -- (7.5,3);
\draw[gray](7.5,3) -- (7.9,2);
\draw[gray](6.5,1) -- (6.9,2);
\draw[gray](6.9,2) -- (7.3,1);

\draw[gray] (1.5,1) -- (3,4);
\node at (3,3.6){0};
\draw[gray] (2,2) -- (2.5,1);
\node at (2,1.6){-1};
\draw[gray] (2.5,3) -- (3,2);
\node at (2.5,2.6){1};
\draw[gray] (3,4) -- (3.5,3);
\end{tikzpicture}
\end{center}
\caption{Subdivision trees}
\label{figure:subdivtree}
\end{figure}
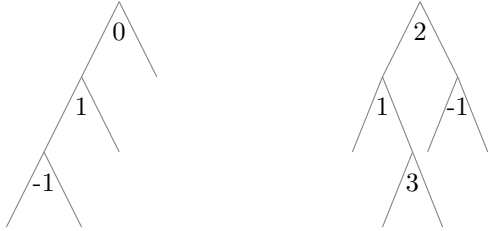

\begin{remark} \label{remark:subdiv} (the subdivision represented by a subdivision tree)
Each leaf in a subdivision tree is labelled by a word in the alphabet $\{ A, B, C, c \}$. The labelling is obtained as follows. Trace the (unique) path $p$ from the root to a given leaf $\ell$. Suppose that 
\[ v_{1}, e_{1}, v_{2}, e_{2}, \ldots, e_{k}, v_{k+1} \]
is a complete list of the vertices and edges encountered along the path $p$, written in the order that they are encountered. Thus, in particular, $v_{1}$ is the root of the tree and $v_{k+1} = \ell$. Let $n_{1}, n_{2}, \ldots, n_{k}$ be the integers labelling the nodes $v_{1}, \ldots, v_{k}$; for $i=1, \ldots, k$, let $X_{i}$ be $A$ if $e_{i}$ points downward and to the left, and let $X_{i}$ be $B$ if $e_{i}$ points downward and to the right. The labelling of the leaf $\ell$ is then
\[ C^{n_{1}}X_{1}C^{n_{2}}X_{2}\ldots C^{n_{k}}X_{k}. \]

For instance, the leaves of the left tree in Figure \ref{figure:subdivtree} are labelled by  the words $ACAcA$, $ACAcB$, $ACB$, and $B$. The leaves of the right tree  
are labelled by $C^{2}ACA$, $C^{2}ACBC^{3}A$, $C^{2}ACBC^{3}B$, $C^{2}BcA$, and
$C^{2}BcB$. This labelling scheme works the same way, no matter whether we are letting $C$ represent $C_{2}$ or $C_{3}$. 

We obtain a partition of $[0,1)$ by applying these words to the interval $[0,1)$. Thus, $ACAcA$ determines the interval $ACAcA \cdot [0,1)$, and so forth.

The partition of $[0,1)$ determined by the trivial subdivision tree is
$\{ [0,1) \}$.
\end{remark}

\begin{remark} \label{remark:subdivinterpret} (subdivision trees over $S_{2}$ and $S_{3}$)
A subdivision tree $T$ represents one of two subdivisions of the interval $[0,1)$, depending upon whether the ``$C$" is interpreted as $C_{2}$ or $C_{3}$. In most contexts, it should be clear which is intended, but, in cases of possible ambiguity, we may refer to $T$ as \emph{a subdivision tree over $S_{2}$} or \emph{over $S_{3}$}, as the case may be.
\end{remark}

\begin{definition} \label{definition:equivalentsubdivtrees} (Equivalent subdivision trees; the functions $n$ and $N$)
Two subdivision trees $T_{1}$ and $T_{2}$ (both over either $S_{2}$ or $S_{3}$) are \emph{equivalent} if they 
represent the same collection of intervals. We write $T_{1} \approx T_{2}$.

If $T$ is a nontrivial subdivision tree, then $n(T)$ denotes the label of the root. Let
\[ N(T) = \{ n(T') \mid T' \approx T \}. \]
The set $N(T)$ is empty if $T$ is the trivial subdivision tree. 
\end{definition}

\begin{lemma} \label{lemma:finite}(Finiteness of $N(T)$)
If $T$ is a subdivision tree, then $N(T)$ is a finite set.
\end{lemma}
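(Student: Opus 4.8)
The plan is to reduce the statement to a bound on the possible root labels coming from a single ``elementary'' move on subdivision trees, and then to argue that only finitely many such moves are available from a given tree. First I would observe that if $T$ is the trivial tree then $N(T) = \emptyset$ by definition, so assume $T$ is nontrivial with left and right branches $T_\ell$, $T_r$ and root label $n(T)$. The interval partition represented by $T$ is determined by the set of leaf words $C^{n_1}X_1 C^{n_2}X_2 \cdots C^{n_k}X_k$ applied to $I$, as in Remark \ref{remark:subdiv}. Any tree $T'$ equivalent to $T$ represents the same partition of $[0,1)$, hence (since the first ``split point'' of the partition is an intrinsic number, the right endpoint of the leftmost interval) the value $C^{n(T')}A \cdot I$ is the same interval for every $T' \approx T$ that splits at the root. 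More precisely, a nontrivial equivalent tree $T'$ has $n(T')$ determined by which power of $C$ is needed so that $C^{n(T')}A \cdot I$ and $C^{n(T')}B \cdot I$ together are a ``coarsening at the root'' compatible with the partition of $T$.

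The key step is a finiteness-of-powers argument. Using the identities from the proof of Lemma \ref{lemma:eventual} (the relations $C_iAA$, $C_iAB$, $C_iB$ for $i=2$, and the five analogous relations for $i=3$), I would show that for a fixed partition $\mathcal{P}$ of $[0,1)$ arising from some subdivision tree, the set of integers $k$ for which there exists a subdivision tree with root label $k$ representing $\mathcal{P}$ is bounded. The mechanism is that $C^k A \cdot I$ must be a union of intervals of $\mathcal{P}$ (equivalently, $C^k A \cdot I$ must appear as $\omega I$ for a prefix $\omega$ of the cut set underlying $\mathcal{P}$, or be subdivided further), and as $|k| \to \infty$ the interval $C^k A \cdot I = [0, \frac{k}{2k-1})$ (for $S_2$; similarly $[0,\frac{k}{3k-2})$ for $S_3$) shrinks toward or grows away from a definite limit in a way that is incompatible with matching the fixed finite partition for all but finitely many $k$. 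Concretely: $C^kA\cdot I$ has left endpoint $0$ always, and its right endpoint is a strictly monotone function of $k$ converging to the two fixed points $0$ and $1$ of $C$ as $k \to \pm\infty$; only finitely many of these endpoints can be endpoints-or-finer of the finite partition $\mathcal P$, and the endpoints strictly outside the allowed finite set force $C^kA\cdot I$ to straddle a partition point, a contradiction. Hence $\{k : \text{some } T' \approx T \text{ has } n(T') = k\}$ is finite, which is exactly $N(T)$ being finite.

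The main obstacle I anticipate is making the ``only finitely many powers $k$ can match a fixed partition at the root'' claim rigorous without circularity — specifically, one must be careful that $N(T)$ could a priori receive contributions from trees $T'$ whose root split is much finer or coarser than that of $T$, so that $C^{n(T')}A\cdot I$ is not literally an interval of $\mathcal P$ but only a union of such, or conversely gets subdivided. The clean way around this is to pass to the common refinement: any $T' \approx T$ can be further expanded (without changing its root label) to a tree whose leaf-intervals refine $\mathcal P$, and then the root label $n(T')$ is pinned down by the requirement that $C^{n(T')}A \cdot I$ be a union of refinement-intervals lying in $[0, p)$ where $p$ is the smallest partition point, i.e. $C^{n(T')}A\cdot I \subseteq [0,p)$ and $C^{n(T')}B \cdot I \supseteq [p,1) \cap (\text{something})$; this forces $C^{n(T')}A\cdot I$ to have its right endpoint in a fixed finite set of rationals (endpoints of $\mathcal P$ together with $0$ and the Farey mediants visible in $\mathcal P$), and strict monotonicity of $k \mapsto C^kA\cdot I$ finishes it. I would also remark that this gives an effective bound, which is consistent with the algorithmic discussion of Section \ref{section:algorithm}, though the lemma itself only needs finiteness.
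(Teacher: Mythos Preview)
Your approach is essentially the paper's, but you have made it far harder than necessary and introduced an error along the way. The paper's entire argument is: if $T'\approx T$ with $n(T')=k$, then the fixed finite partition $\mathcal{C}$ determined by $T$ refines the two-piece partition $\{[0,2^{k}/(2^{k}+1)),\,[2^{k}/(2^{k}+1),1)\}$ (since $T'$ first splits $I$ at the root into $C^{k}A\cdot I$ and $C^{k}B\cdot I$, then subdivides further). If $N(T)$ were infinite, $\mathcal{C}$ would refine infinitely many distinct two-piece partitions, hence have infinitely many cut points, contradicting the finiteness of the leaf set of $T$. That is the whole proof.

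Two specific issues in your write-up. First, the formula $C^{k}A\cdot I = [0,\tfrac{k}{2k-1})$ is incorrect; one has $C_{2}^{k}(1/2)=2^{k}/(2^{k}+1)$, so $C^{k}A\cdot I=[0,2^{k}/(2^{k}+1))$ (and $3^{k}/(3^{k}+2)$ in the $S_{3}$ case). Second, your ``main obstacle'' paragraph is a red herring: there is no need to worry about whether $C^{n(T')}A\cdot I$ is an interval of $\mathcal{P}$ or merely a union, and no need to pass to a common refinement. The root split of $T'$ is always a coarsening of $\mathcal{P}$, so the point $2^{k}/(2^{k}+1)$ is automatically one of the finitely many endpoints already present in $\mathcal{P}$; injectivity of $k\mapsto 2^{k}/(2^{k}+1)$ finishes immediately. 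Your earlier sentence that ``$C^{n(T')}A\cdot I$ is the same interval for every $T'\approx T$'' is also false as written and should be deleted.
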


\begin{proof}
We prove the lemma in the case of $S_{2}$, the argument for the case of $S_{3}$ being similar. 

Note that, if $T' \approx T$ and $n(T') = k$, then the collection of intervals $\mathcal{C}$ determined by $T$ refines $\{ [0, 2^{k}/(2^{k}+1)), [2^{k}/(2^{k}+1), 1) \}$. Thus, if $N(T)$ were infinite, $\mathcal{C}$ would refine an infinite partition
of $[0,1)$, which would force $\mathcal{C}$ to be infinite. This is impossible, since $T$ has only finitely many leaves.
\end{proof}

\begin{lemma} \label{lemma:l-rlemma}
Let $T$ and $T'$ be nontrivial subdivision trees (both over $S_{2}$ or $S_{3}$); assume that $n(T) = n(T')$.
Then $T \approx T'$ if and only if $T_{\ell} \approx T'_{\ell}$ and
$T_{r} \approx T'_{r}$.
\end{lemma}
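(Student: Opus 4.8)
The plan is to prove both directions directly from the description of the partition attached to a subdivision tree in Remark \ref{remark:subdiv}. The key observation is that if $T$ is a nontrivial subdivision tree with $n(T) = k$, then every leaf label of $T$ begins with $C^{k}$ followed by either $A$ or $B$; consequently the partition $\mathcal{C}(T)$ of $[0,1)$ determined by $T$ splits as $\mathcal{C}(T) = \mathcal{C}_{A} \sqcup \mathcal{C}_{B}$, where every interval in $\mathcal{C}_{A}$ lies inside $C^{k}A \cdot [0,1)$ and every interval in $\mathcal{C}_{B}$ lies inside $C^{k}B \cdot [0,1)$. Moreover, if $v_{A}$ and $v_{B}$ denote the left and right children of the root, the intervals in $\mathcal{C}_{A}$ are exactly the images under $C^{k}A$ of the intervals in the partition $\mathcal{C}(T_{\ell})$ determined by the left branch $T_{\ell}$ (viewed again as a subdivision tree), and similarly for $\mathcal{C}_{B}$, $C^{k}B$, and $T_{r}$. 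This is immediate from the leaf-labelling recipe: a leaf of $T$ in the left subtree has label $C^{k}A\cdot(\text{leaf label in }T_{\ell})$, and applying this word to $[0,1)$ is the same as applying $C^{k}A$ to the corresponding interval of $\mathcal{C}(T_{\ell})$.

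With that structural decomposition in hand, the proof is short. For the ``if'' direction: assuming $T_{\ell} \approx T'_{\ell}$ and $T_{r} \approx T'_{r}$, we have $\mathcal{C}(T_{\ell}) = \mathcal{C}(T'_{\ell})$ and $\mathcal{C}(T_{r}) = \mathcal{C}(T'_{r})$ as collections of intervals; since $n(T) = n(T') = k$ as well, the decomposition above gives $\mathcal{C}(T) = C^{k}A\cdot\mathcal{C}(T_{\ell}) \sqcup C^{k}B\cdot\mathcal{C}(T_{r}) = C^{k}A\cdot\mathcal{C}(T'_{\ell}) \sqcup C^{k}B\cdot\mathcal{C}(T'_{r}) = \mathcal{C}(T')$, so $T \approx T'$. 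For the ``only if'' direction: assume $T \approx T'$, i.e.\ $\mathcal{C}(T) = \mathcal{C}(T')$, and recall $n(T) = n(T') = k$. Using the decomposition for both $T$ and $T'$ and the fact that $C^{k}A\cdot[0,1)$ and $C^{k}B\cdot[0,1)$ are disjoint, the subcollection of $\mathcal{C}(T)$ lying inside $C^{k}A\cdot[0,1)$ must equal the subcollection of $\mathcal{C}(T')$ lying inside $C^{k}A\cdot[0,1)$; i.e.\ $C^{k}A\cdot\mathcal{C}(T_{\ell}) = C^{k}A\cdot\mathcal{C}(T'_{\ell})$. Since $C^{k}A$ is an injective map on $[0,1)$, we may cancel it to conclude $\mathcal{C}(T_{\ell}) = \mathcal{C}(T'_{\ell})$, that is, $T_{\ell} \approx T'_{\ell}$; the argument for the right branches is identical with $A$ replaced by $B$.

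The one point that requires a word of care — and the closest thing to an obstacle — is justifying that applying the concatenated word $C^{k}A\,w$ to $[0,1)$ equals applying $C^{k}A$ to the interval $w\cdot[0,1)$, where $w$ is a leaf label of $T_{\ell}$. This is just associativity of the action of these linear fractional transformations composed with restriction, but one should note that the intermediate intervals genuinely land inside $[0,1)$ so that the restrictions compose as expected; this is built into the fact that $T$ is a legitimate subdivision tree, whose leaf words all send $[0,1)$ into $[0,1)$ (indeed onto the interval-between-Farey-fractions described in Remark \ref{remark:discussionofintervals}). Given this, nothing in the argument depends on whether ``$C$'' is interpreted as $C_{2}$ or $C_{3}$, so the lemma holds uniformly over both $S_{2}$ and $S_{3}$, as stated.
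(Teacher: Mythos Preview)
Your proof is correct and follows essentially the same approach as the paper's: both decompose the partition of $T$ as the images under $C^{k}A$ and $C^{k}B$ of the partitions of $T_{\ell}$ and $T_{r}$, then separate these two disjoint halves and cancel the injective prefix map. The paper phrases the separation step by tracking the supremum of the rightmost left-branch interval to show $k=\hat{k}$, whereas you invoke the disjointness of $C^{k}A\cdot[0,1)$ and $C^{k}B\cdot[0,1)$ directly; this is a cosmetic difference, not a different route.
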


\begin{proof}
We prove the lemma in the case that $T$ and $T'$ are subdivision trees over $S_{2}$; the case of $S_{3}$ differs in only minor ways.  
Assume that $T$ and $T'$ are subdivision trees, and that $n(T) = n(T')= t$.

Assume that $T \approx T'$. Let $\ell_{1}, \ldots, \ell_{k}$ label the leaves of $T_{\ell}$ and let $\ell_{k+1}, \ldots, \ell_{m}$ label the leaves of $T_{r}$. (Here, and throughout the proof, the labels of the leaves are read from left to right, so $\ell_{1}$ is the label of the leftmost leaf of $T_{\ell}$, etc.) Let $\ell'_{1}, \ldots, \ell'_{\hat{k}}$ label the leaves of $T'_{\ell}$ and let $\ell'_{\hat{k}+1}, \ldots, \ell'_{m}$ label the leaves of $T'_{r}$. It follows that  
\[ C^{t}A\ell_{1}, \ldots, C^{t}A\ell_{k}, C^{t}B\ell_{k+1}, \ldots, C^{t}B\ell_{m} \]
label the leaves of $T$ and
\[ C^{t}A\ell'_{1}, \ldots, C^{t}A\ell'_{\hat{k}}, C^{t}B\ell_{\hat{k}+1}, \ldots,
C^{t}B\ell_{m} \]
label the leaves of $T'$.  It follows that the above labels pairwise determine equal intervals, in the given order: $C^{t}A\ell_{1} \cdot [0,1) = C^{t}A\ell'_{1} \cdot [0,1)$, etc. Since $\ell_{k}$ and $\ell'_{\hat{k}}$ label rightmost leaves (of the trees $T_{\ell}$ and $T'_{\ell}$, respectively), $C^{t}A\ell_{k} \cdot [0,1)$ and $C^{t}A\ell'_{\hat{k}} \cdot [0,1)$ have the same supremum, namely $2^{t}/(2^{t}+1)$ (since $\ell_{k} \cdot [0,1)$ and $\ell'_{\hat{k}} \cdot [0,1)$ have the supremum $1$). It follows directly that $C^{t}A\ell_{k}$ and $C^{t}A\ell'_{\hat{k}}$ determine the same interval; thus, $k = \hat{k}$.

It follows easily that $\ell_{j}$ and $\ell'_{j}$ 
determine the same interval, for $j=1, \ldots, k$ (simply cancel $C^{t}A$ in the relevant products). Thus, $T_{\ell} \approx T'_{\ell}$. By similar reasoning, $T_{r} \approx T'_{r}$.

Conversely, assuming that $T_{\ell} \approx T'_{\ell}$ and $T_{r} \approx T'_{r}$, we easily conclude that $T \approx T'$.
\end{proof}

\begin{proposition} \label{proposition:equalityofleaves}(equality of leaves)
Let $\omega, \omega' \in \{ A, B, C, c \}^{\ast}$, where $C = C_{2}$ or $C_{3}$. The intervals $\omega \cdot [0,1)$ and $\omega' \cdot [0,1)$ are equal if and only if $\omega = \omega' C^{k}$, for some $k \in \mathbb{Z}$. 
\end{proposition}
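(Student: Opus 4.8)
The plan is to prove this by induction on the combined word length $|\omega| + |\omega'|$, exploiting the fact that every element of $\{A, B, C, c\}^{\ast}$ can be rewritten, using the identities from the proof of Lemma \ref{lemma:eventual}, into a canonical ``$C$-at-the-end'' form. First I would establish the ``easy'' direction: if $\omega = \omega' C^{k}$ then since $C = C_n$ fixes the interval $[0,1)$ setwise (it is a self-homeomorphism of $I$), we get $\omega \cdot [0,1) = \omega' C^{k} \cdot [0,1) = \omega' \cdot [0,1)$. The real content is the converse.

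For the converse, the key reduction is that one may assume $\omega$ and $\omega'$ begin with letters from $\{A, B\}$ rather than $\{C, c\}$. Indeed, using the matrix identities $C_2 AA = AC_2$, $C_2 AB = BAc_2$, $C_2 B = BBC_2$ (and their rewrites for $c_2$), and the analogous five identities for $C_3$, one can ``push'' any leading $C$ or $c$ rightward past the first $\{A,B\}$ letter; this may introduce a longer prefix but always produces an equivalent word whose first letter lies in $\{A, B\}$ (the case where the whole word reduces to a power of $C$ is exactly $\omega = \emptyset \cdot C^k$, handled separately, corresponding to $\omega \cdot [0,1) = [0,1)$ with $\omega' = \emptyset$). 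Once both $\omega = X \omega_1$ and $\omega' = X' \omega'_1$ with $X, X' \in \{A, B\}$, the interval equality $\omega \cdot [0,1) = \omega' \cdot [0,1)$ forces $X = X'$: since $A \cdot [0,1) = [0, 1/2)$, $B \cdot [0,1) = [1/2, 1)$, and $C^k$ preserves the midpoint $1/2$ only when $k = 0$ in the relevant sense — more carefully, because any interval of the form $\omega_1 \cdot [0,1)$ or $\omega'_1 \cdot [0,1)$ is a nondegenerate subinterval of $[0,1)$, applying $A$ lands it strictly left of an appropriate threshold and applying $B$ lands it strictly right, so the two outputs can only coincide if the same letter was applied. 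This last point should be packaged cleanly, perhaps borrowing the supremum-comparison trick from the proof of Lemma \ref{lemma:l-rlemma}.

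Having matched the first letters, I would cancel: from $X\omega_1 \cdot [0,1) = X \omega'_1 \cdot [0,1)$ and injectivity of the linear fractional transformation $X$, we get $\omega_1 \cdot [0,1) = \omega'_1 \cdot [0,1)$ with strictly shorter total length, so by induction $\omega_1 = \omega'_1 C^k$, whence $\omega = X\omega_1 = X\omega'_1 C^k = \omega' C^k$. To keep the induction well-founded I would set up the measure of complexity to be the total length \emph{after} maximally pushing $C$'s to the right (i.e., work with the canonical form), using Lemma \ref{lemma:eventual}'s rewriting to guarantee the push-right process terminates, and observing that the cancellation of a leading $\{A,B\}$ letter strictly decreases this measure.

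The main obstacle I anticipate is the bookkeeping around the rewriting identities when the pushed-right form produces an $\omega''$ suffix of length $> 1$ (as happens for $C_3$, where $\omega'' \in \{A, AA, AB\}$ etc.), since then ``the first letter of the reduced word'' requires a little care, and I need to be sure that pushing the leading $C$ past only the \emph{first} letter of the word is always legitimate — the identities above are stated for specific two- or three-letter prefixes, so if the word is $Cc\cdots$ I may first need to cancel $Cc = \mathrm{id}$, and if it is $CA$ but the next letter is neither $A$ nor $B$ because the word ends, I fall into the ``reduces to a power of $C$'' base case. A clean way to sidestep this is to first reduce $\omega$ and $\omega'$ to words in which no $Cc$ or $cC$ cancellation is possible and then argue that such a reduced word either is a power of $C$ or has a genuine $\{A,B\}$ letter that a leading $C^{\pm 1}$ can be pushed past. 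I expect this case analysis to be the only genuinely fiddly part; everything else is the standard ``match leading letter, cancel, induct'' pattern.
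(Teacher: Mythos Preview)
Your approach is genuinely different from the paper's, and considerably more laborious. The paper's proof is two lines: if $\omega I = \omega' I$ then $(\omega')^{-1}\omega$ is an element of $S$ with domain and range both equal to $I$, i.e.\ an element of $\mathbb{S}(I,I)$; and Theorem~\ref{theorem:structureofstructuresets} (proved earlier by a direct determinant and fixed-point computation in $PSL_2$) says $\mathbb{S}(I,I) = \{C^k \mid k \in \mathbb{Z}\}$. So $(\omega')^{-1}\omega = C^k$, done. Your inductive combinatorial argument via rewriting identities reproves a special case of that theorem from scratch, which is redundant.

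There is also a concrete gap in your reduction step. You claim one can always push a leading $C^{\pm 1}$ past the first $\{A,B\}$ letter using the identities $C_2AA = AC_2$, $C_2AB = BAc_2$, $C_2B = BBC_2$, etc., and that the residual case ``falls into the `reduces to a power of $C$' base case.'' This is false: take $\omega = CA$ with $C = C_2$. None of the identities apply (they require $C$ to be followed by $AA$, $AB$, or $B$, not by a terminal $A$), and $CA$ is certainly not a power of $C$, since $CA \cdot I = [0,2/3) \neq I$. The same issue arises for $C_3$ with words like $CAB$. You can patch this, for instance by appending $A$'s on the right to both $\omega$ and $\omega'$ until the rewriting becomes available (legitimate because linear fractional transformations agreeing on a nondegenerate subinterval agree everywhere), but then your proposed induction measure ``total length after maximally pushing $C$'s right'' needs to be reworked, and the argument grows. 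The paper's route via Theorem~\ref{theorem:structureofstructuresets} sidesteps all of this by exploiting the algebraic structure directly.
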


\begin{proof}
We first consider the case in which $\omega, \omega' \in S_{2}$.
 
If $\omega = \omega' C^{k}$, then 
\[ \omega \cdot [0,1) = \omega'C^{k} \cdot [0,1) = \omega' \cdot [0,1), \]
where the final equality follows from the fact that $C \cdot [0,1) = [0,1)$.

Conversely, suppose that $\omega I = \omega' I$. It follows that $(\omega')^{-1} \omega I = I$, so $(\omega')^{-1} \omega \in \mathbb{S}(I,I)$, so $(\omega')^{-1}\omega = C^{k}$, for some $k \in \mathbb{Z}$, by Theorem \ref{theorem:structureofstructuresets}. It follows that $\omega = \omega' C^{k}$. 
\end{proof}

\begin{definition} \label{definition:ee} (elementary equivalence)
The two transformations in Figure \ref{figure:m2} define \emph{elementary equivalence} between subdivision trees over $S_{2}$.
\begin{figure}[!h]
\begin{center}
\begin{tikzpicture}
\draw[gray] (1,3) -- (2,1);
\draw[gray] (1,3) -- (.5,2);
\draw[gray] (1.5,2) -- (1,1);
\node at (1,2.6){n};
\node at (1.5, 1.6){0};
\node at (.5, 1.8){a};
\node at (1,.8){b};
\node at (2,.75){c};

\draw[gray, ->] (2,2) arc (120:60:1);

\draw[gray] (4,3) -- (3,1);
\draw[gray] (3.5,2) -- (4,1);
\draw[gray] (4,3) -- (4.5,2);
\node at (4,2.25){n+1};
\node at (3.5, 1.6){0};
\node at (3,.8){a-1};
\node at (4,.8){b+1};
\node at (4.5,1.75){c-1};

\draw[gray] (6,1) -- (7,3);
\draw[gray] (6.5,2) -- (7,1);
\draw[gray] (7,3) -- (7.5,2);
\node at (7,2.6){n};
\node at (6.5,1.6){0};
\node at (6,.8){a};
\node at (7,.8){b};
\node at (7.5, 1.8){c};

\draw[gray, ->] (7.75,2.1) arc (120:60:1);

\draw[gray] (9,2) -- (9.5,3);
\draw[gray] (9.5,3) -- (10.5,1);
\draw[gray] (9.5,1) -- (10,2);
\node at (9.5, 2.25){n-1};
\node at (10,1.6){0};
\node at (9,1.8){a-1};
\node at (9.5,.8){b+1};
\node at (10.5,.8){c-1};
\end{tikzpicture}
\end{center}
\caption{The relations that define elementary equivalence between subdivision trees over $S_{2}$.}
\label{figure:m2}
\end{figure}

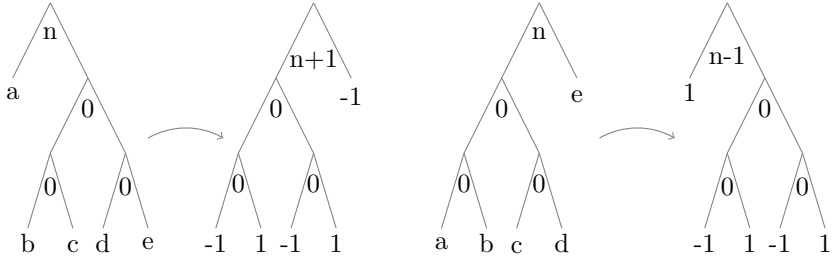
\begin{figure}[!h]
\begin{center}
\begin{tikzpicture}
\draw[gray] (1,3) -- (2,1);
\draw[gray] (1,3) -- (.5,2);
\draw[gray] (1.5,2) -- (1,1);
\draw[gray] (.7,0) -- (1,1);
\draw[gray] (1.3,0) -- (1,1);
\draw[gray] (2,1) -- (2.3,0);
\draw[gray] (1.7,0) -- (2,1);
\node at (1,2.6){n};
\node at (1.5, 1.6){0};
\node at (.5, 1.8){a};
\node at (.7, -.2){b};
\node at (1.3, -.24){c};
\node at (1.7, -.2){d};
\node at (2.3, -.2){e};
\node at (1,.55){0};
\node at (2,.55){0};

\draw[gray, ->] (2.3,1.2) arc (120:60:1);

\draw[gray] (4.5,3) -- (3.5,1);
\draw[gray] (4,2) -- (4.5,1);
\draw[gray] (4.5,3) -- (5,2);
\draw[gray] (3.2,0) -- (3.5,1);
\draw[gray] (3.5,1) -- (3.8,0);
\draw[gray] (4.2,0) -- (4.5,1);
\draw[gray] (4.8,0) -- (4.5,1);
\node at (4.5,2.25){n+1};
\node at (4, 1.6){0};
\node at (3.5,.6){0};
\node at (4.5,.6){0};
\node at (5,1.75){-1};
\node at (3.2,-.2){-1};
\node at (3.8,-.2){1};
\node at (4.2,-.2){-1};
\node at (4.8,-.2){1};

\draw[gray] (7.5,3) -- (6.5,1);
\draw[gray] (7,2) -- (7.5,1);
\draw[gray] (7.5,3) -- (8,2);
\draw[gray] (6.2,0) -- (6.5,1);
\draw[gray] (6.5,1) -- (6.8,0);
\draw[gray] (7.2,0) -- (7.5,1);
\draw[gray] (7.8,0) -- (7.5,1);
\node at (7.5,2.6){n};
\node at (7, 1.6){0};
\node at (6.5,.6){0};
\node at (7.5,.6){0};
\node at (8,1.75){e};
\node at (6.2,-.2){a};
\node at (6.8,-.2){b};
\node at (7.2,-.25){c};
\node at (7.8,-.2){d};

\draw[gray, ->] (8.3,1.2) arc (120:60:1);

\draw[gray] (10,3) -- (11,1);
\draw[gray] (10,3) -- (9.5,2);
\draw[gray] (10.5,2) -- (10,1);
\draw[gray] (9.7,0) -- (10,1);
\draw[gray] (10.3,0) -- (10,1);
\draw[gray] (11,1) -- (11.3,0);
\draw[gray] (10.7,0) -- (11,1);
\node at (10,2.3){n-1};
\node at (10.5, 1.6){0};
\node at (9.5, 1.8){1};
\node at (9.7, -.2){-1};
\node at (10.3, -.2){1};
\node at (10.7, -.2){-1};
\node at (11.3, -.2){1};
\node at (10,.55){0};
\node at (11,.55){0};

\end{tikzpicture}
\end{center}
\caption{The relations that define elementary equivalence between subdivision trees over $S_{3}$.}
\label{figure:m3}
\end{figure}

To apply one of these transformations to a subdivision tree $T$ is to replace a subtree of the form on the left with a subtree of the form on the right. Here the labels $a$, $b$, $c$ represent the integer labels of the nodes of $T$ that are attached at the leaves labelled by $a$, $b$, $c$ (respectively). An application of the given transformation changes the integer labels of the nodes, as indicated on the right-hand tree. If one of the integers $a$, $b$, $c$ labels a leaf in $T$, then that integer is ignored. We also say that two subdivision trees $T_{1}$ and $T_{2}$ are elementary equivalent over $S_{2}$ if one can be transformed into the other by a sequence of such transformations.

\emph{Elementary equivalence over $S_{3}$} is defined by the tree pairs in Figure \ref{figure:m3}.

In Figure \ref{figure:m3}, the labels on the leaves of the right-hand trees have been abbreviated to avoid creating an over-crowded figure. The leftmost ``$-1$" on the second tree from the left represents ``$a-1$", and so on.
\end{definition}

\begin{remark} \label{remark:inverses} We note that the transformations in Figure \ref{figure:m2} are inverses of each other; similarly for Figure \ref{figure:m3}.
\end{remark}

\begin{example} For instance, Figure \ref{figure:elementaryequivalence} depicts an elementary equivalence between two subdivision trees over $S_{2}$. The right-hand tree is the result of applying the second relation to the left-hand tree at the node labelled by ``$4$". 

\begin{figure} [!h]
\begin{center}
\begin{tikzpicture}
\draw[gray] (7.5,3) -- (6.5,1);
\draw[gray] (7,2) -- (7.5,1);
\draw[gray] (7.5,3) -- (8,2);
\draw[gray] (6.2,0) -- (6.5,1);
\draw[gray] (6.5,1) -- (6.8,0);
\draw[gray] (6.4,-1) -- (6.2,0);
\draw[gray] (5.8,-1) -- (6.2,0);
\draw[gray] (6.6,-1) -- (6.8,0);
\draw[gray] (7.2,-1) -- (6.8,0);
\node at (7.5,2.6){2};
\node at (7, 1.6){4};
\node at (6.5,.5){0};
\node at (6.15,-.65){-1};
\node at (6.85,-.65){3};

\draw[gray, ->] (8.25,1) arc (120:60:1);

\draw[gray] (11,3) -- (10,1);
\draw[gray] (10.5,2) -- (11,1);
\draw[gray] (11,3) -- (11.5,2);
\draw[gray] (9.7,0) -- (10,1);
\draw[gray] (10,1) -- (10.3,0);
\draw[gray](11,1) -- (10.7,0);
\draw[gray](11,1) -- (11.3,0);
\draw[gray](10.7,0) -- (10.4,-1);
\draw[gray](10.7,0) -- (11,-1);
\node at (11,2.6){2};
\node at (10.5, 1.6){3};
\node at (10,.35){-2};
\node at (11,.35){0};
\node at (10.7,-.65){4};

\end{tikzpicture}
\end{center}
\caption{Elementary equivalence between two subdivision trees over $S_{2}$.}
\label{figure:elementaryequivalence}
\end{figure}
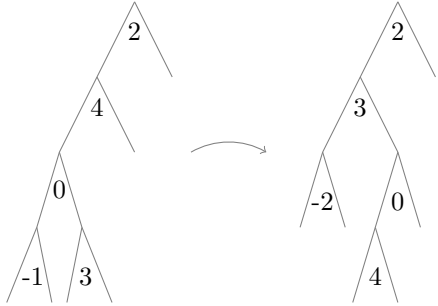
One easily checks that the two trees are indeed equivalent.
\end{example}

\begin{lemma} \label{lemma:elementaryequivalence}
If two subdivision trees $T_{1}, T_{2}$ (over $S_{2}$ or $S_{3}$) are elementary equivalent, then they are equivalent.
\end{lemma}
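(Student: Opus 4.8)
The plan is to reduce the claim to a purely local, one-move verification and then invoke Lemma \ref{lemma:l-rlemma}. Since elementary equivalence is by definition generated by a finite list of single-move transformations (the tree pairs in Figure \ref{figure:m2} for $S_2$ and Figure \ref{figure:m3} for $S_3$), and since ``$\approx$'' is an equivalence relation, it suffices to show that if $T_2$ is obtained from $T_1$ by a single elementary move then $T_1 \approx T_2$. So first I would fix such a single move: it replaces some subtree $U_1$ rooted at a node of $T_1$ by the corresponding subtree $U_2$ on the right-hand side of one of the pictured relations, leaving the rest of the tree (including which side of the move's root we descend from) unchanged.

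The key reduction is that equivalence of subdivision trees is ``local'' in the following sense: if $T_1$ and $T_1'$ differ only in a subtree rooted at some node $v$, with $v$ reached from the root along a common path reading out a prefix word $w \in \{A,B,C,c\}^*$, then the leaves of $T_1$ are the old leaves (shared with $T_1'$) together with $\{w \cdot \ell : \ell \text{ a leaf-word of } U_1\}$, and similarly for $T_1'$ with $U_2$. Left-multiplying all leaf words by the fixed $w$ and applying $w$ to $[0,1)$ is an injective operation on intervals, so $T_1 \approx T_1'$ if and only if the two subtrees $U_1$ and $U_2$ (regarded as subdivision trees in their own right) are equivalent — this is essentially an iterated application of Lemma \ref{lemma:l-rlemma}, peeling off one node at a time down the common path. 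Hence the whole lemma comes down to checking, for each of the (finitely many) relation pairs in Figures \ref{figure:m2} and \ref{figure:m3}, that the left-hand tree and the right-hand tree represent the same partition of $[0,1)$.

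That finitely-many-checks step is the real content, and it is a direct computation with the leaf-word labelling from Remark \ref{remark:subdiv}, using the matrix identities already recorded: for $S_2$ one reads off the leaf words of both sides of a relation and shows the resulting interval lists coincide, invoking $C_2 A A = A C_2$, $C_2 A B = B A c_2$, $C_2 B = B B C_2$ (and their inverses) together with Proposition \ref{proposition:equalityofleaves} to recognize two words as naming the same interval precisely when they differ by a power of $C$; for $S_3$ one does the same with $C_3 AAA = A C_3$, $C_3 AAB = BAA c_3$, $C_3 ABA = BAB C_3$, $C_3 ABB = BBA c_3$, $C_3 B = BBB C_3$. In each case the number of leaves matches on the two sides and one checks the intervals agree position by position from left to right (the leftmost endpoints being $0$, the rightmost being $1$, and the interior breakpoints forced to coincide). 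I would present the $S_2$ verification in reasonable detail as the representative case and remark that the $S_3$ verifications are identical in spirit, differing only in replacing powers of $2$ by powers of $3$ and using the $C_3$-identities.

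The main obstacle I anticipate is bookkeeping rather than conceptual difficulty: correctly translating each pictured tree — including the bottom layer of ``$0$''-labelled nodes that appear in the $S_3$ figures — into its ordered list of leaf words, keeping track of the node-label changes ($a \mapsto a-1$, $b \mapsto b+1$, $c \mapsto c-1$, etc.) so that the generic symbols $a,b,c,d,e$ on the surrounding nodes are handled uniformly, and confirming that a leaf-case of one of these symbols (when $a$, $b$, or $c$ labels an actual leaf rather than a node) genuinely causes no problem because then the corresponding word segment $C^a$ or $C^c$ contributes nothing to the partition by Proposition \ref{proposition:equalityofleaves}. Once the leaf-word lists are written down honestly, verifying the interval equalities is a mechanical application of the displayed matrix identities.
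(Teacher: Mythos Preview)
Your proposal is correct and follows essentially the same approach as the paper. The paper's proof is extremely terse: it simply records the three word identities
\[
C^{n}AC^{a} = C^{n+1}AAC^{a-1},\quad C^{n}BAC^{b} = C^{n+1}ABC^{b+1},\quad C^{n}BBC^{c} = C^{n+1}BC^{c-1},
\]
notes they are easily checked as matrix equalities, and appeals to the leaf-word description of subdivision trees (Remark \ref{remark:subdiv}) to conclude. Your explicit locality reduction (via injectivity of left-multiplication by the common prefix $w$, or equivalently iterated use of Lemma \ref{lemma:l-rlemma} along the shared path) is a valid way to organize the argument, though strictly speaking the paper bypasses it: once the displayed identities hold as transformations, every leaf word of $T_2$ equals the corresponding leaf word of $T_1$ as a map of $[0,1)$, and equivalence is immediate from Definition \ref{definition:equivalentsubdivtrees} without needing to peel down to the move's root.
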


\begin{proof}
The proof that the left-hand transformation in Figure \ref{figure:m2} preserves equivalence relies on the system of equalities
\begin{align*}
C^{n}AC^{a} &= C^{n+1}AAC^{a-1}; \\
C^{n}BAC^{b} &= C^{n+1}BAC^{b+1}; \\
C^{n}BBC^{c} &= C^{n+1}BC^{c-1},
\end{align*}
all of which are easily verified, and from the interpretation of subdivision trees (Remark \ref{remark:subdiv}. The other three verifications follow similarly.
\end{proof}

\subsection{The correspondence between subdivision trees and expansions} \label{subsection:subdivexp}

\begin{theorem} \label{theorem:thelink}
(subdivision trees and expansions) We let $C$ denote either $C_{2}$ or $C_{3}$.
Let $v \in \mathcal{V}$ be the result of a sequence of expansions from $\{ [id_{I},I] \}$; i.e.,  $\{ [id,I] \} \leq v$. It follows that there is some subdivision tree $T$ such that the set $\mathcal{L}$ of labels on the leaves satisfies
\[ v = \{ [\omega, I] \mid \omega \in \mathcal{L} \}. \]
Conversely, any subdivision tree $T$ determines a vertex $v$ by the above equality, and 
$\{ [id_{I}, I] \} \leq v$ for this $v$.

If $\{ [id_{I},I] \} \leq v, v'$ and $T$ and $T'$ are the subdivision trees corresponding to $v$ and $v'$, then $v=v'$ if and only if $T$ and $T'$ are equivalent.
\end{theorem}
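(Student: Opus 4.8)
The plan is to prove the three assertions in order: first that every $v$ above $\{[id_I,I]\}$ arises from a subdivision tree, then the converse, and finally the equivalence criterion. For the first assertion I would induct on the length of an expansion chain $\{[id_I,I]\}=v_0\nearrow v_1\nearrow\cdots\nearrow v_k=v$. The base case is the trivial subdivision tree. For the inductive step, by Remark \ref{remark:explicitexpansion} it suffices to treat a single ``binary'' expansion at one pair $[\omega,I]\in v_{k-1}$, which replaces $[\omega,I]$ by $[\omega C^{n}A,I]$ and $[\omega C^{n}B,I]$ for some $n\in\mathbb Z$. Given the subdivision tree $T_{k-1}$ for $v_{k-1}$ (so $\omega$ is the label of some leaf $\ell$), I form $T_k$ by attaching at $\ell$ a new node labelled $n$ with two leaves below it; by the leaf-labelling rule of Remark \ref{remark:subdiv}, the labels of these two leaves are exactly $\omega C^{n}A$ and $\omega C^{n}B$, and all other leaf labels are unchanged. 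A general (non-binary) expansion with cut set $\mathcal{C}$ is handled by noting it is a composition of binary expansions, or equivalently by attaching the finite binary tree determined by $\mathcal{C}$ under $\ell$ after inserting a node labelled $n$; either way one checks the leaf labels match $\{\omega C^{n}\tau\mid\tau\in\mathcal{C}\}$.

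For the converse, I would again induct, this time on the number of nodes of $T$. The trivial tree gives $v=\{[id_I,I]\}$. If $T$ is nontrivial with root label $n$ and branches $T_\ell$, $T_r$, then (reading leaf labels left to right via Remark \ref{remark:subdiv}) the leaf set of $T$ is $\{C^{n}A\ell_i\}\cup\{C^{n}B\ell'_j\}$ where $\{\ell_i\}$, $\{\ell'_j\}$ are the leaf labels of $T_\ell$, $T_r$. One expands $\{[id_I,I]\}$ once at $[id_I,I]$ using $h=C^n\in\mathbb S(I,I)$ and the cut set $\{A,B\}$ to reach $\{[C^{n}A,I],[C^{n}B,I]\}$; then, observing $[C^{n}A,I]=[id_{I},I]$ relabelled by left-multiplication (or more precisely that everything below is an expansion in the directed set), one applies the inductive hypothesis to each branch and assembles, using Proposition \ref{proposition:welldefinedexpansion} to see the composite is a legitimate expansion. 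The point is just that attaching subtrees corresponds to iterated expansion.

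The third assertion is the substantive one and will be the main obstacle. The ``if'' direction (equivalent trees give equal vertices) is immediate from Proposition \ref{proposition:equalityofleaves}: equivalent trees represent the same collection of intervals $\omega\cdot[0,1)$, and by that proposition two leaf labels $\omega,\omega'$ give the same interval iff $\omega=\omega' C^k$, hence the same pair $[\omega,I]=[\omega C^k,I]$ in $\mathcal B$ (since $C^k\in\mathbb S(I,I)$). So equal interval-partitions force the two vertices to be literally the same set of equivalence classes. For the ``only if'' direction, suppose $v=v'$. The vertex $v$ determines, via $[\omega,I]\mapsto \omega\cdot[0,1)$, the partition of $[0,1)$ associated to $v$ — and this is well-defined on $\mathcal B$ exactly because of Proposition \ref{proposition:equalityofleaves} — so $T$ and $T'$ represent the same partition of $[0,1)$, i.e. $T\approx T'$ by Definition \ref{definition:equivalentsubdivtrees}. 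Here the only delicate point is checking that the map ``subdivision tree $\rightsquigarrow$ its partition of $[0,1)$'' is well-defined and that it factors through $v$: that the leaf labels, read as intervals, genuinely partition $[0,1)$ (this follows from Remark \ref{remark:subdiv} together with the fact, used repeatedly, that each node-label operation $C^n$ fixes $[0,1)$ and $A,B$ split it at a Farey point), and that passing from $T$ to $v$ and back loses no information beyond the $C^k$-ambiguity already quotiented out in $\mathcal B$. Once those bookkeeping facts are in place, the equivalence $v=v'\iff T\approx T'$ is essentially a restatement of Proposition \ref{proposition:equalityofleaves} at the level of finite partitions.
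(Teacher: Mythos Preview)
Your proposal is correct and follows essentially the same approach as the paper: the paper dismisses the bijective correspondence as ``straightforward, in view of Remark \ref{remark:explicitexpansion}'' (your inductions spell this out), and for the equivalence criterion both you and the paper reduce to the fact that $[\omega,I]=[\omega',I]$ iff $\omega=\omega'C^{k}$, invoking Proposition \ref{proposition:equalityofleaves} and the description of $\mathbb{S}(I,I)$. One small correction: the well-definedness of the map $[\omega,I]\mapsto\omega\cdot[0,1)$ comes from Theorem \ref{theorem:structureofstructuresets} (which gives $\mathbb{S}(I,I)=\{C^{k}\mid k\in\mathbb{Z}\}$), not from Proposition \ref{proposition:equalityofleaves} alone.
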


\begin{proof} 
The correspondence between subdivision trees $T$ and vertices $v$ satisfying $\{ [id_{I},I] \} \leq v$ is straightforward, in view of the discussion in Remark \ref{remark:explicitexpansion}. 

We will now show that $v=v'$ if and only if $T$ is equivalent to $T'$. Assume first that $T$ and $T'$ are equivalent. Thus,   
\[ v = \{ [\omega_{1},I], \ldots, [\omega_{n},I] \} \quad \text{and} \quad v' = \{ [\omega'_{1},I], \ldots, [\omega'_{n},I] \}, \] 
where the $\omega_{i}$ are the labels of the leaves of $T$ (listed from left to right) 
and, similarly, $\omega'_{i}$ are the labels of the leaves of $T'$ (also listed from left to right). Since $T$ and $T'$ are equivalent, we have 
\[ \omega_{i} = \omega'_{i} C^{k_{i}}, \]
for $i=1, \ldots, n$ and for some $k_{i} \in \mathbb{Z}$, by Definition \ref{definition:equivalentsubdivtrees} and Proposition \ref{proposition:equalityofleaves}. It follows directly that, for all $i$, $[\omega_{i},I] = [\omega'_{i},I]$, by Definition \ref{definition:B}, letting
$h = C^{k_{i}}$ (since $C^{k_{i}} \in \mathbb{S}(I,I)$). Thus, $v=v'$.

If we carry over the notation from above, the converse essentially follows from the fact that the equality $[\omega_{i},I] = [\omega'_{i},I]$ implies the equality $\omega_{i}=\omega'_{i} C^{k_{i}}$ (for appropriate $k_{i}$); this is a direct consequence of Definition \ref{definition:B} and the description of $\mathbb{S}(I,I)$ from Theorem \ref{theorem:structureofstructuresets}.
\end{proof}

\subsection{A discussion of expansion schemes; the expansion schemes $\mathcal{E}_{i}$ and $\mathcal{E}'_{i}$} \label{subsection:expansionscheme}

The directed set construction from Section \ref{section:directedset}, and the generalizations considered in \cite{FH2}, lead to complexes that are often too difficult to analyze when (for instance) attempting to establish finiteness properties for the acting group. One device for simplifying the complexes appeared in \cite{FH2} under the name of ``expansion schemes". An expansion scheme $\mathcal{E}$ assigns to each pair $[f,D] \in \mathcal{B}$ a collection of expansions. This assignment determines a simplicial complex $\Delta^{\mathcal{E}}$ in which the simplices are chains
\[ v_{1} < v_{2} < \ldots < v_{n} \]
such that the vertices $v_{i}$ ($i=2, \ldots, n$) are all the result of expansions from $v_{1}$ that are allowed by $\mathcal{E}$. Thus, for instance, the trivial expansion scheme, which allows no expansions, results in a discrete set of vertices. At the opposite extreme, an expansion scheme may impose no restraint at all, resulting in the original directed set construction. Since the topology of $\Delta^{\mathcal{E}}$ depends significantly on the choice of $\mathcal{E}$, it would be useful to have a criterion that recognizes when the complex $\Delta^{\mathcal{E}}$ is $n$-connected. The idea of an ``$n$-connected expansion scheme" offers such a criterion. The necessary definitions follow.

We will begin with a general discussion of expansion schemes; the definitions of $\mathcal{E}_{i}$ and $\mathcal{E}'_{i}$ are in Example \ref{example:THEexpansionscheme}. The definition of ``pseudovertex" is from Section 4 of \cite{FH2}, while the other definitions and theorems in this subsection are from Section 6 of \cite{FH2}.

\begin{definition} \label{definition:pseudovertex}
(pseudovertices) 
Let $v = \{ [f_{1}, D_{1}], \ldots, [f_{m},D_{m}] \} \subseteq \mathcal{B}$. We say that $v$
is a \emph{pseudovertex} if the sets $f_{i}(D_{i})$ $(i=1\ldots, m)$ are pairwise disjoint. 
\end{definition}

\begin{remark} \label{remark:pseudovertex} (the partial order on pseudovertices; the action of $\widehat{S}$ on pseudovertices)
The pseudovertices are partially ordered by expansion, which can be defined exactly as it was for 
vertices (Definition \ref{definition:expansion}). The pseudovertices do not form a directed set, since the \emph{support} of a given pseudovertex
\[ \left( f_{1}(D_{1}) \cup \ldots \cup f_{m}(D_{m}) \right) \]
is invariant under expansion. The proof of Theorem \ref{theorem:directedset} still shows that any two pseudovertices with the same support have an upper bound. Thus, the simplicial realization of the set of all pseudovertices is a disjoint union of contractible sets.

There is a (partial) action of $\widehat{S}$ (Definition \ref{definition:locallydetermined}) on $\mathcal{B}$, defined by 
\[ \hat{s} \cdot [f,D] = [\hat{s}f,D]. \]
This action is defined for suitable $[f,D]$ and $\hat{s}$; i.e., for all pairs $[f,D]$ and $\hat{s} \in \widehat{S}$
such that $f(D)$ is a subset of the domain of $\hat{s}$.
\end{remark}

\begin{definition} \label{definition:expansionscheme} (expansion schemes) 
($\mathcal{E}$-expansion; expansion scheme) Let $\mathcal{PV}$ denote the collection of all pseudovertices. 
Assume that  $\mathcal{E}: \mathcal{B} \rightarrow 2^{\mathcal{PV}}$ satisfies (1)-(3), for each $[f,D] \in \mathcal{B}$. We let $b$, rather than $[f,D]$, denote a typical member of $\mathcal{B}$  in order to simplify notation.
\begin{enumerate}
\item Each $w \in \mathcal{E}(b)$ is the result of a sequence of expansions from $\{ b \}$; i.e., for each $w \in \mathcal{E}([f,D])$, $\{ [f,D] \} \leq w$;
\item $\{ b \} \in \mathcal{E}(b)$;
\item ($\widehat{S}$-invariance) for each $\hat{s} \in \widehat{S}$, and each $b \in \mathcal{B}$ for which $\hat{s} \cdot b$ is defined, $\hat{s} \cdot \mathcal{E}(b) = \mathcal{E}(\hat{s} \cdot b)$. 
\end{enumerate}
Let $v \in \mathcal{PV}$; we write $v = \{ b_{1}, \ldots, b_{m} \}$, where $b_{1}$, $\ldots$, $b_{m} \in \mathcal{B}$. We say that $v'$ is a result of \emph{$\mathcal{E}$-expansion from $v$} if there are $v'_{i} \in \mathcal{E}(b_{i})$, for $i = 1, \ldots, m$, such that 
\[ v' = \bigcup_{i=1}^{m} v'_{i}. \]
We say that $\mathcal{E}$ is an \emph{expansion scheme} if
\begin{enumerate}
\item[(4)] for every $[f,D] \in \mathcal{B}$ and every $w_{1}, w_{2} \in \mathcal{E}([f,D])$ such that $w_{1} \leq w_{2}$, $w_{2}$ is the result of $\mathcal{E}$-expansion from $w_{1}$.
\end{enumerate}
\end{definition}

\begin{definition} (the complex $\Delta^{\mathcal{E}}$) \label{definition:DE}
Let $\mathcal{E}$ be an expansion scheme. We let $\Delta^{\mathcal{E}}$ be the subcomplex of the directed set construction made up of \emph{$\mathcal{E}$-simplices}; i.e., simplices
\[ v_{1} < v_{2} < \ldots < v_{m} \]
such that the vertices $v_{j}$ ($j \in \{ 2, \ldots, m \}$) are obtained from $v_{1}$ by 
$\mathcal{E}$-expansion.
\end{definition}

\begin{definition} \label{definition:intervalsubcomplexes} (interval subcomplexes; relative ascending links) 
Let $v'$ and $v''$ be pseudovertices such that $v' \leq v''$ (in the sense of the expansion partial order; see Remark \ref{remark:pseudovertex} and Definition \ref{definition:expansion}). We let $\Delta^{\mathcal{E}}_{[v',v'']}$ denote the set of all $\mathcal{E}$-simplices 
\[ v_{1} < \ldots < v_{m} \]
such that $v' \leq v_{1} < v_{m} \leq v''$. This is the \emph{interval subcomplex} determined by $v'$, $v''$, and $\mathcal{E}$. 

The \emph{ascending link of $v'$ relative to $v''$} is the link of $v'$ in the complex $\Delta^{\mathcal{E}}_{[v', v'']}$.
\end{definition}

\begin{definition} \label{definition:nconnectedE} ($n$-connected expansion schemes)
Let $\mathcal{E}$ be an expansion scheme. We say that $\mathcal{E}$ is \emph{$n$-connected}
if, for each $b \in \mathcal{B}$ and each pseudovertex $v$ such that $\{ b \} \leq v$, the ascending link of $\{ b \}$ relative to $v$ is $(n-1)$-connected.
\end{definition}

\begin{theorem} \label{theorem:nconnected} ($n$-connectedness of $\Delta^{\mathcal{E}}$)
If $\mathcal{E}$ is an $n$-connected expansion scheme, then the complex $\Delta^{\mathcal{E}}$ is $n$-connected.
\end{theorem}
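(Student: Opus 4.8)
\textbf{Proof plan for Theorem \ref{theorem:nconnected}.}

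The plan is to deduce the $n$-connectedness of $\Delta^{\mathcal{E}}$ from the $n$-connectedness hypothesis on $\mathcal{E}$ by a Morse-theoretic filtration argument, following the standard pattern used by Brown, Stein, and others for generalized Thompson groups. The key observation is that $\Delta^{\mathcal{E}}$ is built up from its vertices by adjoining, for each vertex $v_1$, the (relative) cones on the ascending links of $v_1$, and each such cone-attachment is controlled by the $(n-1)$-connectedness of the relevant ascending links. Since $\mathcal{E}$ is $n$-connected, those ascending links are all $(n-1)$-connected, and the attachments do not change connectivity up through dimension $n$.

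First, I would set up a height function on the vertex set of $\Delta^{\mathcal{E}}$. Recall from Theorem \ref{theorem:directedset} that $\mathcal{V}$ is a directed set; moreover every vertex $v$ can be expanded to a vertex of the ``normal form'' type $\hat v = \{[\id_{E_1},E_1],\ldots,[\id_{E_n},E_n]\}$, and conversely every vertex lies above $\{[\id_I,I]\}$ (or the appropriate root) only after a group translate. The natural height of a vertex is the number of leaves in (any) representing subdivision tree, or more precisely the number of pieces $m$ in $v=\{[f_1,D_1],\ldots,[f_m,D_m]\}$; expansion strictly increases this number, so height is strictly monotone along $\nearrow$. Let $\Delta^{\mathcal{E}}_{\le k}$ be the subcomplex spanned by vertices of height $\le k$. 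Because every $\mathcal{E}$-simplex is a chain $v_1<\cdots<v_m$ with all $v_j$ obtained from $v_1$ by $\mathcal{E}$-expansion, the bottom vertex $v_1$ has the strictly smallest height in the simplex; this is what makes the filtration by ``lowest vertex'' well-behaved.

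Second, I would analyze the effect of passing from $\Delta^{\mathcal{E}}_{\le k-1}$ to $\Delta^{\mathcal{E}}_{\le k}$. A simplex of $\Delta^{\mathcal{E}}_{\le k}$ not contained in $\Delta^{\mathcal{E}}_{\le k-1}$ either has its lowest vertex $v_1$ of height exactly $k$ (so it is, together with all its faces, a simplex in the star of $v_1$ among $\mathcal{E}$-simplices having $v_1$ as bottom) or has some higher vertex of height exactly $k$. Organizing these additions by the lowest vertex $v_1$ of height $k$, I would show that $\Delta^{\mathcal{E}}_{\le k}$ is obtained from $\Delta^{\mathcal{E}}_{\le k-1}$ by coning off, for each such $v_1$, the subcomplex of $\Delta^{\mathcal{E}}_{\le k-1}$ consisting of those $\mathcal{E}$-simplices all of whose vertices strictly $\mathcal{E}$-expand $v_1$ — this subcomplex is precisely (the realization of) the ascending link of $v_1$ relative to suitable upper bounds, i.e. a union/direct limit of the ascending links of $\{b\}$ relative to $v$ as in Definition \ref{definition:nconnectedE} after decomposing $v_1=\{b_1,\ldots,b_m\}$ via the join description of $\mathcal{E}$-expansion (an $\mathcal{E}$-expansion of $v_1$ is a choice $v_i'\in\mathcal{E}(b_i)$, so the ascending link of $v_1$ is the join of the ascending links of the $\{b_i\}$). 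A join of $(n-1)$-connected complexes is $(n-1)$-connected (in fact more), so the hypothesis that $\mathcal{E}$ is $n$-connected — i.e. that each ascending link of $\{b\}$ relative to $v$ is $(n-1)$-connected — guarantees that each of these link subcomplexes is $(n-1)$-connected. Coning off an $(n-1)$-connected subcomplex does not affect $\pi_i$ for $i\le n$ (attaching a cone kills nothing in those degrees and the pair is $n$-connected), so by induction on $k$ each $\Delta^{\mathcal{E}}_{\le k}$ has the same $n$-type as $\Delta^{\mathcal{E}}_{\le k_0}$ for the minimal height $k_0$; choosing $k_0$ so that $\Delta^{\mathcal{E}}_{\le k_0}$ is a point (or non-empty and connected) and taking the direct limit over $k$ gives that $\Delta^{\mathcal{E}}$ is $n$-connected.

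\textbf{Main obstacle.} The routine parts are the monotonicity of the height function and the connectivity bookkeeping; the technical heart is the identification, for each new bottom vertex $v_1$ of height $k$, of ``the part of $\Delta^{\mathcal{E}}_{\le k-1}$ that gets coned off when $v_1$ is added'' with the ascending link of $v_1$ appearing in Definition \ref{definition:nconnectedE}, and the verification that this local picture is consistent globally (i.e. that different bottom vertices of the same height attach along disjoint or already-contractible pieces, so the cone attachments can be performed simultaneously). This requires careful use of condition (4) in Definition \ref{definition:expansionscheme} (so that $\mathcal{E}$-simplices with a fixed bottom vertex really do form an honest subcomplex closed under taking the $\mathcal{E}$-expansion structure), the $\widehat S$-invariance (3) (to reduce every vertex to a translate of a normal-form vertex and thereby to the local models $\mathcal{E}(b)$), and the join decomposition of $\mathcal{E}$-expansion noted above. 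I expect this identification — essentially the claim that the inclusion-exclusion over lowest vertices turns the global complex into a sequence of cone attachments along relative ascending links — to be the step that needs the most care, and it is exactly where the argument parallels (and can cite the mechanics of) Section 6 of \cite{FH2}.
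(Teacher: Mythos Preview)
Your overall strategy---height function, Morse-type filtration, join decomposition of ascending links---is the right one and matches the mechanics of Lemma~2.6 in \cite{FH2} (which is what the paper invokes; Theorem~\ref{theorem:nconnected} is quoted from \cite{FH2} and not reproved here, though the paper sketches the argument in the remark following Theorem~\ref{theorem:contractibilityofDelta}). However, there is a directional mismatch in your filtration that makes the proof fail as written.

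You define $\Delta^{\mathcal{E}}_{\le k}$ as the subcomplex spanned by vertices of height at most $k$, and then propose to obtain $\Delta^{\mathcal{E}}_{\le k}$ from $\Delta^{\mathcal{E}}_{\le k-1}$ by coning each new bottom vertex $v_1$ of height $k$ over ``the subcomplex of $\Delta^{\mathcal{E}}_{\le k-1}$ consisting of those $\mathcal{E}$-simplices all of whose vertices strictly $\mathcal{E}$-expand $v_1$.'' But if $v_1$ has height $k$ and $w$ is a strict $\mathcal{E}$-expansion of $v_1$, then $|w| > k$, so $w \notin \Delta^{\mathcal{E}}_{\le k-1}$: the subcomplex you describe is empty. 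In a filtration by increasing maximum height, a newly added vertex is the \emph{top} vertex of any new simplex, and what gets coned is its \emph{descending} link---not the ascending link that the hypothesis controls.

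The fix, and the actual content of Lemma~2.6 in \cite{FH2}, is to reverse the order of attachment. Since the vertex set is directed, $\Delta^{\mathcal{E}}$ is the direct limit, over vertices $v$, of the bounded subcomplexes $\Delta^{\mathcal{E}}_{[\,\cdot\,,v]}$ (all $\mathcal{E}$-simplices with every vertex $\le v$). Each such bounded piece is built starting from the single top vertex $\{v\}$ and adjoining vertices of \emph{decreasing} height; when a vertex $v_1$ is adjoined, one cones off precisely the ascending link of $v_1$ relative to $v$ in the sense of Definition~\ref{definition:intervalsubcomplexes}. Your join decomposition---that this ascending link is the join of the ascending links of the individual $\{b_i\}$ relative to the corresponding pieces of $v$---is correct and is exactly how the paper (in the remark after Theorem~\ref{theorem:contractibilityofDelta}) reduces to the hypothesis of Definition~\ref{definition:nconnectedE}. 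With the direction corrected, each bounded piece is $n$-connected (it starts as a point and each coning is over an $(n-1)$-connected complex), and a direct limit over a directed system of $n$-connected complexes is $n$-connected.
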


\begin{remark} ($n$-connectedness of complexes determined by pseudovertices) The conclusion of Theorem \ref{theorem:nconnected} carries over to complexes determined by pseudovertices in a component-by-component fashion; i.e., each connected component is $n$-connected.
\end{remark}

\begin{example} \label{example:thompsonsV} (the case of Thompson's group $V$)
We consider a basic example of an expansion scheme. Let 
\[ \mathcal{C} = \prod_{n=1}^{\infty} \{ 0, 1\} \]
denote the usual binary Cantor set. The elements of $\mathcal{C}$ are infinite binary strings. We let $\mathcal{C}^{fin}$ denote the set of all finite binary strings. For each $\omega \in \mathcal{C}^{fin}$, we let 
$D_{\omega}$ denote the set of all infinite binary strings that
begin with the prefix $\omega$. For $\omega_{1}, \omega_{2} \in \mathcal{C}^{fin}$, the transformation
$\sigma_{\omega_{1},\omega_{2}} : D_{\omega_{1}}  \rightarrow D_{\omega_{2}}$ removes the prefix $\omega_{1}$ from the input and adds the prefix 
$\omega_{2}$ in its place. We let
\[ S_{V} = \{ \sigma_{\omega_{1},\omega_{2}} \mid \omega_{1}, \omega_{2} \in \mathcal{C}^{fin} \} \cup \{ 0 \}, \]
where $0$ represents the empty function. The set $S_{V}$ is an inverse monoid under composition. The associated set of domains $\mathcal{D}^{+}_{S_{V}}$ consists of all of the 
sets $D_{\omega}$, where $\omega \in \mathcal{C}^{fin}$.

The set of all bijections $\gamma: \mathcal{C} \rightarrow \mathcal{C}$ that are locally determined by $S_{V}$ make up a group, which we denote by $V$. This is Thompson's well-known group $V$, as described in \cite{CFP}. We define an $S_{V}$ structure as follows. For each 
pair $(D_{\omega_{1}}, D_{\omega_{2}})$, we define
\[ \mathbb{S}(D_{\omega_{1}}, D_{\omega_{2}}) = \{ \sigma_{\omega_{1}, \omega_{2}} \}. \]
The verification that this assignment does, indeed, define an $S_{V}$-structure is routine. 

We now define an expansion scheme $\mathcal{E}$. For each $[f,D_{\omega}]$, let
\[ \mathcal{E}([f,D_{\omega}]) = \{ \{ [f,D_{\omega}] \}, \{ [f,D_{\omega 0}], [f,D_{\omega 1}] \} \}. \]
Thus, the set $\mathcal{E}([f,D_{\omega}])$ consists of two pseudovertices: the base pseudovertex $\{ [f,D_{\omega}] \}$, and the pseudovertex obtained by performing the simplest possible expansion at $[f,D_{\omega}]$, namely the expansion that subdivides $D_{\omega}$ into left and right halves ($D_{\omega0}$ and $D_{\omega1}$, respectively). It is straightforward to check that the assignment $\mathcal{E}$ satisfies the conditions of Definition \ref{definition:expansionscheme}. 

A simplex in $\Delta^{\mathcal{E}}$ is a chain 
\[ v_{1} < v_{2} < \ldots < v_{m}, \]
where $v_{1} = \{ [f_{1},D_{\omega_{1}}], \ldots, [f_{n},D_{\omega_{n}}] \}$, 
and each vertex $v_{j}$ $(2 \leq j \leq m)$ can be obtained from $v_{1}$ by, for a given $i \in \{ 1, \ldots, n \}$, either replacing $[f_{i}, D_{\omega_{i}}]$ with its left and right halves (in the sense described above), or leaving $[f_{i}, D_{\omega_{i}}]$ unchanged.
 
It is also straightforward to check that the expansion scheme $\mathcal{E}$ is $n$-connected, for all $n$. Indeed, let $b \in \mathcal{B}$ and let $\{ b \} < v$.   There is a unique $\mathcal{E}$-expansion from $b$, and thus the $1$-simplex connecting $\{ b \}$ to $\{ b_{\ell}, b_{r} \}$ is the star of $\{ b \}$
in $\Delta^{\mathcal{E}}_{[\{ b \}, v]}$. The ascending link of $\{ b \}$ relative to $v$ is therefore always a point.
 It follows from Theorem \ref{theorem:nconnected} that $\Delta^{\mathcal{E}}$ is contractible. 
\end{example}

\begin{example} \label{example:THEexpansionscheme}
We now return to the main examples of this paper. Let $S = S_{2}$, $S_{3}$, $S'_{2}$, or $S'_{3}$, and let the $S$-structure $\mathbb{S}$ be defined as in Definition \ref{definition:structuresets}. 
We will define expansion schemes $\mathcal{E}_{i}$ and $\mathcal{E}'_{i}$, for $i=2,3$. In order to do so, we must first introduce some useful notation.

For each $k \in \mathbb{Z}$, we let $u_{k}$ denote the vertex that corresponds to the subdivision tree consisting of a single caret in which the root is numbered $k$. (The correspondence in question is that of Theorem \ref{theorem:thelink}.) Thus, 
\[ u_{k} = \{ [C^{k}A,I], [C^{k}B,I] \}, \]
where $C = C_{2}$ or $C_{3}$, depending on the semigroup $S$ in question. If $S = S_{2}$ or $S'_{2}$, we let $u_{k-\frac{1}{2}}$ be the vertex corresponding to the subdivision tree consisting of two carets: a top caret (with root labelled $k$), and a second caret, attached to the left child of the root, labelled $0$. Thus,
\[ u_{k-\frac{1}{2}} = \{ [C^{k}AA,I], [C^{k}AB,I], [C^{k}B,I] \}. \]
If $S = S_{3}$ or $S'_{3}$, then $u_{k-\frac{1}{2}}$ is the vertex represented by the subdivision tree consisting of a top caret (with root labelled $k$), and a complete depth-two binary tree, attached at the left child of the root, in which each node is labelled $0$. Thus,
\[ u_{k-\frac{1}{2}} = \{ [C^{k}AAA,I], [C^{k}AAB,I], [C^{k}ABA,I], [C^{k}ABB,I], [C^{k}B,I] \}. \]

With the above conventions, we can set
\[ \mathcal{E}_{i}([id_{I},I]) = \mathcal{E}'_{i}([id_{I},I]) = \{ \{[id_{I},I] \} \} \cup \{ u_{k/2} \mid k \in \mathbb{Z} \}, \]
for $i=2$ or $3$. By extending $\widehat{S}$-equivariantly, we arrive at a definition of
$\mathcal{E}_{i}(b) = \mathcal{E}'_{i}(b)$, for any $b = [f,I]$ and for $i=2$ or $3$, where $I$ is contained in the domain of $f \in \widehat{S}$:
\[ \mathcal{E}_{i}([f,I]) = \mathcal{E}'_{i}([f,I]) = \{ \{[f,I] \} \} \cup \{ f \cdot u_{k/2} \mid k \in \mathbb{Z} \}. \]
The well-definedness of this assignment is easy to check.

It is straightforward to check that $u_{k} \leq u_{k-\frac{1}{2}}$ and $u_{k-1} \leq u_{k-\frac{1}{2}}$, for each integer $k$. (The first inequality is clear; the second inequality follows directly after applying an elementary equivalence.) Moreover, no two of the vertices $u_{k_{1}}$ and $u_{k_{2}}$ are comparable and no two of the vertices $u_{k_{1} - \frac{1}{2}}$ and $u_{k_{2}-\frac{1}{2}}$ are comparable (if $k_{1} \neq k_{2}$). It follows that the simplicial realizations of $\mathcal{E}_{i}(b)$ and $\mathcal{E}'_{i}(b)$ take the form indicated in Figure \ref{figure:E}. 

\begin{figure} [!h]
\begin{center}
\begin{tikzpicture}

\filldraw[lightgray] (4,1.5) -- (0,3) -- (1,3.25) -- cycle;
\filldraw[lightgray] (4,1.5) -- (2,3) -- (3,3.25) -- cycle;
\filldraw[lightgray] (4,1.5) -- (4,3) -- (5,3.25) -- cycle;
\filldraw[lightgray] (4,1.5) -- (6,3) -- (7,3.25) -- cycle;
\filldraw[lightgray] (4,1.5) -- (1,3.25) -- (2,3) -- cycle;
\filldraw[lightgray] (4,1.5) -- (3,3.25) -- (4,3) -- cycle;
\filldraw[lightgray] (4,1.5) -- (5,3.25) -- (6,3) -- cycle;
\filldraw[lightgray] (4,1.5) -- (7,3.25) -- (8,3) -- cycle;

\draw[gray, thick] (4,1.5) -- (0,3);
\draw[gray, thick] (0,3) -- (1,3.25);
\draw[gray, thick] (4,1.5) -- (1,3.25);
\draw[gray, thick] (1,3.25) -- (2,3);
\draw[gray, thick] (4,1.5) -- (2,3);
\draw[gray, thick] (2,3) -- (3,3.25);
\draw[gray, thick] (4,1.5) -- (3,3.25);
\draw[gray, thick] (3,3.25) -- (4,3);
\draw[gray, thick] (4,1.5) -- (4,3);
\draw[gray, thick] (4,3) -- (5,3.25);
\draw[gray, thick] (4,1.5) -- (5,3.25);
\draw[gray, thick] (5,3.25) -- (6,3);
\draw[gray, thick] (4,1.5) -- (6,3);
\draw[gray, thick] (6,3) -- (7,3.25);
\draw[gray, thick] (4,1.5) -- (7,3.25);
\draw[gray, thick] (7,3.25) -- (8,3);
\draw[gray, thick] (4,1.5) -- (8,3);

\node at (0,3.25){-2};
\node at (2,3.25){-1};
\node at (4,3.25){0};
\node at (6,3.25){1};
\node at (8,3.25){2};
\node at (4,1.25){b};

\filldraw [gray] (0,2.25) circle (1.5pt);
\filldraw [gray] (-.5,2.25) circle (1.5pt);
\filldraw [gray] (-1,2.25) circle (1.5pt);

\filldraw [gray] (8,2.25) circle (1.5pt);
\filldraw [gray] (8.5,2.25) circle (1.5pt);
\filldraw [gray] (9,2.25) circle (1.5pt);
\end{tikzpicture}
\end{center}
\caption{Above we have depicted the simplicial complex $\mathcal{E}(b)$ associated to $b=[id_{I},I]$ by the expansion schemes $\mathcal{E}_{i}$ and $\mathcal{E}'_{i}$. An integer $k$ refers to the vertex $u_{k}$.}
\label{figure:E}
\end{figure}
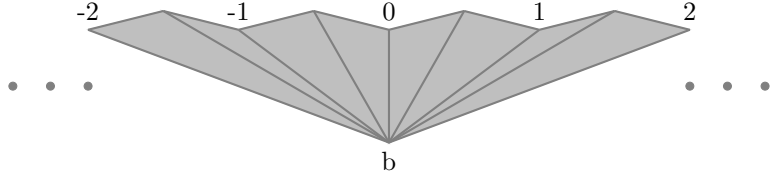

We recall that $[f,\omega I] = [f \omega, I]$ when $\omega \in \{ A, B \}^{\ast}$ (see the beginning of Remark \ref{remark:explicitexpansion}). It follows that the description of $\mathcal{E}_{i}$ is complete for $i=2$ and $3$. 

We next define
\[ \mathcal{E}'_{i}([f,[m,\infty)) = \{ \{[f,[m,\infty)] \}, \{ [f,[m,m+1)], [f,[m+1,\infty)] \} \}. \]
This completes the definition of $\mathcal{E}'_{i}$, for $i=2$ and $3$. 

Observe that, if $\{ [f,[m,\infty)] \} \leq v$, then
\[ v = \{ [f,D] \mid D \in \mathcal{P} \}, \]
where $\mathcal{P} \subseteq \mathcal{D}^{+}_{gen}$ is a finite partition of $[m,\infty)$ into generating domains (see Remark \ref{remark:explicitexpansion}). The partition $\mathcal{P}$ is necessarily a refinement of $\{ [m,m+1), [m+1,\infty) \}$. It follows that the ascending link of $\{ [f,[m,\infty)] \}$  relative to $v$ is always a point, and thus contractible. Thus, the expansion scheme
$\mathcal{E}'_{i}$ is $n$-connected for a given $n$ if and only if
$\mathcal{E}_{i}$ is. We may therefore concentrate on $\mathcal{E}_{i}$ in what follows.
\end{example}

\section{Finite complete presentations of semigroups} \label{section:FP}

In order to understand equivalence between subdivision trees, we will need a full analysis of the monoids $M_{2}$ and $M_{3}$, which, by definition, are generated by the linear fractional transformations that we have denoted $A$, $B$, $C_{i}$, and $c_{i}$ (for $i=2,3$). 

In this section, in contrast to our usual practice, the letters $A$, $B$, and $C$ will be used as formal symbols. We will define abstract monoid presentations 
$\mathcal{P}_{i}$ and $\widehat{\mathcal{P}}_{i}$ ($i=2,3$), with the ultimate goal of proving that the monoid $M(\mathcal{P}_{i})$ defined by $\mathcal{P}_{i}$ is isomorphic to $M_{i}$. (The monoids $M(\widehat{\mathcal{P}}_{i})$ represent a necessary intermediate device.)   

The arguments in this section parallel those from Section 5 of \cite{LM}. 

\subsection{Monoid presentations and string-rewriting systems}

\begin{definition} \label{definition:monoid}
(Monoid presentations)
Let $\Sigma$ be a set. The \emph{free monoid on $\Sigma$}, denoted $\Sigma^{\ast}$ is the set of all positive (possibly empty) words in $\Sigma$, with the operation of concatenation. The empty word is  denoted by $1$. We write $\omega_{1} \equiv \omega_{2}$ if $\omega_{1}, \omega_{2} \in \Sigma^{\ast}$ are identical as words.

Let $\mathcal{R}$ be a set of ordered pairs $(r_{1},r_{2}) \in \Sigma^{\ast} \times \Sigma^{\ast}$. We view such a pair as an equality between words in $\Sigma^{\ast}$, writing $r_{1} = r_{2}$ if either 
$(r_{1},r_{2})$ or $(r_{2}, r_{1})$ is in $\mathcal{R}$. The pair $\mathcal{P} = \langle \Sigma \mid \mathcal{R} \rangle$ is called a \emph{monoid presentation}; the set $\mathcal{R}$ is the set of \emph{relations}. These relations determine an equivalence relation on $\Sigma^{\ast}$ in the following way. If $\omega_{1}, \omega_{2} \in \Sigma^{\ast}$, then we write $\omega_{1} \approx \omega_{2}$ if $\omega_{1} \equiv
\alpha r_{1} \beta$ and $\omega_{2} \equiv \alpha r_{2} \beta$ for some words $\alpha, \beta \in \Sigma^{\ast}$, and $(r_{1},r_{2}) \in \mathcal{R}$. The symmetric, transitive closure of $\approx$, denoted $\sim$, is an equivalence relation on $\Sigma^{\ast}$. We sometimes denote the equivalence class of a word $\omega$ by $[\omega]$.

The concatenation operation on $\Sigma^{\ast}$ determines a well-defined associative operation on the set of equivalence classes $\Sigma^{\ast} / \sim$. We let $M(\mathcal{P})$ denote the set of these equivalence classes, with the operation induced by concatenation. The set $M(\mathcal{P})$ is a monoid with respect to this operation, called the \emph{monoid determined by $\mathcal{P}$}.
\end{definition}

\begin{definition} (Rewrite Systems; String-Rewriting Systems)
A \emph{rewrite system} is a directed graph $\Gamma$. We allow loops and multiple edges. If $v_{1}$ and $v_{2}$ are vertices of $\Gamma$, we write $v_{1} \rightarrow v_{2}$ if there is a directed edge issuing from $v_{1}$ and terminating at $v_{2}$. We write $v_{1} \dot{\rightarrow} v_{2}$ if there is a directed edge path from $v_{1}$ to $v_{2}$. Equivalently, $\dot{\rightarrow}$ is the transitive closure of $\rightarrow$. 

Let $\mathcal{P} = \langle \Sigma \mid \mathcal{R} \rangle$ be a monoid presentation. We define a rewrite system $\Gamma(\mathcal{P})$  as follows. The  vertex set of $\Gamma(\mathcal{P})$ is $\Sigma^{\ast}$. There is a directed edge from $\omega_{1}$ to $\omega_{2}$ if 
$\omega_{1} \equiv \alpha r_{1} \beta$ and $\omega_{2} \equiv \alpha r_{2} \beta$, where $\alpha, \beta \in \Sigma^{\ast}$ and $(r_{1}, r_{2}) \in \mathcal{R}$. The directed graph $\Gamma(\mathcal{P})$ is called the \emph{string-rewriting system} associated to the monoid presentation $\mathcal{P}$.  
\end{definition}

\begin{remark}
Let $\dot{\leftrightarrow}$ denote the symmetric, transitive closure of $\rightarrow$. Thus, $\dot{\leftrightarrow}$ is an equivalence relation on the vertices of $\Gamma(\mathcal{P})$.
The above definitions easily show that the relation $\dot{\leftrightarrow}$ coincides with $\sim$ on $\Sigma^{\ast}$. In other words, equivalence classes of words in $\Sigma^{\ast}$ modulo $\approx$ are in one-to-one correspondence with (undirected) path components of $\Gamma(\mathcal{P})$. 

In view of this close identification between the monoid $M(\mathcal{P})$ and the string-rewriting system
$\Gamma(\mathcal{P})$, it causes no harm to write $r_{1} \rightarrow r_{2}$ for a relation $(r_{1}, r_{2}) \in \mathcal{R}$.
\end{remark}

\begin{definition}
(terminating; confluent; locally confluent; reduced)
A rewrite system $\Gamma$  is \emph{terminating} if every sequence of vertices $v_{1} \rightarrow v_{2} \rightarrow v_{3} \rightarrow \ldots$ is finite. We say $\Gamma$ is \emph{confluent} if whenever 
$v_{1} \dot{\rightarrow} w_{1}$ and $v_{1} \dot{\rightarrow} w_{2}$, there is some $v_{2}$ such that $w_{1} \dot{\rightarrow} v_{2}$ and $w_{2} \dot{\rightarrow} v_{2}$. 
   We say $\Gamma$ is \emph{locally confluent} if whenever 
$v_{1} \rightarrow w_{1}$ and $v_{1} \rightarrow w_{2}$, there is some $v_{2}$ such that $w_{1} \dot{\rightarrow} v_{2}$ and $w_{2} \dot{\rightarrow} v_{2}$. 

A vertex $v$ of $\Gamma$ is called \emph{reduced} if there is no directed edge issuing from $v$.

A rewrite system is \emph{complete} if it is terminating and confluent. We say that a monoid presentation
$\mathcal{P}$ is complete if the associated string-rewriting system $\Gamma(\mathcal{P})$ is complete.
\end{definition}

\begin{theorem} \cite{Newman}
If the rewrite system $\Gamma$ is terminating and locally confluent, then $\Gamma$ is confluent. \qed
\end{theorem}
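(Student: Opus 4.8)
The statement is Newman's Lemma (the Diamond Lemma), and the plan is to prove it by well-founded induction, which is exactly what termination provides: since every forward chain $v_1 \rightarrow v_2 \rightarrow \cdots$ is finite, the relation $\rightarrow$ is well-founded, so one may argue by induction on a vertex $v$ with respect to the order ``properly reachable from $v$''. Call a vertex $v$ \emph{confluent} if, whenever $v$ reaches vertices $w_1$ and $w_2$ (in zero or more steps), there is a vertex $v_2$ reached from both $w_1$ and $w_2$; proving that every vertex is confluent is precisely the desired conclusion. Throughout I would work with the reflexive–transitive closure of $\rightarrow$ rather than the transitive closure $\dot{\rightarrow}$ of the text, so that ``reaches in zero steps'' is allowed — this is the reading under which the statements of confluence and local confluence behave correctly at reduced vertices, and it costs nothing.

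The inductive step carries the whole content. Fix a vertex $v_1$ and assume every vertex properly reachable from $v_1$ is confluent. Suppose $v_1$ reaches $w_1$ and $w_2$. If either reduction is empty, say $w_1 = v_1$, take $v_2 = w_2$ and we are done; so assume both reductions begin with an edge, $v_1 \rightarrow x_1$ followed by a reduction $x_1$ to $w_1$, and $v_1 \rightarrow x_2$ followed by a reduction $x_2$ to $w_2$. Apply local confluence to the peak $x_1 \leftarrow v_1 \rightarrow x_2$ to obtain a vertex $y$ reached from both $x_1$ and $x_2$. Since $v_1 \rightarrow x_1$, the vertex $x_1$ lies properly below $v_1$, hence is confluent; applying confluence of $x_1$ to the two vertices $w_1$ and $y$ (both reached from $x_1$) yields $z$ reached from both $w_1$ and $y$. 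Likewise $x_2$ is confluent, and it reaches both $w_2$ and — via $y$, then via the reduction $y$ to $z$ — the vertex $z$; applying confluence of $x_2$ to $w_2$ and $z$ produces $v_2$ reached from both. Then $w_1$ reaches $z$ reaches $v_2$, and $w_2$ reaches $v_2$, which completes the step. By well-founded induction every vertex is confluent, so $\Gamma$ is confluent.

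The step I expect to be the genuine obstacle is not any calculation but getting the induction organized correctly: the tempting move is to try to ``tile'' the two arbitrary reductions $v_1 \rightarrow^{*} w_1$ and $v_1 \rightarrow^{*} w_2$ one step at a time, which does not close up without extra hypotheses. The right maneuver is to close \emph{only the initial peak} $x_1 \leftarrow v_1 \rightarrow x_2$ using local confluence, and then hand the two remaining — and strictly smaller — peaks rooted at $x_1$ and at $x_2$ to the inductive hypothesis. A secondary point deserving an explicit sentence is the legitimacy of Noetherian induction here, i.e., that ``no infinite forward chain'' yields a well-founded reachability relation on which the induction principle applies; this is immediate from the definition of terminating but should be stated. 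Everything past that is routine diagram-chasing.
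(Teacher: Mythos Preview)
Your proof is correct and is the standard Huet-style argument for Newman's Lemma via well-founded induction. The paper, however, does not prove this theorem at all: it is stated with a citation to \cite{Newman} and closed immediately with a \qed, so there is no proof in the paper to compare against. Your write-up could stand in for the omitted argument; the remark about using the reflexive--transitive closure (so that empty reductions are handled cleanly) is a sensible clarification given how the paper phrased $\dot{\rightarrow}$.
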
 

\begin{corollary} \label{corollary:unique} (unique reduced forms) 
If $\mathcal{P} = \langle \Sigma \mid \mathcal{R} \rangle$ is a complete monoid presentation, then any connected component of $\Gamma(\mathcal{P})$ contains a unique reduced word.

In particular, any word $\omega \in \Sigma^{\ast}$ is equivalent to a unique reduced word modulo $\approx$. \qed  
\end{corollary}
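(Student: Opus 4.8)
The plan is to derive both assertions from the Church--Rosser property: \emph{if $\Gamma(\mathcal{P})$ is confluent and $u \dot{\leftrightarrow} v$, then there is a word $w$ reachable from both $u$ and $v$ by directed edge-paths}, where throughout we allow such paths to be trivial (of length zero). The existence half uses only termination: starting from any vertex $\omega$ of a given component $\mathcal{C}$, repeatedly follow an outgoing directed edge; since $\Gamma(\mathcal{P})$ is terminating the process halts, necessarily at a vertex with no outgoing edge, i.e.\ a reduced word, which lies in $\mathcal{C}$. So every component contains at least one reduced word.

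To establish Church--Rosser I would induct on the length $n$ of an undirected edge-path from $u$ to $v$. The case $n=0$ is trivial. For the inductive step, write the path as a zigzag $u \dot{\leftrightarrow} v'$ of length $n-1$ followed by a single edge between $v'$ and $v$; the inductive hypothesis yields $w'$ with $u \dot{\rightarrow} w'$ and $v' \dot{\rightarrow} w'$. If that last edge is $v \rightarrow v'$, then $v \dot{\rightarrow} w'$ and we take $w \equiv w'$. If it is $v' \rightarrow v$, then both $v' \dot{\rightarrow} w'$ and $v' \dot{\rightarrow} v$, so confluence supplies $w$ with $w' \dot{\rightarrow} w$ and $v \dot{\rightarrow} w$; concatenating directed paths also gives $u \dot{\rightarrow} w$. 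Uniqueness is then immediate: if $u$ and $v$ are reduced words in the same component, Church--Rosser provides $w$ with $u \dot{\rightarrow} w$ and $v \dot{\rightarrow} w$; since a reduced word has no outgoing edge, the only directed path issuing from it is trivial, forcing $w \equiv u$ and $w \equiv v$, hence $u \equiv v$. The ``in particular'' clause follows because, by the Remark after the definition of $\Gamma(\mathcal{P})$, the undirected path component of $\omega$ is exactly its equivalence class under $\sim$.

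The only place that calls for any care is the induction for Church--Rosser --- specifically the case split on the orientation of the last edge of the zigzag, together with the (harmless) convention that directed paths may be trivial, so that the base cases of the induction and the reducedness argument line up. Everything else is a routine unwinding of the definitions of ``terminating'', ``confluent'', and ``reduced''; alternatively, the implication ``confluent $\Rightarrow$ Church--Rosser'' could simply be quoted as a standard fact from the theory of rewriting systems, in the same spirit as the citation to Newman's lemma.
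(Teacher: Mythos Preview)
Your argument is correct. Note, however, that the paper does not actually supply a proof of this corollary: it is stated with a terminal \qed\ and treated as a standard consequence of completeness, in the same spirit as the citation of Newman's lemma immediately before it. Your write-up spells out exactly the standard proof one would give if asked --- termination yields existence of a reduced form, confluence is promoted to the Church--Rosser property by induction on zigzag length, and uniqueness follows because reduced words admit only trivial forward paths. So there is no divergence of approach to compare; you have simply filled in what the paper leaves implicit.
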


\subsection{Basic Definitions of the Rewrite Systems}

\begin{definition} \label{definition:gamma2,3}
We define monoid presentations, $\mathcal{P}_{2}$, 
$\widehat{\mathcal{P}}_{2}$, $\mathcal{P}_{3}$, and $\widehat{\mathcal{P}}_{3}$, as follows.

\begin{enumerate}
\item $\mathcal{P}_{2} = \langle A, B, C_{2}, c_{2} \mid \mathcal{R}_{2} \rangle$, where $\mathcal{R}_{2}$ consists of the relations appearing in the top box of Table \ref{table:R2}.
\item $\widehat{\mathcal{P}}_{2} = \langle A, B, C_{2}, a, b, c_{2}, 0 \mid 
\widehat{\mathcal{R}}_{2} \rangle$, where $\widehat{\mathcal{R}}_{2}$ consists of the relations that appear in Table \ref{table:R2}.
\item $\mathcal{P}_{3} = \langle A, B, C_{3}, c_{3} \mid \mathcal{R}_{3} \rangle$, where $\mathcal{R}_{3}$ consists of the relations appearing in the top box of Table \ref{table:R3}.
\item $\widehat{\mathcal{P}}_{3} = \langle A, B, C_{3}, a, b, c_{3}, 0 \mid 
\widehat{\mathcal{R}}_{3} \rangle$, where $\widehat{\mathcal{R}}_{3}$ consists of the relations that appear in Table \ref{table:R3}.
\end{enumerate}
We note that the occurrences of ``$X$" in the tables represent arbitrary generators.
\end{definition}

\begin{table} [!h]
\begin{center}
\begin{tabular} {|lll|}
\hline
 && \\
$C_{2}AA \rightarrow AC_{2}$ & $C_{2}B \rightarrow BBC_{2}$ &
$C_{2}AB \rightarrow BAc_{2}$ \\ &&\\  $c_{2}A \rightarrow AAc_{2}$ &
$c_{2}BB \rightarrow Bc_{2}$ & 
$c_{2}BA \rightarrow ABC_{2}$ \\ &&\\
$C_{2} c_{2} \rightarrow 1$  &
$c_{2} C_{2} \rightarrow 1$ & \\ && \\
\hline 
&& \\
$C_{2}aa \rightarrow aC_{2}$ & $C_{2}b \rightarrow bbC_{2}$ &
$C_{2}ab \rightarrow bac_{2}$ \\ && \\
 $c_{2}a \rightarrow aac_{2}$ &
$c_{2}bb \rightarrow bc_{2}$ & 
$bA \rightarrow 0$ \\ && \\
$aB \rightarrow 0$ &
$bB \rightarrow 1$  &
$aA \rightarrow 1$ \\ && \\ $0X \rightarrow 0$   &
$X0 \rightarrow 0$ & \\ && \\
\hline  
\end{tabular}
\end{center}
\vspace{5pt}
\caption{The relations of $\widehat{\mathcal{R}}_{2}$. The relations in the top box are $\mathcal{R}_{2}$. The ``$X$'' stands for any of the generators.}
\label{table:R2}
\end{table}

\begin{table}[!t]
\begin{center}
\begin{tabular}{|lll|}
\hline
&& \\
$C_{3}AAA \rightarrow AC_{3}$ & 
$C_{3}AAB \rightarrow BAAc_{3}$ &
$C_{3}ABA \rightarrow BABC_{3}$ \\ && \\
$C_{3}ABB \rightarrow BBAc_{3}$ &
$C_{3}B \rightarrow BBBC_{3}$ &
$c_{3}A \rightarrow AAAc_{3}$ \\ && \\
$c_{3}BAA \rightarrow AABC_{3}$ &
$c_{3}BAB \rightarrow ABAc_{3}$ &
$c_{3}BBA \rightarrow ABBC_{3}$  \\ && \\
$c_{3}BBB \rightarrow Bc_{3}$ &
$C_{3} c_{3} \rightarrow 1$ &
$c_{3} C_{3} \rightarrow 1$ \\ && \\
\hline
 && \\
$c_{3}a \rightarrow aaac_{3}$ & 
$C_{3}aab \rightarrow baac_{3}$ &
$c_{3}bab \rightarrow abac_{3}$ \\ && \\
$C_{3}abb \rightarrow bbac_{3}$ &
$c_{3}bbb \rightarrow bc_{3}$ &
$C_{3}aaa \rightarrow aC_{3}$ \\ && \\
$c_{3}baa \rightarrow aabC_{3}$ &  
$C_{3}aba \rightarrow babC_{3}$ &
$c_{3}bba \rightarrow abbC_{3}$ \\ && \\
$C_{3}b \rightarrow bbbC_{3}$ &
$C_{3} c_{3} \rightarrow 1$ &
$c_{3} C_{3} \rightarrow 1$ \\ && \\
$0X \rightarrow 0$ & $X0 \rightarrow 0$ &
$bB \rightarrow 1$ \\ && \\
$aA \rightarrow 1$ &
$aB \rightarrow 0$ & $bA \rightarrow 0$ \\ &&\\ 
\hline
\end{tabular}
\vspace{5pt}
\caption{The relations of $\widehat{\mathcal{R}}_{3}$. The relations in the top box are $\mathcal{R}_{3}$.}
\label{table:R3}
\end{center}
\end{table}

\begin{proposition}
The string-rewriting systems  $\Gamma(\mathcal{P}_{2})$, $\Gamma(\mathcal{P}_{3})$,
$\Gamma(\widehat{\mathcal{P}}_{2})$, and $\Gamma(\widehat{\mathcal{P}}_{3})$ are locally confluent and terminating. In particular, each word $\omega$ has a unique reduced form $r(\omega)$.
\end{proposition}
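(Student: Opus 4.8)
The plan is to exhibit, for each of the four presentations, a reduction order that proves termination, and then to verify local confluence by a finite critical-pair check; Newman's theorem (\cite{Newman}) upgrades this to confluence, and Corollary~\ref{corollary:unique} then delivers the ``in particular''.

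\emph{Termination.} I would use a monotone interpretation into $(\bn,\le)$. Assign to each generator a strictly increasing function $\bn\to\bn$: let $P_A=P_B=P_a=P_b$ be $t\mapsto t+1$, let $P_{C_2}=P_{c_2}=P_{C_3}=P_{c_3}$ be $t\mapsto 4t+1$, and let $P_0$ be the identity. To a word $X_1\cdots X_n$ attach the composite $\tau=P_{X_1}\circ\cdots\circ P_{X_n}$ and then the pair $\bigl(\tau(0),\,n\bigr)\in\bn\times\bn$, ordered lexicographically. One checks rule by rule in Tables~\ref{table:R2} and~\ref{table:R3} that $\tau_{r_1}(s)\ge\tau_{r_2}(s)$ for all $s\in\bn$ whenever $r_1\rightarrow r_2$ is a relation, with strict inequality in every case except the instance of $0X\rightarrow 0$ (or $X0\rightarrow 0$) with $X=0$, where the two interpretations agree but the length drops by one. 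For example $\tau_{C_2B}(s)=4s+5>4s+3=\tau_{BBC_2}(s)$, $\tau_{C_3B}(s)=4s+5>4s+3=\tau_{BBBC_3}(s)$, and the ``inverse'' relations $C_ic_i\rightarrow 1$ give $\tau(s)=16s+5>s$. Since each $P_X$ is monotone, replacing a factor $r_1$ by $r_2$ inside a word $\alpha r_1\beta$ never increases $\tau(0)$ and alters the length by $|r_2|-|r_1|$ independently of $\alpha,\beta$; hence every rewrite step strictly decreases the lexicographic pair, and, $\bn\times\bn$ being well-ordered, no infinite rewrite sequence exists.

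\emph{Local confluence.} By Newman's theorem it suffices to prove local confluence, and because the system terminates this reduces to checking that every critical pair --- obtained whenever a proper suffix of one left-hand side equals a proper prefix of another, or one left-hand side is a factor of another --- is joinable. For $\mathcal{P}_2$ and $\mathcal{P}_3$ every left-hand side either begins with $C_i$ or $c_i$ and ends with $A$ or $B$, or is one of $C_ic_i$, $c_iC_i$; thus overlaps occur only at an interior $C_i$ or $c_i$, and the list is short. Each resolves by a direct computation; e.g.\ over $S_2$, the overlap $C_2c_2A$ reduces as $C_2c_2A\rightarrow A$ and as $C_2c_2A\rightarrow C_2AAc_2\rightarrow AC_2c_2\rightarrow A$, while $c_2C_2B$ reduces as $c_2C_2B\rightarrow B$ and as $c_2C_2B\rightarrow c_2BBC_2\rightarrow Bc_2C_2\rightarrow B$. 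For $\widehat{\mathcal{P}}_2$ and $\widehat{\mathcal{P}}_3$ the relations involving $a$, $b$, and $0$ produce further overlaps (for instance $c_iaA$, $0C_iAA$, and overlaps among $aA\rightarrow1$, $aB\rightarrow0$, $bB\rightarrow1$, $bA\rightarrow0$, $0X\rightarrow0$, $X0\rightarrow0$), but each again rewrites to a common reduced word by the same mechanical process. Running through this finite list establishes local confluence in all four cases.

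Combining the two parts, Newman's theorem gives confluence, so each of $\Gamma(\mathcal{P}_2)$, $\Gamma(\mathcal{P}_3)$, $\Gamma(\widehat{\mathcal{P}}_2)$, $\Gamma(\widehat{\mathcal{P}}_3)$ is complete; Corollary~\ref{corollary:unique} then shows that every word $\omega$ is $\approx$-equivalent to a unique reduced word $r(\omega)$. I expect the choice of termination order to be the one point requiring real thought: word length, the number of $C_i$/$c_i$ letters, and ``number of $A$/$B$ letters to the right of each $C_i$/$c_i$'' all fail, because several relations lengthen the word or duplicate $A$/$B$ letters, and it is precisely the super-linear interpretation of $C_i$ and $c_i$ that absorbs these effects while the length serves only to break the one remaining tie. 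The remainder --- enumerating and joining all critical pairs --- is routine but, especially for $\widehat{\mathcal{P}}_3$ with its larger relation set, is the bulk of the labor.
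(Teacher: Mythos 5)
Your overall strategy is the same as the paper's: prove termination, check local confluence on critical pairs, and invoke Newman's lemma together with Corollary \ref{corollary:unique}. Your termination argument is in fact more solid than the paper's, which only observes that each relation ``moves $C$ toward the end of the word or shortens it''; as you correctly note, the naive measures suggested by that remark (length, or the number of letters to the right of each $C_{i}$/$c_{i}$) are not monotone under rewriting in context, and your lexicographic pair built from the monotone interpretation $P_{A}(t)=t+1$, $P_{C_{i}}(t)=4t+1$ does strictly decrease under every rule (the coefficient $4$ is exactly what is needed for $C_{3}B\rightarrow BBBC_{3}$ and $c_{3}A\rightarrow AAAc_{3}$). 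For $\mathcal{P}_{2}$ and $\mathcal{P}_{3}$ your analysis of the overlaps is also right: the only critical pairs come from $C_{i}c_{i}$ and $c_{i}C_{i}$, and all of them join.

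The gap is in the local-confluence claim for $\widehat{\mathcal{P}}_{2}$ and $\widehat{\mathcal{P}}_{3}$, which you assert resolves ``by the same mechanical process'' but do not carry out. With the relations exactly as printed in Table \ref{table:R2}, the overlap of $c_{2}a\rightarrow aac_{2}$ with $aB\rightarrow 0$ inside the word $c_{2}aB$ yields the pair $(aac_{2}B,\, c_{2}0)$; the second reduces to $0$, but $aac_{2}B$ contains no left-hand side (note $c_{2}B$ alone is not a rule, only $c_{2}BB$ and $c_{2}BA$ are) and so is irreducible and distinct from $0$. Similarly $C_{2}abB$ reduces both to the irreducible word $C_{2}a$ (via $bB\rightarrow 1$) and to the irreducible word $bac_{2}B$ (via $C_{2}ab\rightarrow bac_{2}$), two distinct normal forms for the same element of $\widehat{M}_{2}$. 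So the ``routine'' critical-pair check is precisely where the content lies, and carrying it out shows that the hatted systems, as printed, are not locally confluent: either additional rules (e.g.\ ones handling $c_{i}a$ and $C_{i}ab$ followed by capital letters) must be added, or the claim must be restricted to the words actually used later in the paper. To be fair, the paper's own proof is an even briefer sketch that does not detect this either, but since your write-up explicitly asserts that these mixed lower/upper-case overlaps all join, the assertion as it stands is false and needs repair before the argument is complete.
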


\begin{proof}
The proof is straightforward, but somewhat tedious. We will sketch the basic ideas.

One important observation is that, for a given word $\omega$ in the generators, occurrences in $\omega$ of the lefthand sides of the relations are generally pairwise disjoint. In such cases, local confluence is trivial to check.

A typical case of overlap involves relations such as $cC \rightarrow 1$ and $Cc \rightarrow 1$. A case of overlap such as $cCc$ is easily seen to reduce to $c$, no matter whether $cC \rightarrow 1$ is applied before $Cc \rightarrow 1$, or the reverse. This implies that the local confluence condition is satisfied in such a case, as well.

The ``terminating" condition follows from the fact that every relation either ``moves" $C$ closer to the end of the word (possibly changing the occurrence of $C$ to $c$ in the process), or shortens the word.  
\end{proof}

\begin{definition}
(The monoids $M_{2}$ and $M_{3}$) Let $T_{A} : [0,1) \rightarrow [0,1/2)$ be the transformation defined by the rule
\[ T_{A}(x) = \frac{x}{x+1}. \]
Thus, $T_{A}$ is exactly the transformation denoted $A$ in Section \ref{section:family}. We similarly define $T_{B}$, $T_{C_{2}}$, and $T_{C_{3}}$ as $B$, $C_{2}$, and $C_{3}$ were defined in Section \ref{section:family}.

For $i=2,3$, we let $M_{i}$ be the monoid generated by the transformations 
$T_{A}$, $T_{B}$, $T_{C_{i}}$; i.e., the collection of functions generated by these transformations under the operation of concatenation.

For $i=2,3$, we let $\widehat{M}_{i}$ be the inverse monoid generated by the transformations
$T_{A}$, $T_{B}$, $T_{C_{i}}$; i.e., the collection of functions generated by these transformations and their inverses, under the operation of concatenation.
\end{definition} 

\begin{definition}
(The maps $\pi_{2}$, $\hat{\pi}_{2}$, $\pi_{3}$, $\hat{\pi}_{3}$, and $\pi$)
For each $X \in \{ A, B, C_{2}, C_{3} \}$, we set $\pi(X) = T_{X}$. We extend this map to 
the lower-case letters $a$, $b$, $c_{2}$, $c_{3}$ by sending each to the relevant inverses; i.e.,
$\pi(a) = T^{-1}_{A}$, $\pi(b) = T^{-1}_{B}$, etc. We define $\pi(0)$ to be the empty function (with empty domain and codomain).  

For $i=2,3$, we define monoid homomorphisms $\pi_{i}: M(\mathcal{P}_{i}) \rightarrow M_{i}$ by letting $\pi_{i}$ agree with $\pi$ on the relevant generating sets. We similarly 
define monoid homomorphisms $\hat{\pi}_{i}: M(\widehat{\mathcal{P}}_{i}) \rightarrow \widehat{M}_{i}$ (for $i=2,3$) by letting $\hat{\pi}_{i}$ agree with $\pi$ on the relevant generating sets.

We note that the homomorphisms $\hat{\pi}_{i}$ restrict to $\pi_{i}$, for $i=2,3$ (and, indeed, $M(\mathcal{P}_{i})$ is a submonoid of $M(\widehat{\mathcal{P}}_{i})$, as the latter remark implies).
\end{definition}

\begin{remark} \label{remark:surjective}
The proof that $\pi_{i}$ and $\hat{\pi}_{i}$ ($i=2,3$) are monoid homomorphisms depends on showing that the defining relations of $M(\mathcal{P}_{i})$ and $M(\widehat{\mathcal{P}}_{i})$ are satisfied by their images in $M_{i}$ and $\widehat{M}_{i}$. This verification is routine, and is left to the reader.

It is clear that the maps $\pi_{i}$ and $\hat{\pi}$ are surjective. 
\end{remark}

\begin{remark}
Note that, although $T^{-1}_{B} T_{B} = 1$ (where ``$1$'' here denotes the identity function on $[0,1]$), $T_{B}T^{-1}_{B} = id_{[1/2, 1)} \neq 1$. Similarly, $T_{A}T^{-1}_{A} = id_{[0,1/2)}$.

In a similar vein, the transformation $T_{B} T^{-1}_{A} \neq T^{-1}_{A} T_{B}$, etc.
\end{remark}

\begin{remark}
The rewrite rules $1X \rightarrow X$ and $X1 \rightarrow X$ are implicit in the definitions of $M(\mathcal{P}_{i})$ and $M(\widehat{\mathcal{P}}_{i})$. It is technically unnecessary to include them, since ``$1$" is simply notation for the empty string.
\end{remark}

\subsection{The ``no potential cancellations" condition}

Throughout this subsection, we will write ``$C$" in place of $C_{2}$ or $C_{3}$, and similarly write ``$c$" in place of $c_{2}$ or $c_{3}$.

\begin{definition} ($C$-tracks)
A subword $\omega'$ of $\omega \in \{ A, B, C, a, b, c, 0 \}^{\ast}$ is called a \emph{$C$-track}
if 
\begin{enumerate}
\item $\omega'$ contains at most one occurrence of $C$ or $c$ (not both);
\item any occurrence of $C$ or $c$ is at the beginning of the word $\omega'$;
\item $\omega'$ is a maximal subword with respect to properties (1) and (2).
\end{enumerate}
\end{definition}

\begin{remark} Any word $\omega \in \{ A, B, C, a, b, c, 0 \}^{\ast}$ has a unique decomposition
\[ \omega \equiv \omega_{1} \ldots \omega_{n} \]
as a product of $C$-tracks.
\end{remark}

\begin{definition} (\cite{LM}, Definition 5.7) (advancing an occurrence of $C$ or $c$) (
To \emph{advance} an occurrence of $C$ (or $c$) is to apply one of the relations from Definition \ref{definition:gamma2,3}, other than those of the form $Cc \rightarrow 1$,
$cC \rightarrow 1$, $0X \rightarrow 0$, and $X0 \rightarrow 0$, to a subword containing that occurrence of $C$ or $c$. 
\end{definition}

\begin{definition} (\cite{LM}, Definition 5.8) (no potential cancellations)
Assume that $\omega \in \{ A, B, C, c \}^{\ast}$. Let 
\[ \omega \equiv \omega_{1} \ldots \omega_{n} \]
be the unique decomposition into $C$-tracks. We say that $\omega$ has \emph{no potential cancellations} if the words
\[ r(\omega_{1})\omega_{2} \ldots \omega_{n}, \quad  \omega_{1}r(\omega_{2})\ldots \omega_{n}, \quad \ldots \quad \omega_{1}\omega_{2}\ldots r(\omega_{n}) \]
contain no occurrences of $cC$ or $Cc$ as subwords.
\end{definition}

\begin{proposition} (\cite{LM}, Lemma 5.9) \label{proposition:npc}
If $\omega \in \{A, B, C, c \}^{\ast}$ has no potential cancellations and $\omega'$ is the result of advancing a $c$ or $C$ exactly once, then $\omega'$ has no potential cancellations.
\end{proposition}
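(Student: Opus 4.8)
The plan is to carry out a case analysis on the location of the advance relative to the $C$-track decomposition of $\omega$, mirroring the argument of Lodha--Moore (\cite{LM}, Lemma 5.9) but adapted to the relation sets $\mathcal{R}_{2}$ and $\mathcal{R}_{3}$ of Definition \ref{definition:gamma2,3}. Write $\omega \equiv \omega_{1}\cdots\omega_{n}$ for the unique $C$-track decomposition. An advance of a $c$ or $C$ applies one of the relations of Definition \ref{definition:gamma2,3} other than $Cc\to 1$, $cC\to 1$, $0X\to 0$, $X0\to 0$; inspecting Tables \ref{table:R2} and \ref{table:R3}, every such relation has the shape $C\sigma \to \tau C^{\epsilon}$ (with $\epsilon=\pm1$), where $\sigma$ and $\tau$ are positive words in $\{A,B\}$ (when the occurrence is $C$) or the mirror-image versions with $c$ on the left. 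The key structural point, which I would establish first, is that the $C$ or $c$ being advanced lies at the start of exactly one $C$-track $\omega_{j}$, say $\omega_{j} \equiv C\sigma'$ with $\sigma'\in\{A,B\}^{\ast}$; for the relation to apply, its left-hand side $C\sigma$ must be a prefix of $C\sigma'\omega_{j+1}$, i.e. $\sigma$ is a prefix of $\sigma'\omega_{j+1}$. I would split into the subcase where $\sigma$ is a prefix of $\sigma'$ (the advance happens entirely inside $\omega_{j}$) and the subcase where the relation reaches past the end of $\omega_{j}$ into the start of $\omega_{j+1}$ (which begins with a $C$ or $c$).

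First I would dispose of the ``entirely inside'' subcase. Here $\omega_{j} \equiv C\sigma\rho$ with $\sigma\rho = \sigma'$, and the advance replaces $\omega_{j}$ by $\tau C^{\epsilon}\rho$. If $\rho$ is nonempty, $\tau C^{\epsilon}\rho$ splits into two $C$-tracks $\tau$ and $C^{\epsilon}\rho$; if $\rho$ is empty we get a single track $\tau C^{\epsilon}$. In either event the new word $\omega'$ has $C$-track decomposition $\omega_{1}\cdots\omega_{j-1}\,(\text{new tracks})\,\omega_{j+1}\cdots\omega_{n}$. To check that $\omega'$ has no potential cancellations I would use the hypothesis on $\omega$ together with the observation that reducing a single $C$-track and then re-reading neighbors interacts only with the immediately adjacent tracks. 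The point is that $r(\tau) = \tau$ already (it is a positive $\{A,B\}$-word, hence reduced), and $r(\tau C^{\epsilon}\rho)$ differs from $r(\omega_{j})$ in a controlled way, since $\tau C^{\epsilon}\rho$ and $C\sigma\rho$ represent the same monoid element and the reduction of a word beginning with a single $C^{\pm1}$ followed by a positive $\{A,B\}$-word is governed by the push-forward identities (the ones displayed in the proof of Lemma \ref{lemma:eventual}); crucially these identities never create a $cC$ or $Cc$ at the seam with a neighboring track that was not potentially present before, because the sign $\epsilon$ at the right end of the reduced track is determined by the same data that determined the sign at the right end of $r(\omega_{j})$. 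I would make this precise by noting that $r(\omega_{j})$ and $r(\tau C^\epsilon \rho)$ have $C$-or-$c$ occurrences in the ``same positions modulo the $\{A,B\}$-skeleton,'' so the seam words $r(\omega_{j-1})\omega_{j}\cdots$ and $\cdots\omega_{j}r(\omega_{j+1})$ transform into seam words with the same cancellation status.

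The main obstacle, and the subcase I expect to require the most care, is when the advance reaches past the end of $\omega_{j}$ into $\omega_{j+1}$. Then $\omega_{j+1}$ begins with a $C$ or $c$, and the left-hand side $C\sigma$ of the relation overlaps that leading $C^{\pm1}$; but reading Tables \ref{table:R2} and \ref{table:R3}, the left-hand sides $C\sigma$ (for advances) have $\sigma\in\{A,B\}^{\ast}$ with no $C$ or $c$, so in fact $C\sigma$ can never extend past the end of $\omega_{j}$ unless $\sigma$ is a proper prefix of $\sigma'$ — meaning this overlap subcase does not actually arise for a legitimate advance. I would therefore argue that the hypothesis ``$\omega$ has no potential cancellations'' is exactly what guarantees that whenever an advance is applicable at the leading $C^{\pm1}$ of $\omega_{j}$, the matched left-hand side lies within $\omega_{j}$ (together with possibly the first letter or two of $\omega_{j+1}$ only when those are in $\{A,B\}$, which is impossible since $\omega_{j+1}$ starts with $C^{\pm1}$): no-potential-cancellations forces $r(\omega_{j})$ not to end in a way that would put a $C$ immediately before the $C$ of $\omega_{j+1}$, and the advance relations only ever act on $\{A,B\}$-letters following the $C$. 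Hence the overlap subcase is vacuous, and the entire proposition reduces to the ``entirely inside'' analysis above. I would close by remarking that the same bookkeeping works verbatim with $C$ and $c$ interchanged (using Remark \ref{remark:inverses}) and for both $S_{2}$ and $S_{3}$, the only difference being the length of the $\{A,B\}$-words $\sigma,\tau$ appearing in the relations.
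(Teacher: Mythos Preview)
Your proposal has a genuine gap at the central step. When you advance the leading $C^{\pm 1}$ of $\omega_{j}\equiv C\sigma\rho$ via a rule $C\sigma\to\tau C^{\epsilon}$, the prefix $\tau\in\{A,B\}^{\ast}$ does \emph{not} become a separate $C$-track: it is absorbed into the preceding track. The new decomposition has $\omega'_{j-1}\equiv\omega_{j-1}\tau$ and $\omega'_{j}\equiv C^{\epsilon}\rho$, with all other tracks unchanged. Your argument addresses only the seam between $\omega'_{j}$ and $\omega'_{j+1}$; that one is indeed easy, since $r(\omega'_{j})$ is a suffix of $r(\omega_{j})$ (both reduce the same tail $\rho$ after the single $C^{\pm 1}$), so the terminal $C$/$c$ is unchanged. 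But you never examine the seam between $r(\omega'_{j-1})=r(\omega_{j-1}\tau)$ and the new leading letter $C^{\epsilon}$ of $\omega'_{j}$, and that is where the content lies.

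This seam is not automatic. The reduced word $r(\omega_{j-1})$ ends in a short reduced suffix $x$ beginning with $C$ or $c$ (for $\mathcal{P}_{2}$ one has $x\in\{C,c,CA,cB\}$; for $\mathcal{P}_{3}$ there are eight possibilities). Appending $\tau$ and reducing further may flip the sign of the trailing $C$/$c$, and whether the result collides with $C^{\epsilon}$ depends on the specific rule. The hypothesis only rules out one of the two bare endings $x\equiv C$ or $x\equiv c$ (whichever would already clash with the first letter of $\ell$). The paper's proof is precisely the finite verification that for every rule $\ell\to r$ and every remaining ending $x$, the word $xr$ reduces without ever producing $Cc$ or $cC$; this is the content of Tables~\ref{table:proof1} and~\ref{table:proof2}. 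Your claim that ``the sign $\epsilon$ at the right end of the reduced track is determined by the same data'' is about the wrong seam, and there is no structural shortcut that avoids the case check on the left seam.
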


\begin{table}[!b] 
\begin{center}
\begin{tabular}{|c|c|c|c|}
\hline
$\ell \rightarrow r$ & $x \equiv C$ \text{ or }$c$ & $x \equiv CA$ & $x \equiv cB$ \\
\hline \hline
$CAA \rightarrow AC$ & $CAC \dot{\rightarrow} CAC$ & $CAAC \dot{\rightarrow} ACC$ & $cBAC \dot{\rightarrow} ABCC$ \\
\hline
$CB \rightarrow BBC$ & $CBBC \dot{\rightarrow} B^{4}C^{2}$ & $CAB^{2}C \dot{\rightarrow} BAcBC$ &
$cB^{3}C \dot{\rightarrow} cB^{3}C$ \\
\hline
$CAB \rightarrow BAc$ & $CBAc \dot{\rightarrow} BBCCAc$ &  $CABAc \dot{\rightarrow} BAAAcc$ &  $cBBAc \dot{\rightarrow}  BAAcc$ \\
\hline
$cA \rightarrow AAc$ & $cAAc \dot{\rightarrow} A^{4}cc $ & $CA^{3}c \dot{\rightarrow} ACAc $ & $cBAAc \dot{\rightarrow} ABCAc $ \\
\hline 
$cBB \rightarrow Bc$ & 
$cBc \dot{\rightarrow} cBc$ & $CABc \dot{\rightarrow} BAcc$ & $cBBc \dot{\rightarrow} Bcc$ \\
\hline
$cBA \rightarrow ABC$ & $cABC \dot{\rightarrow} AAcBC$ &  $CAABC \dot{\rightarrow} AB^{2}C^{2}$ &  $cBABC \dot{\rightarrow} AB^{3}C^{2}$ \\
\hline
\end{tabular}

\end{center}
\vspace{5pt}
\caption{The proof of Proposition \ref{proposition:npc} in the case of $M(\mathcal{P}_{2})$.}
\label{table:proof1}
\end{table}

\begin{table}[!t] 
\begin{center}
\begin{tabular}{|c|c|c|}
\hline
$\ell \rightarrow r$ & $x \in \{c, C, CA, CAA \}$ & $x \in \{CAB, cB, cBA, cBB\}$ \\
\hline \hline
$CA^{3} \rightarrow AC$ & $CAC \dot{\rightarrow} CAC$ & $CABAC \dot{\rightarrow} BABCC$ \\
 &  $CAAC \dot{\rightarrow} CAAC$ &  $cBAC \dot{\rightarrow} cBAC$ \\
 &  $CAAAC \dot{\rightarrow} ACC$ & $cBAAC \dot{\rightarrow} AABCC$ \\
 &  & $cBBAC \dot{\rightarrow} ABBCC$ \\
\hline
$CABA \rightarrow BABC$ & $CBABC \dot{\rightarrow} B^{3}CABC$ & $CABBABC \dot{\rightarrow} B^{2}A^{4}cBC$ \\
 &  $CABABC \dot{\rightarrow} BAB^{4}C^{2}$ &  $cBBABC \dot{\rightarrow} AB^{5}C^{2}$ \\
 &  $CAABABC \dot{\rightarrow} BA^{5}cBC$ & $cBABABC \dot{\rightarrow} ABA^{4}cBC$ \\
 &  & $cB^{3}ABC \dot{\rightarrow} BA^{3}cBC$ \\
\hline
$CAAB \rightarrow BAAc$ & $CBAAc \dot{\rightarrow} B^{3}CAAc$ & $CABBAAc \dot{\rightarrow} A^{4}B^{2}CAc$ \\
 &  $CABAAc \dot{\rightarrow} BABCAc$ &  $cBBAAc \dot{\rightarrow} ABBCAc$ \\
 &  $CAABAAc \dot{\rightarrow} BA^{8}cc$ & $cBABAAc \dot{\rightarrow} ABA^{7}cc$ \\
 &  & $cB^{3}AAc \dot{\rightarrow} BA^{6}cc$ \\
\hline
$CABB \rightarrow BBAc$ & $CBBAc \dot{\rightarrow} B^{6}CAc$ & $CABBBAc \dot{\rightarrow} BBAcBAc$ \\
 &  $CABBAc \dot{\rightarrow} B^{2}A^{4}cc$ &  $cBBBAc \dot{\rightarrow} BA^{3}cc$ \\
 &  $CAABBAc \dot{\rightarrow} BA^{2}cBAc$ & $cBABBAc \dot{\rightarrow} ABAcBAc$ \\
 &  & $cB^{4}Ac \dot{\rightarrow} BcBAc$ \\
\hline
$CB \rightarrow BBBC$ & $CBBBC \dot{\rightarrow} B^{9}C^{2}$ & $CABBBBC \dot{\rightarrow} BBAcBBC$ \\
 &  $CABBBC \dot{\rightarrow} BBAcBC$ &  $cBBBBC \dot{\rightarrow} BcBC$ \\
 &  $CAABBBC \dot{\rightarrow} BA^{2}cBBC$ & $cBABBBC \dot{\rightarrow} ABAcBBC$ \\
 &  & $cB^{5}C \dot{\rightarrow} BcBBC$ \\
\hline
$cA \rightarrow AAAc$ & $cAAAc \dot{\rightarrow} A^{9}c^{2}$ & $CABAAAc \dot{\rightarrow} BABCAAc$ \\
& $CA^{4}c \dot{\rightarrow} ACAc$ & $cBAAAc \dot{\rightarrow} AABCAc$ \\
& $CA^{5}c \dot{\rightarrow} ACAAc$ & $cBAAAAc \dot{\rightarrow} AABCAAc$ \\
& & $cBBAAAc \dot{\rightarrow} ABBCAAc$ \\
\hline
$cBAA \rightarrow AABC$ & $cAABC \dot{\rightarrow} A^{6}cBC$ & $CABAABC \dot{\rightarrow} BABCABC$ \\
& $CAAABC \dot{\rightarrow} AB^{3}C^{2}$ & $cBAABC \dot{\rightarrow} A^{2}B^{4}C^{2}$ \\
& $CA^{4}BC \dot{\rightarrow} ACABC$ & $cBA^{3}BC \dot{\rightarrow} AABCABC$ \\
& & $cBBAABC \dot{\rightarrow} ABBCABC$ \\
\hline
$cBAB \rightarrow ABAc$ & $cABAc \dot{\rightarrow} A^{3}cBAc$ & $CABABAc \dot{\rightarrow} BAB^{4}CAc$ \\
& $CAABAc \dot{\rightarrow} BA^{5}cc$ & $cBABAc \dot{\rightarrow} ABA^{4}cc$\\
& $CAAABAc \dot{\rightarrow} AB^{3}CAc$ & $cBAABAc \dot{\rightarrow} A^{2}B^{4}CAc$ \\
& & $cBBABAc \dot{\rightarrow} AB^{5}CAc$ \\
\hline
$cBBA \rightarrow ABBC$ & $cABBC \dot{\rightarrow} A^{3}cBBC$ & $CABABBC \dot{\rightarrow} BAB^{7}CC$ \\
& $CAABBC \dot{\rightarrow} BAAcBC$ & $cBABBC \dot{\rightarrow} ABAcBC$ \\
& $CA^{3}BBC \dot{\rightarrow} AB^{6}CC$ & $cBAABBC \dot{\rightarrow} AAB^{7}CC$ \\
& & $cBBABBC \dot{\rightarrow} AB^{8}C^{2}$ \\
\hline
$cB^{3} \rightarrow Bc$ & $cBc \dot{\rightarrow} cBc$ & $CABBc \dot{\rightarrow} BBAcc$ \\
& $CABc \dot{\rightarrow} CABc$ & $cBBc \dot{\rightarrow} cBBc$ \\
& $CAABc \dot{\rightarrow} BAAcc$ & $cBABc \dot{\rightarrow} ABAcc$ \\
& & $cBBBc \dot{\rightarrow} Bcc$ \\
\hline
\end{tabular}

\end{center}
\vspace{5pt}
\caption{The proof of Proposition \ref{proposition:npc} in the case of $M(\mathcal{P}_{3})$.}
\label{table:proof2}
\end{table}

\begin{proof}
Let $\omega \equiv \omega_{1} \omega_{2} \ldots \omega_{n}$, where the right side of the equation is the unique decomposition of $\omega$ into $C$-tracks. Suppose that $\omega'$ is the result of advancing an occurrence of $C$ (or $c$) exactly once; suppose that the advanced  occurrence of $C$ or $c$ appears in $\omega_{i}$, and let $\ell \rightarrow r$ be the relation that advances this $C$ or $c$. Thus $\omega_{i} \equiv \ell \beta$ for some word 
$\beta$. Let 
\[ \omega' \equiv \omega'_{1} \omega'_{2} \ldots \omega'_{n} \]
be the unique decomposition of $\omega'$ into $C$-tracks. It follows directly that
$\omega'_{i-1} \omega'_{i} \equiv \omega_{i-1} r \beta$, while $\omega'_{j} \equiv \omega_{j}$
if $j \in \{ 1, \ldots, n \} - \{ i-1, i \}$. Note that the subword $\omega_{i-1}r$ consists of the $C$-track of the $(i-1)$st occurrence of a $C$ (or $c$) in $\omega'$, followed by a $C$ (or $c$); note also that the only chance of an occurrence of $Cc$ or $cC$ in the words $\omega'_{1} \ldots r(\omega'_{j-1}) \omega'_{j} \ldots \omega'_{n}$ might occur when $j=i$.

To prove the proposition, it therefore suffices to prove that, during the reduction of
the subword $\omega_{i-1}r$, no occurrence of $Cc$ or $cC$ can arise. Note that 
$\omega_{i-1}r$ begins and ends with occurrences of $C$ (or $c$), while all intermediate letters are $A$ or $B$. Thus, after reducing $\omega_{i-1}$, it suffices to show that an occurrence of $Cc$ or $cC$ cannot arise in (further) reducing $r(\omega_{i-1})r$. Finally, we note that the reduced word $r(\omega_{i-1})$ ends in a reduced word $x$ that begins with a $C$ or $c$. There are only finitely many possibilities for $x$: indeed, $x \in \{ c, C, CA, cB \}$
in the case of $M(\mathcal{P}_{2})$, while $x \in \{ c, C, CA, CAA, CAB, cB, cBA, cBB \}$ in the case of $M(\mathcal{P}_{3})$. Furthermore, in either case, one of the cases $x \equiv C$ or $x \equiv c$ can be ruled out, since $x \ell$ contains no occurrence of $Cc$ or $cC$ by hypothesis.

Thus, in summary, it suffices to show that, for each rewriting rule $\ell \rightarrow r$, 
no occurrence of $cC$ or $Cc$ can appear when reducing the word $xr$, where $x$ runs over the above possibilities and further satisfies the condition that $x \ell$ itself contains neither $cC$ nor $Cc$. 

The relevant calculations are summarized in Tables \ref{table:proof1} and \ref{table:proof2}.
\end{proof}

\begin{definition}
(negative-to-positive words)
Let $\omega$ be a word in the alphabet $\{ A, B, C, a, b, c, 0, 1 \}$. We say that $\omega$ is \emph{negative-to-positive} if all occurrences of $a$ and $b$ (if any) occur before any occurrence of either $A$ or $B$. 
\end{definition}

\begin{remark}
Note that, if $\omega \neq 1, 0$ is a negative-to-positive word containing no occurrences of 
$bB$ or $aA$ or $1$, then each $C$-track in $\omega$ is a word in either $\{ a, b, C, c \}$
or $\{ A, B, C, c \}$. We call a $C$-track in the former alphabet \emph{negative}, while a $C$-track in the latter alphabet is \emph{positive}. Of course, negative $C$-tracks occur before positive ones in a negative-to-positive word.
\end{remark}

\begin{definition} (no potential cancellations in negative-to-positive words)
Let $\omega \in \{ A, B, C, a, b, c \}^{\ast}$. Assume that the reduced form of $\omega$ is not $0$, 
and that $\omega$ also contains no occurrences of $bB$ or $aA$.

 Let 
\[ \omega \equiv \omega_{1} \ldots \omega_{n} \]
be the unique decomposition into $C$-tracks. We say that $\omega$ has \emph{no potential cancellations} if the words
\[ r(\omega_{1})\omega_{2}\ldots \omega_{n}, \quad \omega_{1}r(\omega_{2})\ldots \omega_{n}, \quad \ldots \quad \omega_{1}\omega_{2}\ldots r(\omega_{n}) \]
contain no occurrences of $cC$ or $Cc$. 
\end{definition}

\begin{proposition} \label{proposition:npcinntp}
Assume that
\begin{enumerate}
\item $\omega$ is negative-to-positive;
\item $\omega$ has no subword of the form $aA$ or $bB$;
\item the reduced form of $\omega$ is not $0$;
\end{enumerate}
Let $\omega'$ be the result of advancing a $c$ or $C$ exactly once, and then removing any occurrence of $aA$ or $bB$, along with any occurrence of ``$1$''. The word $\omega'$ also has no potential cancellations.
\end{proposition}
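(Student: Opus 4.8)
The plan is to follow the proof of Proposition~\ref{proposition:npc} as closely as possible, after extracting the structural content of hypotheses (1)--(3) (and, as in that proposition, the standing hypothesis that $\omega$ itself has no potential cancellations).

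The structural observation is this. By (2), $\omega$ has no subword $aA$ or $bB$; and since $aB\to 0$ and $bA\to 0$ are relations while the reduced form of $\omega$ is an invariant of its equivalence class that is not $0$ by (3), $\omega$ also has no subword $aB$ or $bA$. So no occurrence of $a$ or $b$ in $\omega$ is immediately followed by an occurrence of $A$ or $B$, and, together with (1), this forces the $C$-track decomposition $\omega\equiv\omega_{1}\cdots\omega_{n}$ to split as $\omega\equiv\omega_{-}\omega_{+}$, where every track in $\omega_{-}$ is a word in $\{a,b,C,c\}$ (``negative'') and every track in $\omega_{+}$ is a word in $\{A,B,C,c\}$ (``positive''), all negative tracks preceding all positive ones. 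As in Proposition~\ref{proposition:npc}, advancing moves one $C$ or $c$ by one relation and (before cleanup) gives $\omega'_{i-1}\omega'_{i}\equiv\omega_{i-1}r\beta$, with $\omega'_{j}\equiv\omega_{j}$ otherwise, when the advance is in $\omega_{i}$ via $\ell\to r$.

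The argument divides by where the advanced $C$ or $c$ lies. If it lies in a positive track and $\ell\to r$ is a ``positive'' relation, then either $\omega_{-}$ is untouched and $\omega'$ equals $\omega_{-}$ followed by the result of a single advance applied to the all-positive word $\omega_{+}$, so Proposition~\ref{proposition:npc} applies verbatim to $\omega_{+}$; or the advance occurs in the first positive track and pushes a string $\pi\in\{A,B,AA,AB,BA,BB\}$ back into the last track of $\omega_{-}$. If it lies in a negative track and $\ell\to r$ is a ``negative'' relation, one repeats the computation behind Tables~\ref{table:proof1} and \ref{table:proof2}: for each negative advancing relation one checks, for each possible suffix $x$ of $r(\omega_{i-1})$ beginning at its (unique) occurrence of $C$ or $c$ with $x\ell$ free of $cC$ and $Cc$, that reducing $xr$ produces no $cC$ or $Cc$. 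For $S_{3}$ the negative relations are the exact mirror images of the positive ones (lowercase every $A$ and $B$), so this is the lowercased Table~\ref{table:proof2}; for $S_{2}$ the mirror of $cBA\to ABC$ is absent, which merely enlarges the stock of reduced negative $c$-tracks, but every additional suffix $x$ begins with $c$ immediately followed by $a$ or $b$ and is therefore stuck, so $xr$ is already reduced, leaving only the short suffixes (a lowercased Table~\ref{table:proof1} check).

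The genuinely new work is the single seam where $\omega_{-}$ meets $\omega_{+}$: the pushed-string situation above, together with the case in which a negative relation applied in the last negative track carries a $C$ or $c$ up to the boundary. The pushed or exposed letters abut a last letter of $\omega_{-}$ in $\{a,b,C,c\}$. If this creates $aA$ or $bB$, delete it (and any resulting ``$1$'') and cascade; if it would create $aB$ or $bA$, then $r(\omega)=0$, contradicting (3); if it would create $cC$ or $Cc$ at the boundary, directly or after a cascade of deletions, trace the offending pair of letters back through the identity $\omega'_{i-1}\omega'_{i}\equiv\omega_{i-1}r\beta$ and the intervening track reductions to exhibit a $cC$ or $Cc$ already occurring in one of the words that test $\omega$ for potential cancellations, contradicting the hypothesis. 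In every surviving sub-case $\omega'$, after cleanup, is seen directly to have no potential cancellations and to be again negative-to-positive with no $aA$ or $bB$ and nonzero reduced form. The main obstacle will be exactly this seam analysis --- enumerating the finitely many boundary configurations and the cascading deletions, and checking in each that a newly created $cC$ or $Cc$ must descend from one already present in $\omega$ (or force $r(\omega)=0$); the positive half is a citation of Proposition~\ref{proposition:npc} and the negative half a lowercased reprise of Tables~\ref{table:proof1}--\ref{table:proof2}, so the one negative--positive seam carries all of the real content.
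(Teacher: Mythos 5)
Your overall architecture coincides with the paper's: the same three-way split according to whether the track $\omega_{i-1}$ preceding the advanced occurrence of $C$ or $c$ is positive, negative, or negative with $\omega_{i}$ positive; the positive--positive case is a citation of Proposition \ref{proposition:npc}; the negative--negative case is disposed of by transporting Tables \ref{table:proof1} and \ref{table:proof2} along the upper-case/lower-case correspondence of relations; and the negative--positive seam is the only new content. Up to that point your sketch is the paper's proof.

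The divergence --- and the gap --- is in how the seam is closed. The paper does not ``trace an offending $cC$ or $Cc$ back to one already present in $\omega$''; there is nothing to trace back to. Instead it observes that the reduced form of a negative $C$-track ends in one of \emph{finitely many} words $x$ beginning with its $C$ or $c$ (namely $x\in\{c,C,Ca,cb\}$ for $\widehat{\mathcal{P}}_{2}$ and $x\in\{c,C,Ca,Caa,Cab,cb,cba,cbb\}$ for $\widehat{\mathcal{P}}_{3}$), discards the pairs $(x,\ell)$ excluded by the hypotheses ($x\ell$ containing $cC$ or $Cc$ is excluded because $\omega$ has no potential cancellations; $aB$ or $bA$ is excluded because $r(\omega)\neq 0$), and then verifies each survivor by direct reduction of $xr$ in Tables \ref{table:3} and \ref{table:4} --- the same shape of computation as in Proposition \ref{proposition:npc}. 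Your proposal never establishes the finiteness of this list of endings, and your dismissal of the ``extra'' reduced suffixes arising from the missing mirror of $cBA\rightarrow ABC$ (the words $cba\nu$, $\nu\in\{a,b\}^{\ast}$, which are reduced under the relations of Table \ref{table:R2} as printed) is sound only in the negative--negative case, where no $aA$ or $bB$ can form. At the seam it fails: $x\equiv cba$ followed by $r\equiv ABC$ cascades through $aA\rightarrow 1$ and $bB\rightarrow 1$ to $cC$, and the corresponding $x\ell\equiv cbacBA$ is not excluded by any stated hypothesis. So either such $x$ must be shown never to occur as the terminal segment of $r(\omega_{i-1})$ --- which amounts to having the mirror relation $cba\rightarrow abC$ available so that $cba$ is not reduced and the no-potential-cancellations hypothesis rules the configuration out --- or the seam analysis as you have set it up breaks. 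This finite classification of reduced negative-track endings, together with the explicit table of reductions, is precisely the content your sketch defers, and it is the whole of the new work in this proposition.
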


\begin{proof}
The proof is like that of Proposition \ref{proposition:npc}. We assume that $\omega'$ is the result of advancing the $i$th occurrence of $C$ or $c$ in $\omega$ exactly once. Let 
$\omega_{1} \omega_{2} \ldots \omega_{n}$ be the $C$-track decomposition of $\omega$.
There are three cases: $\omega_{i-1}$ and $\omega_{i}$ are both positive $C$-tracks,
or $\omega_{i-1}$ and $\omega_{i}$ are both negative, or $\omega_{i-1}$ is negative and
$\omega_{i}$ is positive.

We note that the first case (in which both $C$-tracks are positive) is already handled by the proof of Proposition \ref{proposition:npc}. The second case is also handled by the proof of Proposition \ref{proposition:npc}. This follows from the observation that each rewriting rule between words in the alphabet $\{ a, b, C, c \}$ corresponds to a rewriting rule in between words in the alphabet $\{ A, B, C, c \}$. One need only replace a $C$ with $c$ (or the reverse), an $A$ with a $b$, and a $B$ with an $A$. Thus, for instance, the rewriting rule
$CAA \rightarrow AC$ corresponds to $cbb \rightarrow bc$. Using this substitution, we can transform Tables \ref{table:proof1} and \ref{table:proof2} into tables that prove the negative-to-negative case.

It remains to consider the case in which $\omega_{i-1}$ is negative and $\omega_{i}$ is positive. This case still follows the pattern of the proof of Proposition \ref{proposition:npc}, but we need to handle more cases. We note that $x \in \{ c, C, Ca, cb \}$ in the case of $M(\widehat{\mathcal{P}}_{2})$ and $x \in \{ c, C, Ca, Caa, Cab, cb, cba, cbb \}$
in the case of $M(\widehat{\mathcal{P}}_{3})$. We can further reduce to the cases
$x \in \{ Ca, cb \}$ and $x \in \{ Ca, Caa, Cab, cb, cba, cbb \}$ (respectively), since the proof of Proposition \ref{proposition:npc} also handles the cases in which $x \equiv C$ or $x \equiv c$. Furthermore, subwords of the form
$aB$ and $bA$ are ruled out by hypothesis (since $\omega \neq 0$). 

We summarize the necessary calculations in Tables \ref{table:3} and \ref{table:4}.
\end{proof}

\begin{table} [!h] 
\begin{center}
\begin{tabular} {|c|c|c|}
\hline
$CaAC \dot{\rightarrow} CC$ & $CaAAc \dot{\rightarrow} CAc$ & 
$CaABC \dot{\rightarrow} BBCC$ \\ 
\hline
$cbBBC \dot{\rightarrow} cBC$ & $cbBAc \dot{\rightarrow} AAcc$ & 
$cbBc \dot{\rightarrow} cc$  \\
\hline 
\end{tabular}
\end{center}
\vspace{5pt}
\caption{ No potential cancellations -- the negative-to-positive case for $M(\widehat{\mathcal{P}}_{2})$.}
\label{table:3}
\end{table}

\begin{table} [!h] 
\begin{center}
\begin{tabular} {|c|c|c|}
\hline
$ CaAC \dot{\rightarrow} CC$ & $CaAAAc \dot{\rightarrow} CAAc$ & 
$ CaAABC \dot{\rightarrow} CABC$ \\ 
\hline
$ CaABAc \dot{\rightarrow} BBBCAc$ & $CaABBC \dot{\rightarrow} B^{6}C^{2} $ & 
$CaaAC \dot{\rightarrow} CaC$  \\
\hline 
$CaaAAAc \dot{\rightarrow} CAc$ & $CaaAABC \dot{\rightarrow} B^{3}C^{2} $ & 
$ CabBABC \dot{\rightarrow} B^{3}C^{2}$  \\
\hline 
$ CabBAAc \dot{\rightarrow} CAc $ & $CabBc \dot{\rightarrow} Cac$ & 
$cbBABC \dot{\rightarrow} A^{3}cBC$  \\
\hline
$ cbBAAc \dot{\rightarrow} A^{6}c^{2} $ & $ cbBBAc \dot{\rightarrow} cBAc$ & 
$ cbBBBC \dot{\rightarrow} cBBC $  \\
\hline
$ cbBc \dot{\rightarrow} c^{2}  $ & $cbaAC \dot{\rightarrow} cbC $ & 
$ cbaABAc \dot{\rightarrow} A^{3}c^{2}$  \\
\hline
$ cbaABBC \dot{\rightarrow} cBC$ & $cbbBBAc \dot{\rightarrow} A^{3}c^{2} $ & 
$ cbbBBBC \dot{\rightarrow} cBC$  \\
\hline    
$ cbbBc \dot{\rightarrow} cbc$ & & \\
\hline
\end{tabular}
\end{center}
\vspace{5pt}
\caption{ No potential cancellations -- the negative-to-positive case for $M(\widehat{\mathcal{P}}_{3})$.}
\label{table:4}
\end{table}

\subsection{Presentations and normal forms for the monoids $M_{2}$ and $M_{3}$}

\begin{proposition} (\cite{LM}, Lemma 5.10) \label{proposition:terminalC}
Let $\omega$ be a word in the generators $\{ A, B, C, c \}$. Assume that $\omega$ has no potential cancellations. 

There is a word $\tau \in \{ A, B \}^{\ast}$ such that $r(\omega \tau) \equiv \widehat{\omega}C^{\epsilon}$, where $\widehat{\omega}$ is a word in $\{ A, B \}$ and $\epsilon \geq 0$ is the total exponent 
of $C$ and $c$ in $\omega$.
\end{proposition}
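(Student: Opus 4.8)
~\textbf{Proof proposal.} The plan is to produce the word $\tau$ greedily, by repeatedly ``advancing'' the (unique, since $\omega \in \{A,B,C,c\}^\ast$ has a single $C$ or $c$ per $C$-track, but here we want the \emph{total} occurrence) copies of $C$ and $c$ to the right and folding them into one another. First I would reduce to the case that $\omega$ itself is already in reduced form: replacing $\omega$ by $r(\omega)$ changes nothing about the conclusion (the total exponent $\epsilon$ of $C$ and $c$ is preserved by all the advancing relations, since each such relation moves a $C$ or $c$ past a letter or converts $C \leftrightarrow c$ but never creates or destroys one, and the only relations that do change the count, namely $Cc \to 1$ and $cC \to 1$, are precisely excluded by the ``no potential cancellations'' hypothesis together with Proposition~\ref{proposition:npc}). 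So assume $\omega \equiv \omega_1 \cdots \omega_n$ is reduced, with $\omega_i$ its $C$-track decomposition.

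The core of the argument is an induction that pushes the leftmost occurrence of $C$ or $c$ all the way to the right end. Concretely: look at the first $C$-track $\omega_1$. If $\omega_1$ contains a $C$ or $c$, I would right-multiply $\omega$ by a suitable $\tau_1 \in \{A,B\}^\ast$ so that, after advancing, $\omega_1$'s $C$ (or $c$) moves into $\omega_2$; the key point is that Proposition~\ref{proposition:npc} guarantees the resulting word still has no potential cancellations, so in particular no $Cc$ or $cC$ is ever formed, which means the two adjacent occurrences of $C$/$c$ that now sit in (the rewritten) $\omega_2$ have the \emph{same sign} and hence fuse into a single $C^{\pm 1}$ or power thereof via the ``free'' rule $CC \to \ldots$ — actually more carefully, the relations let one carry a $C$ past another $C$ only when they are both $C$ or both $c$, and that is exactly what ``no potential cancellations'' supplies. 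Iterating, after finitely many such steps all occurrences of $C$ and $c$ have been amalgamated and driven to the right, leaving a word of the form $\widehat\omega C^\delta$ with $\widehat\omega \in \{A,B\}^\ast$. Finally, since the total exponent is an invariant of the whole process and $\widehat\omega$ contributes $0$, we get $\delta = \epsilon$; and $\epsilon \geq 0$ follows because a reduced word with no potential cancellations and no $cC$/$Cc$ cannot have its $C$-count drop below its $c$-count in a way that survives — more precisely, each $C$-track carries net $+1$, $-1$, or $0$, and the fusion rules combine them additively, so the only way to reach a \emph{single} terminal block is if the running total never forces a cancellation, giving $\epsilon \geq 0$ as the sum of the per-track contributions.

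For the termination of the greedy procedure I would invoke the ``terminating'' half of the rewrite-system proposition: every advancing move either shortens the word or moves a $C$ closer to the end, and appending letters of $\{A,B\}$ on the right only finitely often before each advance is forced, so the process halts. The precise bookkeeping — which $\tau_1, \tau_2, \ldots$ to append, and checking that appending them does not reintroduce a potential cancellation — is where one has to be slightly careful; this is the analogue of \cite{LM}, Lemma 5.10, and I would follow their argument, using our Tables~\ref{table:proof1} and~\ref{table:proof2} as the case analysis for the local moves.

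\textbf{Main obstacle.} The hard part will be verifying that one can always choose the suffix $\tau$ so that the advancing moves available are exactly the ``sign-preserving'' ones — i.e., that the structure of the relations in $\mathcal{R}_2$ and $\mathcal{R}_3$ lets a $C$ or $c$ be pushed rightward past an arbitrary $\{A,B\}$-block once enough right-padding is supplied, and that this never manufactures a $Cc$ or $cC$. This is precisely what Proposition~\ref{proposition:npc} is for, so the obstacle is really one of organizing the induction (on, say, the position of the leftmost $C$/$c$, with a secondary measure given by the termination ordering of the rewrite system) rather than proving anything genuinely new; but getting the two nested inductions to interleave cleanly, and confirming $\epsilon \geq 0$ rather than merely $\epsilon \in \mathbb{Z}$, is the delicate point.
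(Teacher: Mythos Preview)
Your proposal has a genuine structural gap: the direction of your induction is wrong. You want to push the \emph{leftmost} $C$ or $c$ rightward, but right-multiplying by $\tau$ appends letters to the \emph{end} of $\omega$, after $\omega_n$. Those appended letters cannot help advance the $C$ in $\omega_1$, because that $C$ is blocked by the $C$ or $c$ at the start of $\omega_2$: there is no advancing rule of the form $CC\to\cdots$, $Cc\to\cdots$ (other than cancellation), etc. Your claim that adjacent same-sign occurrences ``fuse'' is not supported by any relation in $\mathcal{R}_2$ or $\mathcal{R}_3$; the string $CC$ is already reduced and simply sits there. The paper's proof inducts in the opposite direction: write $\omega \equiv \omega_1\omega_2$ where $\omega_1$ contains exactly one $C$ or $c$ and $\omega_2$ has combined exponent $\epsilon-1$; by induction find $\tau_1$ with $r(\omega_2\tau_1)\equiv\widehat\omega_2 C^{\epsilon-1}$, and only then is the $C$ or $c$ in $\omega_1$ followed by an $\{A,B\}$-block $\widehat\omega_2$ through which it can be advanced. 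Handling the tail first is what makes the argument go through.

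You also misread the quantity $\epsilon$: it is the total \emph{count} of occurrences of $C$ and $c$ (each contributing $+1$), not a signed exponent, so $\epsilon\geq 0$ is trivial. The real content is that the terminal block consists entirely of $C$'s rather than a mix; this is because the base case $\epsilon=1$ shows a single trailing $c$ (or $cB$) can be converted to a trailing $C$ by right-multiplying by $BA$ (resp.\ $A$), via the relation $cBA\to ABC$, and the inductive step preserves this. The ``no potential cancellations'' hypothesis is used exactly once, to rule out the possibility that after reducing $\omega_1\widehat\omega_2 C^{\epsilon-1}$ one obtains a terminal string $cC^{\epsilon-1}$; it is not used to control any ``running total''.
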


\begin{proof}
The proof is by induction on the (combined) exponent $\epsilon$ of $C$ and $c$ in $\omega$. We note that, due to the ``no potential cancellations" condition, it is not possible to reduce (or, indeed, increase) the exponent $\epsilon$ by applying any of the monoid relations. 

We first consider the case of $M(\mathcal{P}_{2})$; assume $\epsilon = 1$, the case $\epsilon = 0$ being trivial. We note that $r(\omega)$ ends with one of the strings $C$, $c$, $CA$, or $cB$ (and the only occurrences of $C$ or $c$ occur in these strings). In the case of $C$, there is nothing to prove. If $r(\omega)$ ends with $c$, we can let $\tau \equiv BA$ and then reduce the result. If $r(\omega)$ ends with either $CA$ or $cB$, we can let $\tau \equiv A$ and then reduce the result. This proves the base case.

Now let $\epsilon > 1$. We can express $\omega$ as a product $\omega_{1} \omega_{2}$, where 
the total combined exponent of $C$ and $c$ in $\omega_{2}$ is $\epsilon - 1$, and $\omega_{1}$ contains a single occurrence of $C$ or $c$. By induction, we can find $\tau_{1} \in \{ A, B \}^{\ast}$ such that 
$r(\omega_{2}\tau_{1}) \equiv \widehat{\omega}_{2} C^{\epsilon - 1}$, where $\widehat{\omega}_{2} \in \{ A, B \}^{\ast}$. Thus, after reducing the word $\omega_{1} \omega_{2} \tau_{1}$, we obtain 
a word $\omega' \in \{ A, B, C, c \}^{\ast}$ that ends with $C^{\epsilon}$, $CAC^{\epsilon -1}$, or $cBC^{\epsilon -1}$. [Note that the case $cC^{\epsilon - 1}$ is ruled out by the ``no potential cancellations" hypothesis.] In the first case, we are finished; set $\tau_{2} \equiv 1$. If $\omega'$ ends with $CAC^{\epsilon - 1}$
or $cBC^{\epsilon -1}$, we can set $\tau_{2} \equiv A^{2^{\epsilon-1}}$. After reducing the word
$\omega' \tau_{2}$, we have a string of the required form, so the required $\tau$ is $\tau_{1} \tau_{2}$.

Now we consider $M(\mathcal{P}_{3})$. Let $\omega \in \{ A, B, C, c \}$ and define $\epsilon$ as before. We first consider the case $\epsilon =1$. The word $r(\omega)$ ends with $c$, $C$, $CA$, $CAB$, $CAA$, $cB$, $cBA$, or $cBB$. If $r(\omega)$ ends with $c$, we can let $\tau \equiv BBA$ and apply the relation $cBBA \rightarrow ABBC$. If $r(\omega)$ ends with $C$, there is nothing to prove ($\tau \equiv 1$). In the remaining cases, we let $\tau \equiv AA, A, A, AA, A,$ or $A$ (respectively). 

Now suppose $\epsilon > 1$. We can write $\omega$ as the product $\omega_{1}\omega_{2}$, where
the total combined exponent of $C$ and $c$ in $\omega_{2}$ is $\epsilon -1$, and $\omega_{1}$ contains a single occurrence of either $C$ or $c$. Proceeding as in the case of $M(\mathcal{P}_{2})$, we can right multiply by some $\tau_{1} \in \{ A, B \}^{\ast}$ and reduce to arrive at a word $\omega'$ that ends with one of the following strings:  
$C^{\epsilon}$, $CAC^{\epsilon - 1}$, $CABC^{\epsilon - 1}$, $CAAC^{\epsilon - 1}$, $cBC^{\epsilon - 1}$, $cBAC^{\epsilon -1}$, $cBBC^{\epsilon-1}$. In the first case, there is nothing to prove; let $\tau_{2} \equiv 1$. In the remaining cases, we multiply by $\tau_{2} \equiv A^{2 \cdot 3^{\epsilon - 1}}$, $A^{3^{\epsilon - 1}}$, $A^{3^{\epsilon - 1}}$,
$A^{2 \cdot 3^{\epsilon - 1}}$, $A^{3^{\epsilon - 1}}$, or $A^{3^{\epsilon - 1}}$, respectively. Thus, the required $\tau$ is $\tau_{1}\tau_{2}$.
\end{proof}

\begin{proposition} \label{proposition:ntpadvancing}
Let $\omega \in \{ A, B, C, a, b, c \}^{\ast}$ be a negative-to-positive word with no potential cancellations. Assume that there is no $\tau \in \{ A, B \}^{\ast}$ such that  $r(\omega \tau) \equiv 0$. 

There is some $\tau' \in \{ A, B \}^{\ast}$ such that $r(\omega \tau') \equiv \widehat{\omega} C^{\epsilon}$,
where $\widehat{\omega} \in \{ A, B \}^{\ast}$ and $\epsilon$ is the total combined exponent of 
$C$ and $c$ in $\omega$.
\end{proposition}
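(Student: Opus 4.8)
The plan is to imitate the proof of Proposition \ref{proposition:terminalC}: first split off the purely positive part of $\omega$ and dispose of it by that proposition, then eliminate the lowercase letters $a$ and $b$ one at a time.

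First I would reduce to the case that $\omega$ is itself reduced. Since $r(\omega\tau)\equiv r(r(\omega)\tau)$ for every $\tau\in\{A,B\}^{\ast}$, replacing $\omega$ by $r(\omega)$ changes neither the hypotheses nor the conclusion, and $r(\omega)$ is still negative-to-positive with no potential cancellations: the only rewrite rules mixing lower- and upper-case letters are $aA\rightarrow 1$, $bB\rightarrow 1$, $aB\rightarrow 0$, $bA\rightarrow 0$, of which the first two merely delete letters while the last two never occur (we have $r(\omega)\not\equiv 0$ by the hypothesis with $\tau\equiv 1$), so the negative-to-positive form survives reduction, and ``no potential cancellations'' survives by Proposition \ref{proposition:npcinntp}. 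So assume $\omega$ is reduced. Then $\omega$ contains no subword $aA,aB,bA,bB$, so between its last occurrence of $a$ or $b$ and its first occurrence of $A$ or $B$ there must lie a $C$ or $c$; hence $\omega\equiv\omega^{-}\omega^{+}$, where $\omega^{+}$ is the longest suffix of $\omega$ in $\{A,B,C,c\}^{\ast}$ and $\omega^{-}\in\{a,b,C,c\}^{\ast}$, and, when $\omega^{-}\neq 1$, the word $\omega^{+}$ begins with $C$ or $c$ and $\omega^{-}$ ends in some $z\in\{a,b\}$. I induct on the number of occurrences of $a$ and $b$ in $\omega$.

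In the base case $\omega\in\{A,B,C,c\}^{\ast}$ and Proposition \ref{proposition:terminalC} applies verbatim (here $r(\omega\tau)\not\equiv 0$ is automatic, as no rewriting introduces an $a$ or $b$). Otherwise $\omega^{-}\neq 1$, and the hypothesis forces $\omega^{+}\neq 1$: were $\omega\equiv\omega^{-}$, it would end in $z\in\{a,b\}$, and taking $\tau$ to be $B$ if $z\equiv a$ and $A$ if $z\equiv b$ produces the subword $aB$ or $bA$, whence $r(\omega\tau)\equiv 0$, a contradiction. Now $\omega^{+}$ is a union of whole $C$-tracks of $\omega$, so it inherits ``no potential cancellations'', and since it starts with $C$ or $c$ its combined exponent $g:=\epsilon(\omega^{+})$ is at least $1$; Proposition \ref{proposition:terminalC} gives $\sigma\in\{A,B\}^{\ast}$ with $r(\omega^{+}\sigma)\equiv\widehat{\omega^{+}}C^{g}$, $\widehat{\omega^{+}}\in\{A,B\}^{\ast}$. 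Applying that reduction inside $\omega^{-}\omega^{+}\sigma$ (rewrites are context-independent) yields $r(\omega\sigma)\equiv r(\omega^{-}\widehat{\omega^{+}}C^{g})$. If $\widehat{\omega^{+}}\not\equiv 1$ it begins with $A$ or $B$, and the two-letter word it forms with the trailing $z$ of $\omega^{-}$ cannot be $aB$ or $bA$ (that would give $r(\omega\sigma)\equiv 0$), so it is $aA$ or $bB$ and reduces to $1$, leaving a word with one fewer lowercase letter. If $\widehat{\omega^{+}}\equiv 1$, then $r(\omega\sigma)\equiv r(\omega^{-}C^{g})$ with $\omega^{-}\equiv\gamma' z$; right-multiplying further by $A^{2^{g}}$ when $z\equiv a$, or by $B$ when $z\equiv b$, and using the identities $C^{g}A^{2^{g}}\dot{\rightarrow}AC^{g}$ and $C^{g}B\dot{\rightarrow}B^{2^{g}}C^{g}$ obtained by iterating $C_{2}AA\rightarrow AC_{2}$, resp.\ $C_{2}B\rightarrow BBC_{2}$ (exactly as in the proof of Proposition \ref{proposition:terminalC}), the letter $z$ again cancels with an adjacent uppercase letter, lowering the count. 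In every case the word so obtained still satisfies the three standing hypotheses — the ``no $0$'' property because $r(w\tau)\equiv 0$ for the new word $w$ would, absorbing the accumulated right multipliers, force $r(\omega\rho)\equiv 0$ for some $\rho\in\{A,B\}^{\ast}$ — so the induction closes, and the exponent stays equal to $\epsilon$ because ``no potential cancellations'' is preserved throughout.

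The step I expect to be the main obstacle is the bookkeeping underlying that last clause: checking that the words produced along the way remain reduced-equivalent to negative-to-positive words with no potential cancellations — in particular that right-multiplication by an element of $\{A,B\}^{\ast}$ followed by reduction cannot create a $cC$ or $Cc$ (this is where one uses that a reduced word satisfying the hypothesis never ends in $a$ or $b$, so appending uppercase letters only lengthens the last $C$-track) — and verifying that the localized reductions behave as claimed inside the ambient word, i.e.\ that no intermediate stage of $C^{g}A^{2^{g}}\dot{\rightarrow}AC^{g}$ lets the trailing powers of $A$ interact prematurely with the lowercase prefix. Both reduce to finite checks using confluence (Corollary \ref{corollary:unique}) together with Propositions \ref{proposition:npc} and \ref{proposition:npcinntp}.
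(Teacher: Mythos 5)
Your proof is correct and follows essentially the same route as the paper's: induction on the number of occurrences of $a$ and $b$, splitting $\omega$ into its negative prefix and its positive suffix, applying Proposition \ref{proposition:terminalC} to the suffix, and cancelling one lowercase letter against the leading letter of the resulting $\widehat{\omega}C^{\epsilon_{2}}$, with the case $\widehat{\omega}\equiv 1$ handled by a further right multiplication. The only cosmetic difference is that you choose that extra multiplier ($A^{2^{g}}$ or $B$) according to the trailing letter $z$ of the negative part, whereas the paper always multiplies by $A^{2^{\epsilon_{2}}}$ and invokes the no-zero hypothesis to rule out the pattern $bA$.
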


\begin{proof}
We prove this by induction on the sum $k$ of the combined exponents of $a$ and $b$ in $\omega$. The case $k = 0$ is handled by Proposition \ref{proposition:terminalC}.

Let $\omega \in \{ A, B, C, a, b, c, \}^{\ast}$, let $k$ be defined as above, and suppose that the proposition is known to be true for smaller $k$. We can write $\omega \equiv \omega_{1}\omega_{2}$, where $\omega_{1}$ involves no occurrences of $A$ or $B$, and $\omega_{2}$ involves no occurrences of $a$ or $b$. We may further assume that $\omega_{1}$ ends with an occurrence of $a$ or $b$, since any occurrence of $C$ or $c$ may be subsumed by $\omega_{2}$. 

By Proposition \ref{proposition:terminalC}, we can find a word $\tau_{1} \in \{ A, B \}^{\ast}$ such that 
$r(\omega_{2} \tau_{1}) \equiv \widehat{\omega}C^{\epsilon_{2}}$, where $\epsilon_{2}$ is the total exponent sum of $C$ and $c$ in $\omega_{2}$ and $\widehat{\omega} \in \{ A, B \}^{\ast}$. If $\widehat{\omega}$ is not the empty word, then it must be that the initial letter of $\widehat{\omega}$ cancels with the terminal letter of $\omega_{1}$ in 
$\omega_{1} \widehat{\omega}C^{\epsilon_{2}}$. (This is because occurrences of $aB$ and $bA$ cannot arise, by the hypothesis that $r(\omega \tau)$ is never $0$.) Now we can call the inductive hypothesis, to find $\tau_{2}$ such that $r(\omega_{1} \widehat{\omega}C^{\epsilon_{2}} \tau_{2}) \equiv \widetilde{\omega}C^{\epsilon_{1} + \epsilon_{2}}$, where $\widetilde{\omega} \in \{ A, B \}^{\ast}$ and $\epsilon_{1}$ is the total exponent of $c$ and $C$ in $\omega_{1}$. This completes the induction, under the assumption that $\widehat{\omega}$ is not the empty word.

If $\widehat{\omega} \equiv 1$, we simply multiply by a suitable word $\tau_{3/2}$: either $A^{2^{\epsilon_{2}}}$ or
$A^{3^{\epsilon_{2}}}$ (depending on whether we are considering $M(\mathcal{P}_{2})$ or $M(\mathcal{P}_{3})$). After reducing, we find that $r(\omega_{2} \tau_{1} \tau_{3/2}) \equiv \widehat{\omega}'C^{\epsilon_{2}}$, where $\widehat{\omega}'$ is non-empty. This reduces us to the previous case, completing the induction and the proof.
\end{proof}

\begin{proposition} \label{proposition:normalforms} 
(Normal Forms in $M(\mathcal{P}_{i})$)
The reduced words modulo the presentation $\mathcal{P}_{2}$ take the form
\[ \omega_{1} \omega_{2} \omega_{3}, \]
where $\omega_{1} \in \{ A, B \}^{\ast}$, $\omega_{2} \in \{ C^{n}A, c^{m}B \mid m,n \in \mathbb{N} \}^{\ast}$,
and $\omega_{3} \in \{ C, c \}^{\ast}$.

The reduced words modulo the presentation $\mathcal{P}_{3}$ take the form
\[ \omega_{1} \omega_{2} \omega_{3}, \]
where $\omega_{1} \in \{ A, B \}^{\ast}$, $\omega_{2} \in \{ C^{n_{1}}A, C^{n_{2}}AA,
C^{n_{3}}AB, c^{n_{4}}B, c^{n_{5}}BA, c^{n_{6}}BB \mid n_{i} \in \mathbb{N} \}^{\ast}$, and 
$\omega_{3} \in \{ C, c \}^{\ast}$.
\end{proposition}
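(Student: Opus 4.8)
The plan is to prove the asserted direction directly: every word that is reduced modulo $\mathcal{P}_i$ has the stated shape. Recall that a word $\omega$ is reduced exactly when it contains no left-hand side of a rewrite rule of $\mathcal{R}_i$ as a subword (this is what ``no directed edge issues from $\omega$ in $\Gamma(\mathcal{P}_i)$'' means). Every left-hand side of $\mathcal{R}_2$ has length at most $3$ and every left-hand side of $\mathcal{R}_3$ has length at most $4$, so only ``local'' interactions can produce a forbidden subword; in particular $Cc$ and $cc$\,--\,no, rather $Cc$ and $cC$ are themselves forbidden. I would make this locality explicit at the outset, since it is what keeps the argument finite.

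First I would decompose a reduced word $\omega \in \{A,B,C,c\}^{\ast}$ into maximal blocks, $\omega \equiv L_{0}P_{1}L_{1}P_{2}\cdots P_{r}L_{r}$, where each $L_{i}$ is a maximal subword over $\{A,B\}$ (possibly empty only when $i=0$ or $i=r$) and each $P_{j}$ is a maximal nonempty subword over $\{C,c\}$. Two immediate consequences: because $Cc$ and $cC$ are forbidden, each $P_{j}$ is homogeneous, i.e.\ $P_{j}\equiv C^{k_{j}}$ or $P_{j}\equiv c^{k_{j}}$ with $k_{j}\geq 1$; and for $1\leq i\leq r-1$ the block $L_{i}$ must be nonempty, since otherwise $P_{i}$ and $P_{i+1}$ would merge. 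The block $L_{0}$ is wholly unconstrained and will become $\omega_{1}$.

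The key step will be to constrain each nonempty $L_{j}$ with $1\leq j\leq r$, i.e.\ each letter block immediately following a $\{C,c\}$-block $P_{j}$. Reading the pair $P_{j}L_{j}$ from the last symbol of $P_{j}$: if $P_{j}=C^{k_{j}}$, the rule $CB\to\cdots$ forces the first letter of $L_{j}$ to be $A$; then for $\mathcal{P}_{2}$ the rules $CAA\to\cdots$, $CAB\to\cdots$ force $L_{j}\equiv A$, while for $\mathcal{P}_{3}$ the four rules with left-hand sides $CAAA, CAAB, CABA, CABB$ force $|L_{j}|\leq 2$, so $L_{j}\in\{A,AA,AB\}$. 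Dually, if $P_{j}=c^{k_{j}}$, the rule $cA\to\cdots$ forces the first letter of $L_{j}$ to be $B$, and the remaining $c$-rules force $L_{j}\equiv B$ for $\mathcal{P}_{2}$ and $L_{j}\in\{B,BA,BB\}$ for $\mathcal{P}_{3}$. Thus each $P_{j}L_{j}$ with $j<r$, and also $P_{r}L_{r}$ when $L_{r}\neq 1$, is a syllable of the form $C^{k}A$ or $c^{m}B$ (respectively one of the six $\mathcal{P}_{3}$-syllables). A brief check confirms that no forbidden subword straddles the junction between $L_{0}$ and $P_{1}$, nor between $L_{j}$ and $P_{j+1}$, nor between a syllable and $\omega_{3}$: the only short words occurring across such a junction are $AC,BC,Ac,Bc$ (together with $AAC,ABC,BAC,BBC$ in the $\mathcal{P}_{3}$ case), none of which is a subword of any left-hand side. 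This key step is the only place where the two relation sets enter, which is precisely why the two asserted normal-form shapes differ.

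With these facts, assembling the factorisation is bookkeeping: set $\omega_{1}:=L_{0}\in\{A,B\}^{\ast}$; let $\omega_{2}$ be the concatenation of the syllables $P_{1}L_{1},\dots,P_{r-1}L_{r-1}$ and, if $L_{r}\neq 1$, also $P_{r}L_{r}$; and set $\omega_{3}:=P_{r}\in\{C,c\}^{\ast}$ if $L_{r}=1$, otherwise $\omega_{3}:=1$. This gives $\omega\equiv\omega_{1}\omega_{2}\omega_{3}$ of the required form, the degenerate case $r=0$ (no occurrence of $C$ or $c$) being $\omega_{2}\equiv\omega_{3}\equiv 1$. I expect the main obstacle to be nothing deep but rather ensuring the case analysis is genuinely exhaustive\,--\,handling empty blocks at the two ends, the $r=0$ case, and verifying that all block junctions are harmless\,--\,and the locality of the relations (single $C$ or $c$, length at most $3$ or $4$) is exactly what makes this routine.
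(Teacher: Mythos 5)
Your proposal is correct, but it takes a genuinely different route from the paper. The paper argues in the opposite direction: it asserts that words of the stated form are reduced and then shows, by induction on word length, that \emph{every} word in the generators can be rewritten to a word of the stated form; since a reduced word rewrites only to itself, the characterization follows. You instead analyze irreducible words directly: a word is reduced iff it contains no left-hand side of Table \ref{table:R2} (resp.\ \ref{table:R3}) as a subword, and your block decomposition $L_{0}P_{1}L_{1}\cdots P_{r}L_{r}$ plus the forbidden subwords $Cc$, $cC$, $CB$, $cA$, and the length-$3$ (resp.\ length-$4$) rules pins each $P_{j}L_{j}$ down to one of the allowed syllables. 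I checked the case analysis against both relation tables and it is exhaustive; this is arguably cleaner than the length induction, since it isolates exactly where the two relation sets differ (the bound $|L_{j}|\le 1$ versus $|L_{j}|\le 2$) and needs no appeal to confluence or uniqueness of reduced forms. Two small remarks. First, your junction checks are superfluous for the direction you prove (you are describing a word already assumed reduced, not certifying reducedness); they would only be needed for the converse inclusion, which in any case fails for arbitrary $\omega_{3}\in\{C,c\}^{\ast}$ (e.g.\ $\omega_{3}\equiv Cc$), so the proposition can only be read in the direction you prove --- the direction actually used later in Proposition \ref{proposition:terminalC} and Theorem \ref{theorem:presentation}. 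Second, the garbled self-correction in your first paragraph (``$Cc$ and $cc$ --- no, rather \ldots'') should be cleaned up before this is usable.
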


\begin{proof}
It is clear that the words in question are reduced. Thus, the main point is to show that every word in the generators can be reduced to a word of the given type. This is easily done by induction on the length of the word.
\end{proof}

\begin{theorem} \label{theorem:presentation} (Monoid Presentations for $M_{2}$ and $M_{3}$)
The monoid homomorphisms $\pi_{i} : M(\mathcal{P}_{i}) \rightarrow M_{i}$ are isomorphisms, for
$i=2,3$. 

In particular, $\mathcal{P}_{i}$ is a presentation for $M_{i}$, for $i=2,3$.
\end{theorem}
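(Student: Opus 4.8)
## Proof proposal for Theorem~\ref{theorem:presentation}

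The plan is to show that $\pi_i\colon M(\mathcal{P}_i)\to M_i$ is an isomorphism by proving it is injective; surjectivity is already noted in Remark~\ref{remark:surjective}. Since $\mathcal{P}_i$ is a complete presentation, every class in $M(\mathcal{P}_i)$ has a unique representative that is a reduced word, and by Proposition~\ref{proposition:normalforms} these reduced words have the explicit normal form $\omega_1\omega_2\omega_3$ described there. So it suffices to prove: if $\omega$ and $\omega'$ are two reduced words (of the normal form type) with $\pi_i(\omega)=\pi_i(\omega')$ as transformations of $[0,1)$, then $\omega\equiv\omega'$ as words. Equivalently, the map from reduced words to $M_i$ is injective.

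First I would record what a reduced normal-form word does to the interval. A word $\omega_1\in\{A,B\}^\ast$ acts as an isometry-like contraction sending $[0,1)$ onto the Farey subinterval $\omega_1 I$ (Remark~\ref{remark:discussionofintervals}), and distinct $\omega_1$ give distinct images unless one is a prefix of the other, in which case the images are nested. The middle block $\omega_2$ and the tail $\omega_3\in\{C,c\}^\ast$ contribute the ``$C$-exponent'' data. The key computational input I would isolate is a normal-form analogue of Proposition~\ref{proposition:terminalC}/Proposition~\ref{proposition:ntpadvancing}: given a reduced word $\omega$, right-multiplying by a suitable $\tau\in\{A,B\}^\ast$ and reducing produces $\widehat\omega\,C^{\epsilon}$ with $\widehat\omega\in\{A,B\}^\ast$ and $\epsilon$ the total combined $C/c$-exponent of $\omega$. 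Applying this to both $\omega$ and $\omega'$ with a common such $\tau$ (take the product of the two), and using that $\pi_i(\omega)=\pi_i(\omega')$ forces $\pi_i(\omega\tau)=\pi_i(\omega'\tau)$, reduces us to comparing two reduced words of the special form $\widehat\omega C^{\epsilon}$ and $\widehat\omega' C^{\epsilon'}$ that represent the same transformation. Here $\pi_i(\widehat\omega C^\epsilon)$ maps $[0,1)$ onto $\widehat\omega I$ (since $C\cdot[0,1)=[0,1)$), so equality of images gives $\widehat\omega I=\widehat\omega' I$, whence $\widehat\omega\equiv\widehat\omega'$ by Proposition~\ref{proposition:equalityofleaves} together with the fact that a word in $\{A,B\}^\ast$ is already reduced (no $C$'s to cancel). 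Then $C^{\epsilon}$ and $C^{\epsilon'}$ represent the same transformation of $[0,1)$; since $\pi_i(C)$ is the hyperbolic translation $C_i$ of infinite order, $\epsilon=\epsilon'$. So $\widehat\omega C^\epsilon\equiv\widehat\omega' C^{\epsilon'}$.

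It remains to climb back: from $\omega\tau$ and $\omega'\tau$ having the same reduced form, deduce $\omega\equiv\omega'$. This is where I expect the main obstacle, and it needs a right-cancellation argument. The cleanest route: show that for a reduced normal-form word $\omega$ and any $X\in\{A,B\}$, the reduced form of $\omega X$ determines $\omega$ uniquely — i.e., the ``append $A$ (resp.\ $B$) then reduce'' operation on reduced words is injective. This follows by inspecting the normal form $\omega_1\omega_2\omega_3$: appending $X$ interacts only with the tail $\omega_3\in\{C,c\}^\ast$ via the rules $C^nA$, $c^mB$ etc., and the reduction is ``reversible'' in the sense that one can read off from $\omega X$'s reduced form exactly which reduction steps were applied (this is essentially the content of the $C$-track/no-potential-cancellations analysis of Section~\ref{section:FP}, and parallels \cite{LM}). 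Iterating over the letters of $\tau$ gives that the reduced form of $\omega\tau$ determines $\omega$, hence $\omega\equiv\omega'$, proving injectivity. Finally, injective $+$ surjective $=$ isomorphism, and an isomorphism $M(\mathcal{P}_i)\cong M_i$ is exactly the statement that $\mathcal{P}_i$ presents $M_i$. The argument is uniform in $i=2,3$, the only difference being the list of allowed middle-block syllables (Proposition~\ref{proposition:normalforms}) and the constants $2$ versus $3$ in the auxiliary $\tau$'s.
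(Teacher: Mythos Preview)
Your approach has a genuine gap at the ``common $\tau$'' step. Proposition~\ref{proposition:terminalC} guarantees, for each word with no potential cancellations, the existence of \emph{some} $\tau\in\{A,B\}^\ast$ for which $r(\omega\tau)\equiv\widehat\omega C^{\epsilon}$; it does not say that every sufficiently long $\tau$ works, nor that the product of two such $\tau$'s works for both words simultaneously. Concretely, once you have $r(\omega\tau_1)=\widehat\omega_1 C^{\epsilon}$, right-multiplying by the second word's $\tau_2$ can destroy the terminal-$C^{\epsilon}$ form: for instance $r(C\cdot AB)=BAc$, so $r(\widehat\omega_1 C^{\epsilon}\cdot\tau_2)$ need not end in $C^{\epsilon}$. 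Iterating (apply the proposition again, get a new $\tau_3$, \ldots) does not terminate. This is exactly the difficulty the paper's proof is designed to avoid.

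The paper sidesteps the problem by passing to the extended monoid $M(\widehat{\mathcal{P}}_i)$ (with inverses $a,b$ and a zero) and forming the \emph{single} negative-to-positive word $\omega_1^{-1}\omega_2$, which then needs only one $\tau$ via Proposition~\ref{proposition:ntpadvancing}. The argument is by minimal counterexample: among pairs $\{\omega_1,\omega_2\}$ of distinct reduced words with $\pi_i(\omega_1)=\pi_i(\omega_2)$, pick one of minimal total $C/c$-count, strip common prefixes, reduce to $\omega_1\in\{C^nA,c^mB\}^\ast$, verify that $\omega_1^{-1}\omega_2$ has no potential cancellations (this is where minimality is used), and then Proposition~\ref{proposition:ntpadvancing} yields $r(\omega_1^{-1}\omega_2\tau)\equiv\widehat\omega C^{k}$. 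Since $\hat\pi_i(\omega_1^{-1}\omega_2)=\mathrm{id}$, one gets $\pi_i(\widehat\omega C^{k})=\pi_i(\tau)$ with $\widehat\omega,\tau\in\{A,B\}^\ast$, and a short image-of-interval comparison gives the contradiction. Your ``climb-back'' right-cancellation step is also not justified as written; it may well be true, but the reversibility-of-reduction analysis you gesture at is not obviously simpler than the injectivity statement itself, and the paper's route makes it unnecessary.
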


\begin{proof}
In view of Remark \ref{remark:surjective}, it suffices to show that $\pi_{i}$ is injective, for $i=2,3$. We suppose, for a contradiction, that $\pi_{2}$ is not injective. Let 
\[ S' = \{ \{ \omega_{1}, \omega_{2} \} \mid \omega_{1} \not \equiv \omega_{2}; \pi_{2}(\omega_{1}) = \pi_{2}(\omega_{2}); \omega_{1} \text{ and } \omega_{2} \text{ are reduced} \}. \] We let 
\[ S'' = \{ \{ \omega_{1}, \omega_{2} \} \mid \{ \omega_{1}, \omega_{2} \} \in S'; \, \omega_{1}, \omega_{2} \text{ begin with different letters} \}. \]  
We note that $S'$ is non-empty by hypothesis, and it follows easily that $S''$ is also non-empty. (It suffices to cancel the maximal common prefix of the words $\omega_{1}, \omega_{2}$, where $\{ \omega_{1}, \omega_{2} \} \in S'$.) Next, we note that if $\{ \omega_{1}, \omega_{2} \} \in S''$, then one of $\omega_{1}$ or $\omega_{2}$ begins with $C$ or $c$, or is trivial. (The case in which $\omega_{1}$ begins with ``$A$" and $\omega_{2}$ begins with ``$B$'' (or the reverse) can be ruled out, since $\pi_{2}(\omega_{1})$ cannot be equal to $\pi_{2}(\omega_{2})$ under these conditions.)

Consider $\{ \omega_{1}, \omega_{2} \} \in S''$ such that the total exponent of $C$ and $c$ in $\omega_{1}$ and $\omega_{2}$ is a minimum. We assume that $\omega_{1}$ begins with either $C$ or $c$. (The case in which $\omega_{1}$ is trivial is easier, and will be handled in the course of the more difficult argument.) It follows that $\omega_{1} \equiv \omega'_{1} C^{k}$, where $\omega'_{1} \in \{ C^{n}A, c^{m}B \mid n,m \in \mathbb{N} \}^{\ast}$ and $k \in \mathbb{Z}$, by Proposition \ref{proposition:normalforms}. Indeed, we can assume that $k=0$, for if $k \neq 0$, then we simply multiply both $\omega_{1}$ and $\omega_{2}$ on the right by $c^{k}$. The word $\omega'_{2} :\equiv \omega_{2}c^{k}$ is then necessarily reduced, by the hypothesis
that the total exponent of $C$ and $c$ in $\omega_{1}$ and $\omega_{2}$ is a minimum in $S''$. We can then replace the pair $\{ \omega_{1}, \omega_{2} \}$ by
$\{ \omega'_{1}, \omega'_{2} \}$, where the latter is still in $S''$.

Thus, we can assume that $\omega_{1} \in \{ C^{n}A, c^{m}B \mid n,m \in \mathbb{N} \}^{\ast}$, $\{ \omega_{1}, \omega_{2} \} \in S''$, and the total combined exponent of $C$ and $c$ in $\omega_{1}$ and $\omega_{2}$ is a minimum within $S''$. We claim that the words $\omega_{1}^{-1}$ and $\omega_{2}$ have no potential cancellations. This is obvious in the case of $\omega_{2}$, since it is reduced.   In $\omega_{1}^{-1}$, every occurrence of $a$ is followed by $c$, every occurrence of $b$ is followed by $C$, and there are no occurrences of  $A$, $B$, $Cc$, or $cC$. Now note that no occurrences of $Cc$ or $cC$ can occur when reducing subwords of the form $C^{\pm 1}ac$
or $C^{\pm 1}bC$; from this it follows that $\omega^{-1}_{1}$ has no potential cancellations.  

Next we claim that $\omega_{1}^{-1} \omega_{2}$ has no potential cancellations. (Here the claim is obvious if $\omega_{1}$ is the trivial word; thus, the argument from this point is the general case.) Indeed, the only $C$-track of $\omega_{1}^{-1} \omega_{2}$ that could cause a problem is the one that begins with the terminal letter ($c$ or $C$) of $\omega_{1}^{-1}$. Assume that 
$\omega_{1} \equiv C \hat{\omega}_{1}$ (without loss of generality), and suppose that $\omega^{-1}_{1} \omega_{2}$ has a potential cancellation. Replacing $\{ \omega_{1}, \omega_{2} \}$ by 
$\{ \hat{\omega}_{1}, r(c \omega_{2}) \}$, we find (possibly after cancelling common prefixes) that the latter is a pair in $S''$ of smaller total exponent in $C$ and $c$. This contradicts the choice of $\{ \omega_{1}, \omega_{2} \}$, which proves the claim. 

Since $\pi_{2}(\omega_{1}) = \pi_{2}(\omega_{2})$, $\widehat{\pi}_{2}(\omega_{1}^{-1}\omega_{2}) = \widehat{\pi}(1) = id_{[0,1)}$.
In particular, this means that $\omega_{1}^{-1}\omega_{2}$ satisfies the hypotheses of Proposition \ref{proposition:ntpadvancing}. We can therefore find a word $\tau \in \{ A, B \}^{\ast}$ such that
$r(\omega_{1}^{-1} \omega_{2} \tau) \equiv \widehat{\omega} C^{k}$, where $k \geq 0$ is the total combined exponent of $C$ and $c$ in the word $\omega_{1}^{-1}\omega_{2}$ and $\widehat{\omega} \in \{ A, B \}^{\ast}$. Thus, we have $\pi_{2}(\widehat{\omega}C^{k}) = \pi_{2}(\tau)$. We now cancel 
the maximal common prefix of $\widehat{\omega}C^{k}$ and $\tau$. We continue to denote the resulting strings $\widehat{\omega}C^{k}$ and $\tau$, respectively, but now either $\widehat{\omega}$
or $\tau$ is trivial. 

If $\widehat{\omega}$ is trivial, but $\tau$ is not, then $\pi_{2}(\widehat{\omega}C^{k}) = C^{k}$,
while $\pi_{2}(\tau)$ is a transformation whose range is a proper subinterval of $[0,1)$. This is a contradiction. If $\tau$ is trivial, but not $\widehat{\omega}$, we find that $\pi_{2}(\tau)$ has the image
$[0,1)$, but $\pi_{2}(\widehat{\omega}C^{k})$ does not, which is also a contradiction. Finally, if both $\widehat{\omega}$ and $\tau$ are trivial, we find that $\pi_{2}(C^{k}) = id_{[0,1)}$, which is possible only if $k=0$. The latter implies that $\omega_{1}$ is the trivial word, $\omega_{2} \in \{ A, B \}^{\ast}$, and $\omega_{2}$ is not the trivial word. This leads us to conclude that $\pi_{2}(\omega_{2}) = id_{[0,1)}$, which is impossible since the image of $\pi_{2}(\omega_{2})$ is a proper subinterval of $[0,1)$. Thus, $\pi_{2}$ is injective.

The case of $\pi_{3}$ is similar.  
\end{proof}

\begin{remark} (presentations for certain submonoids of $\mathrm{Isom}(\mathbb{H}^{2})$)
Let $\widetilde{A}$ denote the transformation of the projective line $\mathbb{P}_{1}$ ($= \partial \mathbb{H}^{2}$) that agrees with $A$ (as defined in Definition \ref{definition:S}) on the $[0,1)$. Similarly define $\widetilde{B}$, $\widetilde{C}_{2}$, and so forth.
The transformations $\widetilde{A}$, $\widetilde{B}$, $\widetilde{C}_{2}$, $\widetilde{C}_{3}$, and their inverses may equivalently be considered isometries of $\mathbb{H}^{2}$. Let $\widetilde{S}_{i} = \{ \widetilde{A}, \widetilde{B}, \widetilde{C}_{i}, \widetilde{c}_{i} \}^{\ast}$, for $i=2,3$. 
There are obvious homomorphisms $\phi_{i}: \widetilde{S}_{i} \rightarrow M_{i}$ for $i=2,3$. It is just as clear that these homomorphisms are surjective. If $\phi_{i}(\alpha) = \phi_{i}(\beta)$, 
but $\alpha \neq \beta$, then $\alpha$ and $\beta$ are transformations of $\partial \mathbb{H}^{2}$ that agree on $[0,1)$. This is impossible, however, since any isometry of $\mathbb{H}^{2}$ is determined by its effect on any three boundary points. Thus, $\phi_{i}$ is injective, for $i=2,3$.

It follows from all of this that $\widetilde{S}_{2}$ and $\widetilde{S}_{3}$ admit the same presentations and normal forms as do $M_{2}$ and $M_{3}$.
\end{remark}

\section{An intermediate value theorem for the expansion scheme $\mathcal{E}_{i}$} \label{section:IVT}

In this section, we will argue that $N(T)$ is always a set of consecutive integers, if $T$ is a non-trivial subdivision tree. This will be the main ingredient to our proof that the expansion schemes $\mathcal{E}_{i}$ and $\mathcal{E}'_{i}$ are $n$-connected, for all $n$. The proof of the latter fact will be assembled in Section \ref{section:F-infty}.

The essential idea, that of a ``sufficiently expanded subdivision tree", is again drawn from \cite{LM} (Definition 5.5).

\subsection{The case of $M_{2}$}

We remind the reader that ``node" means ``interior node" (see Definition \ref{definition:subdivtree}). 

\begin{definition} \label{definition:fullyexpanded} (sufficiently expanded subdivision trees)
A subdivision tree is \emph{sufficiently expanded} if there is a directed arc $p$ in $T$ from the root $\epsilon$ to a leaf $\ell$ such that
\begin{enumerate}
\item each non-root node on the arc has a non-zero label;
\item if $p$ passes through a non-root node $v$ and the label of $v$ is positive, then $p$ also passes through the left child of $v$;
\item if $p$ passes through a non-root node $v$ and the label of $v$ is negative, then $p$ also passes through the right child of $v$.
\end{enumerate}
A sufficiently expanded subdivision tree is \emph{left sufficiently expanded} (respectively, \emph{right sufficiently expanded}) if some directed arc $p$ as described above passes through the left (respectively, right) child of the root.
\end{definition}

\begin{lemma} \label{lemma:lferfe}
Let $T$ be a subdivision tree.
\begin{enumerate}
\item If $n(T) = k$ and $k-1 \not \in N(T)$, then there is $T' \approx T$ such that $n(T') = k$ and $T'$ is left sufficiently expanded;
\item if $n(T) = k$ and $k+1 \not \in N(T)$, then there is $T' \approx T$ such that $n(T') = k$ and $T'$ is right sufficiently expanded.
\end{enumerate}
\end{lemma}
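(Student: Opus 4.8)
## Proof proposal for Lemma \ref{lemma:lferfe}

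The plan is to prove (1); statement (2) will follow by the obvious left-right symmetry (interchanging $A \leftrightarrow B$, positive $\leftrightarrow$ negative labels, and the two transformations of Figure \ref{figure:m2}). So assume $n(T) = k$ and $k-1 \notin N(T)$. The strategy is to use the elementary equivalences of Figure \ref{figure:m2} to repeatedly rewrite $T$ (staying inside its $\approx$-class), each time either (a) producing a directed arc from the root through the \emph{left} child satisfying conditions (1)--(3) of Definition \ref{definition:fullyexpanded}, or else (b) strictly decreasing some terminating measure of complexity, which will force the process to halt at a tree of the required form.

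First I would set up the descent. Start at the root, whose label is $k$; since $k-1 \notin N(T)$, Lemma \ref{lemma:l-rlemma} (together with the structure of elementary equivalences) prevents us from lowering the root label below $k$, and in particular we cannot apply the second (label-decreasing) transformation of Figure \ref{figure:m2} at the root in a way that would drive its label to $k-1$. I claim $k \neq 0$: if $k = 0$ were the only possible root label, then $N(T) = \{0\}$, but the elementary equivalences always let us push the root label up by $1$ (the left transformation of Figure \ref{figure:m2}, applied at the root, raises $n$ to $n+1$ at the cost of relabeling the three attached nodes, which is always legal since those labels are integers), contradicting $k-1 \notin N(T)$ only if... — more carefully, I would argue directly: if $k \le 0$, I use the left transformation of Figure \ref{figure:m2} at the root to obtain $T'' \approx T$ with $n(T'') = k+1 > k$, hence $k \in N(T)$ is not the maximum, and iterating produces arbitrarily large elements of $N(T)$, contradicting finiteness (Lemma \ref{lemma:finite}) unless the left subtree was already "short" — so instead the cleanest route is to observe that $k-1 \notin N(T)$ forces $k > 0$ by the following: apply the \emph{right} transformation of Figure \ref{figure:m2} at the root (the one sending $n \mapsto n-1$); the result is a tree $\approx T$ with root label $k-1$, \emph{provided} the required sub-caret structure is present, i.e. provided the right child of the root is itself a node labeled $0$ whose own children are available. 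If that structure is present we get $k - 1 \in N(T)$, a contradiction; so it is \emph{not} present, which is exactly a structural constraint on $T$ near the root that we exploit. Then move to the left child of the root, whose label $\ell_1$ is now forced to be nonzero (if it were $0$, an elementary equivalence at the root would change the root label, and one of the two directions would hit $k-1$), and recurse: at each node $v$ along the arc being built, the constraint "cannot reach a forbidden configuration by elementary equivalence" forces the label of $v$ to be nonzero and forces the arc to continue into the left child if the label is positive, the right child if negative — which is precisely conditions (1)--(3). The recursion terminates because each step moves strictly deeper into the finite tree $T$ (or a tree $\approx T$ of no greater height, after finitely many rewrites — here one must check the rewrites do not increase height unboundedly, which follows from the finiteness of $N$ at each node).

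The main obstacle I anticipate is making the termination argument for the rewriting process genuinely rigorous: an elementary equivalence at a node $v$ changes the labels of the three nodes hanging off the caret at $v$, which can in principle cascade, and one must show that a suitable lexicographic or depth-based complexity measure strictly decreases so the process of "pushing the good arc downward" halts. I would handle this by working top-down: once the root label and the direction of the first step are fixed (using $k - 1 \notin N(T)$ and Lemma \ref{lemma:l-rlemma} to pin them down), the left subtree $T_\ell$ is a genuinely smaller subdivision tree, its own $N(T_\ell)$ is finite (Lemma \ref{lemma:finite}), and I can set up an induction on, say, the number of nodes of $T$. The inductive hypothesis applied to $T_\ell$ (with the analogue of the hypothesis $k-1 \notin N$ translated via Lemma \ref{lemma:l-rlemma} into a statement about $N(T_\ell)$) yields a left- or right-sufficiently-expanded form for $T_\ell$, and splicing that back under the fixed root caret gives the desired $T'$ with $n(T') = k$, left sufficiently expanded. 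The base case (root's relevant child is a leaf) is immediate. The one delicate point in the induction is verifying that the hypothesis "$k - 1 \notin N(T)$" does descend to the correct hypothesis on the subtree — this is where Lemma \ref{lemma:l-rlemma} and the explicit form of the relations in Figure \ref{figure:m2} do the real work, since they tell us exactly how the root label of the whole tree is coupled to the root labels of the two branches.
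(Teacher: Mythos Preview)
Your overall structure --- induction on the number of carets, with the root-level hypothesis $k-1\notin N(T)$ constraining the left branch, then recursing into $T_\ell$ --- matches the paper's approach. But the execution has a genuine gap and some errors.

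The gap is precisely the point you flag as ``delicate'': how does $k-1\notin N(T)$ descend to a usable hypothesis on $T_\ell$? You never actually carry this out. The paper's key observation is that $k-1\notin N(T)$ forces $0\notin N(T_\ell)$ (not merely $n(T_\ell)\neq 0$): if some $\widetilde{T}_\ell\approx T_\ell$ had $n(\widetilde{T}_\ell)=0$, then replacing $T_\ell$ by $\widetilde{T}_\ell$ (legitimate by Lemma~\ref{lemma:l-rlemma}) and applying the second transformation of Figure~\ref{figure:m2} at the root would yield a tree $\approx T$ with root label $k-1$. Once you know $0\notin N(T_\ell)$, set $k_1$ to be either the smallest positive or the largest negative element of $N(T_\ell)$ (one of these exists since $N(T_\ell)$ is finite and nonempty). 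Then $k_1-1\notin N(T_\ell)$ in the first case and $k_1+1\notin N(T_\ell)$ in the second, so the inductive hypothesis (part (1) or (2) respectively) applies to $T_\ell$ and produces a sufficiently expanded $T'_\ell$ with $n(T'_\ell)=k_1\neq 0$. Because $k_1>0$ forces the witnessing path in $T'_\ell$ through its left child and $k_1<0$ through its right child, concatenating with the root-to-left-child edge of $T'$ gives a path satisfying Definition~\ref{definition:fullyexpanded} through the left child of the root.

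Two specific errors: first, you have the transformations of Figure~\ref{figure:m2} backwards --- the one that \emph{decreases} the root label applies when the \emph{left} child of the root is labelled $0$, not the right. Second, the digressions attempting to force $k>0$ or $k\neq 0$ are both incorrect (the root label can be any integer) and unnecessary; nothing in the argument depends on the sign of $k$.
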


\begin{proof}
We prove both parts simultaneously by induction on the number of carets in the subdivision tree $T$. The induction begins trivially, since a subdivision tree with a single caret is necessarily both left and right sufficiently expanded.

Now we consider a subdivision tree $T$, and assume that the lemma is true of all subdivision trees containing fewer carets. We will argue for (1); the argument proving (2) is similar. Thus, we let $n(T) = k$ and assume that $k-1 \not \in N(T)$.
 We note that $0 \not \in N(T_{\ell})$; otherwise (up to equivalence) $T$ takes the form in Figure \ref{figure:0inN}.
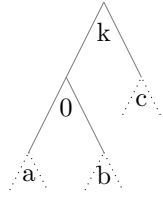
\begin{figure}[!h]
\begin{center}
\begin{tikzpicture}
\draw[gray] (0,1) -- (1,3);
\draw[gray] (.5,2) -- (1,1);
\draw[gray] (1,3) -- (1.5,2);
\draw[dotted] (0,1) -- (.25, .5);
\draw[dotted] (-.25,.5) -- (0,1);
\draw[dotted] (1,1) -- (1.25, .5);
\draw[dotted] (.75,.5) -- (1,1);
\draw[dotted] (1.25,1.5) -- (1.5,2);
\draw[dotted] (1.75,1.5) -- (1.5,2);
\node at (1,2.6){k};
\node at (.5,1.6){0};
\node at (0,.7){a};
\node at (1,.7){b};
\node at (1.5, 1.7){c};
\end{tikzpicture}
\end{center}
\caption{The case in which $0 \in N(T_{\ell})$.}
\label{figure:0inN}
\end{figure}
This allows us to apply an elementary equivalence from Definition \ref{definition:ee}, resulting in a $T' \approx T$ such that $n(T') = k-1$. However, this implies that $k-1 \in N(T)$, a contradiction. Thus, $0 \not \in N(T_{\ell})$, as claimed.

We let $k_{1}$ be either: i) the smallest positive member of $N(T_{\ell})$, or
ii) the largest negative member of $N(T_{\ell})$. We can assume that $n(T_{\ell}) = k_{1}$ (possibly after replacing $T_{\ell}$ with an equivalent tree and applying Lemma \ref{lemma:l-rlemma}).  We note that $k_{1} - 1 \not \in N(T_{\ell})$ (in case i)), or 
$k_{1}+1 \not \in N(T_{\ell})$ (in case ii)); therefore the inductive hypothesis applies, and we conclude that $T_{\ell}$ is equivalent to a left sufficiently expanded tree $T'_{\ell}$ (in case i)), or to a right sufficiently expanded tree $T'_{\ell}$ (in case ii)). In either case, we replace $T_{\ell}$ by $T'_{\ell}$. We let $T'$ denote the result of replacing $T_{\ell}$ by $T'_{\ell}$ in the tree $T$. We note that $n(T') = n(T) = k$ and $T' \approx T$ by Lemma \ref{lemma:l-rlemma}.

Assume that we are in case i); case ii) is similar. Since $T'_{\ell}$ is left sufficiently expanded, there is a path $p'$ from the root of $T'_{\ell}$ to a leaf of $T'_{\ell}$ satisfying the properties in Definition \ref{definition:fullyexpanded}, such that $p'$ also passes through the left child of the root of $T'_{\ell}$. Let $p$ be the concatenation of $e$ and $p'$, where $e$ is the edge connecting the root of $T'$ to its left child, the root of $T'_{\ell}$. The path $p$ satisfies all of the properties from Definition \ref{definition:fullyexpanded} and passes through the left child of the root in $T'$, so $T'$ is left sufficiently expanded. This completes the induction.
\end{proof}

\begin{proposition} \label{proposition:minmaxfe}
Let $T$ be a non-trivial subdivision tree.
\begin{enumerate}
\item If $T$ is left sufficiently expanded and $T'$ satisfies $n(T') < n(T)$ then
$T' \not \approx T$.
\item If $T$ is right sufficiently expanded and $T'$ satisfies $n(T') > n(T)$ then 
$T' \not \approx T$.
\end{enumerate}
\end{proposition}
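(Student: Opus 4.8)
The plan is to prove the two statements by induction on the number of carets in $T$, handling both simultaneously (as in the proof of Lemma \ref{lemma:lferfe}), since the argument for (1) and (2) will be mirror images of each other. I will argue for (1): suppose $T$ is left sufficiently expanded with $n(T) = k$, and suppose toward a contradiction that $T' \approx T$ with $n(T') = k' < k$. By the definition of ``left sufficiently expanded'', there is a directed arc $p$ from the root through the \emph{left} child of the root, satisfying the node-label conditions (1)--(3) of Definition \ref{definition:fullyexpanded}. The base case (a single caret) is essentially vacuous, since a single-caret tree has only finitely many equivalents and in fact $N(T)$ is a singleton (only elementary equivalences that change the root label require the left subtree to contain a node labelled $0$, which a leaf does not).

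For the inductive step, the key idea is to translate ``$n(T')$ can be made smaller'' into a statement about $N(T_\ell)$ or $N(T'_\ell)$ and then invoke the inductive hypothesis on the smaller tree $T_\ell$. First I would observe, as in the proof of Lemma \ref{lemma:lferfe}, that since $T$ is left sufficiently expanded, the left subtree $T_\ell$ is itself sufficiently expanded (the arc $p$ restricted to $T_\ell$ witnesses this), and moreover that $0 \notin N(T_\ell)$ --- for if $0 \in N(T_\ell)$, an elementary equivalence (Figure \ref{figure:m2} or \ref{figure:m3}) would let us lower the root label of $T$, contradicting minimality considerations. More precisely, the arc $p$ passes through the left child of the root because the root label $k$ is (we may assume) positive, or through the right child if $k$ is negative --- but the crucial point is that whether we can push $n(T')$ below $k$ is controlled by whether $T_\ell$ (resp. $T_r$) admits an equivalent tree whose root label is $0$, which is exactly ruled out. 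Combining Lemma \ref{lemma:l-rlemma} (which says $T \approx T'$ with equal root labels forces $T_\ell \approx T'_\ell$ and $T_r \approx T'_r$) with the elementary-equivalence relations, I would show that lowering $n(T)$ by one step would require $0 \in N(T_\ell)$ (when $k > 0$) or $0 \in N(T_r)$ (when $k < 0$), each of which contradicts the sufficient-expansion hypothesis applied inductively to the relevant subtree.

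The main obstacle I anticipate is making precise the bookkeeping of \emph{how} the root label changes under a sequence of elementary equivalences. A single elementary equivalence changes $n(T)$ by $\pm 1$, but it also reshuffles the subtrees and their node labels (the relations in Figures \ref{figure:m2} and \ref{figure:m3} are local but nontrivial). So I cannot simply track $n(T)$ in isolation; I need to argue that if $N(T)$ contained an integer smaller than $k = n(T)$, then by Lemma \ref{lemma:finite} $N(T)$ is a finite set of integers, and I would want to know it is in fact an \emph{interval} of integers --- which is precisely the statement this proposition is being used to establish! To avoid circularity, I would instead argue directly: take $T'' \approx T$ with $n(T'') = k - 1$ (if $k - 1 \in N(T)$ were false we could still have some smaller value, but by considering the largest element of $N(T)$ strictly below $k$ and applying Lemma \ref{lemma:l-rlemma} after matching root labels via further equivalences, reduce to the case of a label exactly one less). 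Then comparing the leaf labellings of $T$ and $T''$ as words in $\{A,B,C,c\}^*$ via Proposition \ref{proposition:equalityofleaves}, and using the normal forms from Proposition \ref{proposition:normalforms} together with the ``no potential cancellations'' machinery, one tracks the leftmost leaf: in a left sufficiently expanded tree the leftmost-leaf word has a specific reduced form (a product $C^{n_1}A \cdot C^{n_2}A \cdots$ with all $n_i$ of one sign by conditions (1)--(3)) that cannot be matched by any tree with strictly smaller root label. This last comparison of reduced forms is where the real content lies, and it will lean on Theorem \ref{theorem:presentation} (uniqueness of reduced words) to conclude that no such $T''$ exists, giving the contradiction and completing the induction.
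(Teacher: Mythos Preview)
Your final paragraph contains the right idea, but you have buried it under an unnecessary inductive superstructure and gotten the details wrong. The paper's proof is direct, not inductive: after normalizing so that $n(T)>0$ and $n(T')=0$, one simply looks at the leaf $\ell$ at the end of the distinguished arc $p$ in $T$ and compares its label $\omega$ to the label $\omega'$ of the corresponding leaf in $T'$. The entire point of conditions (1)--(3) in Definition~\ref{definition:fullyexpanded} is that $\omega$ is \emph{already reduced} in the rewrite system $\mathcal{P}_{2}$: each non-root syllable is of the form $C^{n_i}A$ with $n_i>0$ or $c^{|n_i|}B$ with $n_i<0$, and one checks that no left-hand side of a rule from Table~\ref{table:R2} occurs as a subword. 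Since $n(T)>0$ and $p$ goes left first, $\omega$ begins with $C$. On the other side, $n(T')=0$ forces $\omega'$ to begin with $A$ or $B$, and since every rewrite rule has left-hand side beginning with $C$ or $c$, the reduced form $r(\omega'C^{k})$ also begins with $A$ or $B$. By Proposition~\ref{proposition:equalityofleaves} and uniqueness of reduced forms, this is a contradiction.

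Two specific errors in your sketch: it is not the \emph{leftmost} leaf but the leaf at the end of $p$ (which may zig-zag), and the $n_i$ are not all of one sign---rather, the sign of each $n_i$ dictates whether the following letter is $A$ or $B$. Your attempt to reduce to ``$0\notin N(T_\ell)$'' and invoke an inductive hypothesis is a detour that risks the circularity you yourself flag; the paper avoids this entirely because the reducedness of $\omega$ is a local syntactic fact, not something requiring induction on the tree.
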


\begin{proof}
We first prove (1). Assume that $T$ is left sufficiently expanded and $T'$ is such that $n(T') < n(T)$ and $T' \approx T$. After letting a suitable power of $C$ act at the roots of $T$ and $T'$, we can assume that $n(T) > 0$ and $n(T')=0$. Since $T$ is left sufficiently expanded, there is a directed arc $p$ from the root of $T$ to a leaf $\ell$ satisfying the conditions of Definition \ref{definition:fullyexpanded}; the label of $\ell$ is a reduced word $\omega$. There is a leaf $\ell'$ of $T'$ that corresponds to $\ell$; let $\omega'$ be the label of $\ell'$. We have $\omega = \omega' C^{k}$, for some $k \in \mathbb{Z}$, by Proposition \ref{proposition:equalityofleaves}. After reducing, we find 
\[ \omega \equiv r(\omega' C^{k}). \]
However, these words cannot be equal letter-by-letter, since $\omega$ necessarily begins with an occurrence of $C$, but $\omega'C^{k}$ (and, thus, $r(\omega'C^{k})$) begins with either $A$ or $B$. This is a contradiction.
    
The proof of (2) is similar. One can reduce to the case in which $n(T) < 0$ and $n(T')=0$, and then argue that $\omega$ begins with a $c$, while $r(\omega'C^{k})$ begins with either $A$ or $B$.
\end{proof}

\begin{proposition} \label{proposition:0notinM2}
Let $T$ be a non-trivial subdivision tree.
\begin{enumerate}
\item If $0 \not \in N(T_{\ell})$, then $n(T) = \mathrm{min}(N(T))$.
\item If $0 \not \in N(T_{r})$, then $n(T) = \mathrm{max}(N(T))$.
\end{enumerate}
\end{proposition}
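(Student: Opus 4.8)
The plan is to prove (1) and (2) as a matched pair, with (2) being completely symmetric to (1), so I would concentrate on (1). The statement to establish is that if $0 \notin N(T_\ell)$, then $n(T) = \min(N(T))$. Since $N(T)$ is a finite set of integers by Lemma \ref{lemma:finite}, the claim amounts to showing that no subdivision tree $T' \approx T$ can have $n(T') < n(T)$. So the approach is: assume $0 \notin N(T_\ell)$ and let $k = n(T)$; it suffices to produce an equivalent tree that is left sufficiently expanded with root label $k$, because then Proposition \ref{proposition:minmaxfe}(1) immediately forbids any equivalent $T'$ with smaller root label.

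The key step is therefore to invoke Lemma \ref{lemma:lferfe}(1): if $n(T) = k$ and $k - 1 \notin N(T)$, then there is $T' \approx T$ with $n(T') = k$ that is left sufficiently expanded. So what I really need to check is that the hypothesis $0 \notin N(T_\ell)$ forces $k - 1 \notin N(T)$. This is exactly the observation already used inside the proof of Lemma \ref{lemma:lferfe}: if $k - 1 \in N(T)$, then (up to equivalence) one may arrange $n(T) = k$ with $n(T_\ell) = 0$, i.e.\ $T$ has the shape of Figure \ref{figure:0inN}; but then $0 \in N(T_\ell)$, contradicting the hypothesis. Slightly more carefully: suppose $k - 1 \in N(T)$, so there is $T'' \approx T$ with $n(T'') = k - 1$; by Lemma \ref{lemma:l-rlemma} we may, after replacing $T$ by an equivalent tree, assume $n(T) = k-1$ as well (and then must show $0 \in N(T_\ell)$). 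Here one uses that applying a power of $C$ at the root lets us shift; the elementary equivalences of Definition \ref{definition:ee} (Figure \ref{figure:m2}) relate a tree with root label $k$ and left-child label $0$ to one with root label $k - 1$, and conversely, which is precisely the mechanism that puts $0$ into $N(T_\ell)$. Once $k - 1 \notin N(T)$ is secured, Lemma \ref{lemma:lferfe}(1) gives the left sufficiently expanded representative, and Proposition \ref{proposition:minmaxfe}(1) closes the argument.

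For part (2), the same reasoning applies with left/right interchanged: from $0 \notin N(T_r)$ one deduces $k + 1 \notin N(T)$ by the mirror-image argument (using the symmetric elementary equivalences), then Lemma \ref{lemma:lferfe}(2) produces a right sufficiently expanded $T' \approx T$ with $n(T') = k$, and Proposition \ref{proposition:minmaxfe}(2) shows no equivalent tree can have a larger root label, i.e.\ $n(T) = \max(N(T))$.

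The main obstacle I anticipate is the bookkeeping in the implication ``$k - 1 \in N(T) \Rightarrow 0 \in N(T_\ell)$'': one must be careful that the elementary equivalence invoked genuinely applies to the tree $T$ (it requires that $T_\ell$ itself have a caret at its root with label $0$, or that one can first pass to such a representative of $T_\ell$ via Lemma \ref{lemma:l-rlemma}), and that reducing to the normalized situation $n(T) = k$, $n(T_\ell) = 0$ does not lose generality. All the algebraic identities needed are already recorded in Figures \ref{figure:m2} and \ref{figure:m3} and in the proof of Lemma \ref{lemma:lferfe}, so the remaining work is organizational rather than computational. Everything else — the finiteness of $N(T)$, the existence of sufficiently expanded representatives, and the extremality of their root labels — is quoted directly from the results above.
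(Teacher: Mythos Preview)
Your plan has a genuine gap at the step ``$0 \notin N(T_\ell) \Rightarrow k-1 \notin N(T)$''. You claim this is ``exactly the observation already used inside the proof of Lemma \ref{lemma:lferfe}'', but that proof uses the \emph{opposite} implication: it shows that $0 \in N(T_\ell)$ implies $k-1 \in N(T)$ (via an elementary equivalence). You need the converse, and that is not supplied anywhere. Your own attempt to justify it (``by Lemma \ref{lemma:l-rlemma} we may, after replacing $T$ by an equivalent tree, assume $n(T) = k-1$'') is confused: once you replace $T$ by an equivalent tree with a different root label, the left branch $T_\ell$ changes as well, so you can no longer draw conclusions about the original $N(T_\ell)$. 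And knowing only that some $T'' \approx T$ has $n(T'') = k-1$ does not, by any elementary equivalence you have recorded, put a $0$ at the root of $T_\ell$; for that you would need $0 \in N(T''_r)$, which is not given.

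The paper sidesteps this entirely by applying Lemma \ref{lemma:lferfe} to $T_\ell$ rather than to $T$. Since $0 \notin N(T_\ell)$, one can choose a representative of $T_\ell$ whose root label is the smallest positive (or largest negative) element of $N(T_\ell)$; that representative satisfies the hypothesis of Lemma \ref{lemma:lferfe}, hence can be made sufficiently expanded. Grafting this sufficiently expanded $T''_\ell$ back into $T$ (legitimately, via Lemma \ref{lemma:l-rlemma}, since the root label $k$ is unchanged) and prepending the root-to-left-child edge produces a left sufficiently expanded tree $T' \approx T$ with $n(T') = k$. Then Proposition \ref{proposition:minmaxfe}(1) finishes. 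The point is that the hypothesis $0 \notin N(T_\ell)$ is exactly what lets you run the sufficiently-expanded construction \emph{one level down}; it does not by itself verify the hypothesis of Lemma \ref{lemma:lferfe} at the level of $T$.
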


\begin{proof}
We prove (1), the proof of (2) being similar.

We can find a subdivision tree $T'_{\ell} \approx T_{\ell}$ such that 
$n(T'_{\ell})$ is either the smallest positive number in $N(T_{\ell})$ or the largest negative number in $N(T_{\ell})$. In either case, the hypothesis of Lemma \ref{lemma:lferfe} applies, and we can replace $T'_{\ell}$ by $T''_{\ell}$, where $T''_{\ell}$ is sufficiently expanded. We can then find a directed arc $p''$ from the root of $T''_{\ell}$  to a leaf $\ell$, 
where $p''$ satisfies the conditions from Definition \ref{definition:fullyexpanded}. We can then replace the tree $T_{\ell}$ by $T''_{\ell}$ within the tree $T$, to create a new $T'$ such that $n(T') = n(T)$, $T' \approx T$, and 
$T''_{\ell}$ is the left branch of the tree $T'$. Now let $p' = ep''$, where $e$ is the edge connecting the root of $T'$ to the root of $T''_{\ell}$. The path $p'$ satisfies all of the conditions of Definition \ref{definition:fullyexpanded}, and shows that $T'$ is left sufficiently expanded. 

If $T'' \approx T$ and $n(T'') < n(T)$, then $n(T'') < n(T')$ and $T'' \approx T'$, which contradicts 
Proposition \ref{proposition:minmaxfe}(1). It follows that $n(T) = \mathrm{min}(N(T))$.  
\end{proof}

\begin{theorem} \label{theorem:M2exp} (The intermediate value theorem for $M_{2}$)
If $T$ is a non-trivial subdivision tree, then 
\[ N(T) = [m,M] \cap \mathbb{Z}, \]
where $m = \mathrm{min}(N(T))$ and $M = \mathrm{max}(N(T))$.
\end{theorem}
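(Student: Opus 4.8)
The plan is to prove the statement by induction on the number of carets of $T$, showing that $N(T)$ has no ``gaps''; i.e., if $m < k < M$ with $m,M \in N(T)$, then $k \in N(T)$. By Lemma \ref{lemma:finite}, $N(T)$ is finite, so $m = \mathrm{min}(N(T))$ and $M = \mathrm{max}(N(T))$ are well-defined. The base case is a subdivision tree with a single caret, where $N(T) = \{ n(T) \}$ is trivially an interval (in fact, I should check directly from the elementary equivalences of Figure \ref{figure:m2} that a single caret labelled $k$ is only equivalent to a single caret labelled $k$, since any elementary equivalence that changes the root label also introduces additional carets into the left or right branch).

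For the inductive step, take a non-trivial $T$ with $n(T) = k_0$, and I want to show $N(T)$ is an interval. The key dichotomy is whether $0 \in N(T_\ell)$ and whether $0 \in N(T_r)$. First I would analyze how $N(T)$ relates to $N(T_\ell)$ and $N(T_r)$: by Lemma \ref{lemma:l-rlemma}, a tree $T'$ with $n(T') = k_0$ is equivalent to $T$ precisely when $T'_\ell \approx T_\ell$ and $T'_r \approx T_r$, so $k_0 \in N(T)$ always, and more generally any $k \in N(T)$ arises by first applying elementary equivalences inside the branches (which don't change the root label) and then applying the root-altering elementary equivalence of Figure \ref{figure:m2}. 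Inspecting that relation: one can decrease the root label by $1$ exactly when $0 \in N(T_\ell)$ (Figure \ref{figure:0inN}), and symmetrically increase it by $1$ exactly when $0 \in N(T_r)$; moreover, after such a move the new left (resp. right) branch has a label pattern that lets the induction continue. So the possible root labels split into a ``downward reachable'' set and an ``upward reachable'' set joined at $k_0$. If $0 \notin N(T_\ell)$, then Proposition \ref{proposition:0notinM2}(1) gives $k_0 = \mathrm{min}(N(T))$, so there is nothing ``below'' and $N(T) = [k_0, M] \cap \mathbb{Z}$; symmetrically if $0 \notin N(T_r)$.

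The substantive case is when $0 \in N(T_\ell)$ (and/or $0 \in N(T_r)$). Here I would argue that decreasing the root label once yields a tree $T^{(-1)}$ with $n(T^{(-1)}) = k_0 - 1$, and I claim $N(T^{(-1)}) = N(T)$ (same equivalence class), with a strictly controlled left branch: tracking the effect of the Figure \ref{figure:m2} relation on the branch containing the old caret labelled $0$, the new tree $T^{(-1)}$ has strictly fewer carets than... no --- rather, the correct bookkeeping is that $N(T) \cap (-\infty, k_0] = \{ k_0, k_0 - 1, \ldots \}$ is itself an interval going down to $\mathrm{min}(N(T))$, because each downward step is available precisely as long as we have not yet reached the minimum, by Proposition \ref{proposition:0notinM2} applied to the successively modified trees. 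Concretely: if $k_0 - 1 \in N(T)$ but some $k$ with $\mathrm{min}(N(T)) < k < k_0 - 1$ were missing, pick the largest such $k$; then $k+1 \in N(T)$ but $k \notin N(T)$, so by Proposition \ref{proposition:0notinM2}(1) (applied to a representative $T^\ast$ with $n(T^\ast) = k+1$, noting $0 \notin N(T^\ast_\ell)$ since $k \notin N(T)$) we get $k+1 = \mathrm{min}(N(T))$, contradicting $k < k+1$ being in $N(T)$ is... rather contradicting that $\mathrm{min}(N(T)) < k < k+1$. The same argument run upward (using part (2) of Proposition \ref{proposition:0notinM2}) shows $N(T) \cap [k_0, \infty)$ is an interval up to $\mathrm{max}(N(T))$. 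Splicing the two half-intervals at $k_0$ gives $N(T) = [m, M] \cap \mathbb{Z}$.

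\textbf{Main obstacle.} The delicate point is the precise claim that \emph{if $k \notin N(T)$ then $0 \notin N(T^\ast_\ell)$ for a representative $T^\ast$ of $T$ with $n(T^\ast) = k+1$} --- i.e., that the \emph{only} way to lower the root label is via the single elementary move of Figure \ref{figure:m2}, so that unavailability of that move (which is what $0 \notin N(T^\ast_\ell)$ encodes) genuinely witnesses $k \notin N(T)$. This requires knowing that $N(T^\ast)$ is determined ``locally at the root'' together with the branch data, which is exactly the content one extracts from Lemma \ref{lemma:l-rlemma} plus a careful enumeration of the root-changing elementary equivalences; I expect to spend most of the writing making that enumeration airtight and confirming it is closed under the relevant equivalences (so the choice of representative $T^\ast$ does not matter).
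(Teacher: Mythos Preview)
Your proposal is correct and uses essentially the same idea as the paper: the contradiction comes from Proposition~\ref{proposition:0notinM2}. The paper's version is shorter---it picks the \emph{minimal} missing $k \in (m,M)$, takes $T' \approx T$ with $n(T') = k-1$, observes that $0 \notin N(T'_r)$ (else an elementary equivalence would put $k$ into $N(T)$), and concludes via Proposition~\ref{proposition:0notinM2}(2) that $k-1 = M$, a contradiction. Your split into downward and upward halves relative to $k_0 = n(T)$ is unnecessary, and the induction on the number of carets is never actually invoked in your argument; you can drop both and the proof goes through as a single direct contradiction.

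Your ``main obstacle'' is not a genuine obstacle, and your worry about it suggests you have the direction of the needed implication backwards. What you use is $k \notin N(T) \Rightarrow 0 \notin N(T^\ast_\ell)$. The contrapositive, $0 \in N(T^\ast_\ell) \Rightarrow k \in N(T)$, is immediate: by Lemma~\ref{lemma:l-rlemma} you may replace $T^\ast_\ell$ by an equivalent tree with root labelled $0$ without disturbing $n(T^\ast)=k+1$ or the equivalence class, and then one application of the second move in Figure~\ref{figure:m2} (valid by Lemma~\ref{lemma:elementaryequivalence}) lowers the root label to $k$. You do \emph{not} need that the elementary move is the \emph{only} way to lower the root label; sufficiency is all that is required. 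Lemma~\ref{lemma:l-rlemma} also shows that $N(T^\ast_\ell)$ depends only on the equivalence class of $T$ and the value $n(T^\ast)=k+1$, so the choice of representative $T^\ast$ is irrelevant.
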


\begin{proof}
Suppose that $k \in (m,M) \cap \mathbb{Z}$ but $k \not \in N(T)$. Assume further that $k$ is the minimal such integer.  

There is a subdivision tree $T' \approx T$ such that $n(T') = k-1$. It must be that $0 \not \in  N(T'_{r})$ (otherwise, we can apply an elementary equivalence to produce a $T'' \approx T'$ such that $n(T'') =k$). Thus, 
$k-1 = n(T') = \mathrm{max}(N(T')) = \mathrm{max}(N(T)) = M$, a contradiction.
\end{proof}

\subsection{The case of $M_{3}$}

\begin{definition} \label{definition:blocking} (blocking trees)
Let $T$ be a subdivision tree. We say that $T$ is a \emph{blocking tree} if either
\begin{enumerate}
\item both of the children of the root of $T$ are nodes, and the three vertices (the root and its children)
are not all labelled by $0$, or 
\item one of these three vertices is a leaf, or $T$ is trivial.
\end{enumerate}
\end{definition}

\begin{definition} \label{definition:fullyexpandedM3} (sufficiently expanded in $M_{3}$)
A subdivision tree $T$ over $M_{3}$ is \emph{sufficiently expanded} if there is a directed arc $p$ from the root $\epsilon$ to some leaf $\ell$ such that 
\begin{enumerate}
\item each non-root node on the arc $p$ is the root of a blocking (sub)tree;
\item if a non-root node $v$ on the arc $p$ has a positive label, then the arc $p$ passes through the left child of $v$;
\item if a non-root node $v$ on the arc $p$ has a negative label, then the arc $p$ passes through the right child of $v$. 
\item if the arc $p$ passes through a non-root node $v$ labelled by ``$0$", then the next node along $p$ (if any) has a non-zero label.
\end{enumerate}
A sufficiently expanded subdivision tree is \emph{left sufficiently expanded} (respectively, \emph{right sufficiently expanded}) if $p$ passes through the left (respectively, the right) child of $\epsilon$.
\end{definition}

\begin{lemma} \label{lemma:lferfeM3}
Let $T$ be a subdivision tree.
\begin{enumerate}
\item If $n(T) = k$ and $k-1 \not \in N(T)$, then there is $T' \approx T$ such that $n(T') = k$ and $T'$ is left sufficiently expanded;
\item if $n(T) = k$ and $k+1 \not \in N(T)$, then there is $T' \approx T$ such that $n(T') = k$ and $T'$ is right sufficiently expanded.
\end{enumerate}
\end{lemma}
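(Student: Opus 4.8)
The plan is to mimic the proof of Lemma \ref{lemma:lferfe} exactly, replacing each ingredient with its $M_{3}$-analogue. I would prove both parts simultaneously by induction on the number of carets in $T$. For the base case, a subdivision tree with a single caret is trivially both left and right sufficiently expanded in the sense of Definition \ref{definition:fullyexpandedM3}: the root has no non-root nodes on the required arc, so conditions (1)--(4) are vacuous, and the arc may be taken to pass through whichever child of $\epsilon$ we please.

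For the inductive step, assume $n(T) = k$ and $k-1 \notin N(T)$ (the argument for part (2) is symmetric). The first key point is that $0 \notin N(T_{\ell})$ \emph{and} more generally that no tree equivalent to $T$ can have its root-plus-children-all-labelled-$0$, i.e. that the relevant elementary equivalence from Figure \ref{figure:m3} cannot be applied to lower the root label to $k-1$; otherwise $k-1 \in N(T)$, a contradiction. This is what forces $T_{\ell}$ (after a suitable normalization via Lemma \ref{lemma:l-rlemma}) to be the root of a blocking tree once we push the interesting structure to the left. Concretely, I would choose $k_{1}$ to be the smallest positive element of $N(T_{\ell})$ or the largest negative element (one of these exists since $T_{\ell}$ is nontrivial, using Lemma \ref{lemma:finite} and the fact that $0$ need not lie in $N(T_{\ell})$), arrange $n(T_{\ell}) = k_{1}$ via Lemma \ref{lemma:l-rlemma}, observe that $k_{1} \mp 1 \notin N(T_{\ell})$, apply the inductive hypothesis to obtain a left (resp.\ right) sufficiently expanded $T'_{\ell} \approx T_{\ell}$, and substitute it back into $T$ to get $T' \approx T$ with $n(T')=k$. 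Finally I concatenate the edge $e$ from the root of $T'$ to the root of $T'_{\ell}$ with the witnessing arc $p'$ inside $T'_{\ell}$; one checks that $e p'$ satisfies conditions (1)--(4) of Definition \ref{definition:fullyexpandedM3} and passes through the left child of the root, so $T'$ is left sufficiently expanded.

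The extra wrinkle compared with the $M_{2}$ case is condition (4) and the notion of a \emph{blocking tree}: in $M_{3}$ a node labelled $0$ is allowed on the arc, but only if it is immediately followed by a non-zero label, and every non-root node must be the root of a blocking subtree. So when I normalize $T_{\ell}$ and apply the inductive hypothesis, I must make sure that the root of $T'_{\ell}$ is the root of a blocking tree and that, if $n(T'_{\ell}) = 0$ happens (which it does not here, by our choice of $k_{1} \neq 0$, so this is automatic), the next node along has a non-zero label. The case analysis guaranteeing ``blocking'' is the main thing that has to be checked carefully: I expect the main obstacle to be verifying that the hypothesis $k - 1 \notin N(T)$ really does rule out \emph{all} the elementary equivalences of Figure \ref{figure:m3} that could decrease the root label, i.e.\ that none of the four relation-types (read in reverse) applies at the root of any tree equivalent to $T$. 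This is the $M_{3}$ analogue of the single ``$0 \in N(T_{\ell})$'' obstruction in Lemma \ref{lemma:lferfe}, but because Figure \ref{figure:m3} has more relations and they involve a depth-two left subtree, the bookkeeping is heavier. Once that is pinned down, the concatenation-of-arcs argument and the induction go through as in the $M_{2}$ case.
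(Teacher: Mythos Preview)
Your argument has a genuine gap: the claim that $0 \notin N(T_{\ell})$ is \emph{false} in the $M_{3}$ setting. In the $M_{2}$ case this followed because the elementary equivalence in Figure~\ref{figure:m2} that lowers the root label needs only that $T_{\ell}$ have root labelled $0$. In $M_{3}$, the relations in Figure~\ref{figure:m3} require the deeper condition that $T_{\ell}$ have root labelled $0$ \emph{and} both children of that root be nodes labelled $0$. Thus $k-1 \notin N(T)$ only tells you that every tree equivalent to $T_{\ell}$ is a \emph{blocking} tree; it does not rule out $0 \in N(T_{\ell})$. When $0 \in N(T_{\ell})$, your choice of $k_{1}$ as the smallest positive or largest negative element fails: if, say, the smallest positive element is $1$, then $k_{1}-1 = 0 \in N(T_{\ell})$ and the inductive hypothesis does not apply.

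The paper's proof splits into the two cases. When $0 \notin N(T_{\ell})$ it proceeds exactly as you describe. When $0 \in N(T_{\ell})$, one normalizes so that $n(T_{\ell}) = 0$ and then uses the fact that all equivalents of $T_{\ell}$ are blocking to deduce (via Lemma~\ref{lemma:l-rlemma}) that $0 \notin N(T_{\ell\ell})$ or $0 \notin N(T_{\ell r})$; the induction is then applied one level deeper, to $T_{\ell\ell}$ or $T_{\ell r}$. The concatenated arc now passes through a node labelled $0$ (the root of $T_{\ell}$), and this is exactly the situation that condition~(4) of Definition~\ref{definition:fullyexpandedM3} was designed to accommodate: the next node along the arc carries the nonzero label chosen inside $T_{\ell\ell}$ (or $T_{\ell r}$). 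You correctly identified condition~(4) as the ``extra wrinkle'', but then arranged your argument so that it never fires; in fact it is essential, and the case you dismissed is where it is needed.
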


\begin{proof}
The proof resembles that of Lemma \ref{lemma:lferfe}. We argue by induction on the number of carets in the subdivision tree $T$. If $T$ consists of a single caret, then it is necessarily both left sufficiently expanded and right sufficiently expanded; thus, the base case is satisfied.

Now consider an arbitrary subdivision tree $T$, and suppose that the lemma has been proved for all subdivision trees having fewer carets. We assume that $T$ satisfies (1); the case of (2) is similar. Since $k-1 \not \in N(T)$, the left branch $T_{\ell}$ of $T$ is a blocking tree. (Indeed, all trees in the equivalence class of $T_{\ell}$ are blocking trees, by Lemma \ref{lemma:l-rlemma}.) There are two possibilities for $T_{\ell}$: either $0 \in N(T_{\ell})$ or
$0 \not \in N(T_{\ell})$. In the latter case, we can proceed essentially as in the proof of Lemma \ref{lemma:lferfe}. We therefore assume that $0 \in N(T_{\ell})$; indeed, we can assume that $n(T_{\ell}) = 0$ without loss of generality. Since all trees in the equivalence class of $T_{\ell}$ are blocking trees, it must be that either $0 \not \in N(T_{\ell \ell})$ or $0 \not \in N(T_{\ell r})$. We assume that $0 \not \in N(T_{\ell \ell})$. Thus, by induction, we can replace $T_{\ell \ell}$ by a subdivision tree $T'_{\ell \ell} \approx T_{\ell \ell}$ such that there is a path $\hat{p}$ from the root of $T'_{\ell \ell}$ to a leaf of $T'_{\ell \ell}$
that satisfies the conditions of Definition \ref{definition:fullyexpandedM3}. We let $T'$ be the result of replacing $T_{\ell \ell}$ with $T'_{\ell \ell}$ in $T$. Now, letting $p_{1}$ denote the path from the root of $T'$ to the root of $T'_{\ell \ell}$ and $p=p_{1} \hat{p}$, the path $p$ shows that $T'$ is left sufficiently expanded, completing the induction.  
\end{proof}

\begin{proposition} \label{proposition:minmaxfeM3}
Let $T$ be a subdivision tree.
\begin{enumerate}
\item If $T$ is left sufficiently expanded and $T'$ satisfies $n(T') < n(T)$ then
$T' \not \approx T$.
\item If $T$ is right sufficiently expanded and $T'$ satisfies $n(T') > n(T)$ then 
$T' \not \approx T$.
\end{enumerate}
\end{proposition}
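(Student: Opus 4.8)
The statement to be proved is Proposition~\ref{proposition:minmaxfeM3}, the $M_{3}$-analogue of Proposition~\ref{proposition:minmaxfe}. The plan is to mimic the proof of Proposition~\ref{proposition:minmaxfe} as closely as possible, the only difference being that one must account for the more elaborate notion of ``sufficiently expanded'' from Definition~\ref{definition:fullyexpandedM3} (which now involves blocking trees and the extra condition on runs of zero-labelled nodes). I will prove (1); the proof of (2) is symmetric, reading ``$c$'' for ``$C$'', ``right'' for ``left'', and using the mirror-image relations from Table~\ref{table:R3} and Figure~\ref{figure:m3}.

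\textbf{Key steps.} First, suppose for contradiction that $T$ is left sufficiently expanded, $n(T')<n(T)$, and $T'\approx T$. Letting a suitable power of $C$ act at both roots (which changes neither the conclusion nor the hypothesis, since $C\cdot[0,1)=[0,1)$ and the left-sufficiently-expanded property is preserved under changing only the root label downward in this normalized situation -- more precisely, one normalizes so that $n(T)>0$ and $n(T')=0$; strictly one should note that acting by $C$ at the root preserves the witnessing arc $p$ because it only alters the root's label, and the root is not required to satisfy conditions (1)--(4) of Definition~\ref{definition:fullyexpandedM3}). Second, use the witnessing arc $p$ of Definition~\ref{definition:fullyexpandedM3} to extract the reduced word $\omega$ labelling the terminal leaf $\ell$ of $p$: the point of the ``sufficiently expanded'' conditions is exactly that $\omega$, read from the root, begins with $C$ (because $n(T)>0$ and the arc immediately descends to the left child of the root, producing an initial $C^{n(T)}A$, which is reduced and starts with $C$; one must check using Proposition~\ref{proposition:npcinntp}/Proposition~\ref{proposition:ntpadvancing} or directly from the normal form in Proposition~\ref{proposition:normalforms} that no relation of $\mathcal{P}_{3}$ can eliminate or hide this leading $C$, and this is where the blocking-tree and zero-run conditions are used to guarantee the relevant prefix is genuinely reduced). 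Third, let $\ell'$ be the leaf of $T'$ corresponding to $\ell$ under the equivalence, with label $\omega'$; by Proposition~\ref{proposition:equalityofleaves}, $\omega=\omega' C^{k}$ for some $k\in\mathbb{Z}$, hence $\omega\equiv r(\omega' C^{k})$ since $\omega$ is reduced. But $\omega'$ begins (reading from the root of $T'$, whose label is $0$) with $A$ or $B$ -- because $n(T')=0$ means the first node contributes $C^{0}=1$, so the first actual letter of $\omega'$ is the edge-letter $A$ or $B$ -- and therefore $\omega'C^{k}$, and hence its reduced form $r(\omega'C^{k})$, begins with $A$ or $B$ (no relation in Table~\ref{table:R3} creates a leading $C$ or $c$ from a word beginning with $A$ or $B$; this follows from the normal-form description in Proposition~\ref{proposition:normalforms}). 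This contradicts $\omega\equiv r(\omega'C^{k})$, since the two words then disagree on their first letter.

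\textbf{Main obstacle.} The delicate point is not the overall structure -- which is a verbatim transcription of the $M_{2}$ argument -- but verifying that the arc $p$ guaranteed by ``left sufficiently expanded'' in the $M_{3}$ sense really does force the leaf label $\omega$ to start with a $C$ after reduction. In the $M_{2}$ case this was immediate; here one must show that passing through blocking trees and obeying condition (4) of Definition~\ref{definition:fullyexpandedM3} is precisely what prevents the relations $C_{3}AAA\to AC_{3}$, $C_{3}ABA\to BABC_{3}$, etc., from being applicable in a way that would strip the leading $C$. Concretely, I expect to argue that the prefix of $\omega$ corresponding to the first few steps of $p$ lies among the ``no potential cancellations'' words of Section~\ref{section:FP}, so that by Proposition~\ref{proposition:normalforms} its reduced form still begins with $C$; the blocking-tree hypothesis is exactly the combinatorial condition ensuring this. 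Once this lemma-level fact is in hand, the contradiction closes exactly as above.
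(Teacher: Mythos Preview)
Your proposal is correct and follows exactly the same approach as the paper, which simply states that the proof is ``no different from that of Proposition~\ref{proposition:minmaxfe}'' and that the crucial observation is that the witnessing leaf $\ell$ of $T$ has a reduced label, while the corresponding leaf label in $T'$ (after reduction) begins with $A$ or $B$. You are more explicit than the paper about why the conditions of Definition~\ref{definition:fullyexpandedM3} force the leaf label to be reduced---this is precisely what those conditions are designed to guarantee, as you correctly diagnose, and your observation that the resulting word fits the normal-form syllable pattern of Proposition~\ref{proposition:normalforms} (syllables $C^{n}A$, $C^{n}AA$, $C^{n}AB$, $c^{n}B$, $c^{n}BA$, $c^{n}BB$) is the right way to see it.
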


\begin{proof}
The proof is no different from that of Proposition \ref{proposition:minmaxfe}; again the crucial observation is that the left sufficiently expanded tree $T$ has a leaf $\ell$ whose label is a reduced word, and the corresponding leaf $\ell'$ in $T'$ has a leaf whose label, after reduction, cannot be equivalent to that of $\ell$.
\end{proof}

\begin{proposition} \label{proposition:0notinM3}
Let $T$ be a subdivision tree.
\begin{enumerate}
\item If, whenever $T' \approx T_{\ell}$, $T'$ is a blocking tree, then $n(T) = \mathrm{min}(N(T))$.
\item If, whenever $T' \approx T_{r}$, $T'$ is a blocking tree, then $n(T) = \mathrm{max}(N(T))$.
\end{enumerate}
\end{proposition}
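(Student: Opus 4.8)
\textbf{Proof proposal for Proposition \ref{proposition:0notinM3}.}

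The plan is to mirror the structure of the proof of Proposition \ref{proposition:0notinM2}, with the role of the condition ``$0 \notin N(T_{\ell})$'' replaced by the hypothesis that every tree equivalent to $T_{\ell}$ is a blocking tree. I will argue part (1); part (2) is obtained by the symmetric argument (swapping left for right and positive labels for negative labels). Let $T$ be a non-trivial subdivision tree such that every $T' \approx T_{\ell}$ is a blocking tree. The goal is to produce a tree $\widehat{T} \approx T$ with $n(\widehat{T}) = n(T)$ that is \emph{left sufficiently expanded} in the sense of Definition \ref{definition:fullyexpandedM3}; once we have this, Proposition \ref{proposition:minmaxfeM3}(1) forbids any equivalent tree from having a smaller root label, so $n(T) = \mathrm{min}(N(T))$.

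The construction of $\widehat{T}$ is essentially the content of Lemma \ref{lemma:lferfeM3}, but applied to the left branch rather than to $T$ itself, and this is where the blocking-tree hypothesis enters. First, using Lemma \ref{lemma:finite} and Lemma \ref{lemma:l-rlemma}, I may replace $T_{\ell}$ by an equivalent tree whose root label $k_{1}$ is the smallest positive element of $N(T_{\ell})$ if $N(T_{\ell})$ contains a positive integer (and if $0 \notin N(T_{\ell})$), or the largest negative element otherwise; alternatively, if $0 \in N(T_{\ell})$, I take $n(T_{\ell}) = 0$. In the first case $k_{1} - 1 \notin N(T_{\ell})$ (or $k_1+1 \notin N(T_\ell)$ in the negative subcase), so Lemma \ref{lemma:lferfeM3} directly produces a left (resp.\ right) sufficiently expanded tree equivalent to $T_{\ell}$, with the same root label. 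In the remaining case, $n(T_{\ell}) = 0$ and $T_{\ell}$ (with all its equivalent trees) is a blocking tree with a zero-labelled root; since not all three of the root and its two children can be labelled $0$ (else it would not be blocking, or it reduces to a smaller case), one of $N(T_{\ell\ell})$, $N(T_{\ell r})$ fails to contain $0$, and I recurse into that grandchild exactly as in the proof of Lemma \ref{lemma:lferfeM3}. In every case I obtain a sufficiently expanded tree $T''_{\ell} \approx T_{\ell}$ together with a path $\widehat{p}$ from its root to a leaf satisfying conditions (1)--(4) of Definition \ref{definition:fullyexpandedM3}.

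Next I splice: let $\widehat{T}$ be the result of replacing $T_{\ell}$ by $T''_{\ell}$ inside $T$. By Lemma \ref{lemma:l-rlemma}, $\widehat{T} \approx T$ and $n(\widehat{T}) = n(T)$. Let $p = e\,\widehat{p}$, where $e$ is the edge from the root of $\widehat{T}$ to the root of $T''_{\ell}$ (its left child). I must check that $p$ witnesses that $\widehat{T}$ is left sufficiently expanded; the only thing to verify beyond what $\widehat{p}$ already gives is condition (1) at the root of $T''_{\ell}$ — namely that this first non-root node is the root of a blocking subtree — which holds precisely because of the hypothesis that $T_{\ell}$ (hence $T''_{\ell}$) and all its equivalents are blocking trees, and condition (4), which is automatic since $\widehat{p}$ already satisfies it internally and the root of $\widehat{T}$ is not subject to the condition. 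Thus $\widehat{T}$ is left sufficiently expanded, and Proposition \ref{proposition:minmaxfeM3}(1) gives $n(T) = \mathrm{min}(N(T))$. The proof of (2) is identical with left/right and positive/negative interchanged.

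\textbf{Main obstacle.} The delicate point is the bookkeeping in the case $0 \in N(T_{\ell})$: one must be sure that the recursion into $T_{\ell\ell}$ or $T_{\ell r}$ terminates and produces a path that, when prefixed by the two edges down to that grandchild, still satisfies \emph{all four} conditions of Definition \ref{definition:fullyexpandedM3} — in particular condition (4), which controls consecutive zero labels, since we have just traversed a zero-labelled node (the root of $T''_{\ell}$). This is handled exactly as in Lemma \ref{lemma:lferfeM3}, so the cleanest route is to phrase the present proposition so that it quotes Lemma \ref{lemma:lferfeM3} applied to the subtree $T_{\ell}$, rather than re-deriving the sufficiently-expanded path from scratch; the blocking-tree hypothesis is exactly what guarantees the hypothesis ``$k - 1 \notin N(T_{\ell})$'' of that lemma can be arranged (via the choice of $k_1$ as an extremal element of $N(T_\ell)$, noting that $0$ behaving as a ``blocked'' value plays the role that $0 \notin N(T_\ell)$ played in the $M_2$ case).
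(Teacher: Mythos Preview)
Your proposal is correct and follows essentially the same approach as the paper. The paper's proof is terser: it simply observes that in the proof of Lemma~\ref{lemma:lferfeM3}, the hypothesis ``$k-1 \notin N(T)$'' is used only to deduce that every tree equivalent to $T_{\ell}$ is a blocking tree, so under the present hypothesis that proof runs verbatim to produce a left sufficiently expanded $\widehat{T}\approx T$ with $n(\widehat{T})=n(T)$, after which Proposition~\ref{proposition:minmaxfeM3}(1) finishes. You instead spell out the case analysis (whether or not $0\in N(T_{\ell})$) and the splicing explicitly, which is fine but not needed --- the cleanest phrasing, as you yourself note at the end, is to invoke the \emph{proof} of Lemma~\ref{lemma:lferfeM3} directly on $T$ rather than on $T_\ell$, since the blocking-tree hypothesis is exactly the intermediate conclusion that proof establishes from ``$k-1\notin N(T)$'' before proceeding.
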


\begin{proof}
We prove (1), the proof of (2) being similar.

The proof of Lemma \ref{lemma:lferfeM3}
allows us to to replace $T$ with an equivalent $\widehat{T}$ such that $\widehat{T}$ is left sufficiently expanded and $n(T) = n(\widehat{T})$.

Let $\widetilde{T} \approx T$. Thus, $\widetilde{T} \approx \widehat{T}$, so $n(\widetilde{T}) \geq n(\widehat{T})$, by Proposition \ref{proposition:minmaxfeM3}. Thus, $n(\widetilde{T}) \geq n(T)$, which implies 
$n(T) = \mathrm{min}(N(T))$.
\end{proof}

\begin{theorem} \label{theorem:M3exp} (The intermediate value theorem for $M_{3}$)
If $T$ is a subdivision tree over $M_{3}$, then 
\[ N(T) = [m,M] \cap \mathbb{Z}. \]
\end{theorem}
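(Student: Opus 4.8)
The plan is to mirror the proof of Theorem \ref{theorem:M2exp} exactly, replacing each ingredient by its $M_3$-analogue. The claim is that $N(T)$ is an interval of integers; since $N(T)$ is finite by Lemma \ref{lemma:finite}, it suffices to rule out a ``gap.''

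Suppose, for contradiction, that $k \in (m,M) \cap \mathbb{Z}$ with $k \notin N(T)$, and take $k$ minimal with this property. First I would pick a representative $T' \approx T$ with $n(T') = k-1$, possible since $k-1 \in N(T)$ by minimality of $k$. The key step is to argue that $0 \notin N(T'_r)$; equivalently, by Lemma \ref{lemma:l-rlemma}, that every tree equivalent to $T'_r$ is a \emph{blocking tree}. Indeed, if some $T'' \approx T'_r$ were \emph{not} a blocking tree, then (after arranging $n(T'_r)=0$) one of the elementary equivalences in Figure \ref{figure:m3} applied at the root of $T'$ would produce a tree equivalent to $T'$ whose root label is $k$, contradicting $k \notin N(T)$. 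Here I need to be careful about which of the $M_3$ relations to invoke: the relations in Figure \ref{figure:m3} raise or lower the root label by $1$ only when the relevant children are all nodes labelled $0$, so the ``not a blocking tree'' hypothesis on $T'_r$ is precisely what licenses the move. (The two elementary equivalences are inverse to each other by Remark \ref{remark:inverses}, so one of them will raise the root label from $k-1$ to $k$.)

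Once $0 \notin N(T'_r)$ — i.e., every tree equivalent to $(T')_r$ is a blocking tree — Proposition \ref{proposition:0notinM3}(2) applies to $T'$ and gives $n(T') = \mathrm{max}(N(T'))$. But $N(T') = N(T)$ since $T' \approx T$, so $k-1 = n(T') = M$, contradicting $k < M$. This establishes that $N(T)$ has no gaps, hence equals $[m,M] \cap \mathbb{Z}$, where $m = \mathrm{min}(N(T))$ and $M = \mathrm{max}(N(T))$ (both finite by Lemma \ref{lemma:finite}, assuming $T$ nontrivial so that $N(T) \neq \emptyset$).

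The main obstacle I anticipate is the bookkeeping in the ``$0 \notin N(T'_r)$'' step: one must verify that the failure of the blocking-tree condition for $(T')_r$ really does unlock an elementary equivalence at the root of $T'$ that shifts $n$ by exactly $1$ in the right direction, and that the leaf-vs-node degenerate cases in Definition \ref{definition:blocking}(2) are handled (in those cases $(T')_r$ \emph{is} a blocking tree, so there is nothing to do). Everything else — the reduction to a minimal gap, the translation $N(T') = N(T)$, and the appeal to Proposition \ref{proposition:0notinM3} — is formally identical to the $M_2$ case. It is worth noting that, unlike in the $M_2$ argument where $0 \notin N(T_\ell)$ was the relevant condition, here the natural hypothesis is phrased in terms of blocking trees, which is exactly the form in which Proposition \ref{proposition:0notinM3} is stated, so no further translation is needed.
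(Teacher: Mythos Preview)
Your proof is correct and follows exactly the paper's approach: take a minimal gap $k$, find $T' \approx T$ with $n(T')=k-1$, argue that every tree equivalent to $T'_r$ must be a blocking tree (else an elementary equivalence raises the root label to $k$), and invoke Proposition~\ref{proposition:0notinM3}(2) to reach $k-1 = M$. One minor slip: the condition ``$0 \notin N(T'_r)$'' is \emph{not} equivalent to ``every tree equivalent to $T'_r$ is a blocking tree'' (a blocking tree can certainly have root label $0$), and Lemma~\ref{lemma:l-rlemma} does not furnish such an equivalence --- but you ultimately use the correct blocking-tree condition, which is what Proposition~\ref{proposition:0notinM3} requires, so the argument goes through.
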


\begin{proof}

Suppose that $k \in (m,M) \cap \mathbb{Z}$ but $k \not \in N(T)$. Assume further that $k$ is the minimal such integer.  

There is a subdivision tree $T' \approx T$ such that $n(T') = k-1$. It must be that all subdivision trees that are equivalent to $T'_{r}$ are blocking trees (otherwise, we can first replace $T'_{r}$ by a non-blocking equivalent tree $T''_{r}$, and then apply an elementary equivalence to produce a $T'' \approx T'$ such that $n(T'') =k$). Thus, 
$k-1 = n(T') = \mathrm{max}(N(T')) = \mathrm{max}(N(T)) = M$, a contradiction.
\end{proof}

\section{The proof of the $F_{\infty}$ property} \label{section:F-infty}

In this section, we will complete the proof that the expansion schemes $\mathcal{E}_{i}$ and $\mathcal{E}'_{i}$ ($i=2,3$) 
are $n$-connected for all $n$. This involves assembling a few pieces from Section \ref{section:IVT}.

We will also complete the proofs that the groups $F(S_{i})$, $F(S'_{i})$, $T(S_{i})$, $V(S_{i})$, and $V(S'_{i})$ have type $F_{\infty}$, for $i=2,3$. These proofs are almost entirely like the ones from \cite{FH2}. 

Recall that the approach in this paper departed from that of \cite{FH2} in using a proper subset $\mathcal{D}_{gen}^{+}$ of the domains $\mathcal{D}^{+}$ as the foundation for the original directed set construction. This makes little difference in the final arguments, but rather than simply referring the reader to 
\cite{FH2} (which runs to over sixty pages), we will sketch the necessary changes when it seems appropriate to do so.

\subsection{Brown's finiteness criterion}

Here we briefly recall Brown's finiteness criterion for the reader's convenience.

\begin{theorem} \cite{Brown} \label{theorem:Brown} (Brown's Finiteness Criterion)
Let $X$ be a CW-complex. Let $G$ be a group acting on $X$. If
\begin{enumerate}
\item $X$ is $(n-1)$-connected; 
\item $G$ acts cellularly on $X$, and
\item there is a filtration $X_{1} \subseteq X_{2} \subseteq \ldots \subseteq X_{k} \subseteq \ldots \subseteq X$
such that
\begin{enumerate}
\item $X = \bigcup_{k=1}^{\infty} X_{k}$;
\item $G$ leaves each $X_{k}^{(n)}$ invariant and acts cocompactly on each $X_{k}^{(n)}$; 
\item each $p$-cell stabilizer has type $F_{n-p}$, and
\item for sufficiently large $k$, $X_{k}$ is $(n-1)$-connected,
\end{enumerate}
\end{enumerate}
then $G$ is of type $F_{n}$. \qed
\end{theorem}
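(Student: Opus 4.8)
Since the theorem is cited from Brown, my plan is simply to reproduce Brown's argument, organized so that the simplified hypothesis (3)(d) does the work of a direct-limit analysis. The plan has two parts: first reduce to the cocompact case, and then prove the cocompact case by splicing finiteness resolutions of the cell stabilizers into the equivariant cellular chain complex. Throughout I would use the standard characterizations of type $F_n$: $G$ is of type $F_n$ iff $BG$ admits a model with finite $n$-skeleton, iff $G$ acts freely on an $(n-1)$-connected CW-complex $W$ with $G\backslash W$ having finite $n$-skeleton, iff the trivial module $\bz$ admits a resolution by free $\bz G$-modules that are finitely generated in degrees $\leq n$ (the passage from this last, $FP_n$-flavored, statement to $F_n$ being vacuous for $n\leq 1$ and standard for $n\geq 2$ via finite presentability).

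\textbf{Reduction to the cocompact case.} By hypothesis (3)(d) there is a $k$ for which $X_k$ is $(n-1)$-connected; fix such a $k$ and put $Y = X_k^{(n)}$. Then $\pi_i(Y)=\pi_i(X_k)=0$ for $i\leq n-1$ because homotopy groups in these degrees only see the $n$-skeleton; $G$ acts cellularly and cocompactly on $Y$ by (2) and (3)(b); and the $G$-stabilizer of a $p$-cell of $Y$ is its stabilizer in $X$, hence of type $F_{n-p}$ by (3)(c). (Note that hypotheses (1) and (3)(a) are not even needed in this simplified form — they would be needed if (3)(d) were weakened to ``essentially $(n-1)$-connected''.) So it suffices to prove: if $G$ acts cellularly and cocompactly on an $(n-1)$-connected CW-complex $Y$ with each $p$-cell stabilizer of type $F_{n-p}$, then $G$ is of type $F_n$.

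\textbf{The cocompact case.} I would work with the augmented equivariant cellular chain complex $C_\ast(Y)\to\bz$. Since $Y$ is $(n-1)$-connected, Hurewicz gives $\widetilde H_i(Y)=0$ for $i\leq n-1$, so this complex is exact at $C_i(Y)$ for $0\leq i\leq n-1$. Cocompactness gives $C_p(Y)\cong\bigoplus_j \bz[G/G_{\sigma_j}]$ (up to orientation characters, which do not disturb the finiteness count), a \emph{finite} sum over the $G$-orbits of $p$-cells, with each $G_{\sigma_j}$ of type $F_{n-p}$. For each $j$, induce up a free $\bz G_{\sigma_j}$-resolution of $\bz$ that is finitely generated through degree $n-p$ to a free $\bz G$-resolution of $\bz[G/G_{\sigma_j}]$ finitely generated through degree $n-p$, and sum over $j$ to resolve $C_p(Y)$. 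Lifting the boundary maps over these resolutions (possible by projectivity in the relevant range) yields a first-quadrant double complex of free $\bz G$-modules whose $p$-th column resolves $C_p(Y)$; by the acyclic-assembly (``staircase'') lemma its total complex is a free $\bz G$-resolution of $\bz$, and in total degree $m\leq n$ every summand $F_{p,q}$ with $p+q=m$ has $q\leq n-p$ and so is finitely generated. Truncating gives a partial free resolution finitely generated in degrees $\leq n$, hence $G$ is of type $F_n$. (Equivalently, and more geometrically, one can build a model of $EG\times_G Y$ by replacing, orbit by orbit, the factor $BG_{\sigma_j}$ attached along each $p$-cell with a model having finite $(n-p)$-skeleton; this produces a complex with finite $n$-skeleton carrying the $(n-1)$-type of $BG$.)

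\textbf{Where the difficulty lies.} The reduction step is essentially formal once (3)(d) is granted, so the whole weight of the proof sits in the cocompact case: the assembly of the stabilizer resolutions into a resolution of $\bz$ with the correct finiteness in each degree (the staircase argument plus orientation-character bookkeeping), the low-degree edge cases $n=0,1$ where ``$(n-1)$-connected'' degenerates to nonemptiness or connectedness and finite generation of $G$ must be read off directly from cocompactness of $Y$, and the $FP_n\Rightarrow F_n$ passage in degree $2$. It is precisely the strengthened hypothesis (3)(d) — in place of the ``essentially $(n-1)$-connected'' condition of the general criterion — that lets us bypass the direct-limit/essential-connectivity analysis entirely.
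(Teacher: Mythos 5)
The paper offers no proof of this theorem: it is imported verbatim from \cite{Brown} and marked as proved elsewhere, so there is no internal argument to compare yours against. Your reconstruction is essentially Brown's own proof and is essentially sound. The reduction to a cocompact, $(n-1)$-connected $Y=X_{k}^{(n)}$ via hypothesis (3)(d) is correct (and you rightly note that (1) and (3)(a) become redundant in this simplified formulation), and the assembly of induced stabilizer resolutions over the columns of the equivariant chain complex, followed by acyclic assembly, is exactly Brown's homological argument for the $FP_{n}$ conclusion, with the orientation-character caveat correctly flagged (it is vacuous in this paper's applications, where stabilizers fix simplices pointwise). The one place where the write-up stops short of a complete proof is the passage from $FP_{n}$ to $F_{n}$ for $n\geq 2$: finite presentability of $G$ does not follow from the chain-level argument and needs a separate input (Brown's result that a group acting cocompactly on a simply connected complex with finitely presented vertex stabilizers and finitely generated edge stabilizers is finitely presented). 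You flag this yourself, and your parenthetical Borel-construction variant --- building $EG\times_{G}Y$ orbit by orbit from models of $BG_{\sigma}$ with finite $(n-p)$-skeleta and using that $EG\times_{G}Y\rightarrow BG$ is an $n$-connected map --- closes it uniformly in $n$ and is the version worth writing out in full.
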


\subsection{Contractibility of the complexes $\Delta^{\mathcal{E}_{i}}$ and $\Delta^{\mathcal{E}'_{i}}$}

In this subsection, we will prove that the complexes $\Delta^{\mathcal{E}_{i}}$ and
$\Delta^{\mathcal{E}'_{i}}$ are contractible, for $i=2,3$. This completes a line of argument that was begun at the end of  Section \ref{section:expansion}, and extended through Sections \ref{section:FP} and \ref{section:IVT}.

 Recall that the directed set constructions of the classifying spaces for the groups $F(S)$, $T(S)$, $V(S)$ differed in details (see Section \ref{section:directedset}). We will use the same notation, $\Delta^{\mathcal{E}_{i}}$ and $\Delta^{\mathcal{E}'_{i}}$, to denote the subcomplexes determined by $\mathcal{E}$- (or $\mathcal{E}'$-) expansions in all cases, trusting that the precise meaning will always be clear from the context.  

\begin{theorem} \label{theorem:contractibilityofDelta}
(Contractibility of the complexes $\Delta^{\mathcal{E}_{i}}$ and
$\Delta^{\mathcal{E}'_{i}}$)
The complexes $\Delta^{\mathcal{E}_{i}}$ are contractible, 
for each of the groups $F(S), T(S)$, and $V(S)$ ($S \in \{ S_{2}, S_{3} \}$).

The complexes $\Delta^{\mathcal{E}'_{i}}$ are contractible, for each of the groups
$F(S)$ and $V(S)$ ($S \in \{ S'_{2}, S'_{3} \}$).
\end{theorem}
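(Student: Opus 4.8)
The plan is to apply Theorem \ref{theorem:nconnected}, so it suffices to show that each of the expansion schemes $\mathcal{E}_{i}$ and $\mathcal{E}'_{i}$ is $n$-connected for every $n$; i.e., for each $b \in \mathcal{B}$ and each pseudovertex $v$ with $\{b\} \leq v$, the ascending link of $\{b\}$ relative to $v$ is $(n-1)$-connected. By the $\widehat{S}$-invariance built into $\mathcal{E}_{i}$ and $\mathcal{E}'_{i}$ (Definition \ref{definition:expansionscheme}(3)) and the explicit description of $\mathcal{E}_{i}([f,I])$ in Example \ref{example:THEexpansionscheme}, we may assume $b = [\id_{I}, I]$. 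As observed at the end of Example \ref{example:THEexpansionscheme}, the ascending links at pairs $[f,[m,\infty)]$ are always single points, so $\mathcal{E}'_{i}$ is $n$-connected iff $\mathcal{E}_{i}$ is, and we concentrate on $b = [\id_{I},I]$ for the schemes $\mathcal{E}_{i}$.

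First I would recall from Figure \ref{figure:E} that $\mathcal{E}_{i}(b)$ is a simplicial cone with apex $\{b\}$ over a cellulated line $L$, whose vertices are the $u_{k}$ ($k \in \mathbb{Z}$), with $u_{k}$ and $u_{k-1}$ joined through the midpoint vertex $u_{k-\frac12}$; the relevant comparabilities are $u_{k} \leq u_{k-\frac12}$ and $u_{k-1} \leq u_{k-\frac12}$, and no $u_{k_{1}}, u_{k_{2}}$ (resp. no $u_{k_{1}-\frac12}, u_{k_{2}-\frac12}$) are comparable for $k_{1}\neq k_{2}$. Now fix a pseudovertex $v$ with $\{b\} \leq v$. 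By Theorem \ref{theorem:thelink}, $v$ corresponds to a subdivision tree $T$; if $T$ is trivial then $v = \{b\}$ and the ascending link is empty, which is vacuously $(n-1)$-connected for the purpose of $n$-connectedness of the \emph{scheme} only when $n = 0$ — here I should instead note that when $\{b\} = v$ there is nothing to check, and otherwise $T$ is nontrivial. The ascending link of $\{b\}$ in $\Delta^{\mathcal{E}_{i}}_{[\{b\},v]}$ consists precisely of those vertices $u_{k/2}$ (and the edges among them) that satisfy $u_{k/2} \leq v$. The midpoint $u_{k-\frac12}$ satisfies $u_{k-\frac12} \leq v$ iff both $u_{k-1} \leq v$ and $u_{k} \leq v$ — indeed $u_{k-\frac12}$ is the unique $\mathcal{E}$-join of $u_{k-1}$ and $u_{k}$ inside $\mathcal{E}_{i}(b)$, and since both are expansions of $b$ refining the partition underlying $v$, their common refinement $u_{k-\frac12}$ does too. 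Meanwhile $u_{k} \leq v$ iff the single-caret tree with root labelled $k$ is equivalent to a subtree of $T$, which by Lemma \ref{lemma:l-rlemma} and Definition \ref{definition:equivalentsubdivtrees} happens iff $k \in N(T)$.

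Therefore the ascending link of $\{b\}$ relative to $v$ is exactly the full subgraph of the cellulated line $L$ spanned by the vertices $\{u_{k} \mid k \in N(T)\}$ together with their connecting midpoints. By Theorem \ref{theorem:M2exp} (for $S = S_{2}$) and Theorem \ref{theorem:M3exp} (for $S = S_{3}$), $N(T) = [m,M]\cap\mathbb{Z}$ is a (nonempty, by Lemma \ref{lemma:finite} finite) set of consecutive integers; hence this subgraph is a single closed segment of the line — a contractible $1$-complex — and in particular $(n-1)$-connected for every $n \geq 0$ (a nonempty interval is $(-1)$-connected, $0$-connected, $1$-connected, and $m$-connected for all $m\geq 1$ since it is contractible). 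This shows $\mathcal{E}_{i}$ is $n$-connected for all $n$, so by Theorem \ref{theorem:nconnected} the complex $\Delta^{\mathcal{E}_{i}}$ is $n$-connected for all $n$, i.e. contractible, and the same follows for $\Delta^{\mathcal{E}'_{i}}$. For the $F(S)$ and $T(S)$ versions, the complexes $\Delta_{F}$, $\Delta_{T}$ are built from subsets of pseudovertices closed under expansion (Proposition \ref{proposition:contractibilityforFTV}), and the ascending-link computation above is local, carried out inside $\mathcal{E}_{i}(b)$; the same consecutive-integers conclusion applies verbatim, so these complexes are contractible as well.

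\textbf{Main obstacle.} The genuinely hard input is the intermediate value theorem, $N(T) = [m,M]\cap\mathbb{Z}$ — that is, Theorems \ref{theorem:M2exp} and \ref{theorem:M3exp}, which rest on the normal-form and ``no potential cancellations'' machinery of Section \ref{section:FP} and the sufficiently-expanded-tree arguments of Section \ref{section:IVT}. Granting those, the only remaining care-point here is the bookkeeping identifying the ascending link with the span of $\{u_{k} \mid k \in N(T)\}$: one must check that a vertex $u_{k/2}$ with half-integer index lies below $v$ exactly when both its integer neighbours do, and that the edge of $L$ between $u_{k-1}$ and $u_{k}$ is present in the relative ascending link exactly when $u_{k-\frac12} \leq v$. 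Both follow from Theorem \ref{theorem:thelink}, Lemma \ref{lemma:l-rlemma}, and the elementary-equivalence relation $u_{k-1}\leq u_{k-\frac12}$ noted in Example \ref{example:THEexpansionscheme}, so this is routine once stated carefully.
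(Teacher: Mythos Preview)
Your overall strategy matches the paper's exactly: reduce to showing $\mathcal{E}_{i}$ is $n$-connected for all $n$, reduce to $b=[\id_{I},I]$, represent $v$ by a subdivision tree $T$, and show that the ascending link is the segment of the cellulated line between $u_{m}$ and $u_{M}$, where $N(T)=[m,M]\cap\mathbb{Z}$ by the intermediate value theorems.

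There is, however, a genuine gap in your justification of the key claim that $u_{k-\frac12}\leq v$ whenever both $u_{k-1}\leq v$ and $u_{k}\leq v$. Your argument is that $u_{k-\frac12}$ is the ``common refinement'' of $u_{k-1}$ and $u_{k}$, so if $v$ refines both, it refines $u_{k-\frac12}$. This fails for $S_{3}$: there $u_{k-\frac12}$ has five pieces, while the common refinement of the two-piece partitions $u_{k-1}$ and $u_{k}$ has only three, so $u_{k-\frac12}$ is strictly finer than the common refinement, and knowing $v$ refines the latter does not give $v\geq u_{k-\frac12}$. Even for $S_{2}$, where $u_{k-\frac12}$ does coincide with the three-piece common refinement, you would still need to prove that ``partition refinement'' coincides with the expansion order $\leq$ on vertices above $\{[\id_{I},I]\}$; this is plausible but not established in the paper, and the tools you cite (Theorem~\ref{theorem:thelink}, Lemma~\ref{lemma:l-rlemma}, and $u_{k-1}\leq u_{k-\frac12}$) do not yield it.

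The paper closes this gap differently, using Propositions~\ref{proposition:0notinM2} and~\ref{proposition:0notinM3}. With $n(T)=k$ and $k-1\in N(T)$, the contrapositive of Proposition~\ref{proposition:0notinM2}(1) gives $0\in N(T_{\ell})$, so one can choose a representative $T'\approx T$ with root labelled $k$ and left child labelled $0$; then $u_{k-\frac12}$'s tree is visibly the top of $T'$, hence $u_{k-\frac12}\leq v$. For $S_{3}$ the analogous use of Proposition~\ref{proposition:0notinM3} produces a $T'$ whose left branch is \emph{not} a blocking tree, i.e.\ has root and both children labelled $0$, which is exactly the depth-two shape needed for $u_{k-\frac12}$ in that case. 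You should replace the common-refinement sentence with this argument.
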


\begin{proof}
By Theorem \ref{theorem:nconnected}, it suffices to show that the 
expansion schemes $\mathcal{E}_{i}$ and $\mathcal{E}'_{i}$ ($i=2,3$) are $n$-connected for all $n$. By the discussion at the end of Example \ref{example:THEexpansionscheme}, it suffices to consider the expansion scheme $\mathcal{E}$. 

In the groups $F(S_{i}), T(S_{i}), V(S_{i})$ ($i=2,3$), there is just one domain type, namely $[I]$. By equivariance of 
$\mathcal{E}_{i}$, it suffices to show that, whenever $\{ [id_{I},I] \} \leq v$, the ascending link of $\{ [id_{I},I] \}$ relative to $v$ is contractible. 

Since $\{ [id_{I},I] \} \leq v$, $v$ can be represented by a subdivision tree $T$ (by Theorem \ref{theorem:thelink}). By Theorem \ref{theorem:M2exp} or \ref{theorem:M3exp}, $N(T) = [m,M] \cap \mathbb{Z}$, for some integers $m$ and $M$. Thus, $v \geq u_{k}$ for an integer $k$ if and only if $k \in [m,M]$, where $u_{k}$ is as defined in Example \ref{example:THEexpansionscheme}). 

Now we must consider the ``fractional" vertices $u_{k-1/2}$. Note first that, if $k-1/2 \not \in [m,M] \subseteq \mathbb{R}$, then $u_{k-1/2} \not \leq v$, since, if it were, we would conclude that $u_{k-1} \leq v$ 
and $u_{k} \leq v$ (since $u_{k-1}, u_{k} \leq u_{k-1/2}$ in the expansion partial order). This contradicts our hypothesis, since at least one of $k-1$ and $k$ is not in $[m,M]$. Now assume that $k-1/2 \in [m,M]$. It follows from this that $k-1, k \in N(T)$, since $u_{k -1} \leq u_{k-1/2}$ and
$u_{k} \leq u_{k-1/2}$ in the expansion partial order. It follows that $m < k \leq M$ (in the linear order on $\mathbb{R}$). If we are in the case $S = S_{2}$ or $S'_{2}$, then Proposition \ref{proposition:0notinM2} and the inequality $u_{k} \leq v$ show that $0 \in N(T_{\ell})$ (since $k-1 \in N(T)$. Thus, there is some $T'$, $T' \approx T$, such that the root of $T'$ is labelled by $k$ and the left child of the root is labelled by $0$. Since $T'$ represents the vertex $v$, it follows that $u_{k-1/2} \leq v$.
If we are in the case $S = S_{3}$ or $S'_{3}$, then Proposition \ref{proposition:0notinM3} and the inequality $u_{k} \leq v$ show that 
there is some $T'$, $T' \approx T$, such that the root of $T'$ is labelled by $k$ and the left branch of $T'$ is not a blocking tree. It now follows directly that $u_{k-1/2} \leq v$ in this case, as well.

Thus, the ascending link of $\{ [id_{I},I] \}$ relative to $v$ corresponds exactly to the portion of the cellulated line $\ell$ between the vertices $u_{m}$ and $u_{M}$, where $\ell$ is as depicted in Figure \ref{figure:E}. The ascending link is therefore contractible.
\end{proof}

\begin{remark}
We review some of the relevant ideas from \cite{FH2}. 

The basic approach to proving $n$-connectedness is laid out in Lemma 2.6 from
\cite{FH2}. Let $\widehat{\Delta}$ be a simplicial complex whose vertices are a directed set. Let $h$ be a height function defined on the vertex set, such that
$h(v_{1}) < h(v_{2})$ when $v_{1} < v_{2}$. Assume further that $\widehat{\Delta}$  is a subcomplex of the simplicial realization.
Lemma 2.6 says that $\widehat{\Delta}$ is $n$-connected if, for every two vertices
$v_{1}, v_{2}$ in $\widehat{\Delta}$ such that $v_{1} < v_{2}$, the ascending link of $v_{1}$ relative to $v_{2}$ (Definition \ref{definition:intervalsubcomplexes}) is always $(n-1)$-connected. 

Lemma 2.6 applies to our complexes $\Delta^{\mathcal{E}_{i}}$ and $\Delta^{\mathcal{E}'_{i}}$ directly, where the height function $h$ sends a vertex to its cardinality (as in Definition \ref{definition:filtration}). It therefore suffices to show that the relative ascending link is $n$-connected for all $n$. 

We can argue the latter point directly as follows. If 
\[ v_{1} = \{ [f_{1},D_{1}], \ldots, [f_{m},D_{m}] \} \]
is a vertex of either $\Delta^{\mathcal{E}_{i}}$ or $\Delta^{\mathcal{E}'_{i}}$, 
and $v_{1} < v_{2}$, then we can write
\[ v_{2} = \bigcup_{k=1}^{m} p_{k}, \]
where, for $k=1, \ldots, m$, $p_{k}$ is a pseudovertex having the same support
as $\{ [f_{k}, D_{k}] \}$. The ascending link of $v_{1}$ relative to $v_{2}$
is homeomorphic to the joins of the ascending links of $\{ [f_{k},D_{k}] \}$
relative to $p_{k}$, for $k = 1, \ldots, m$. (This can be argued exactly as in the proof of Theorem 6.9 from \cite{FH2}.)  All of the latter ascending links are contractible if they are non-empty, by the proof of Theorem \ref{theorem:contractibilityofDelta} given above. At least one of the latter ascending links is non-empty (since $v_{1} \neq v_{2}$), so the ascending link of $v_{1}$ relative to $v_{2}$ is contractible, as claimed.  
\end{remark}

\subsection{$\Gamma$-finite filtrations of $\Delta^{\mathcal{E}_{i}}$
and $\Delta^{\mathcal{E}'_{i}}$}

In this subsection, we will describe natural filtrations of the complexes 
$\Delta^{\mathcal{E}_{i}}$
and $\Delta^{\mathcal{E}'_{i}}$. We will denote the acting group by $\Gamma$; here
$\Gamma$ might be any of the groups $\{ F(S), T(S), V(S) \}$, where $S \in \{ S_{2}, S_{3}, S'_{2}, S'_{3} \}$.

\begin{definition} \label{definition:filtration}
($\Gamma$-finite filtrations of the complexes $\Delta^{\mathcal{E}_{i}}$ and $\Delta^{\mathcal{E}'_{i}}$)
Let $v$ be a vertex in $\Delta^{\mathcal{E}_{i}}$ or $\Delta^{\mathcal{E}'_{i}}$, or a pseudovertex. We let $|v|$ denote the cardinality of $v$, which we will call the \emph{height} of $v$. For $n \geq 1$, we let $\Delta^{\mathcal{E}_{i}}_{n}$ denote the subcomplex of $\Delta^{\mathcal{E}_{i}}$ spanned by vertices of height $n$ or less. Similarly define the subcomplexes $\Delta^{\mathcal{E}'_{i}}_{n}$ of $\Delta^{\mathcal{E}'_{i}}$.
\end{definition}

\begin{proposition} \label{proposition:cocompactness}
The group $\Gamma$ acts on each $\Delta^{\mathcal{E}_{i}}_{n}$ (or $\Delta^{\mathcal{E}'_{i}}_{n}$, as the case may be) cocompactly, and
\[ \Delta^{\mathcal{E}_{i}} = \bigcup_{n=1}^{\infty} \Delta^{\mathcal{E}_{i}}_{n}. \]
A similar equality is true of $\Delta^{\mathcal{E}'_{i}}$ and the subcomplexes $\Delta^{\mathcal{E}'_{i}}_{n}$.
\end{proposition}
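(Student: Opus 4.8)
The plan is to prove both assertions of Proposition~\ref{proposition:cocompactness} by essentially the same routine argument, first handling the equality of complexes and then the cocompactness of the filtration pieces.

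\smallskip

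\textbf{The union equality.} For the statement $\Delta^{\mathcal{E}_{i}} = \bigcup_{n=1}^{\infty} \Delta^{\mathcal{E}_{i}}_{n}$, I would observe that every simplex of $\Delta^{\mathcal{E}_{i}}$ is a finite chain $v_{1} < v_{2} < \ldots < v_{m}$ of vertices, each of which is a finite subset of $\mathcal{B}$ by Definition~\ref{definition:vertices}. In particular each $v_{j}$ has some finite cardinality $|v_{j}|$, so setting $n = \max_{j} |v_{j}|$ places the whole simplex inside $\Delta^{\mathcal{E}_{i}}_{n}$. This shows every simplex lies in some $\Delta^{\mathcal{E}_{i}}_{n}$, hence the union on the right contains $\Delta^{\mathcal{E}_{i}}$; the reverse containment is immediate from the definition of the subcomplexes. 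The identical reasoning applies to $\Delta^{\mathcal{E}'_{i}}$.

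\smallskip

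\textbf{Cocompactness.} For cocompactness of the $\Gamma$-action on $\Delta^{\mathcal{E}_{i}}_{n}$, I would first note that $\Gamma$ does preserve $\Delta^{\mathcal{E}_{i}}_{n}$, since the $\Gamma$-action preserves the height function $|\cdot|$ (the action permutes the pairs $[f_{j},D_{j}]$ of a vertex, as in the proof of Proposition~\ref{proposition:freeabelian}, so it does not change cardinality) and preserves the expansion relation (Proposition~\ref{proposition:welldefinedexpansion}) and hence $\mathcal{E}$-expansion by the $\widehat{S}$-equivariance of $\mathcal{E}_{i}$. It then suffices to show that $\Delta^{\mathcal{E}_{i}}_{n}$ has only finitely many $\Gamma$-orbits of simplices, and since a simplex of $\Delta^{\mathcal{E}_{i}}_{n}$ is a chain of bounded length (at most $n$, as the heights strictly increase) of vertices of height $\leq n$, it is enough to bound the number of $\Gamma$-orbits of vertices of height $\leq n$. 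Here I would argue as in \cite{FH2}: given a vertex $v = \{ [f_{1},D_{1}], \ldots, [f_{m},D_{m}] \}$ with $m \leq n$, the proof of Theorem~\ref{theorem:directedset} shows $v$ can be expanded to a vertex $\hat v$ each of whose pairs has the form $[\mathrm{id}_{E}, E]$ with $E \in \mathcal{D}^{+}_{gen}$, so that $\hat v$ is identified with a partition of $[0,1)$ (or $\mathbb{R}^{+}$) into generating domains. Since $\Gamma$ acts transitively on vertices of a given type, $v$ lies in the $\Gamma$-orbit of a vertex determined by its \emph{type} (the multiset $\{ [D_{1}], \ldots, [D_{m}] \}$ of domain types), and there are only one or two domain types (Remark~\ref{remark:domaintype}), so only finitely many types of bounded cardinality $\leq n$. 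This gives finitely many $\Gamma$-orbits of vertices of height $\leq n$, hence finitely many orbits of simplices, i.e.\ cocompactness. The case of $\Delta^{\mathcal{E}'_{i}}$ is handled identically, using the second domain type $[n,\infty)$ as well.

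\smallskip

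The main obstacle — really the only subtlety — is the claim that vertices of height $\leq n$ fall into finitely many $\Gamma$-orbits: one must be careful that ``same type'' genuinely implies ``same $\Gamma$-orbit'' for the groups $\Gamma \in \{F(S), T(S), V(S)\}$, which requires knowing that there is an element of $\Gamma$ carrying one vertex of a given type to another. For the $V$-groups this is transparent (one builds the bijection piece by piece using transformations from $\mathbb{S}(D_{j}, E_{j})$, which are nonempty precisely when $D_{j}$ and $E_{j}$ have the same type, and takes their union), and this is exactly the kind of argument carried out in \cite{FH2}; for the $F$- and $T$-subgroups one must check the constructed element is continuous (resp.\ preserves the cyclic order), which holds because the relevant vertices arise from expansions of $[\mathrm{id}_{I}, I]$ described by subdivision trees (Theorem~\ref{theorem:thelink}) and the transformations involved are orientation-preserving. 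Beyond this point the proof is entirely routine bookkeeping.
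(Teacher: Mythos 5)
Your union equality and the observation that $\Gamma$ preserves height are fine, and your count of $\Gamma$-orbits of \emph{vertices} of height $\leq n$ matches what the paper does. The gap is in the sentence ``since a simplex of $\Delta^{\mathcal{E}_{i}}_{n}$ is a chain of bounded length \ldots it is enough to bound the number of $\Gamma$-orbits of vertices of height $\leq n$.'' Finitely many orbits of vertices does not imply finitely many orbits of simplices: a chain is not determined by the orbits of its entries, and in general a group can act with finitely many vertex orbits on a graph with infinitely many edge orbits. Here the issue is not hypothetical. For $b=[f,I]$ the set $\mathcal{E}_{i}(b)$ is \emph{infinite} (it contains $f\cdot u_{k/2}$ for every $k\in\mathbb{Z}$), so there are infinitely many $1$-simplices of $\Delta^{\mathcal{E}_{i}}_{2}$ with minimal vertex $\{b\}$, and to get finitely many orbits of these you must show that the stabilizer of $\{b\}$ acts cofinitely on this infinite set. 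That is exactly the content of the $\mathbb{S}$-finiteness condition the paper verifies: the $\star$-action $h\star v=(fhf^{-1})\cdot v$ of $\mathbb{S}(I,I)\cong\mathbb{Z}$ on the complex $\mathcal{E}_{i}([f,I])$ of Figure \ref{figure:E} is by translation $u_{k}\mapsto u_{k+j}$ and hence cocompact, after which Proposition 6.13 of \cite{FH2} (or the direct count in the paper's follow-up remark: finitely many vertex orbits, plus finitely many $\mathcal{E}_{i}$-expansions of each minimal vertex modulo $\star$) yields cocompactness. Your proposal never touches this point; you identify the vertex-orbit count as ``the only subtlety,'' but that is the routine half of the argument.

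A secondary caution: even granting the vertex-orbit count, realizing the $\star$-action inside $\Gamma$ (i.e., producing an actual element of the stabilizer of $v'$ inducing a given tuple $(h_{1},\dots,h_{m})$, or at least cofinitely many of them) is immediate for the $V$-groups by taking unions of partial bijections, but for $F(S)$ and $T(S)$ the glued map must additionally be continuous, resp.\ cyclic-order-preserving. You flag the analogous issue for transitivity on types but not for the stabilizer action on ascending simplices, which is where it is actually needed.
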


\begin{proof}
Let us first note that the $\Gamma$-action preserves height, and therefore acts on $\Delta^{\mathcal{E}_{i}}_{n}$ (or $\Delta^{\mathcal{E}'_{i}}_{n}$). It is easy to see that $\Delta^{\mathcal{E}_{i}}$ is the union of the subcomplexes in the filtration. 

We temporarily let $\mathcal{E}$ denote an arbitrary expansion scheme.
Definition 6.12 from \cite{FH2} describes an action $\star$ of $\mathbb{S}(D,D)$ ($D \in \mathcal{D}^{+}_{S}$, or $D \in \mathcal{D}^{+}_{gen}$, as in our case) on the 
set $\mathcal{E}([f,D])$ as follows:
\[ h \star v = (fhf^{-1}) \cdot v, \]
where $h \in \mathbb{S}(D,D)$. If the action of $\mathbb{S}(D,D)$ on the simplicial realization of $\mathcal{E}([f,D])$ is always cocompact, for all $D$, then $\mathcal{E}$
is said to be \emph{$\mathbb{S}$-finite}. 

In the current situation, the group $\mathbb{S}(D,D)$ is isomorphic either to $\mathbb{Z}$ or to the trivial group (when $[D] = [I]$ or $[D] = [ [0,\infty) ]$, respectively). In either case, the action of $\mathbb{S}(D,D)$ is cocompact. Indeed,
the action of $\mathbb{Z}$ on $\mathcal{E}([id_{I},I])$ is by translation (i.e., the integer $n$ moves a vertex $u_{k}$ to $u_{n+k}$, where the vertices $u_{j}$ are as described in Example \ref{example:THEexpansionscheme}). This is clearly cocompact; see Figure \ref{figure:E}. This reasoning applies equally to all $[f,D]$ such that $[D] = [I]$ due to the equivariance of the expansion scheme $\mathcal{E}_{i}$ (or $\mathcal{E}'_{i}$). If $[D] = [[0,\infty)]$, there is nothing to prove, since the set $\mathcal{E}'_{i}([f,D])$ is compact. It follows that both $\mathcal{E}_{i}$ and $\mathcal{E}'_{i}$ are $\mathbb{S}$-finite.

We can now apply Proposition 6.13 from \cite{FH2}, which says that 
when an expansion scheme $\mathcal{E}$ is $\mathbb{S}$-finite and $\mathbb{S}$ has finitely many domain types, then the action of $\Gamma$ on each subcomplex $\Delta^{\mathcal{E}}_{n}$ is cocompact; this proves that the action of $\Gamma$ on the filtration is cocompact.

The final equality in the proposition is clear.
\end{proof}

\begin{remark}
We sketch a more direct proof that $\Gamma$ acts cocompactly.

We assume that $\Gamma = V(S_{i})$, the proofs for the other groups being similar. Two vertices $v_{1}$ and $v_{2}$ are in the same $\Gamma$-orbit if and only if they have the same type (Definition \ref{definition:vertices}). In the current context, the latter condition is equivalent to having the same height. 

Now assume that $\Gamma$ fails to act cocompactly on $\Delta^{\mathcal{E}_{i}}_{n}$, for some $n$. We note that the dimension 
of $\Delta^{\mathcal{E}_{i}}_{n}$ is no more than $n-1$, since a simplex in $\Delta^{\mathcal{E}_{i}}_{n}$ is an ascending chain
\[ v_{0} < v_{1} < v_{2} < \ldots < v_{k}, \]
and the height function strictly increases along such chains. Thus, assuming that the action of $\Gamma$ is not cocompact, there are infinitely many $\Gamma$-orbits of $k$-simplices, for some $k$. Since there are only finitely many $\Gamma$-orbits of vertices, this implies that there
is a vertex $v' = \{ b_{1}, \ldots, b_{\ell} \}$ such that infinitely many $\Gamma$-orbits of $k$-simplices have $v'$ as their minimal vertex. This, however, sets up the contradiction, since all of the $k$-simplices in question are obtained by $\mathcal{E}_{i}$-expansion from $v'$, and there are only finitely many such $\mathcal{E}_{i}$-expansions modulo the action $\star$. 

The details of the remainder of the argument follow that of the proof of Proposition 6.13 from \cite{FH2}.
\end{remark}

\subsection{The $F_{\infty}$ property for $V(S_{n})$ and $V(S'_{n})$}

\begin{definition} \label{definition:contractingpseudo}
(Contracting pseudovertices)
Let $\mathcal{E}$ be an arbitrary expansion scheme.
We say that a pseudovertex $v$ is \emph{contracting relative to $\mathcal{E}$} if $v$ has the same type as
some $w \in \mathcal{E}(b)$, where $b \in \mathcal{B}$. (Recall that ``same type" was defined in Definition \ref{definition:vertices}.)
\end{definition}

\begin{definition} \label{definition:rich}
(Rich in contractions)
Let $\mathcal{E}$ be an expansion scheme. We say that $\mathcal{E}$ is 
\emph{rich in contractions} if there is some constant $C$ such that, if $v$ is a pseudovertex of height at least $C$, then there is some contracting pseudovertex
$v'$ such that $v' \subseteq v$.
\end{definition}

\begin{theorem} \label{theorem:FHFinfty} (\cite{FH2}, Theorem 8.2)
(Groups of type $F_{\infty}$) Let $\mathbb{S}$ be an $S$-structure with finitely many domain types, such that the group $\mathbb{S}(D,D)$ has type $F_{\infty}$
for $D \in \mathcal{D}^{+}$. Let $\mathcal{E}$ be an expansion scheme such that 
\begin{enumerate}
\item $\mathcal{E}$ is $n$-connected for all $n$;
\item $\mathcal{E}$ is rich in contractions;
\item each set $\mathcal{E}(b)$ ($b \in \mathcal{B}$) is finite.
\end{enumerate}
The group $\Gamma_{S}$ has type $F_{\infty}$.
\end{theorem}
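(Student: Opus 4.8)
The plan is to apply Brown's finiteness criterion (Theorem \ref{theorem:Brown}) to the complex $X = \Delta^{\mathcal{E}}$ with its simplicial $\Gamma_{S}$-action, showing that $\Gamma_{S}$ has type $F_{n}$ for each fixed $n$ and then letting $n \to \infty$. Brown's condition (1) is immediate: since $\mathcal{E}$ is $n$-connected for all $n$ by hypothesis (1), Theorem \ref{theorem:nconnected} gives that $\Delta^{\mathcal{E}}$ is $n$-connected for all $n$, hence contractible and in particular $(n-1)$-connected. Condition (2) holds because $\Gamma_{S}$ acts simplicially. The real work lies in building a filtration satisfying Brown's condition (3).

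For the filtration I would take $X_{k} = \Delta^{\mathcal{E}}_{k}$, the subcomplex spanned by vertices of height $|v| \le k$ (as in Definition \ref{definition:filtration}). Because height strictly increases along chains, $\Gamma_{S}$ preserves height and leaves each $X_{k}$ invariant, and $X = \bigcup_{k} X_{k}$, which gives (3a) and the invariance in (3b). For cocompactness (3b), two vertices lie in the same $\Gamma_{S}$-orbit precisely when they have the same type, and finitely many domain types force finitely many vertex types of each height; moreover every simplex is an $\mathcal{E}$-expansion of its height-minimal vertex, and by hypothesis (3) each $\mathcal{E}(b)$ is finite, so each minimal vertex admits only finitely many such expansions. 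Hence each $X_{k}$ has finitely many $\Gamma_{S}$-orbits of simplices, and the action on $X_{k}^{(n)}$ is cocompact.

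For the cell stabilizers (3c), I would first show every vertex stabilizer is type $F_{\infty}$. Writing $v = \{[f_{1},D_{1}], \ldots, [f_{m},D_{m}]\}$, the homomorphism $\Gamma_{v} \to \mathrm{Sym}(m)$ recording the permutation induced on the pieces has finite-index kernel $K$, and, exactly as in the argument of Proposition \ref{proposition:freeabelian}, the assignment $\gamma \mapsto (h_{1}, \ldots, h_{m})$ determined by $\gamma_{\mid f_{j}(D_{j})} = f_{j} h_{j} f_{j}^{-1}$ identifies $K$ with $\prod_{j=1}^{m} \mathbb{S}(D_{j},D_{j})$. Since each factor has type $F_{\infty}$ by hypothesis, and $F_{\infty}$ is inherited by finite direct products and by finite-index overgroups, $\Gamma_{v}$ has type $F_{\infty}$. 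For a general $p$-simplex $\sigma = (v_{0} < \cdots < v_{p})$, the distinct heights of its vertices force every element of $\Gamma_{\sigma}$ to fix each $v_{j}$, so $\Gamma_{\sigma} \subseteq \Gamma_{v_{0}}$; conversely, writing $v_{0} = \{b_{1}, \ldots, b_{\ell}\}$, the group $\Gamma_{v_{0}}$ permutes the finite set $\prod_{i} \mathcal{E}(b_{i})$ of all $\mathcal{E}$-expansions of $v_{0}$ (finite by hypothesis (3)), and the kernel of this finite action fixes each $v_{j}$ and so lies in $\Gamma_{\sigma}$. Hence $\Gamma_{\sigma}$ has finite index in $\Gamma_{v_{0}}$ and is type $F_{\infty}$, in particular $F_{n-p}$.

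The main obstacle is Brown's condition (3d): that $X_{k}$ is $(n-1)$-connected once $k$ is large. Here I would use the height function $h(v) = |v|$ as a Morse function on $X = \bigcup_{k} X_{k}$ and descend the filtration from the contractible complex $X$. The relative homotopy type of $X_{k} \hookrightarrow X_{k+1}$ is governed by the descending links $\mathrm{lk}^{\downarrow}(v)$ of the newly added vertices $v$ of height $k+1$, namely the subcomplexes spanned by the proper $\mathcal{E}$-contractions $v' < v$; if these descending links are all $(n-1)$-connected whenever $h(v)$ exceeds a bound, then every inclusion $X_{k} \hookrightarrow X$ with $k$ large is $(n-1)$-connected, and contractibility of $X$ forces $X_{k}$ to be $(n-1)$-connected. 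The connectivity of the descending links is exactly where hypothesis (2), \emph{rich in contractions} (Definition \ref{definition:rich}), is indispensable: once $h(v) \ge C$ the vertex $v$ contains a contracting pseudovertex, and as the height grows the supply of disjoint contraction opportunities grows, so that a nerve/covering argument shows $\mathrm{lk}^{\downarrow}(v)$ is $(n-1)$-connected for $h(v)$ large relative to $n$. Making this estimate precise---quantifying how many contraction opportunities \emph{rich in contractions} supplies and verifying that the associated covering has a highly connected nerve---is the technical heart of the argument and the step I expect to be hardest.
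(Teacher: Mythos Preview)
Your proposal is correct and follows essentially the same route as the argument from \cite{FH2} that the paper cites (and sketches in Remark~\ref{remark:Finfty}): Brown's criterion applied to $\Delta^{\mathcal{E}}$ with the height filtration, cocompactness from finite domain types together with finiteness of each $\mathcal{E}(b)$, $F_{\infty}$ cell stabilizers via the product of the groups $\mathbb{S}(D_{j},D_{j})$, and descending-link connectivity established through a cover by partitioned downward links analyzed with the Nerve Theorem. One small wording point: you write that the kernel $K$ is ``identified with'' $\prod_{j}\mathbb{S}(D_{j},D_{j})$, whereas Proposition~\ref{proposition:freeabelian} only records injectivity; surjectivity does hold for the full group $\Gamma_{S}$ (any tuple $(h_{1},\ldots,h_{m})$ assembles to a bijection locally determined by $S$), but it deserves a sentence.
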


\begin{theorem} \label{theorem:FinftyforV}
The groups $V(S_{i})$ and $V(S'_{i})$ are of type $F_{\infty}$, for $i=2,3$.
\end{theorem}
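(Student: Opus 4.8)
The plan is to verify the three hypotheses of Theorem \ref{theorem:FHFinfty} for the $S$-structures and expansion schemes constructed above, in the context of $\Gamma = V(S_i)$ or $V(S'_i)$. (A remark is needed first: Theorem \ref{theorem:FHFinfty} is stated in \cite{FH2} using the full domain collection $\mathcal{D}^+$, but the only features used in its proof are those summarized in Proposition \ref{proposition:closure}, together with the directed-set construction of Section \ref{section:directedset} and the cocompactness results of the previous subsection; these all go through verbatim when $\mathcal{D}^+$ is replaced by $\mathcal{D}^+_{gen}$, so I would open the proof by recording that the argument of \cite{FH2} applies to our setup without essential change.)

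First I would dispatch the easy hypotheses. The $S$-structure $\mathbb{S}$ (Definition \ref{definition:structuresets}) has exactly one or two domain types (Remark \ref{remark:domaintype}), so it has finitely many. By Theorem \ref{theorem:structureofstructuresets}, each group $\mathbb{S}(D,D)$ is either infinite cyclic or trivial, hence of type $F_\infty$. Condition (1) of Theorem \ref{theorem:FHFinfty}, that $\mathcal{E}_i$ and $\mathcal{E}'_i$ are $n$-connected for all $n$, is precisely the content of Theorem \ref{theorem:contractibilityofDelta} and the discussion of Subsection 8.2 --- indeed Theorem \ref{theorem:contractibilityofDelta} is proved there by showing each $\mathcal{E}_i$, $\mathcal{E}'_i$ is $n$-connected for all $n$, which is the statement I would cite. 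Condition (3), that each $\mathcal{E}_i(b)$ and $\mathcal{E}'_i(b)$ is finite --- wait, in fact $\mathcal{E}_i([f,I])$ is infinite, since it contains the vertices $u_{k/2}$ for all $k \in \mathbb{Z}$ (Figure \ref{figure:E}). So condition (3) as literally stated fails; what actually holds, and what the proof of Theorem \ref{theorem:FHFinfty} really uses via the action $\star$ of Subsection 8.3, is that each $\mathcal{E}_i(b)$ is finite modulo the action of $\mathbb{S}(D,D)$. This is exactly the $\mathbb{S}$-finiteness established in the proof of Proposition \ref{proposition:cocompactness} (the $\mathbb{Z}$-action on $\mathcal{E}([id_I,I])$ by translation $u_k \mapsto u_{n+k}$ is cocompact). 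So I would phrase the verification as: $\mathcal{E}_i$ and $\mathcal{E}'_i$ are $\mathbb{S}$-finite, which is what the proof of Theorem \ref{theorem:FHFinfty} requires in place of a naive finiteness condition.

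The one condition requiring genuine work is (2), that the expansion schemes are \emph{rich in contractions} (Definition \ref{definition:rich}): there should be a constant $C$ so that every pseudovertex of height $\geq C$ contains a contracting pseudovertex (Definition \ref{definition:contractingpseudo}). This is where I expect the main obstacle to lie. The strategy is combinatorial on subdivision trees. A pseudovertex with support all of $[0,1)$ corresponds (after expanding to identity form and invoking Theorem \ref{theorem:thelink}) to a subdivision tree $T$; a pseudovertex is contracting precisely when it has the type of some element of $\mathcal{E}_i(b)$, i.e. when (for $\mathcal{E}_2$) it contains a "caret" piece corresponding to $u_k$ (two leaves $[C^kA, I], [C^kB,I]$ sharing a common parent) or the three-leaf configuration of $u_{k-1/2}$, for some $k$. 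I would argue: if $T$ has sufficiently many leaves, then $T$ has a node $v$ both of whose children are leaves (a "cherry"); the subtree at $v$ is a single caret with some integer label $k$, which exhibits the pair $\{[C^k A, I],[C^k B, I]\}$ as a contracting sub-pseudovertex. For the $\mathcal{E}'_i$ pieces over domains $[m,\infty)$ the contracting configuration $\{[f,[m,m+1)],[f,[m+1,\infty)]\}$ is forced once the partition of $[m,\infty)$ is nontrivial. The constant $C$ can then be read off from the bound relating the number of leaves of a binary tree to the existence of a cherry (any binary tree with $\geq 2$ leaves has one), adjusted for the general-support case by working componentwise over each $[f_k, D_k]$ in the base pseudovertex; here the finitely-many-domain-types hypothesis keeps $C$ uniform. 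The delicate point --- and the part I would be most careful about --- is matching the notion of "same type" for pseudovertices (Definition \ref{definition:vertices}, a multiset-of-domain-types condition) against the concrete caret configurations in $\mathcal{E}_i(b)$, and ensuring a cherry in the subdivision tree genuinely produces a sub-pseudovertex of the right type rather than merely a pair of adjacent leaves; Theorem \ref{theorem:thelink} and the explicit description of $u_k$ in Example \ref{example:THEexpansionscheme} are what make this identification precise. Once richness in contractions is in hand, Theorem \ref{theorem:FHFinfty} yields that $\Gamma_S = V(S_i)$ or $V(S'_i)$ has type $F_\infty$, completing the proof.
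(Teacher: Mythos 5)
Your overall framing---run the argument of Theorem \ref{theorem:FHFinfty}, observe that hypothesis (3) fails literally because $\mathcal{E}_{i}([f,I])$ is infinite, and replace it by the $\mathbb{S}$-finiteness already established in Proposition \ref{proposition:cocompactness}---matches the paper exactly, as does your citation of Theorem \ref{theorem:contractibilityofDelta} for $n$-connectedness. The genuine gap is in your verification of ``rich in contractions,'' which is the one substantive step. Your cherry argument only applies to pseudovertices representable by subdivision trees, i.e.\ to those $v$ with $\{[f,I]\}\leq v$ for a \emph{single} pair $[f,I]$. But Definition \ref{definition:rich} quantifies over \emph{all} pseudovertices $v=\{[f_{1},D_{1}],\ldots,[f_{m},D_{m}]\}$: the images $f_{j}(D_{j})$ need only be pairwise disjoint, need not be intervals or generating domains, and need not assemble into the leaves of any tree, so Theorem \ref{theorem:thelink} does not apply to them. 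Your fix of ``expanding to identity form'' does not repair this: a cherry in the tree of the expanded vertex $\hat{v}$ yields a contracting sub-pseudovertex of $\hat{v}$, not of $v$ (both leaves of the cherry may arise from expanding a single element of $v$), whereas the definition requires $v'\subseteq v$. And the premise that $v$ lies above a single base pair is essentially equivalent to what richness-in-contractions is meant to deliver, so the reduction is circular.

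The paper's route is an explicit gluing that needs no tree structure and also resolves the ``delicate point'' you flag about matching ``same type'' against actual caret configurations. Given \emph{any} two pairs $[f_{1},\omega_{1}I]$, $[f_{2},\omega_{2}I]$ with disjoint images, define $g$ on $[0,1)$ by $g=f_{1}\omega_{1}a$ on $[0,1/2)$ and $g=f_{2}\omega_{2}b$ on $[1/2,1)$; then $g\in\widehat{S}$ and $\{[g,I]\}$ $\mathcal{E}_{i}$-expands to $\{[g,AI],[g,BI]\}=\{[f_{1},\omega_{1}I],[f_{2},\omega_{2}I]\}$. Hence \emph{every} height-two pseudovertex is an actual $\mathcal{E}_{i}$-contraction target, giving $C=2$ with no adjacency hypothesis; this is precisely where the ``$V$'' setting (right-continuous bijections, so arbitrary disjoint pieces may be glued) is used, and it is the step your approach cannot reach. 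A parallel gluing over $[0,\infty)$ handles the mixed case $[D_{1}]=[I]$, $[D_{2}]=[[0,\infty)]$ for $\mathcal{E}'_{i}$. Finally, the paper also accounts for the two remaining uses of the finiteness of $\mathcal{E}(b)$ in the proof of Theorem \ref{theorem:FHFinfty}---cell stabilizers (Proposition \ref{proposition:freeabelian}) and the constant $C_{0}$, bounded independently by $3$ or $5$---which your write-up should record alongside the cocompactness point.
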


\begin{proof}
Our strategy is to apply the proof of Theorem \ref{theorem:FHFinfty} (Theorem 8.2 from \cite{FH2}) to the groups $\Gamma$. (We note that the groups $\Gamma_{S}$ under consideration in Theorem \ref{theorem:FHFinfty} are analogous to Thompson's group $V$, in  that there is no assumption that $\Gamma_{S}$ preserves no linear or cyclic order.) Let us note that condition (3) is violated, since 
the sets $\mathcal{E}_{i}(b)$ and $\mathcal{E}'_{i}(b)$ are not finite when $b = [f,D]$ and $[D] = [I]$, so the statement does not apply directly.

We have already seen that $\mathcal{E}_{i}$ and $\mathcal{E}'_{i}$ are $n$-connected expansion schemes for all $n$. 

We claim that the expansion scheme $\mathcal{E}_{i}$ is rich in contractions with constant $C = 2$ when $i=2$ or $3$. 
Let $\{ [f_{1},D_{1}], [f_{2}, D_{2}] \} \subseteq \mathcal{B}$ be a pseudovertex. Since $D_{1}, D_{2} \in \mathcal{D}^{+}_{gen}$, we have
$D_{1} = \omega_{1}I$ and $D_{2} = \omega_{2}I$, for some words $\omega_{1}, \omega_{2} \in \{ A, B \}^{\ast}$. Thus, 
\[ [f_{n}, D_{n}] = [f_{n}, \omega_{n}I] = [f_{n} \omega_{n}, I], \]
for $n=1,2$.  Define $g$ on $[0,1)$ by the following rule:
\[  g(x) = \left\{ \begin{array}{ll} f_{1}\omega_{1}a(x) & \text{if } 
x \in [0,1/2) \\
f_{2}\omega_{2}b(x) & \text{if x} \in [1/2,1) \end{array} \right. \]
The pseudovertex $\{ [g,I] \}$ expands to
\begin{align*}
\{ [g,AI], [g,BI] \} &= \{ [f_{1}\omega_{1}a,AI], [f_{2}\omega_{2}b,BI] \} \\ &= \{ [f_{1}\omega_{1},I], [f_{2}\omega_{2},I] \} \\  &=
\{ [f_{1},D_{1}], [f_{2},D_{2}] \}. 
\end{align*}
This proves the claim.
 
The expansion scheme $\mathcal{E}'_{i}$ is also rich in contractions with constant $C=2$. If $\{ [f_{1},D_{1}], [f_{2},D_{2}] \} \subseteq \mathcal{B}$ is a pseudovertex and $D_{1}, D_{2}$ have the same domain type as $I$, then the proof of the previous paragraph shows that a contraction can be performed on 
$\{ [f_{1},D_{1}], [f_{2},D_{2}] \}$. The only remaining case to consider is when 
$[D_{1}] = [I]$ and $[D_{2}] = [[0,\infty)]$. We will write $R$ in place of $[0,\infty)$, to simplify notation. In this case, 
$D_{1} = \omega_{1}I$ and $D_{2} = T^{m} R$, where
$\omega_{1} \in \{ A, B, T \}^{\ast}$ and $m \geq 0$. We have the equalities:
\[ [f_{1}, \omega_{1}I] = [f_{1}\omega_{1},I] \quad \text{and}
\quad [f_{2}, T^{m}R] = [f_{2}T^{m},R]. \] 
Define $g: [0,\infty) \rightarrow [0,\infty)$ as follows:  
\[  g(x) = \left\{ \begin{array}{ll} f_{1}\omega_{1}(x) & \text{if } 
x \in [0,1) \\
f_{2}T^{m-1}(x) & \text{if x} \in [1,\infty) \end{array} \right. \] 
The pseudovertex $\{ [g,R] \}$ expands to 
\begin{align*}
\{ [g,R] \} &= \{ [g,I], [g,TR] \} \\
&= \{ [f_{1}\omega_{1},I], [f_{2}T^{m-1},TR] \} \\
&= \{ [f_{1}, D_{1}], [f_{2}, T^{m}R] \} \\
&= \{ [f_{1}, D_{1}], [f_{2}, D_{2}] \}.
\end{align*}
It follows that $\{ [f_{1}, D_{1}], [f_{2},D_{2}] \}$ is also a contracting vertex relative to $\mathcal{E}'_{i}$.

The assumption that $\mathcal{E}(b)$ is always finite is used in the proof of Theorem \ref{theorem:FHFinfty} in three ways: (1) to prove that $\Gamma$ acts cocompactly on the complexes $\Delta^{\mathcal{E}}_{n}$; (2) to prove that the cell stabilizers have type $F_{\infty}$, and (3) to define a certain constant $C_{0}$. We have already established (1) and (2) by other means: indeed, cell stabilizers are virtually finitely generated free abelian groups, and therefore have type $F_{\infty}$, and the cocompactness of the actions on the complexes $\Delta^{\mathcal{E}_{i}}_{n}$ and $\Delta^{\mathcal{E}'_{i}}_{n}$ was proved as part of Proposition \ref{proposition:cocompactness}. The constant $C_{0}$ is the largest height (i.e., cardinality) of a contracting pseudovertex. Clearly we have an independent bound of $C_{0} = 3$ when $S \in \{ S_{2}, S'_{2} \}$, or $C_{0} = 5$ when $S \in \{ S_{3}, S'_{3} \}$. (Refer to the definitions of $\mathcal{E}_{i}$ and $\mathcal{E}'_{i}$ in Example \ref{example:THEexpansionscheme}.)  
\end{proof}

\begin{remark} \label{remark:Finfty}
We will offer a sketch of the argument here.

We check the hypotheses of Brown's finiteness criterion (Theorem \ref{theorem:Brown}). First, we note that $\Delta^{\mathcal{E}_{i}}$ and $\Delta^{\mathcal{E}'_{i}}$ are contractible by Theorem \ref{theorem:contractibilityofDelta}. It is clear that the relevant actions are cellular. Properties (3)(a) and (b) are settled in Proposition \ref{proposition:cocompactness}. Cell stabilizers are virtually finitely generated free abelian (and therefore of type $F_{\infty}$) by Proposition \ref{proposition:freeabelian}. 

This leaves only (3)(d) to check; i.e., we must show, for each $n \in \mathbb{N}$, that $\Delta^{\mathcal{E}_{i}}_{k}$ and 
$\Delta^{\mathcal{E}'_{i}}_{k}$ are $(n-1)$-connected for sufficiently large $k$. The proof of the latter follows a now-standard strategy: we show that the descending link of a vertex becomes highly connected as the height of the vertex increases. A few basics of this strategy are summarized in Subsection 7.2 of \cite{FH2}, although the methods of argument go back to \cite{Brown} and \cite{BB}. 

We consider $\Delta^{\mathcal{E}_{2}}$; the other cases are similar.
Let $v$ be a vertex of height $k$ in $\Delta^{\mathcal{E}_{2}}$.
The \emph{descending link} of $v$  is its link in 
$\Delta^{\mathcal{E}_{2}}_{k}$. Our analysis of the descending link uses the Nerve Theorem (as it appears in \cite{AB}; the Nerve Theorem is also Theorem 2.10 in \cite{FH2}). Let 
\[ v= \{ b_{1}, \ldots, b_{k} \}. \]
We cover the descending link of $v$ by a number of subcomplexes, called 
\emph{partitioned downward links}, which are each determined by a partition of $v$, and which we now define.

 Let $\mathcal{P}$ be a partition of $v$. The \emph{partitioned downward star} $\mathrm{st}_{\downarrow}(v_{\mathcal{P}})$ (Definition 7.7 from \cite{FH2}), is the subcomplex of 
$\Delta^{\mathcal{E}_{2}}_{k}$  consisting of the vertex $v$ and all simplices resulting from $\mathcal{E}_{2}$-contractions that are supported within members of $\mathcal{P}$. For instance, if 
\[ \mathcal{P} = \{ \{ b_{1}, b_{2} \}, \{ b_{3}, \ldots, b_{k} \} \}, \]
then a contraction supported on the subset $\{ b_{1}, b_{2} \}$, or on the subset $\{ b_{3}, b_{4}, b_{7} \}$ (if $k \geq 7$) (or indeed a combination of such contractions), results in a simplex of 
$\mathrm{st}_{\downarrow}(v_{\mathcal{P}})$, but a contraction supported
on $\{ b_{2}, b_{3} \}$ would not. We then define the partitioned downward link 
$\mathrm{lk}_{\downarrow}(v_{\mathcal{P}})$ as the link of $v$ in  
$\mathrm{st}_{\downarrow}(v_{\mathcal{P}})$. 

For each contracting pseudovertex $w \subseteq v$, we let 
\[ \mathcal{P}_{w} = \{ v-w, w \}. \]
(We note that, in the current context,  ``contracting pseudovertex'' is the same as  ``pseudovertex with two or three members'', by the description of $\mathcal{E}_{2}$ from Example \ref{example:THEexpansionscheme}.) The collection 
\[ \mathcal{C} = \{ \mathrm{lk}_{\downarrow}(v_{\mathcal{P}_{w}}) \mid w
\text{ is a contracting pseudovertex} \} \]
is a cover of $\mathrm{lk}_{\downarrow}(v)$. We apply the Nerve Theorem to $\mathcal{C}$. The intersection of two members of $\mathcal{C}$ is another partitioned downward link:
\[   \mathrm{lk}_{\downarrow}(v_{\mathcal{P}_{w'}}) \cap \mathrm{lk}_{\downarrow}(v_{\mathcal{P}_{w''}}) =  
\mathrm{lk}_{\downarrow}(v_{\mathcal{P}_{w'} \wedge \mathcal{P}_{w''}}), \]
where $\mathcal{P}_{w'} \wedge \mathcal{P}_{w''}$ is the coarsest common refinement of $\mathcal{P}_{w'}$ and $\mathcal{P}_{w''}$. The generalization to finite intersections is straightforward. 

For a partition $\mathcal{P} = \{ P_{1}, \ldots, P_{\ell} \}$ of $v$, there is a natural 
join structure (see Corollary 7.9 from \cite{FH2}):
\[ \mathrm{lk}_{\downarrow}(v_{\mathcal{P}}) \cong 
\bigast_{j=1}^{\ell} \mathrm{lk}_{\downarrow}(P_{j}), \]
where the latter descending links depend only on the types of the pseudovertices $P_{j}$. (In the current case, the type is entirely determined by the cardinality.) Recall that, if $X_{1}$ and $X_{2}$ are $n_{1}$-connected and $n_{2}$-connected complexes (respectively), then the join $X_{1} \ast X_{2}$ is $(n_{1} + n_{2} + 2)$-connected. It follows that $\mathrm{lk}_{\downarrow}(v_{\mathcal{P}})$ is at least as connected as the most highly connected factor
$\mathrm{lk}_{\downarrow}(P_{j})$. 

Finally, we note that a pseudovertex of height two or more has a non-empty descending link (since every such pseudovertex contains a contracting pseudovertex). 
This gives us the base case of an induction; the above considerations allow us to prove inductively that pseudovertices of increasing height have increasing connectivity. The actual induction is done in the proof of Theorem 8.2 from \cite{FH2}; we omit further details. We will consider a similar induction in more detail  in the next subsection.   
\end{remark}

\subsection{The $F_{\infty}$ property for the remaining groups}

\begin{theorem} \label{theorem:FINF}
The groups $F(S)$, where $S \in \{ S_{2}, S_{3}, S'_{2}, S'_{3} \}$, and $T(S)$, where $S \in \{ S_{2}, S_{3} \}$, have type $F_{\infty}$.
\end{theorem}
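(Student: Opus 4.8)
The plan is to mimic the proof of Theorem \ref{theorem:FinftyforV}, applying Brown's finiteness criterion (Theorem \ref{theorem:Brown}) to the complexes $\Delta_F(S)$, $\Delta_F(S')$, and $\Delta_T(S)$ in place of $\Delta^{\mathcal{E}_i}$ and $\Delta^{\mathcal{E}'_i}$. The key point is that the ``$F$'' and ``$T$'' complexes were \emph{defined} (in Subsection \ref{subsection:FT}) as the smallest expansion-closed subcomplexes of $\Delta(S_i)$ or $\Delta(S'_i)$ containing the vertices $[\gamma, X]$ for $\gamma$ in the relevant group, so the expansion schemes $\mathcal{E}_i$ and $\mathcal{E}'_i$ restrict to these subcomplexes; write $\Delta_F^{\mathcal{E}_i}$, etc., for the resulting $\mathcal{E}$-subcomplexes. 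First I would verify contractibility: this is already recorded as Theorem \ref{theorem:contractibilityofDelta}, whose proof is purely local (it only ever examines ascending links relative to a pseudovertex, and these are intervals in the cellulated line of Figure \ref{figure:E}), so it applies verbatim once one notes that the ascending-link analysis takes place inside the subcomplex exactly as in the ambient complex.

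Next I would establish the $\Gamma$-finite filtration, exactly as in Proposition \ref{proposition:cocompactness}: filter $\Delta_F^{\mathcal{E}_i}$ (resp.\ $\Delta_T^{\mathcal{E}_i}$, $\Delta_F^{\mathcal{E}'_i}$) by the subcomplexes $\left(\Delta_F^{\mathcal{E}_i}\right)_n$ of vertices of height $\leq n$. Cocompactness of the $\Gamma$-action on each stratum follows from $\mathbb{S}$-finiteness of $\mathcal{E}_i$ and $\mathcal{E}'_i$ (already proved) together with the fact that $\Gamma$ acts transitively on vertices of a given type within these subcomplexes -- one must check that ``type'' is still governed by cardinality, which for the $F$ and $T$ groups amounts to the standard fact that $F$ (resp.\ $T$) acts transitively on standard dyadic-type subdivisions of a given size (resp.\ cyclically ordered ones). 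Vertex and cell stabilizers are subgroups of the corresponding stabilizers in $\Delta(S_i)$, hence virtually free abelian of finite rank by Proposition \ref{proposition:freeabelian}, so they have type $F_\infty$; thus conditions (2), (3)(a), (3)(b), (3)(c) of Theorem \ref{theorem:Brown} are met.

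The remaining and genuinely new ingredient is condition (3)(d): the high connectivity of the filtration strata $\left(\Delta_F^{\mathcal{E}_i}\right)_k$ for large $k$. For this I would run the descending-link argument sketched in Remark \ref{remark:Finfty} --- cover the descending link of a height-$k$ vertex $v = \{b_1,\dots,b_k\}$ by partitioned downward links $\mathrm{lk}_{\downarrow}(v_{\mathcal{P}_w})$ indexed by contracting pseudovertices $w \subseteq v$, apply the Nerve Theorem, and use the join decomposition $\mathrm{lk}_{\downarrow}(v_{\mathcal P}) \cong \bigast_j \mathrm{lk}_{\downarrow}(P_j)$ to induct. The one subtlety for the $F$ and $T$ groups is that a contracting pseudovertex $w$ must itself be \emph{realizable inside} $\Delta_F^{\mathcal{E}_i}$, i.e.\ the contraction it encodes must respect the linear (resp.\ cyclic) order --- but the expansion scheme $\mathcal{E}_i$ only ever subdivides a single domain $[f,I]$ into order-consecutive pieces, so every $\mathcal{E}_i$-contracting pseudovertex of $v$ automatically lies in the subcomplex, and ``rich in contractions with constant $C=2$'' (resp.\ the analogous statement for $\mathcal{E}'_i$) carries over unchanged from the proof of Theorem \ref{theorem:FinftyforV}: the contracting function $g$ constructed there, being a local homeomorphism of $[0,1)$ or $[0,\infty)$ assembled from pieces of $\widehat S$, is order-preserving (or extends to an order-preserving bijection), hence lies in $F(S)$ (resp.\ $T(S)$). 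Granting this, the number of $\Gamma$-orbits of descending-link covers is finite, connectivity of $\mathrm{lk}_{\downarrow}(v)$ grows with $k$ by the standard join/Nerve induction, and Brown's criterion yields type $F_\infty$.

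The main obstacle, and the step that deserves the most care, is precisely this compatibility of contractions with the order structure for the $T$ groups: unlike $F$, the group $T(S)$ is not literally a group of homeomorphisms of a \emph{linearly} ordered set, so one must check that the contracting vertex $\{[g,I]\}$ and the one-step expansion producing $\{[f_1,D_1],[f_2,D_2]\}$ can be chosen so that the cyclic order is preserved --- equivalently, that the descending-link cover by partitioned downward links, when restricted to $T$-admissible contractions, is still a cover with the same nerve-theoretic behavior. I expect this to go through because $T(S)$ was characterized (in the Remark following Definition \ref{definition:Gamma}) as the subgroup of $V(S)$ preserving a cyclic ordering, and the $\mathcal{E}_i$-contractions are order-local; but making this precise --- e.g.\ showing that every vertex of $\Delta_T^{\mathcal{E}_i}$ of height $\geq 2$ has nonempty descending link \emph{within} the subcomplex --- is the place where a careful argument is needed rather than a citation to \cite{FH2}.
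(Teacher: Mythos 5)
Your overall architecture is the same as the paper's: Brown's criterion applied to the $F$- and $T$-subcomplexes, with contractibility, the height filtration, cocompactness, and the virtually abelian stabilizers all imported from the $V$-case. Those parts are fine and match the paper. The gap is in the one step you yourself flag as "the genuinely new ingredient": you assert that the descending-link cover of Remark \ref{remark:Finfty} --- indexed by \emph{all} contracting pseudovertices $w \subseteq v$ --- "carries over unchanged." It does not. In the $F$- and $T$-complexes a contraction can only be performed on two or three \emph{consecutive} members of $v = \{b_1, \ldots, b_k\}$ (ordered by the position of $f_i(D_i)$), so the admissible $w$ are only the consecutive pairs and triples. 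For such a cover the nerve is no longer a full simplex: for example, the family of all overlapping pairs $\{b_1,b_2\}, \{b_2,b_3\}, \ldots, \{b_{k-1},b_k\}$ has common refinement equal to the partition into singletons, whose partitioned downward link is empty, so that subfamily spans no simplex of the nerve. The connectivity of the resulting nerve (an independence-type complex on a path) is not automatic and is nowhere established in your argument; citing Remark \ref{remark:Finfty} does not supply it. Your secondary worry --- whether the contracting function $g$ lies in $F(S)$ or respects the cyclic order for $T(S)$ --- is comparatively minor and is what the paper treats as "minor details."

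The paper's proof closes this gap by choosing a completely different, \emph{five-element} cover localized at the left end of the linear order: $\mathcal{C} = \{\mathrm{lk}_{\downarrow}(v_{\mathcal{P}_K})\}$ for $K \in \{\{1,2\}, \{2,3\}, \{1,2,3\}, \{2,3,4\}, \{3,4,5\}\}$. The covering property is proved by looking at the \emph{leftmost} element of the minimal vertex $v_0$ of a simplex that gets expanded on the way up to $v$: its contribution to $v$ is a consecutive block occupying one of these five positions (or lies entirely in $\{b_3,\ldots,b_k\}$), and all other expansions contribute disjoint blocks. With this cover the nerve is a four-dimensional simplex, every $t$-fold intersection is a join having the descending link of $\{b_\gamma, \ldots, b_k\}$ (with $\gamma \leq 6$) as a factor, and an induction shows $\mathrm{lk}_{\downarrow}(v)$ is $n$-connected once $k \geq 5n+7$. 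You would need to supply an argument of this kind (or an independent proof that the nerve of the consecutive-blocks cover is highly connected) before your proposal constitutes a proof.
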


\begin{proof}
We consider the group $F(S_{2})$ (the Lodha-Moore group). The proofs that the other groups have type $F_{\infty}$ differ in minor details.

We turn to an analysis of the descending link; all of the other ingredients of the proof can be assembled exactly as in Remark \ref{remark:Finfty}. Let 
\[ v = \{ b_{1}, b_{2}, \ldots, b_{k} \} \]
be either a vertex of $\Delta^{\mathcal{E}_{2}}$, or a pseudovertex.
We assume that the $b_{i}$ are linearly ordered, in the following sense: Each $b_{i} = [f_{i}, D_{i}]$, for appropriate $f_{i}$ and $D_{i} \in \mathcal{D}^{+}_{gen}$, where $f_{i}: D_{i} \rightarrow [0,1)$ is a locally $S_{2}$-embedding that is, moreover, continuous and increasing. We assume that $f_{1}(D_{1})$, $f_{2}(D_{2})$, 
$\ldots$, $f_{k}(D_{k})$ are arranged from left to right. With this assumption, each $\mathcal{E}_{2}$-contraction must be performed on two or three consecutive $b_{i}$. For a subset $K \subseteq \{ 1, \ldots, k \}$, we define
\[ \mathcal{P}_{K} = \{ \{ b_{j} \mid j \in K \}, \{ b_{j} \mid j \not \in K \} \}. \]
We then define
\[ \mathcal{C} = \{ \mathrm{lk}_{\downarrow}(v_{\mathcal{P}_{K}}) \mid
K \in \{ \{ 1, 2 \}, \{ 2, 3 \}, \{ 1, 2, 3 \}, \{ 2, 3, 4 \}, \{ 3, 4, 5 \} \} \}. \]
(If $k < 5$, then the possible subsets $K$ are restricted accordingly.)

We claim that $\mathcal{C}$ is a cover of $\mathrm{lk}_{\downarrow}(v)$. Indeed, let $\sigma$ be a simplex in $\mathrm{lk}_{\downarrow}(v)$. Thus, there is an increasing sequence
\[ v_{0} < v_{1} < v_{2} < \ldots < v_{\ell -1} < v_{\ell} = v, \]
where each $v_{\alpha}$ is obtained by $\mathcal{E}_{2}$-expansion from $v_{0}$, and
\[ \sigma = v_{0} < v_{1} < \ldots < v_{\ell -1}. \]
If $v_{0} = \{ b'_{0}, b'_{1}, \ldots, b'_{q} \}$, where the members are linearly ordered, then there is a leftmost $b'_{\beta}$ that is expanded when we pass from
$v_{0}$ to $v$. In expanding at $b'_{\beta}$, we replace $b'_{\beta}$ with either two or three pairs from $\mathcal{B}$. The latter will occur consecutively in $v$. Thus, the result of expanding at $b'_{\beta}$ will contribute either $\{ b_{\alpha}, b_{\alpha +1}, b_{\alpha +2} \}$ or $\{ b_{\alpha}, b_{\alpha +1} \}$ to $v$, for some $\alpha$. 
All other expansions from $v_{0}$ to $v$ will contribute a disjoint subset of $b_{i}$'s to $v$. It follows easily from this that $\sigma$ is contained in at least one member of the cover $\mathcal{C}$.

For instance, if the expansion at $b'_{\beta}$ contributes $b_{1}$ and $b_{2}$ to $v$, any other expansion from $v_{0}$ must contribute some subset of $\{ b_{3}, \ldots, b_{k} \}$. Thus, in this case, $\sigma \subseteq \mathrm{lk}_{\downarrow}(v_{\mathcal{P}_{\{1, 2 \}}})$. If $b'_{\beta}$ contributes $b_{2}$ and $b_{3}$, then $\sigma \subseteq \mathrm{lk}_{\downarrow}(v_{\mathcal{P}_{\{2, 3 \}}})$.
If $b'_{\beta}$ contributes $b_{m}$ and $b_{m+1}$, for some $m \geq 3$, then
$\sigma \subseteq \mathrm{lk}_{\downarrow}(v_{\mathcal{P}_{\{1, 2 \}}})$ (since, indeed, all expansions contribute some subset of $\{ b_{3}, \ldots, b_{k} \}$ under this hypothesis). 

Now we establish the connectivity of the descending link, as a function of the height $k$. We note first that $\mathrm{lk}_{\downarrow}(v)$ is non-empty provided that $k \geq 2$. It follows from this that each $\mathrm{lk}_{\downarrow}(v_{P_{K}})$ is connected when  $k \geq 7$, since each is a join of two non-empty complexes. Now, if $k \geq 7$, then
$\mathrm{lk}_{\downarrow}(v)$ is connected, since it is covered by a collection 
$\mathcal{C}$ of non-empty subcomplexes, which have a non-empty intersection. (A contraction at $\{ b_{6}, b_{7} \}$ lies in all of the partitioned descending links simultaneously.) 

In general, $\mathrm{lk}_{\downarrow}(v)$ is $n$-connected provided that
$k \geq 5n+7$. We have proved this already for $n=-1$ and $n=0$. Assume that the result is true for $n$. We consider a vertex $v$ of height $k$ at least $5n+12$. 
Each $\mathrm{lk}_{\downarrow}(v_{P_{K}})$ is $(n+1)$-connected, since each is a join of two complexes: one non-empty and one isomorphic to the descending link of a vertex of height at least $5n+7$, and therefore $n$-connected by induction. Moreover, any subcollection of $\mathcal{C}$ containing two of more members intersects in a subcomplex that is at least $n$-connected. (Any such intersection is a join, and one of the factors of the join is the descending link on $\{ b_{\gamma}, \ldots, b_{k} \}$, where $\gamma \leq 6$.)

By the Nerve Theorem \cite{AB}, $\mathrm{lk}_{\downarrow}(v)$ is $(n+1)$-connected if $t$-fold intersections of the cover are $(n-t+2)$-connected and the nerve of the cover is $(n+1)$-connected. Since the nerve is easily seen to be a four-dimensional simplex, and $t$-fold intersections have the required connectivity (by the previous paragraph), $\mathrm{lk}_{\downarrow}(v)$ is $(n+1)$-connected, completing the induction.

By well-established principles (as summarized in Proposition 7.6 from \cite{FH2}, for instance), the 
connectivity of the subcomplex $\Delta^{\mathcal{E}_{i}}_{k}$ tends to infinity as $k$ increases, completing the proof.
\end{proof}

\bibliographystyle{plain}
\bibliography{biblio}

 \end{document}